\begin{document}
\author{Roelof Bruggeman}
\address{Mathematisch Instituut, Universiteit Utrecht, Postbus 80010,
3508 TA Utrecht, Nederland}
\email{r.w.bruggeman@uu.nl}

\author{YoungJu Choie}
\address{Department of Mathematics and PMI, Postech, Pohang, Korea
790--784}
\email{yjc@postech.ac.kr}

\author{Anke Pohl}
\address{University of Bremen, Department 3 -- Mathematics, Institute
for Dynamical Systems, Bibliothekstr.~5, 28359 Bremen, Germany}
\email{apohl@uni-bremen.de}

\title[Period functions, Maass forms and Jacobi Maass forms]{Period
functions for vector-valued Maass cusp forms of real weight, with an
application to Jacobi Maass cusp forms}

\begin{abstract}
For vector-valued Maass cusp forms for~$\SL_2(\ZZ)$ with real
weight~$k\in\RR$ and spectral parameter $s\in\CC$, $\re s\in (0,1)$,
$s\not\equiv \pm k/2$ mod~$1$, we propose a notion of vector-valued
period functions, and we establish a linear isomorphism between the
spaces of Maass cusp forms and period functions by means of a
cohomological approach. The period functions are a generalization of
those for the classical Maass cusp forms, being solutions of a
finite-term functional equation or, equivalently, eigenfunctions with
eigenvalue~$1$ of a transfer operator deduced from the geodesic flow on
the modular surface. We apply this result to deduce a notion of period
functions and related linear isomorphism for Jacobi Maass forms of
weight $k+1/2$ for the semi-direct product of $\SL_2(\ZZ)$ with the
integer points $\hei(\ZZ)$ of the Heisenberg group.
\end{abstract}

\keywords{period functions, vector-valued Maass cusp forms, real weight,
Jacobi Maass forms, principal series}

\subjclass[2020]{Primary: 11F50, 11F67; Secondary: 11F12, 11F37, 22E30,
22E40, 37C30}

\maketitle

\section{Introduction}

For several hyperbolic orbisurfaces~$\Gm\backslash\uhp$, with $\uhp$ denoting the hyperbolic plane and $\Gm$ being a discrete subgroup of~$\SL_2(\RR)$ acting by fractional linear transformation on~$\uhp$, notions of period functions for Maass forms and associated isomorphisms have been established in the course of the last years. For $\Gm=\SL_2(\ZZ)$, which is the seminal example, this has been achieved in combination of work by E.~Artin~\cite{Artin}, Series~\cite{Series}, Mayer~\cite{May90, May91}, Lewis~\cite{L97}, Bruggeman~\cite{Bruggeman_lewiseq}, Chang--Mayer~\cite{CM99}, and Lewis--Zagier~\cite{LZ_survey, LZ01}. Alternative proofs are given in~\cite{BM09,MMS12}, and most recently, by combination of~\cite{Pohl_Symdyn2d, Moeller_Pohl, Pohl_mcf_Gamma0p, Pohl_mcf_general}.

The variant of these proofs most relevant for our work proceeds roughly as follows, applying to Maass cusp forms. See also the survey~\cite{PZ20}. The space of Maass cusp forms for~$\SL_2(\ZZ)$ with spectral parameter~$s$ is shown to be linear isomorphic to the space of parabolic $1$-cohomology of~$\SL_2(\ZZ)$ with module being the vector space of smooth, semi-analytic vectors of the principal series representation with spectral parameter~$s$.
The cocycle classes can be characterized by real-analytic, rapidly decaying solutions of a rather simple functional equation on~$(0,\infty)$ that depends on~$s$. In this way, Maass cusp forms with spectral parameter~$s$ are seen to be linear isomorphic to real-analytic functions on~$(0,\infty)$ or, equivalently, holomorphic functions on~$\CC\setminus(-\infty,0]$ that satisfy the $s$-dependent functional equation and certain decay properties at boundaries. This isomorphism is given by an integral transform relation, and the solutions of this functional equation are the \emph{period functions}.

The functional equation can be deduced from the dynamics of the modular surface~$\SL_2(\ZZ)\backslash\uhp$: A well-chosen discretization of the geodesic flow on~$\SL_2(\ZZ)\backslash\uhp$ gives rise to a discrete dynamical system on~$(0,\infty)$, more precisely to a finitely-branched self-map on~$(0,\infty)\setminus\QQ$,  which is closely related to the Farey map. The associated transfer operator with parameter~$s$, called \emph{slow transfer operator}, is finite-term. The defining equation of its eigenfunctions with eigenvalue~$1$ is just the function equation from above.

Furthermore, an induction on parabolic elements in the discretization gives rise to a companion discrete dynamical system, an infinitely-branched self-map on $(0,\infty)$, closely related to the Gauss map and continued fraction expansions. The family of  \emph{fast transfer operators} associated to this map represents the Selberg zeta function via its Fredholm determinant and can be used to characterize necessary decay properties of period functions.

Turning around the order of this presentation (as it is done for generalizations), the geodesic flow by means of discretizations and transfer operator techniques gives rise to a functional equation suitable for the notion of period functions. And indeed the regularity and decay properties to be requested from period functions as well as the construction of the cohomology theory can partly be motivated by geometric-dynamical considerations. We refer to~\cite{PZ20} and~\cite[Section~8]{tahA} for more explanations.

These results have been generalized to certain classes of hyperbolic orbisurfaces of finite and infinite area. See in particular~\cite{CM99, Moeller_Pohl,
Pohl_mcf_Gamma0p, Pohl_spectral_hecke,Pohl_mcf_general, Pohl_Symdyn2d,
tahA}. With this paper we reach out to establish first instances of analogous results beyond Maass forms of weight~$0$ as well as beyond hyperbolic orbisurfaces. We provide such results for 
\begin{enumerate}[label=(\alph*)]
\item Jacobi Maass cusp forms of any real weight for the discrete (integral) Jacobi group of level~$1$, which is the semi-direct product of~$\SL_2(\ZZ)$ and the integer points~$\hei(\ZZ)$ of the Heisenberg group, and
\item vector-valued Maass cusp forms for~$\SL_2(\ZZ)$ of any real weight and any unitary representation.
\end{enumerate}

To survey our results in more detail, we start with a few preparatory comments and explanations. We set throughout $G \coloneqq \SL_2(\RR)$ and $\Gm\coloneqq \SL_2(\ZZ)$. We let $\hei$ denote the $3$-dimensional continuous Heisenberg group and $\hei(\ZZ)$ the discrete Heisenberg group, i.e., the subgroup of~$\hei$ given by restricting to the ring of integers. See Section~\ref{sect-JMf} for precise definitions. 

The space on which Jacobi Maass forms (and Jacobi Maass cusp forms) are defined is the product space~$\uhp\times\CC$ of the hyperbolic plane~$\uhp$ and the complex plane~$\CC$. On this space, the (continuous) Jacobi group $G^J \coloneqq \hei\rtimes G$ (of level~$1$) acts by fractional linear transformations in the $\hei$-component and by a certain skew product in the $\CC$-component. See Section~\ref{sect-JMf}. Endowing $\uhp\times\CC$ with a Riemannian metric such that $G^J$ acts by Riemannian isometries is not unique. Indeed, there is at least a two-parameter family of such Riemannian metrics on~$\uhp\times\CC$ (see \cite[Remark~2.5]{Yang}). Thus, if we wanted to proceed as for hyperbolic orbisurfaces starting with a discretization of the geodesic flow, then we would face the difficulty of the non-uniqueness of the choice of this flow. In addition, even if we settled on one choice of the Riemannian metric, then we would need to handle the seven-dimensional sphere bundle of~$\uhp\times\CC$ in combination with a mixture of hyperbolic and euclidean action behavior.

To circumvent this obstacle and to simultaneous stay close to the approach for hyperbolic orbisurfaces, we use here another approach based on the theta decomposition. Pitale~\cite[Theorem~4.6]{Pit09} showed that Jacobi Maass forms for $\Gm^J \coloneqq \hei(\ZZ)\rtimes\Gm$ of integral weight and positive integral index are linear isomorphic to certain spaces of vector-valued Maass forms on $\Gm\backslash\uhp$. Thus, this isomorphism allows us to transfer the quest for a notion of period functions for Jacobi Maass cusp forms to a question about period functions for vector-valued Maass cusp forms in the more well-known realm of hyperbolic surfaces. This way, the request for a discretization of the non-unique geodesic flow on $\Gm^J\backslash(\uhp\times\CC)$ is solved implicitly and essentially avoided. However, via this isomorphism the \emph{integral weight} of Jacobi Maass forms gets converted into a \emph{half-integral weight} for the vector-valued Maass forms. Up to date, only \emph{weight-zero situations} have been considered in the literature in this realm of research, and hence we are required to find a notion of period functions for Maass cusp form for half-integral weight and establish the necessary linear isomorphism. Indeed, we provide these results for \emph{arbitrary real weight} as it is no more difficult than half-integral weights. In turn, the generality of our results for vector-valued Maass cusp forms then allows us to consider also arbitrary real weight for Jacobi Maass cusp forms.

For the definition of vector-valued Maass forms of weight~$k\in\RR$ we fix a one-dimensional multiplier system $v_k$, given in~\eqref{vk}, and a unitary representation~$\rho$ of~$\Gm$ on a finite-dimensional vector space~$X_\rho$. \emph{Maass forms} of weight~$k$, spectral parameter~$s$ and multiplier system~$\rho v_k$ are smooth eigenfunctions $\uhp\to X_\rho$ with eigenvalue $s(1-s)$ of the generalized Laplacian
\[
\Dt_k \coloneqq - y^2\,\partial_x^2 - y^2\,\partial_y^2 + i k y\qquad  (z=x+iy\in\uhp)
\]
with growing at most polynomial towards $\infty$ and being invariant under the action $|_{\rho v_k,k}$ on all of~$\Gm$, where
\[
u|_{\rho v_k,k}\gm(z) \coloneqq \rho(\gm)^{-1}v_k(\gm)^{-1} e^{-ik\arg(cz+d)}u(\gm z)
\]
for $u\colon \uhp\to X_\rho$, $\gm\in\Gm$, $z\in\uhp$ and $\gm = \rmatc abcd$. The space of such Maass cusp forms is denoted $\A_k(s,\rho v_k)$. Asking for exponential decay towards $\infty$ instead of polynomial bounds defines the space of \emph{Maass cusp forms} of weight~$k$, spectral parameter~$s$ and multiplier system~$\rho v_k$, which is denoted $\A^0_k(s,\rho v_k)$. See Section~\ref{sect-Mcf} for more details. This section also contains an alternative definition using the universal covering group of~$G$, which is helpful for our considerations.

The space~$\FE^\om_{\rho v_k,s,k}$ of period functions for weight~$k$, spectral parameter~$s$ and multiplier system~$\rho v_k$ consists of real-analytic functions $f\colon (0,\infty)\to X_\rho$ that obey certain extension properties and satisfy the three-term functional equation
\[
f \= f|^\prs_{\rho v_k,s,k} T + f|^\prs_{\rho v_k,s,k}T' \qquad\text{with $T\coloneqq \rmatc 1101\ $ and $T'\coloneqq\rmatc1011$}\,,
\]
where the action~$|^\prs_{\rho v_k,s,k}$ is closely related to the action~$|_{\rho v_k,k}$. We refer to Section~\ref{sect-prs} for precise definitions. We emphasize that this functional equation can be deduced from a transfer operator associated to a discretization of the geodesic flow on the modular surface~$\Gm\backslash\uhp$, and hence the same discretization as for the weight-zero results is lurking in our considerations.

\begin{mainthm}\label{mainthmA}
For $k\in\RR$ and $s\in\CC$ such that $\re s\in (0,1)$ and $s\not\equiv \pm k/2\bmod 1$, the vector spaces $\A^0_k(s,\rho v_k)$ and $\FE^\om_{\rho v_k,s,k}$ are isomorphic. 
\end{mainthm}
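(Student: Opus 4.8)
The plan is to establish the isomorphism between $\A^0_k(s,\rho v_k)$ and $\FE^\om_{\rho v_k,s,k}$ via an intermediate cohomological description, following the Lewis--Zagier--Bruggeman philosophy recalled in the introduction. Concretely, I would factor the desired isomorphism through a space of parabolic $1$-cohomology of $\Gm=\SL_2(\ZZ)$ with coefficients in a suitable module of principal-series type. The overall strategy is thus to prove two separate isomorphisms: first, an isomorphism between Maass cusp forms and parabolic cohomology classes, and second, an isomorphism between those cohomology classes and period functions. Composing these yields the theorem.

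\emph{Step 1 (Maass cusp forms to cohomology).} I would attach to each Maass cusp form $u\in\A^0_k(s,\rho v_k)$ a $1$-cocycle for $\Gm$ valued in the principal-series module with spectral parameter~$s$, twisted by the multiplier system~$\rho v_k$. The natural device is a Green's-form / Stokes-theorem construction: from the eigenfunction~$u$ and a chosen family of principal-series ``boundary germs'' one builds a closed $\Gm$-equivariant $1$-form on~$\uhp$, and integrating along paths between cusps (or their lifts to the universal cover, which is why the alternative definition via~$\tilde G$ recalled in the excerpt is needed) produces a cocycle $\gm\mapsto c_\gm$. The exponential decay defining the \emph{cusp} forms is precisely what forces the cocycle to be \emph{parabolic}, i.e.\ to satisfy the correct vanishing/coboundary condition at the parabolic generator~$T$. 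Injectivity and surjectivity of $u\mapsto [c]$ would be proved by the standard hyperfunction/analytic-continuation arguments, adapted to carry the finite-dimensional coefficient space~$X_\rho$ and the weight-$k$ twist through every estimate. The real-weight multiplier system~$v_k$ enters only through explicit scalar factors $e^{-ik\arg(cz+d)}$, so the finite-dimensionality of~$X_\rho$ and the unitarity of~$\rho$ keep all convergence and equivariance arguments formally identical to the scalar weight-zero case.

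\emph{Step 2 (cohomology to period functions).} I would next trivialize the parabolic cohomology by evaluating cocycles on the standard generators of~$\Gm$. Because $\SL_2(\ZZ)$ is generated by $S=\rmatc{0}{-1}{1}{0}$ and $T=\rmatc{1}{1}{0}{1}$ with the relations $S^2=-1$ and $(ST)^3=-1$, a parabolic cocycle is determined by the single value~$c_S$ on the boundary component $(0,\infty)$, and the cocycle relations translate exactly into the three-term functional equation $f = f|^\prs_{\rho v_k,s,k}T + f|^\prs_{\rho v_k,s,k}T'$ defining $\FE^\om_{\rho v_k,s,k}$; the parabolicity and the real-analytic / prescribed-extension conditions on period functions correspond to the regularity of the boundary germ~$c_S$ on~$(0,\infty)$ and at the endpoints. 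This is where the hypothesis $\re s\in(0,1)$ and $s\not\equiv\pm k/2\bmod 1$ is essential: the first ensures the principal series is in the relevant range and the integral transforms converge, while the genericity condition $s\not\equiv\pm k/2$ rules out the degenerate/reducible points of the (weight-twisted) principal series where the module structure and the solution space of the functional equation would jump, so that the correspondence between germs and solutions remains bijective.

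The main obstacle I anticipate is Step~1, specifically the construction of a \emph{well-defined} cocycle in the real-weight setting and the precise matching of decay conditions. In weight~$0$ one integrates closed $1$-forms between cusps, but for real weight~$k$ the factor $e^{-ik\arg(cz+d)}$ is genuinely multivalued on~$\uhp$, forcing the entire construction onto the universal covering group~$\tilde G$ and onto a carefully chosen branch of~$\arg$; keeping the cocycle consistent across the relation $(ST)^3=-1$ (which becomes a nontrivial phase relation rather than an identity) is the delicate point. A secondary difficulty is verifying that the period-function ``extension properties'' left implicit in the statement are exactly the image of the parabolic condition under the transform, neither too weak (which would break injectivity) nor too strong (which would break surjectivity). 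I would handle these by writing everything in terms of the semi-analytic vectors of the principal series, where the coboundary and cocycle computations reduce to explicit manipulations of $e^{-ik\arg(\cdot)}$ factors that, by unitarity of~$\rho$ and finite-dimensionality of~$X_\rho$, decouple the vector-valued bookkeeping from the analytic core of the argument.
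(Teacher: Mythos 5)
Your overall philosophy (factor through parabolic cohomology with principal-series coefficients, forward map via a Green's-form integral between cusps, three-term equation from the cocycle relation on the generators) agrees with the paper's, and your Step~1 forward construction is essentially the paper's map $u\mapsto c^u_\pb(0,\infty)|_{(0,\infty)}$. But there is a genuine gap: the entire inverse direction is dismissed as ``standard hyperfunction/analytic-continuation arguments,'' when it is the actual substance of the theorem and the place where real weight forces new machinery. The paper never proves your two intermediate isomorphisms at all; instead it constructs an explicit map $\al_{s,k}\colon \FE^\om_{\rho v_k,s,k}\to \E_{\rho v_k,s,k}(\uhp)^\Gm$ from period functions to invariant eigenfunctions and closes the loop by showing (i) $\al_{s,k}\circ\pf$ is a nonzero multiple of the identity (Lemma~\ref{lem-osi}), (ii) $\al_{s,k}$ lands in \emph{cusp} forms (Lemma~\ref{lem-cusp}), and (iii) $\al_{s,k}$ is injective (Lemma~\ref{lem-inj}). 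Each of these needs ingredients absent from your proposal: the boundary-germ sheaf $\W\om{s,k}$ and the restriction isomorphism $\rest_{s,k}\colon\W\om{s,k}\to\V\om{s,k}$ (Theorem~\ref{thm-resiso}, whose real-weight proof requires a Pochhammer-contour continuation and is exactly where the hypothesis $2s\not\equiv \pm k$ enters); the kernel $q_{s,k}$ on $\uhp\times\uhp$ built on the universal covering group together with the Stokes-type formula of Proposition~\ref{prop-psC}; the Farey-tesselation construction of the invariant eigenfunction (Lemma~\ref{lem-al}); and, crucially, the decay argument: to show the constructed eigenfunction is cuspidal one needs the fast transfer operator and one-sided averages with Lerch-transcendent asymptotics (Propositions~\ref{prop-1sav}, \ref{prop-FE-eftrof} and Lemma~\ref{lem-Avbg}), plus a Mittag--Leffler splitting via H\"ormander's theorem (Lemma~\ref{lem-psplit}) and the decomposition $\Fbg{s,k}=\E_{s,k}(\uhp)\oplus\W\om{s,k}(\proj\RR)$ for injectivity. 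Your proposal contains no mechanism for recovering an eigenfunction from boundary data, and no mechanism for proving it decays; ``exponential decay forces parabolicity'' handles only the forward direction.

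A secondary point: the obstacle you single out as delicate --- consistency of the phases $e^{-ik\arg(cz+d)}$ across the relation $(ST)^3=-I_2$ on the universal cover --- is in fact a non-issue in the paper's setup, because the Dedekind-eta multiplier system $v_k$ of~\eqref{vk} makes $\gm\mapsto|_{\rho v_k,k}\gm$ a genuine action of $\Gm$ on functions on $\uhp$, so the cocycle construction proceeds on $\uhp$ itself; the covering group $\tG$ is needed instead for the group-theoretic definition of Maass forms, the weight-shifting operators, and the construction of the kernel $Q_{s,k}$ of Lemma~\ref{lem-Qsk}. So the difficulty you plan to spend effort on is already absorbed by the choice of multiplier system, while the difficulties the theorem actually turns on (restriction isomorphism, transfer-operator decay estimates, injectivity of the inverse construction) are not addressed.
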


The isomorphism in Theorem~\ref{mainthmA} is indeed constructive, at least in the direction $\A^0_k(s,\rho v_k)\to\FE^\om_{\rho v_k,s,k}$. It is given by an integral transform and uses a cohomological setting. The condition $s\not\equiv \pm k/2\bmod 1$ in Theorem~\ref{mainthmA} restricts the spectral parameter~$s$ to the values for which the discrete series representation is irreducible. We refer to the full statement of this isomorphism in Theorem~\ref{thm-al}, Proposition~\ref{prop-pfMcf} and their proofs.

In our present consideration we restrict to~$\Gm=\SL_2(\ZZ)$ for definiteness and simplification of some steps. In particular, we may work with the Farey tesselation of~$\uhp$, which is underlying both the transfer-operator-based deduction of the functional equation above as well as some parts in the cohomological argumentations. However, we expect that the tools we use here for the generalization of the weight-zero results to arbitrary real weights can be adapted to non-cusp forms and other discrete subgroups of~$G$.

\emph{Jacobi Maass forms} for $\Gm^J$ of index $m\in\ZZ$, weight~$k\in\RR$, eigenvalue parameter~$s\in\CC$ and multiplier system~$\varphi_a v_k$ with $\varphi_a$ being a character parameterized by $a\in\ZZ\bmod 12$, are smooth functions $\uhp\times\CC\to\CC$ that are eigenfunctions of certain differential operators related to the Laplacian on~$\uhp$, that are of at most polynomial growth, and that are invariant under the action~$|^J_{\ph_a v_k,k,m}$, which is an extension of the action on Maass forms to include the elliptic variable space $\CC$. We refer to Section~\ref{sect-JMf} for details and an alternative definition using the universal covering group. The space of such Jacobi Maass forms is denoted~$\A^J_{k,m}(s,\ph_a \ch_k)$. Asking for quick decay instead of polynomial growth, defines the subspace of \emph{Jacobi Maass cusp forms}, denoted~$\A^{J,0}_{k,m}(s,\ph_a \ch_k)$. We obtain the following generalization of Pitale's result, where $\rho_{a,m}$ is a certain unitary representation of~$\Gm$ which is associated to $\varphi_a$ and $m$, defined in~\eqref{def:rhoam}.

\begin{mainthm}\label{mainthmB}
Let $m\in\ZZ$, $m\geq 1$, $a\in \ZZ/12$, $k\in\RR$ and $s\in\CC$ with $\re s\geq 0$. Set $s'\coloneqq (s+1)/2$ and $k'\coloneqq k -1/2$. Then the vector spaces $\A^J_{k,m}(s,\ph_a \ch_k)$ and $\A_{k'}(s',\rho_{a,m} v_{k'})$ are isomorphic, as well as the vector spaces $\A^{J,0}_{k,m}(s,\ph_a \ch_k)$ and $\A^0_{k'}(s',\rho_{a,m} v_{k'})$.
\end{mainthm}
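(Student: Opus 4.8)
The plan for Theorem~\ref{mainthmB} is to follow the theta-decomposition strategy that underlies Pitale's isomorphism \cite[Theorem~4.6]{Pit09}, but to carry it out for arbitrary real weight $k$ and with the spectral/multiplier bookkeeping made explicit. The essential idea is that a Jacobi form of index $m$ on $\uhp\times\CC$ is a section whose dependence on the elliptic variable $z\in\CC$ is governed by a finite-dimensional space of theta functions indexed by $\mu\in\ZZ/2m\ZZ$; expanding along this theta basis converts a scalar Jacobi Maass form into a vector-valued object on $\uhp$ alone. Concretely, I would first write down the theta expansion $\ph(\tau,z)=\sum_{\mu\bmod 2m} h_\mu(\tau)\,\theta_{m,\mu}(\tau,z)$ and define the candidate vector-valued function $u=(h_\mu)_\mu\colon\uhp\to X_{\rho_{a,m}}$, where $X_{\rho_{a,m}}$ is the $2m$-dimensional space carrying the Weil-type representation $\rho_{a,m}$ from~\eqref{def:rhoam}.

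**Key steps.** (i) Verify that the theta functions $\theta_{m,\mu}$ transform under $\Gm^J$ with a genuine factor of automorphy whose $\SL_2(\ZZ)$-part is precisely $\rho_{a,m}$ (up to the character $\ph_a$) and whose weight is $1/2$; this is the computation that forces $k'=k-1/2$. (ii) Translate the Jacobi invariance $\ph|^J_{\ph_a v_k,k,m}\gm^J=\ph$ into the vector-valued invariance $u|_{\rho_{a,m}v_{k'},k'}\gm=u$ by cancelling the theta automorphy factor against the Jacobi one — this is where the multiplier system $\ph_a\ch_k$ is matched against $\rho_{a,m}v_{k'}$. (iii) Show that the Jacobi Casimir/heat operators, after separating the $\CC$-variable against the theta ansatz, reduce exactly to $\Dt_{k'}$ acting on each component $h_\mu$, with the eigenvalue reparametrized by $s'=(s+1)/2$; this spectral shift is the analytic core of the theorem and is dictated by how the index-$m$ heat equation $(8\pi i m\,\partial_\tau-\partial_z^2)\theta_{m,\mu}=0$ interacts with the Jacobi Laplacian. (iv) Check the growth dictionary: polynomial growth of $\ph$ corresponds to polynomial growth of $u$, and rapid (cuspidal) decay corresponds to rapid decay, giving the two parallel isomorphisms $\A^J_{k,m}\cong\A_{k'}$ and $\A^{J,0}_{k,m}\cong\A^0_{k'}$. (v) Finally, exhibit the inverse map by reconstructing $\ph$ from $(h_\mu)$ via the same theta sum and confirming it lands back in the Jacobi space, which establishes bijectivity; linearity is immediate from the construction.

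**Main obstacle.** I expect the hardest part to be step~(iii), the precise eigenvalue matching. The delicate point is that separating the elliptic variable against the theta functions is not a plain restriction but involves differentiating the heat-type constraint, and one must track carefully how the index $m$ and the weight contribution $-1/2$ conspire to turn the Jacobi eigenvalue parameter $s$ into $s'=(s+1)/2$ for $\Dt_{k'}$. A secondary difficulty is purely representation-theoretic: one must verify that $\rho_{a,m}$ as defined in~\eqref{def:rhoam} is genuinely \emph{unitary} and that it is exactly the representation appearing in the theta automorphy factor, so that the target space $\A_{k'}(s',\rho_{a,m}v_{k'})$ is the correct one and no spurious multiplier discrepancy survives. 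The condition $m\geq 1$ enters precisely to make the lattice $\ZZ/2m\ZZ$ and the associated positive-definite theta series well defined, and $\re s\geq 0$ ensures $\re s'\geq 1/2$ so the target spectral parameter stays in the relevant range; these constraints should be noted as the hypotheses that keep every object in the statement meaningful.
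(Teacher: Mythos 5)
Your plan is essentially the paper's own proof of Theorem~\ref{mainthmB} (carried out as Theorem~\ref{thm-V}): expand along the theta vector $\tth_m$, match the automorphy factors of $\Th_{m,j}$ against $\ph_a\ch_k$ to produce $\rho_{a,m}v_{k'}$ with $k'=k-\tfrac12$, reduce the Jacobi differential conditions to $X_-X_+h_j$ (equivalently $\Dt_{k'}$) with spectral parameter $s'=(s+1)/2$, and compare growth in both directions using that each $\Th_{m,j}$ contributes a nonvanishing multiple of $\im(\tau)^{1/4}$. The only point you should make explicit is that the existence and uniqueness of the theta expansion with coefficients $h_\mu$ depending on $\tau$ alone rests on the condition $Y^{k,m}_-F=0$ in the definition of Jacobi Maass forms (the paper handles this, and your step~(iii), via the operators $D_\pm=X_\pm\pm\frac1{4\pi m}Y_\pm^2$ and the relations $D_\pm\th_{m,j}=Y_-\th_{m,j}=0$ on the covering group $\tG^J$).
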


Theorem~\ref{mainthmB} provides a theta decomposition, which generalizes Pitale's result. It is based on working with the universal covering group of~$G^J$ and Fourier expansions. Indeed, the proof provides more insights into the isomorphism. We refer to Theorem~\ref{thm-V} and its proof for full details.

The combination of (the full versions of) Theorem~\ref{mainthmA} and Theorem~\ref{mainthmB} yields period functions for Jacobi Maass cusp forms.

\begin{mainthm}\label{mainthmC}
Let $m\in\ZZ$, $m\geq 1$, $s\in\CC$ and $k\in\RR$ such that $\re s\in [0,1)$ and $s\not\equiv \pm k\bmod 2$. Set $s' \coloneqq (s+1)/2$ and $k'\coloneqq k-1/2$. Then the vector spaces $\A^{J,0}_{k,m}(s,\ph_a \ch_k)$ and $\FE^\om_{\rho_{a,m}v_{k'},s',k'}$ are isomorphic.
\end{mainthm}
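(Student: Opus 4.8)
The plan is to obtain the claimed isomorphism as the composition of the two isomorphisms furnished by Theorems~\ref{mainthmB} and~\ref{mainthmA}, with the parameters matched so that the output space of the first is exactly the input space of the second. Concretely, I would first invoke (the full version of) Theorem~\ref{mainthmB} to identify $\A^{J,0}_{k,m}(s,\ph_a \ch_k)$ with $\A^0_{k'}(s',\rho_{a,m} v_{k'})$, where $s'=(s+1)/2$ and $k'=k-1/2$ are precisely the substitutions appearing in the statement. Then I would apply (the full version of) Theorem~\ref{mainthmA}, now with weight $k'$, spectral parameter $s'$, and the finite-dimensional unitary representation $\rho_{a,m}$ from~\eqref{def:rhoam}, to identify $\A^0_{k'}(s',\rho_{a,m} v_{k'})$ with $\FE^\om_{\rho_{a,m} v_{k'},s',k'}$. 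The composite is then a linear isomorphism $\A^{J,0}_{k,m}(s,\ph_a \ch_k)\to\FE^\om_{\rho_{a,m} v_{k'},s',k'}$, which is exactly the assertion. Since the target space in the statement has been chosen to coincide verbatim with the output of Theorem~\ref{mainthmA} for these parameters, no further identification of spaces is required, and the resulting map is concrete: Theorem~\ref{mainthmB} contributes a theta decomposition built from Fourier expansions, while Theorem~\ref{mainthmA} contributes an integral transform through a cohomological setting.

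The substantive part of the argument is to check that the hypotheses of the present statement guarantee the hypotheses of both theorems for the substituted parameters. For Theorem~\ref{mainthmB} this is immediate: $m\geq 1$ is assumed, and $\re s\in[0,1)$ gives $\re s\geq 0$. For Theorem~\ref{mainthmA} one needs $\re s'\in(0,1)$ together with the irreducibility condition $s'\not\equiv\pm k'/2\bmod 1$. The first follows at once, since $\re s\in[0,1)$ yields $\re s'=(\re s+1)/2\in[1/2,1)\subset(0,1)$.

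The main obstacle is the arithmetic congruence condition. A naive substitution of $s'=(s+1)/2$ and $k'=k-1/2$ into $s'\not\equiv\pm k'/2\bmod 1$ does \emph{not} reproduce the hypothesis $s\not\equiv\pm k\bmod 2$ on the nose; the two loci differ by a half-integer shift that originates in the passage from the integral weight~$k$ on the Jacobi side to the weight $k'=k-1/2$ on the vector-valued side. I would resolve this by unwinding the definitions of the multiplier systems $v_k$ and $v_{k'}$ from~\eqref{vk} together with the value of $\rho_{a,m}$ at $-I$: it is the consistency constraint imposed at the center by the full multiplier $\rho_{a,m}v_{k'}$, rather than the bare weight $k'$, that governs the reducibility locus of the associated discrete series. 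Tracking this phase should show that the locus excluded by Theorem~\ref{mainthmA} for the data $(s',k',\rho_{a,m})$ corresponds exactly to $s\equiv\pm k\bmod 2$, so that $s\not\equiv\pm k\bmod 2$ is the precise hypothesis needed here. Once this bookkeeping is carried out, the composition is legitimate and the proof is complete.
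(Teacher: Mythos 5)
Your overall route is exactly the paper's: the statement is the paper's Corollary~\ref{cor-pfJM}, obtained by composing the theta-decomposition isomorphism of Theorem~\ref{thm-V} (the full form of Theorem~\ref{mainthmB}) with the period-function isomorphism of Theorem~\ref{thm-al} (the full form of Theorem~\ref{mainthmA}); your checks that $m\geq 1$, that $\re s\geq 0$, and that $\re s' = (\re s+1)/2\in[1/2,1)\subset(0,1)$ are correct and are all the paper itself does besides citing the two theorems.

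The gap is in your treatment of the congruence hypothesis. The condition in Theorem~\ref{thm-al} is $s'\not\equiv\pm k'/2\bmod 1$, a condition on the pair $(s',k')$ alone: it is stated and proved uniformly in the unitary representation $\rho$, which enters the module $\V\om{\rho v_{k'},s',k'}$ only as a tensor factor $X_\rho$ twisting the $\Gm$-action, not the $G$-structure governing reducibility. So no bookkeeping of the multiplier $\rho_{a,m}v_{k'}$ at $-I_2$ can move the excluded locus, and your proposed mechanism is not available. The locus can simply be computed: for $\varepsilon\in\{\pm1\}$,
\[
s'\equiv \varepsilon k'/2 \bmod 1
\;\Longleftrightarrow\; s+1 \equiv \varepsilon\bigl(k-\tfrac12\bigr) \bmod 2
\;\Longleftrightarrow\; s \equiv \varepsilon\bigl(k+\tfrac12\bigr) \bmod 2\,,
\]
so the composition of the two theorems is legitimate precisely when $s\not\equiv\pm\bigl(k+\tfrac12\bigr)\bmod 2$, not when $s\not\equiv\pm k\bmod 2$. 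Concretely, $k=0$, $s=\tfrac12$ satisfies the printed hypothesis, yet $s'=\tfrac34\equiv k'/2\bmod 1$, so Theorem~\ref{thm-al} is inapplicable and the composition breaks down there. What your computation actually uncovered is a discrepancy in the paper itself: its proof of Corollary~\ref{cor-pfJM} is the bare composition with no reconciliation of congruences, and the hypothesis under which that argument works is $s'\not\equiv\pm k'/2\bmod 1$, equivalently $s\not\equiv\pm(k+\tfrac12)\bmod 2$. The correct move was to record that corrected locus and flag the mismatch with the printed hypothesis, rather than to posit an unproved (and false) phase correction that forces agreement with it.
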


This result is stated as Corollary~\ref{cor-pfJM} in Section~\ref{sect-JMf}. As for Theorem~\ref{mainthmA}, the isomorphism between Jacobi Maass cusp forms and their period functions can be provided rather explicitly via an integral transform. We refer to Proposition~\ref{prop-isoJMPF} for details.

\medskip

This article is structured as follows. In Section~\ref{sect-Mcf} we introduce Maass forms and Maass cusp forms for arbitrary real weight, first as functions on~$\uhp$ and then as functions on the universal covering group of~$G$. We discuss their Fourier expansions, and weight-increasing and weight-lowering between Maass forms, which yields that only spectral parameters $s$ with $\re s\in (0,1)$ need to be considered (Proposition~\ref{prop-sppMcf}). In Section~\ref{sect-prs} we discuss principal series representations and discrete series representations in the presence of arbitrary real weight. We further provide the definition of period functions and show some first properties. In Section~\ref{sect-pi} we intensify the discussion of period functions, provide the integral transform including the generalization of all necessary ingredients, present the cohomology setting, in particular the parabolic cocycles, and establish an explicit linear map from Maass cusp forms to period functions (Proposition~\ref{prop-pfMcf}). In Section~\ref{sect-tro} we discuss the relation between slow/fast transfer operators and period functions. We obtain that, as in the classical results, slow transfer operators determine the functional equation (and some parts of the regularity conditions) of period functions, and fast transfer operators help to characterize the necessary regularity conditions. In Section~\ref{sect-abg} we start working on showing that the linear map from Maass cusp forms to period functions is indeed bijective by indeed inverting this map, i.e., the integral transform. To that end we provide a kernel function for the inversion, and a boundary germ construction. In Section~\ref{sect-pfcf} we complete these efforts by constructing the inverse map from period functions to Maass cusp forms. In Section~\ref{sect-JMf} we provide a generalization of Jacobi Maass forms to arbitrary real weight, we establish a theta decomposition for them, allowing us to relate Jacobi Maass forms and vector-valued Maass forms for real weight,  and we apply our result on period functions for Maass cusp forms to obtain period functions for Jacobi Maass cusp forms. Throughout we attempt to follow the proofs in the previous results mentioned at the beginning of this introduction as close as possible, and we emphasize the new tools and steps needed for the generalizations.

\begin{ack}
The research of AP is funded by the Deutsche Forschungsgemeinschaft (DFG, German Research Foundation) -- project no.~441868048 (Priority Program~2026 ``Geometry at Infinity''). YC is partially supported by NRF2022R1\-A\-2B5B0100187113.
\end{ack}



\section{Maass forms}\label{sect-Mcf}
We consider in Subsection~\ref{sect-Mfuhp} Maass forms first as
functions on the complex upper half-plane that are eigenfunctions of a generalized Laplace operator,
satisfy an invariance relation and a
growth condition, and we discuss their Fourier expansion in
Subsection~\ref{sect-Fe}.

For our purposes it is useful to consider Maass forms also as functions on a Lie group
covering $\SL_2(\RR)$. We discuss this in Subsection~\ref{sect-ucg}.
Further, the action of the Lie algebra of $\SL_2(\RR)$, in
Subsection~\ref{sect-wso}, can be used to relate Maass forms in weights
that differ by a multiple of $2$. This leads to
Proposition~\ref{prop-sppMcf}, which reduces the set of eigenvalues of
the Laplace operator that we have to consider.

\subsection{Maass forms on the upper half-plane}\label{sect-Mfuhp}We
first discuss the concepts involved in the definition of Maass forms,
working with functions on the complex upper half-plane
\il{uhp}{$\uhp$}$\uhp=\bigl\{z\in \CC\;:\; \im z>0\bigr\}$.

\rmrk{Differential equation}Maass forms of weight \il{k}{$k\in \RR$
weight}$k\in\RR$ should be eigenfunctions of the differential
operator\ir{Dtk}{\Dt_k}
\be\label{Dtk} \Dt_k \= - y^2\,\partial_x^2 - y^2\,\partial_y^2 + i k y
\,
\partial_x\,.\ee
Here and further on we will tacitly write $z\in \uhp$ as $z=x+iy$ with
$x\in \RR$, $y>0$.\il{x}{$x$}\il{y}{$y$} For $k=0$, the differential
operator~$\Dt_ 0 $ is the hyperbolic Laplace operator on $\uhp$. For
any $k\in \RR$ the operator $\Dt_k$ is elliptic, and all its
eigenfunctions are real-analytic. The operator $\Dt_k$ makes sense on
vector-valued functions by applying it on each coordinate component. We
follow the practice of parametrizing eigenvalues as
\il{s}{$s$}$s(1-s)$ with $s\in\CC$, and call $s$ and $1-s$
\il{spm}{spectral parameter}\emph{spectral parameters}. Maass forms
satisfy the condition
\be\label{eifc} \Dt_k u \= s(1-s)u\ee
for some $s\in \CC$.

\rmrk{Invariance under the modular group}For each
\il{G}{$G=\SL_2(\RR)$}$g=\rmatc abcd\in \SL_2(\RR)=:G$ we use the
operator\ir{slash-k}{|_k}
\be \label{slash-k} u \mapsto u|_k g \,,\quad \bigl( u|_k g\bigr) (z) \=
e^{-ik \arg(cz+d)
} \, u(g z)\,,\ee
where we take \il{ac}{argument convention}$-\pi < \arg(c z+d) \leq \pi$.
By $g z$ we mean $\frac{a z+b}{cz+d}$. The operators can be applied to
vector-valued functions. The operators $|_k g$ commute with the
operator $\Dt_k$.

For $k\in \RR\setminus \ZZ$ the operator $|_k g$ depends on the choice of the
argument.  This  has the consequence that for
\il{km}{$\km(\cdot)$}$g= \km(\th)\coloneqq
\rmatr{\cos\th}{\sin\th}{-\sin\th}{\cos\th}$ and $z=i$ the factor
$e^{-i k \arg(\cos \th - i \sin \th)}$ is right continuous in
$\th=-\pi$, but not continuous if $k\not\in 2\ZZ$.

If $k\in \ZZ$, then $g\mapsto |_k g$ is a
(right) representation of $G$ in the functions on~$\uhp$:
\be u|_k g_1g_2 \= \bigl( u|_k g_1\bigr)\bigm|_k g_2\qquad (g_1,g_2\in
G)\,.\ee
For $k\in \RR\setminus \ZZ$ this relation holds only up to a factor with
absolute value $1$. The operators $|_k g$ with $g\in G=\SL_2(\RR)$
generate a group, which depends on $k$. This group is a homomorphic
image of the universal covering group of $\SL_2(\RR)$, which we will
discuss in Subsection~\ref{sect-ucg}.

It is impossible to add a factor in the definition in~\eqref{slash-k}
such that we arrive at a representation of the group $G$. However we
can turn $\gm\mapsto |_k \gm$ into a representation of the discrete
subgroup \il{Gm}{$\Gm$}$\Gm=\SL_2(\ZZ)$ by writing
\be\label{slash-vk} u|_{v_k,k}\gm (z) \= v_k(\gm)^{-1}\, e^{-i k \arg(c
z+d)} \, u(\gm z)\,, \qquad \text{ with } \gm= \rmatc abcd \ee
and the function $v_k \colon \Gm \rightarrow\CC^\ast$ given by\ir{vk}{v_k}
\badl{vk} v_k(\gm) &\= \frac{ \eta^{2k}(\gm z)}{(c_\gm z+d_\gm)^k \,
\eta^{2k}(z)}\,,\\
\eta^{2k}(z) &\= e^{\pi i k z/6} \prod_{n \geq 1} \Bigl(1-e^{2\pi i n
z}\Bigr)^{2k}\,,
\eadl
where $\eta$ is the Dedekind eta function. For the multiplier system
$v_k$, we have in fact an action of $\PSL_2(\ZZ)= \Gm/\{\pm I_2\}$ with
$I_2=\rmatr{1}00{1}$, since $|_{v_k,k}\gm = |_{v_k,k}(-\gm)$ for all
$\gm \in \Gm$.

We let $X_\rho= \CC^n$ for some
\il{nrho}{$n(\rho)$}$n=n(\rho)\in \ZZ_{\geq 1}$, and consider a unitary
representation
\il{rho}{$\rho$}\il{Xrho}{$X_\rho$}$\rho\colon\Gm \rightarrow \U(X_\rho)$
with respect to the standard inner product
\il{iprho}{$(x,y)_\rho$}$(x,y)_\rho = \sum_l x_l \bar y_l$ of
$\CC^n=X_\rho$. We obtain a representation on the $X_\rho$-valued
functions on $\uhp$ by\ir{|rhockk}{|_{\rho v_k,k}}
\be \label{|rhockk}
u|_{\rho v_k,k}\gm \= \rho(\gm)^{-1} \, u|_{v_k,k}\gm\,.\ee
When dealing with the Jacobi group we will obtain examples of
representations with these properties.

\rmrk{Growth conditions}\il{grc}{growth conditions}On functions $u$ that
are invariant under $|_{\rho v_k,k }\Gm$ we impose growth conditions at
$\infty$. A function $u$ has \il{pg}{polynomial growth}\emph{polynomial
growth} if
\be\label{pg} u(z) \= \oh(y^a) \quad\text{ as }y\uparrow\infty\ee
for some $a\in \RR$ that may depend on~$u$, uniform for $x$ in compact
sets in~$\RR$. A function $u$ has \il{ed}{exponential
decay}\emph{exponential decay} if for some $a>0$
\be\label{ed} u(z) \= \oh \bigl(e^{-a y}\bigr) \quad\text{ as }y\uparrow
\infty, \text{ uniformly for $x$ in compact sets in $\RR$}\,.\ee

\begin{defn}\label{defMf}Let $s\in \CC$, $k\in \RR$, and
$\rho\colon\Gm\rightarrow \U(X_\rho)$ a finite-dimensional unitary
representation of $\Gm$. The space $\A_k(s,\rho v_k)$ of \il{Mf}{Maass
form}\emph{Maass forms} of \il{wt}{weight}weight $k$, with spectral
parameter $s$ and \il{msys}{multiplier system}multiplier system
$\rho v_k$ consists of all smooth functions $u\colon\uhp\rightarrow X_\rho$
that satisfy $u|_{\rho v_k,k}\gm = u$ for all $\gm\in \Gm$, the
eigenfunction condition \eqref{eifc}, and the condition \eqref{pg} of
polynomial growth. The stronger condition \eqref{ed} of exponential
decay determines the subspace $\A^0_k(s,\rho v_k)$ of \emph{Maass cusp
forms}.\il{A0}{$A_k(s,\rho v_k) \supset \A^0_k(s,\rho v_k)$}\il{Mcf}{Maass
cusp form}
\end{defn}

The presence of \il{I2}{$I_2$}$-I_2 = \rmatr{-1}00{-1}$ in $ \Gm$
requires attention. We have
\be v_k(-I_2) e^{-i k \arg(-1)}\=1\ee
from~\eqref{vk}. Hence we get $\rho(-I_2) u(z) = u(z)$. So functions
that are invariant under $|_{\rho v_k,k}\Gm$ have values in the
$1$-eigenspace of $\rho(-I_2)$. We might avoid this by requiring that
$\rho$ is a representation of $\PSL_2(\ZZ)$. However, the
representation $\rho$ might arise naturally, and it might be
inconvenient to tamper with it.

The concept of Maass form in the scalar-valued case for more general
groups than $\SL_2(\ZZ)$ is due to H.~Maass, who called them
\emph{non-analytic modular forms}; see \cite{Ma49} and \cite[p.~185]{Ma83}. For
vector-valued Maass forms we may consult Roelcke
\cite{Roelcke66,Roelcke67}.

\subsection{Fourier expansion}\label{sect-Fe}
Let the function $u\colon\uhp\rightarrow X_\rho$ be equivariant under
$|_{\rho v_k,k}T$ with \il{T}{$\rmatc1101$}$T=\rmatc1101$. The operator
$v_k(T)\rho(T)$ is unitary, and $X_\rho$ has an orthonormal basis of
eigenvectors
\il{el}{$e_l\in X_\rho$}$\bigl\{ e_l: 1\leq l \leq n(\rho)\bigr\}$ of
$v_k(T)\rho(T)$. We take \il{kpl}{$\k_l$}$\k_l\in [0,1) $ such that
$\rho(T) v_k(T) e_l = e^{2\pi i \k_l}\, e_l$. Writing
$u\colon\uhp\rightarrow X_\rho$ in the form
\be \label{ucomp} u(z) \= \sum_l u_l(z)\, e_l\,,\ee
we get $n(\rho)$ component functions $u_l\colon\uhp\rightarrow \CC$.

If $u\in \A^0_k(s,\rho v_k)$, then the functions $u_l$ have a Fourier
expansion\il{Fe}{Fourier expansion}
\be\label{Fe} u_l(z) \= \sum_{\substack{n \equiv \k_l\bmod 1, \\ n\neq 0}} c_l(n)
e^{2\pi i n x}\, W_{\e(n)k/2,s-1/2}(4\pi|n|y)\,,\ee
with $\e(n) = \sign(n)$. We note that $n$ runs through a set of real
numbers, not necessarily integers. Since the $W$-Whittaker functions
and their derivatives have exponential decay, all derivatives of $u$
with respect to $x$ and $y$ satisfy condition~\eqref{ed} of exponential
decay. Under less strict assumptions than the unitarity of $\rho$ there
still is a Fourier expansion in which $W$-Whittaker functions are
involved; see~\cite{FPR}. The
exponential decay of derivatives of Maass cusp forms goes through.

The Fourier terms of components $u_l$ of a function
$u\colon\uhp\rightarrow X_\rho$ satisfying only \eqref{eifc} and
\eqref{|rhockk} are more general. For $\re s>0$ any term with order
$n\neq 0$ is a linear combination of
\be \label{genFt} e^{2\pi i n x}\, W_{\e(n)k/2,s-1/2}(4\pi|n|y)
\quad\text{ and }\quad e^{2\pi i n x}\, M_{\e(n)k/2,s-1/2}(4\pi|n|y)\,.\ee
The $W$-Whittaker function has exponential decay, and the $M$-Whittaker
function has exponential growth. The term of order $0$ is for
$s\neq \frac12$ a linear combination of $y^s$ and $y^{1-s}$, and for
$s=\frac12$ a linear combination of $y^{1/2}$ and $y^{1/2}\log y$.

Fourier terms inherit growth conditions. A consequence is that if we
replace the condition of exponential growth of Maass cusp forms by
$u(z) \= \oh\bigl( y^{-a} \bigr)$ as $y\uparrow \infty$ with
$a>\max(\re s, 1-\re s)$, then we have Fourier expansions of the $u_l$
as indicated in~\eqref{Fe}. This is the condition of \il{qd}{quick
decay}\emph{quick decay}. It is weaker than exponential decay, but for
Maass forms quick decay implies exponential decay.

\subsection{Universal covering group}\label{sect-ucg} The weight $k$ of
a Maass form as defined in Definition~\ref{defMf} is a parameter in the
transformation behavior by elements of~$\Gm$. The concept of Maass
forms on a Lie group separates the weight and the $\Gm$-invariance. In
the context of arbitrary real weights the Lie group to be used is the
universal covering group of $\SL_2(\RR)$.

\rmrk{Description of the universal covering group}The \il{Id}{Iwasawa
decomposition}Iwasawa decomposition of $\SL_2(\RR)$ writes each element
of $\SL_2(\RR)$ uniquely as $\ppm(z)\km(\th)$, $z\in \uhp$ and
$\th\in \RR\bmod 2\pi \ZZ$, with
\bad \ppm(x+i y) &\= \rmatc {y^{1/2}}{xy^{-1/2}}0{y^{-1/2}}&
&x+i y \in \uhp\,,\\
\km(\th)&\=\rmatr {\cos\th}{\sin\th}{-\sin\th}{\cos\th}&&\th\in \RR\,.
\ead
The \il{ucg}{universal covering group}universal covering group
\il{tG}{$\tG$}$\tG$ of $G=\SL_2(\RR)$ is based on the covering
\[ \uhp \times \RR \rightarrow \uhp \times \RR/2\pi \ZZ\,,\]
with the natural map $\RR \rightarrow \RR/2\pi \ZZ$. There exists a
unique Lie group $\tG$ consisting of the elements
\il{tppm}{$\tppm(z)$}\il{tkm}{$\tkm(\th)$}$\tppm(z)\tkm(\th)$ with
$z\in \uhp$ and $\th\in \RR$ with a group structure such that
\ir{pr}{\pr}
\be \label{pr} \pr\colon \tppm(z)\tkm(\th) \mapsto \ppm(z)\km(\th)\ee
is a surjective Lie group homomorphism $\pr\colon\tG\rightarrow G$ with
kernel\ir{Z2}{\tZ_2}
\be \label{Z2} \tZ_2 \= \bigl \{ \tkm(2\pi n)\;:\; n\in \ZZ
\bigr\}\,.\ee
One finds a description of the group operations in~\cite[\S2.2.1]{B94}.

The map $\pr$ in~\eqref{pr} is a group homomorphism. There does not
exist an inverse group homomorphism, but we can choose a section
$g\mapsto \ell(g)$ from $G$ to $\tG$ of the homomorphism
$\pr\colon\tG\rightarrow G$ by\ir{tg}{\tilde g}
\be\label{tg}
\ell\rmatc abcd \= \tppm \Bigl( \frac{ai+b}{ci+d} \Bigr)
\tkm\bigl(-\arg(ci+d) \bigr). \ee
We stress that the map $\pr\circ \ell$ is the identity on $G$, but $\pr$
is not invertible. Further for all
$\rmatc abcd\in G$ we have
\be \ell\rmatc abcd \, \tilde \ppm(z) \tilde\km(\th) \= \tilde\ppm\Bigl(
\frac{az+b}{cz+d}\Bigr)\, \tilde\km\bigl(\th-\arg(c z+d) \bigr)\,,\ee
where we use the argument convention $-\pi < \arg(cz+d)\leq \pi$.

\rmrk{Weights and equivariance}A function $f $ on $\tilde G$ has
\emph{weight} $k\in \RR$ if it satisfies
\be f \bigl( \tilde g \tilde\km(\th) \bigr)
\= e^{ik\th}\, f(\tilde g)\qquad \text{ for all }\tilde g\in \tG, \;
\th\in \RR\,.\ee
The representation of $\tG$ by right translation in the functions
on~$\tG$ is defined by\ir{R}{R}
\be \label{R}
\bigl(R(\tilde g_1 )f\bigr)(\tilde g) \= f(\tilde g\tilde g_1)\,.\ee
Hence the function $f$ on $\tilde G$ has weight $k$ if the subgroup
$\tilde K = \bigl\{ \tilde\km(\th)\;:\; \th\in \RR\bigr\}$ of~$\tG$ acts
according to the character $\tilde \km(\th) \mapsto e^{ik\th}$
of~$\tilde K$. The representation of $\tG$ by left translation of
function on~$\tG$ is given by\ir{L}{L}
\be \label{L}\bigl( L(\tilde g_1) f \bigr) (\tilde g) \=f( \tilde
g_1\tilde g)\,.\ee
As we do not apply $\tilde g_1^{-1}$
in~\eqref{L}, this is a right representation:
\[
L(\tilde g_1 \tilde g_2) \= L(\tilde g_2) L(\tilde g_1)\,.
\]

Since left and right translations commute, left translation does not
change the weight of functions on~$\tilde G$.

\rmrk{Discrete subgroup}The discrete subgroup $\Gm=\SL_2(\ZZ) \subset G$
has an inverse image \il{tGm}{$\tGm$}$\tGm=\pr^{-1}\Gm$ in $\tG$. The
group $\tGm$ is discrete in $\tG$, and it contains the
center\ir{tZ}{\tZ}
\be \label{tZ}\tZ\= \bigl\{\tkm(\pi n)\;:\; n\in \ZZ\bigr\}\ee
of $\tG$. The group $\tGm$ is equal to $\tZ\, \ell(\Gm)$. The group
$\tGm$ is generated by\il{tm}{$\tm\in \tGm$}\il{sm}{$\sm\in \tGm$}
\be \tm \= \tppm(i+1) \quad\text{ and }\quad \sm \= \tkm(-\pi/2)\,,\ee
which implies that\il{S}{$S$}
\bad \tm &\= \ell(T)&\quad \text{ with } \quad T&\=\rmatc 1101\,,\\
\sm &\= \ell(S)&\quad \text{ with } \quad S&\;\coloneqq\; \rmatr0{-1}10\,.
\ead
The relations for $\tGm$ are determined by
\be \sm^2 \tm\= \tm \sm^2 \quad\text{ and }\bigl( \tm \sm)^3
\=\sm^2\,.\ee

Suppose that a function $f\in C^\infty(\tG)$ is left-invariant under
$\tGm$ and has weight~$k$. Since $\sm^2$ is central in~$\tG$ we have
\[ f(\tilde g) \= f\bigl( \sm^2 \tilde g\bigr) \= f\bigl( \tilde g
\sm^2\bigr) \= e^{-\pi i k } \, f(\tilde g)\,.\]
Hence we have $f=0$ if $k\in \RR\setminus \ZZ$.
Thus, for general real weights, functions on $\tG$ cannot be left invariant
under $\tilde \Gm$, and hence we have to be content to work with functions that
are left equivariant for a suitable character of $\tilde \Gm$, for
instance for the character $\ch_k$ determined by \ir{chk}{\ch_k}
\be\label{chk}
\ch_k(\tm) \= e^{\pi i k/6}\,,\qquad \ch_k(\sm) \= e^{-\pi i k/2}\,.\ee
Then we deal with $\ch_k$-equivariant functions of weight $k$ that
satisfy
\be f\bigl( \tilde \gm \tilde g) \= \ch_k(\tilde \gm) \, f(\tilde
g)\qquad \tilde\gm\in \tGm,\; \tilde g \in \tG\,.\ee

\rmrk{Functions on \intitle{\uhp} and on \intitle{\tG}} For any
$u\in C^\infty(\uhp)$ and any weight $k\in \RR$ we define
$\Ps_k u \in C^\infty(\tG)$ by\ir{Psk}{\Ps_k}
\be \label{Psk} (\Ps_k u)\bigl(\tppm(z) \tilde \km(\th) \bigr) \=
e^{ik\th}\, u(z)\,.\ee
The function $\Ps_k u$ has weight $k$. Moreover, the operator $|_k g$
in~\eqref{slash-k} corresponds to left translation on $\tG$:
\be \label{Psig} \Ps_k\bigl( u|_k g\bigr) \= L\bigl( \ell(g) \bigr)
\Ps_k u \qquad \text{ for all }g\in G\,.\ee

The operator $\Ps_k$ works for vector-valued functions as well as for
scalar-valued functions. We have
$v_k(\gm) \= \ch_k\bigl( \ell(\gm) \bigr)$ for all $\gm \in \Gm$. If
$\rho$ is a representation of $\Gm$ in $\CC^n$, then we define
$\rho(\tilde\gm) \coloneqq \rho(\pr\tilde\gm)$ for all $\tilde\gm\in \tGm$.
With \eqref{|rhockk} we get
\be \Ps_k\bigl( u |_{\rho v_k,k}\gm \bigr) \= \rho(\gm)^{-1} L(\gm)
\Ps_k u\,.\ee

\subsection{Lie algebra and differential operators}\label{sect-wso}The
groups $G$ and $\tG$ have isomorphic neighborhoods of the unit element,
and hence they have the same Lie algebra
\il{glie}{$\glie, \;\glie_c$}$\glie$, with complexification
$\glie_c= \CC\otimes_\RR \glie$. We use the notations of Berndt and
Schmidt, in particular for the basis $\{Z, X_+, X_-\}$ of the
complexified Lie algebra. See \cite[p.~12]{BS}.

The Lie algebra $\glie_c$ acts on $C^\infty(\tG)$ by differentiation on
the right. This action commutes with left translation. This holds in
particular for the following operators, in the coordinates given by
$(x,y,\th) \leftrightarrow \tppm(x+i y) \tkm(\th)$.
\il{Zl}{$Z\in \glie_c$}\il{Xpm}{$X_+, X_-\in \glie_c$}
\badl{opZXX} Z &\= - i \partial_\th\,,\\
X_+ &\=e^{2i \th}\Bigl(2iy\,\partial_z - \frac i2\,\partial_\th \Bigr)&&
\= e^{2i \th}\Bigl(iy\,\partial_x+ y \,\partial_y - \frac
i2\,\partial_\th
\Bigr)\\
X_-&= e^{-2i\th}\Bigl( -2iy\, \partial_{\bar z} + \frac i2 \partial_\th
\Bigr)
&&\= e^{-2i\th}\Bigl( -i y\,\partial _x +y \,\partial_y+ \frac i2
\,\partial_\th
\Bigr)\,.
\eadl
The differential operator $Z$ detects the weight of functions. With the
commutator relations in $\glie_c$ (see~\cite[p.~12]{BS}) we check that
$X_+$ \il{wsho}{weight shifting operator}shifts the weight up by $2$,
and $X_-$ shifts down by~$2$. We have the second order
element\ir{DtXpm}{\Dt}
\be \label{DtXpm} \Dt\= - X_-X_+-\frac14 Z^2-\frac12 Z \= -
X_+X_--\frac14 Z^2+\frac12 Z\,,\ee
which is known to commute with all elements of $\glie_c$. The operator
$\Dt$ is called the Casimir operator; it corresponds to the
differential operator
\be - y^2\bigl( \partial_x^2+\partial_y^2\bigr) + y\,
\partial_x\partial_\th\,.\ee
On functions of weight $k$ the operator $\Dt$ acts as
\be \label{Dt-Dtk}- y^2\bigl(\partial_x^2+\partial_y^2\bigr)+ i k y
\,\partial_x\,,\ee
which is the operator $\Dt_k$ in~\eqref{Dtk}.

Now we are ready to use the operator $\Ps_k$ to transform
Definition~\eqref{defMf} into an equivalent definition of Maass forms
as functions on $\tilde G$. We put \il{tam}{$\tilde\am(y)$}
\[
\tilde \am(y) \coloneqq \tppm(iy) =\ell\matc{y^{1/2}}00{y^{-1/2}}\,.
\]
\begin{defn}\label{defMftG}Let $s\in \CC$, $k\in \RR$, and
$\rho\colon\Gm\rightarrow \U(X_\rho)$ a finite-dimensional unitary
representation of $\Gm$. The space
\il{AktG}{$\A_k(s,\rho \ch_k)$}$\A_k(s,\rho \ch_k)$ of \il{MftG}{Maass
form}\emph{Maass forms} on $\tilde G$ of weight $k$, with spectral
parameter $s$ and representation $\ch_k\rho$ consists of all smooth
functions
\[
f \colon \tG\rightarrow X_\rho
\]
that satisfy
\begin{enumerate}[label=$\mathrm{(\alph*)}$, ref=$\mathrm{\alph*}$]
\item\label{eq:MFa} $R\bigl( \tilde\km(\th) \bigr)f \= e^{i k \th}\,f$ for all
$\th\in \RR$,
\item\label{eq:MFb}
$L(\tilde\gm) f = \rho(\tilde\gm)^{-1}\, \ch_k(\tilde\gm) \, f$ for all
$\tilde\gm\in \tGm$,
\item\label{eq:MFc} $\Dt f \= s(1-s) f$,
\item\label{eq:MFd} $f \bigl(\tilde\am(t)\tilde g\bigr) \= \oh(t^a)$ as
$t\uparrow\infty$ uniform for $\tilde \gm$ in compact sets in
$\tilde G$ for some $a\in \RR$.
\end{enumerate}

If we replace condition~\eqref{eq:MFd} by
$f \bigl(\tilde\am(t)\tilde g\bigr) \= \oh(t^{-a})$ as
$t\uparrow\infty$, uniform for $\tilde \gm$ in compact sets in
$\tilde G$ for all $a\in \RR$, then $f$ is a \emph{Maass cusp
form}.\il{McftG}{Maass cusp form}
\end{defn}

\rmrk{Weight shifting operators}The operator $\Dt$ preserves the weight
of functions on $\tG$, and corresponds to the operator $\Dt_k$
on~$\uhp$.

The operator $X_+$ transforms functions of weight $k$ into functions of
weight $k+2$. It corresponds to the operator
$X_{+,k}=2i y \,\partial_z + \frac k2$ on $\uhp$, which we can see with the
following computation:
\begin{align*}
X_+ (\Ps_k u)(z,\th)& \= e^{2i \th}\Bigl( 2i y \,\partial_z-\frac
i2 \,\partial_\th\Bigr) \Bigr( u(z) e^{ik\th}\Bigr)
\displaybreak[0]\\
&\= 2i y \, \frac{\partial u}{\partial z}(z) e^{i(k+2)\th} + u(z) \frac
k2 e^{i(k+2)\th}
\displaybreak[0]\\
&\= \Bigl( 2iy \frac{\partial u}{\partial_z}(z) + \frac k2 \, u(z)
\Bigr)\, e^{i(k+2)\th}\displaybreak[0]\\
&\= \Bigl( \Ps_{k+2} \Bigl( 2iy \partial_z + \frac k2 \Bigr)
u \Bigr)(z,\th) \,.
\end{align*}
\il{Xpmk}{$X_{+,k}, \; X_{-,k}$}We leave to the reader the analogous
computation for $X_-$ to see that $X_-$ corresponds to
$X_{-,k}=-2i y\,\partial_{\bar z} - \frac k2 $. The operators $X_{+,k}$
and $-X_{-,k}$ correspond to the operators in \cite[(3.1), (3.2)]{Roelcke66}.
Working with Maass forms on the group we have the following results.
\begin{lem}The weight shifting operators satisfy
\be X_\pm \colon \A_k^0(s,\ch_k \rho ) \rightarrow \A_{k\pm 2}^0(s,\ch_k \rho
)\,.\ee
\end{lem}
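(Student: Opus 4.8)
The plan is to check, one by one, the four defining conditions of Definition~\ref{defMftG} for the functions $X_\pm f$, starting from an arbitrary $f\in\A^0_k(s,\ch_k\rho)$. Throughout I would exploit two structural facts: that $X_\pm$ acts by differentiation on the right, and that under the correspondence $f=\Ps_k u$ the operator $X_\pm$ intertwines with the operator $X_{\pm,k}$ on $\uhp$, i.e.\ $X_\pm f=\Ps_{k\pm2}(X_{\pm,k}u)$, as computed in the remark preceding the lemma. The weight condition is then immediate from the commutator relations $[Z,X_\pm]=\pm2X_\pm$ in $\glie_c$: from $Zf=kf$ one gets $Z(X_\pm f)=(k\pm2)X_\pm f$, which upon integrating along the one-parameter group $\tkm(\th)$ yields $R(\tkm(\th))X_\pm f=e^{i(k\pm2)\th}X_\pm f$, so $X_\pm f$ has weight $k\pm2$.

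For the equivariance condition the essential point is that right differentiation commutes with left translation. Applying $X_\pm$ to $L(\tilde\gm)f=\rho(\tilde\gm)^{-1}\ch_k(\tilde\gm)f$ and using that $X_\pm$ commutes both with each $L(\tilde\gm)$ and with the constant factor $\rho(\tilde\gm)^{-1}\ch_k(\tilde\gm)$, I would conclude $L(\tilde\gm)X_\pm f=\rho(\tilde\gm)^{-1}\ch_k(\tilde\gm)X_\pm f$ for all $\tilde\gm\in\tGm$; thus the multiplier stays $\ch_k\rho$. This is also the reason the target space carries the index $\ch_k$ rather than $\ch_{k\pm2}$: it is the weight that is shifted, not the character of $\tGm$. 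The eigenvalue condition is equally formal, since the Casimir operator $\Dt$ is central in $U(\glie_c)$ and hence commutes with $X_\pm$; therefore $\Dt(X_\pm f)=X_\pm(\Dt f)=s(1-s)X_\pm f$, and the spectral parameter is preserved.

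The one genuinely analytic ingredient, and the step I expect to require the most care, is the cusp decay condition. Writing $f=\Ps_k u$ with $u\in\A^0_k(s,\rho v_k)$, so that $X_+ f=\Ps_{k+2}(X_{+,k}u)$ with $X_{+,k}=2iy\,\partial_z+\tfrac k2$ and symmetrically for $X_-$, I would use the observation recorded after the Fourier expansion \eqref{Fe} that all first-order derivatives of a Maass cusp form $u$ decay exponentially, uniformly for $x$ in compact sets, because the $W$-Whittaker functions and their derivatives do. Since $X_{\pm,k}u$ is a combination of $u$ and its first derivatives with coefficients at most linear in $y$, it still satisfies the exponential decay estimate \eqref{ed} (possibly with a slightly smaller rate, as $y\,e^{-ay}=\oh(e^{-a'y})$ for $a'<a$); translating this back through $\Ps_{k\pm2}$ gives the decay $(X_\pm f)(\tilde\am(t)\tg)=\oh(t^{-a})$ for all $a$ required in Definition~\ref{defMftG}. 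Combining the four verifications yields $X_\pm f\in\A^0_{k\pm2}(s,\ch_k\rho)$.
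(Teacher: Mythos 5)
Your proof is correct, but it is organized differently from the paper's. The paper's proof works entirely through the Fourier expansion \eqref{Fe}: it applies $X_{\pm,k}$ termwise, invokes the contiguous relations for Whittaker functions to recognize the result as (a multiple of) the weight-$(k\pm2)$ cusp form expansion, and uses the coefficient growth estimates coming from absolute convergence to justify termwise differentiation and to obtain exponential decay. You instead verify the four conditions of Definition~\ref{defMftG} one by one: the weight shift from $[Z,X_\pm]=\pm2X_\pm$, the $\ch_k\rho$-equivariance from the commutation of right differentiation with left translation, the eigenvalue from the centrality of $\Dt$, and the decay by writing $X_\pm f=\Ps_{k\pm2}(X_{\pm,k}u)$ and citing the statement in Section~\ref{sect-Fe} that all derivatives of a Maass cusp form satisfy \eqref{ed}. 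This is a legitimately different division of labor: your treatment of conditions (\ref{eq:MFa})--(\ref{eq:MFc}) makes explicit the Lie-algebraic facts that the paper leaves implicit in the remarks preceding the lemma, and your decay argument outsources the analytic content (termwise differentiation of the Fourier series) to the earlier blanket statement rather than re-deriving it; note that that statement is itself justified by exactly the coefficient-estimate argument the paper's proof spells out, so the analytic core is the same, merely cited rather than executed. What the paper's route buys in exchange is explicit information: the contiguous relations identify how $X_\pm$ acts on the individual Fourier terms, showing directly that the image has a genuine weight-$(k\pm2)$ cuspidal expansion (no $M$-Whittaker or constant terms), rather than deducing cuspidality from a decay bound. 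One small point you compress into ``translating back through $\Ps_{k\pm2}$'': the group condition in Definition~\ref{defMftG} evaluates at $\tilde\am(t)\tilde g$, i.e.\ at points $tz$ with $z$ in a compact set, whose real parts $tx$ leave every compact set; to get uniformity there one should remark that $\|X_{\pm,k}u\|$ is periodic in $x$ (by the unitarity of $v_k(T)\rho(T)$ and $T$-equivariance), so decay uniform for $x$ in compacts upgrades to decay uniform in all of $x$. This is the standard identification underlying the equivalence of Definitions~\ref{defMf} and~\ref{defMftG}, so it is not a gap, but it deserves a sentence.
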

We note that the weight changes, but that the representation
$\ch_k \rho$ of $\tGm$ stays the same.
\begin{proof}We apply the operator $  X_{\pm,k}$ corresponding to
$X_\pm$ to the Fourier expansion in~\eqref{Fe}. This leads to an
expression in the Whittaker function and its derivative. The contiguous
relations for Whittaker functions allow to show that we get a multiple of
the expression for the Fourier expansion in weight $k\pm 2$. The
absolute convergence of the Fourier expansion gives an estimate of the
growth of the coefficients $c_l(n)$ that is strong enough to show
convergence of the sum of derivatives, and it leads to exponential decay
of the resulting sum.
\end{proof}
\begin{lem}
The operators $X_\pm X_\mp$ act in $\A^0_k(s,\ch_k\rho)$ as
multiplication by the factor $(s\mp k/2)(s-1\pm k/2)$.
\end{lem}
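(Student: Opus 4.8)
The plan is to read off both statements directly from the two factorizations of the Casimir operator recorded in~\eqref{DtXpm},
\[
\Dt \= - X_-X_+ - \tfrac14 Z^2 - \tfrac12 Z \= - X_+X_- - \tfrac14 Z^2 + \tfrac12 Z\,,
\]
combined with the eigenvalue condition~\eqref{eq:MFc}. The only additional input I need is the action of $Z$ on a function of weight~$k$: since $Z = -i\,\partial_\th$ by~\eqref{opZXX} and the weight-$k$ condition~\eqref{eq:MFa} reads $R(\tkm(\th))f = e^{ik\th}f$, differentiating in~$\th$ shows that $Z$ acts as multiplication by the scalar~$k$. By the previous lemma, $X_\mp$ sends $\A^0_k(s,\ch_k\rho)$ into $\A^0_{k\mp2}(s,\ch_k\rho)$ and $X_\pm$ sends the latter back into $\A^0_k(s,\ch_k\rho)$, so each composite $X_\pm X_\mp$ is a well-defined endomorphism of $\A^0_k(s,\ch_k\rho)$, and it suffices to evaluate it on an arbitrary $f\in\A^0_k(s,\ch_k\rho)$.

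For $X_+X_-$ I would apply the second factorization to~$f$. Using that $Z$ acts as~$k$ and that $\Dt f = s(1-s)f$, the identity $\Dt = -X_+X_- - \tfrac14 Z^2 + \tfrac12 Z$ gives
\[
s(1-s)\,f \= -\,X_+X_-\,f + \bigl(-\tfrac14 k^2 + \tfrac12 k\bigr)\,f\,,
\]
so that $X_+X_-$ acts as multiplication by $-s(1-s) - \tfrac14 k^2 + \tfrac12 k = s(s-1) + \tfrac k2 - \tfrac{k^2}4$. Expanding $(s-k/2)(s-1+k/2)$ produces exactly this scalar, matching the claimed factor with the upper signs. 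The case of $X_-X_+$ is entirely parallel, now using the first factorization $\Dt = -X_-X_+ - \tfrac14 Z^2 - \tfrac12 Z$; it yields the scalar $s(s-1) - \tfrac k2 - \tfrac{k^2}4$, which one checks equals $(s+k/2)(s-1-k/2)$, the claimed factor with the lower signs.

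I do not expect any genuine obstacle: the result is a purely algebraic consequence of the enveloping-algebra identities~\eqref{DtXpm} together with the conditions~\eqref{eq:MFa} and~\eqref{eq:MFc} defining a Maass form on~$\tG$. The only points deserving care are bookkeeping ones. First, $Z$ must be evaluated on the weight-$k$ function~$f$ rather than on the intermediate weight-$(k\mp2)$ function; this is automatic because~\eqref{DtXpm} is applied as a single operator identity on $C^\infty(\tG)$ to~$f$, so the factor $-\tfrac14 Z^2 \pm \tfrac12 Z$ always sees the weight of~$f$ itself. Second, one must keep the two sign conventions aligned, so that $X_+X_-$ is paired with the upper signs and $X_-X_+$ with the lower signs in $(s\mp k/2)(s-1\pm k/2)$.
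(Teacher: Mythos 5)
Your proposal is correct and takes essentially the same route as the paper: the paper's proof likewise notes that $Z$ acts as multiplication by the weight and $\Dt$ as multiplication by $s(1-s)$, and then reads off the action of $X_\pm X_\mp$ from the two factorizations of the Casimir operator in~\eqref{DtXpm}. Your sign bookkeeping is also right, since $(s\mp k/2)(s-1\pm k/2)=s(s-1)\pm\tfrac k2-\tfrac{k^2}4$ matches the scalars you extract for $X_+X_-$ and $X_-X_+$ respectively.
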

\begin{proof}The operator $Z$ acts in $\A^0_{k'}(s,\ch_k\rho)$ as
multiplication by $k'$. The operator $\Dt$ acts as multiplication by
$s(1-s)$. A computation based on \eqref{DtXpm} gives the actions of
$X_\pm X_\mp$.
\end{proof}
If $s\not\equiv \frac k2\bmod 1$ and $s\not\equiv -\frac k2\bmod 1$ the
product $(s\mp k/2)(s-1\pm k/2)$ is non-zero, and hence
\be \label{isowsh}X_{\pm,k}\colon \A^0_k(s,\rho v_k) \rightarrow \A^0_{k\pm
2}(s,\rho v_k)\ee
is a bijection.
(Note that the multiplier system $\rho v_k$ is preserved under
$X_{\pm , k}$.)
This shows that if we know one space $\A^0_k(s,\rho v_k)$ with
$s\not\equiv \pm \frac k2\bmod1$, then we know $\A^0_{k'}(s,\rho v_k)$
for all $k'\equiv k\bmod 2$.

\begin{prop}\label{prop-sppMcf} Let $s\in \CC$,
$s\not\equiv \pm \frac k2\bmod1$. Define $\nu\in [0,2)$ by
$\nu\equiv k\bmod 2$. If the space $\A^0_k(s,\rho v_k)$ is non-zero,
then
\badl{sdom} & s\in \frac12 + \bigl(i \RR\setminus\{0\}\bigr)\,,\\
\text{or }\quad &\begin{cases} \frac \nu2 < s < 1-\frac \nu2 &\text{ if
}\nu\in [0,1)\,,\\
1-\frac \nu2 < s < \frac \nu 2&\text{ if }\nu \in [1,2)\,.
\end{cases}
\eadl
\end{prop}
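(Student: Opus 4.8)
The plan is to reduce to the minimal weight $\nu$ and then combine two standard inputs: the self-adjointness of the Casimir operator, which pins down the eigenvalue $s(1-s)$, and the nonnegativity of $\|X_-u\|^2$, which pins down the interval. First I would invoke the weight-shifting isomorphisms. Since the hypothesis $s\not\equiv\pm k/2\bmod1$ depends only on $k\bmod2$, and since it forces the scalars $(s\mp k'/2)(s-1\pm k'/2)$ to be nonzero for every $k'\equiv k\bmod2$, the maps in \eqref{isowsh} are bijections linking all the spaces $\A^0_{k'}(s,\rho v_k)$ with $k'\equiv k\bmod2$. Hence a nonzero element of $\A^0_k(s,\rho v_k)$ yields a nonzero Maass cusp form $u$ of weight $\nu$, realised on $\tGm\backslash\tG$, with $\Dt u=s(1-s)u$, and it suffices to derive \eqref{sdom} from the existence of such a $u$.

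Next I would show the eigenvalue is real. As a cusp form, $u$ decays exponentially and therefore lies in the Hilbert space $L^2(\tGm\backslash\tG;X_\rho)$, on which the right regular representation is unitary; consequently every real element of $\glie$ acts skew-adjointly, giving the adjoint relations $X_+^*=-X_-$, $X_-^*=-X_+$, and $Z^*=Z$. Inserting these into \eqref{DtXpm} shows $\Dt^*=\Dt$, so $s(1-s)=\langle\Dt u,u\rangle/\|u\|^2$ is real; equivalently $s\in\RR$ or $\re s=\tfrac12$. The possibility $\re s=\tfrac12$ with $\im s\neq0$ is precisely the first alternative in \eqref{sdom}, so only real $s$ remains to be treated, the value $s=\tfrac12$ being subsumed there.

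For real $s$ I would read off the interval from a single positivity inequality in weight $\nu$. Using $X_-^*=-X_+$ together with the scalar action of $X_+X_-$ supplied by the second of the two lemmas above,
\be
0\;\le\;\|X_-u\|^2\=\langle u,X_-^*X_-u\rangle\=-\,(s-\tfrac\nu2)\,(s-1+\tfrac\nu2)\,\|u\|^2 ,
\ee
so that $s$ lies between the roots $\tfrac\nu2$ and $1-\tfrac\nu2$ (the companion bound $\|X_+u\|^2\ge0$ gives only the wider, automatically satisfied, constraint). The hypothesis forbids $s$ from equalling either root, because $\tfrac\nu2\equiv\tfrac k2$ and $1-\tfrac\nu2\equiv-\tfrac k2\bmod1$; thus both factors are nonzero, the inequality is strict, and the interval is open. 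Ordering the two roots according as $\nu<1$ or $\nu>1$ produces exactly the two cases of \eqref{sdom} (the interval being empty, so that no real $s$ occurs, when $\nu=1$), which completes the argument.

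The one step demanding genuine care is the sign in the adjoint relations $X_\pm^*=-X_\mp$, since a slip there would reverse the inequality and hence flip the interval. I would fix the convention by testing it against weight $0$: there $X_-X_+$ acts by $s(s-1)=-\lambda$ with $\lambda=s(1-s)$, and the relation $X_+^*=-X_-$ gives $\|X_+u\|^2=-\langle u,X_-X_+u\rangle=\lambda\|u\|^2\ge0$, correctly recovering the classical bound $\lambda\ge0$ for Maass cusp forms. Apart from this bookkeeping the remaining verifications (unitarity of the regular representation, self-adjointness of $\Dt$, and absence of boundary terms, all guaranteed by the exponential decay of cusp forms) are routine.
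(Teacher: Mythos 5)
Your proof is correct, and at the top level it has the same skeleton as the paper's: use the weight-shifting bijections \eqref{isowsh} (valid because $s\not\equiv\pm k/2\bmod 1$ depends only on $k\bmod 2$) to reduce to the minimal weight $\nu$, then extract the constraint on $s$ from an $L^2$-positivity statement. The difference is in the key input. The paper disposes of the second step by citing Roelcke's Satz 3.1, the classical lower bound for the spectrum of $\Dt_{k'}$ on square-integrable automorphic forms, applied for every $k'\equiv k\bmod 2$ and observed to be sharpest at $k'=\nu$; that citation also carries the reality of the eigenvalue. You instead re-derive everything from the unitary structure: skew-symmetry of the real Lie algebra acting on the equivariant $L^2$-space gives $Z^*=Z$ and $X_\pm^*=-X_\mp$ (your sign check against weight $0$ is the right sanity test, and the signs are indeed correct for the Berndt--Schmidt basis used in \eqref{opZXX}), hence formal self-adjointness of $\Dt$ via \eqref{DtXpm}, so $s(1-s)\in\RR$, i.e.\ $s\in\RR$ or $\re s=\tfrac12$; then, for real $s$, the inequality $0\le\|X_-u\|^2=-\bigl(s-\tfrac{\nu}{2}\bigr)\bigl(s-1+\tfrac{\nu}{2}\bigr)\|u\|^2$, which uses the paper's unlabelled lemma on the scalars by which $X_\pm X_\mp$ act, forces $s$ to lie between $\tfrac{\nu}{2}$ and $1-\tfrac{\nu}{2}$, strictly since the hypothesis excludes the endpoints ($\tfrac{\nu}{2}\equiv\tfrac k2$ and $1-\tfrac{\nu}{2}\equiv-\tfrac k2\bmod 1$). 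In substance this reproduces, in the group-theoretic language of Section~\ref{sect-ucg}, the proof of Roelcke's inequality; the analytic prerequisites (cusp forms and all their Lie derivatives are $L^2$ with exponential decay, so integration by parts has no boundary terms) are available from the Fourier-expansion discussion in Section~\ref{sect-Fe}. What your route buys is self-containedness within the paper's own toolkit; what the citation buys is brevity and a statement valid for the full $L^2$-spectrum rather than only for cusp forms.
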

This follows from \cite[Satz 3.1]{Roelcke66}, which implies that
\be -s(2-s) \;\leq\; - \frac {k'}2\Bigl( 1+ \frac {k'}2\Bigr)\ee
for all $k'\equiv  k\bmod 2$. For $k'=\nu$ this equality is stronger
than for all other $k'\equiv \nu\bmod 2$.



\section{Principal series representation}\label{sect-prs}In
\cite{BLZ15}, the representation of $G$ underlying the modules used in
the cohomology of $\Gm$ is the discrete series representation. Handling
arbitrary real weights requires some care.

\rmrk{Operators on the real projective line}The action of $G$ on $\uhp$
by fractional linear transformations extends to an action on the
projective line $\proj\RR$, which is the boundary of~$\uhp$.

By an open interval in $\proj\RR$ we mean an open connected subset
$I\subset \proj\RR$ that is not equal to $\proj\RR$ and has more than
one point. The set $\RR$ is an open interval in $\proj\RR$, and
intervals $(\al,\bt)$ in $\RR$ with $\al<\bt\in \RR$ are intervals in
$\proj\RR$ as well. If $\al>\bt$, then we have the open interval
\il{ci}{$(\cdot,\cdot)_c$}$(\al,\bt)_c 
\= (\al,\infty) \cup\{\infty\}\cup(-\infty,\bt)$.

Let $s\in \CC$ and $k\in \RR$, and let $g=\rmatc abcd\in G$. For
functions $\ph$ on an open subset $I\subset \proj\RR$ we define
$\ph|^\prs_{s,k} g$ on $g^{-1} I$ by\ir{prs-trf}{|^\prs_{s,k}g}
\badl{prs-trf} \bigl(\ph|^\prs_{s,k} g\bigr)(t)&\= (a-ic)^{-s+k/2}\,
(a+i c)^{-s-k/2}\, \Bigl( \frac{t-i}{t-g^{-1}i}\Bigr)^{s-k/2}\\
&\qquad\hbox{} \cdot
\Bigl( \frac{t+i}{t-g^{-1}(-i)}\Bigr)^{s+k/2}\,
\ph(gt)\,,\eadl
where we use $-\pi \leq \arg(a-ic) < \pi$ and
$-\pi < \arg(a+ic) \leq \pi$. (In this way $\ph|^\prs_{s,k}\km(\th)$ is
right-continuous in $\th=-\pi$, like we have in~\eqref{slash-k}.)

The function $t\mapsto \ph(gt)$ is again a function on $\proj\RR$.
The function
$t\mapsto \Bigl( \frac{t-i}{t-g^{-1} i}\Bigr)^{s-k/2}$
is well-defined on $\proj\RR$. In fact it determines a holomorphic
function on $\proj\CC$ minus a path from $i$ to $g^{-1} i$ in $\uhp$.
Similarly, the other factor is holomorphic on $\proj\CC$ outside a path
in the lower half-plane. These two factors are real-analytic on
$\proj\RR$. So $|^\prs_{s,k} g$ sends real-analytic functions on $I$ to
real-analytic functions. It also sends $C^p$-functions to
$C^p$-functions for $p=0,1,\ldots, \infty$. Tensoring $\CC$ with
$X_\rho$ we get operators\ir{prsrvk}{|^\prs_{\rho v_k,s,k} }
\be \label{prsrvk}\bigl( \ph|^\prs_{\rho v_k,s,k} \gm\bigr)
(t) \= \rho(\gm)^{-1}\ v_k(\gm)^{-1}\,
(\ph|^\prs_{s,k}\gm)(t)\ee
for each $\gm\in \Gm$, and have
\be \ph |^\prs_{\rho v_k,s,k}(\gm_1\gm_2) \= \Bigl( \ph|^\prs_{\rho
v_k,s,k}\gm_1\Bigr)
|^\prs_{\rho v_k,s,k}\gm_2\ee
for $\gm_1,\gm_2\in \Gm$.

In this way we arrive at the $\Gm$-equivariant sheaf
\il{Vomsh}{$\V\om{\rho v_k,s,k}$}$\V\om{\rho v_k,s,k}$ of analytic
functions on $\proj\RR$ with values in $X_\rho$, and on larger
equivariant sheaves $\V p{\rho v_k,s,k}$ of $p$ times continuously
differentiable functions, with $p=0,1,2,\ldots,\infty$. The action of
$\gm \in \Gm$ is given by
\be |^\prs_{\rho v_k,s,k} \gm \colon  \V \om{\rho v_k,s,k}(I)
\rightarrow \V \om{\rho v_k,s,k}(\gm^{-1}I)\,.\ee
The space of global sections $\V\om{\rho v_k,s,k}(\proj\RR)$ is
$\Gm$-invariant for this action. This is the \il{psr}{principal series
representation}\emph{principal series representation} twisted by
$\rho v_k$. Similar remarks hold for the larger sheaves
$\V p {\rho v_k,s,k}$ of $p$ times continuously differentiable
functions.

We note that $|_{s,k}^\prs(-I_2)$ is multiplication by $e^{-\pi i k}$,
independent of $s$. From $v_k(-I_2)= e^{-\pi i k} $ we conclude that
$|^\prs_{\rho v_k,s,k}(-I_2)$ is just application of the operator
$\rho(-I_2)^{-1} = \rho(-I_2)$. (We use that $\rho$ is a representation
of $\Gm$, and $-I_2\in \Gm$ is its own inverse.)
For any set $I\subset \proj\RR$ the $1$-eigenspace of
$|^\prs_{\rho v_k,s,k}(-I_2)$ in $\V \om{\rho v_k,s,k}(I)$ is
independent of $s$ and $k$.

\subsection{Period functions}\label{sect-pf}
The period functions that we want to relate to Maass cusp forms in
$\A^0_k(s,\rho v_k)$ form a subspace of
$\V \om{\rho v_k,s,k}(0,\infty)$ with several additional properties.

\rmrk{Action of \intitle{-I_2}}Like for Maass cusps forms, we want
period functions to have values in the $1$-eigenspace of $\rho(-I_2)$.
This implies that
\be \bigl( f|^\prs_{\rho v_k,s,k} S\bigr)
|^\prs_{\rho v_k,s,k} S \= f\ee
for a period function~$f$, for $S=\rmatr0{-1}10$.

\rmrk{Three term relation} We want the period functions to satisfy the
three term relation\ir{3te}{three term relation}
\be\label{3te} f \= f|^\prs_{\rho v_k,s,k} T
+ f|^\prs_{\rho v_k,s,k}T'\,,\ee
where \il{Ta}{$T'=\rmatc1011$}$T'=\rmatc1011 = ST^{-1}S$. This relation
goes back to the three term relation in Lewis's paper \cite{L97}.

For $f\in \V \om{\rho v_k,s,k}(0,\infty)$, the terms on the right hand
side of \eqref{3te} are elements of $\V\om{\rho v_k,s,k}(-1,\infty)$
and $\V\om{\rho v_k,s,k}\bigl((0,-1)_c\bigr)$. In the relation these
two terms are understood to be restricted to $(0,\infty)$.

\rmrk{Continuous extension} The period functions attached to Maass cusp
forms by Lewis and Zagier \cite{LZ01} determine real-analytic functions
on $(0,\infty)$ that satisfy the three term relation~\eqref{3te} with
$\rho v_k=1$, $\re s\in (0,1)$, and $k=0$, and have estimates at the
boundary points of $(0,\infty)$. By \cite[Theorem B and Proposition
14.2]{BLZ15} these functions have, in the projective model used in this
paper, a smooth extension $f$ to $\proj\RR$ satisfying
$f|_{1,s,0} ^\prs S = - f$. In particular, $\lim_{x\downarrow 0}f(x)$
and $\lim_{x\uparrow0} f(x)$ exist and are equal, and analogously for
the one-sided limits at~$\infty$. \smallskip

Here we require that the limits
\be \label{lim-pf} a_\infty(f) \= \lim_{t\uparrow\infty} f(t)\quad\text{
and }\quad a_0(f) \= \lim_{t\downarrow 0}f(t)
\ee
exist, and satisfy
\be \label{lim-rel} a_0(f) \= - \rho(S) a_\infty(f).\ee
We note that $f|^\prs_{\rho v_k,s,k}S $ is defined on $(\infty,0)_c$. We
have for $t<0$:
\begin{align*}
\bigl(f|^\prs_{\rho v_k,s,k}& S\bigr)(t)
\stackrel{\eqref{prs-trf}}= \rho(S)^{-1} v_k(S)^{-1} (-i)^{k/2-s}\,
i^{-s-k/2} \, f(-1/t)
\displaybreak[0]\\
& \stackrel{\eqref{vk}}= \rho(S)^{-1} (-i)^{-k} \, e^{\pi i
(-k)}\, f(-1/t)
\displaybreak[0]\\
& \stackrel{t\uparrow0}{\longrightarrow} \rho(S)a_\infty(f )
\,.
\end{align*}
So \eqref{lim-rel} ensures that the real-analytic function
$-f|^\prs_{\rho v_k,s,k} S$ on $(\infty,0)_c$ has the same limit for
$t\uparrow 0$ as the real-analytic function $f$ on $(0,\infty)$ for
$t\downarrow 0$. This implies that
\be\label{pf-extcoc} t\mapsto\begin{cases} f(t) &\text{ for }t>0\,,\\
-\bigl(f|^\prs_{\rho v_k,s,k} S\bigr)(t)
&\text{ for }t<0\,,
\end{cases}
\ee
extends as a continuous function on $\RR$.

Applying $|^\prs_{\rho v_k,s,k}S$ gives a similar continuous extension
across $\infty$. One can check that the three term equation on
$(0,\infty)$ implies that it also holds on $(\infty,-1)_c$ and on
$(-1,0)$.

\rmrk{Holomorphic extension}A real-analytic function on $(0,\infty)$ is
locally on $(0,\infty)$ given by power series, and hence extends
holomorphically to a complex neighborhood of $(0,\infty)$. For period
functions we require that the extension is possible to a wedge of the
form\ir{Wdt}{W_\dt}
\be \label{Wdt}W_\dt\= \bigl\{ t \in \CC\setminus \{0\}\;:\; |\arg t| <
\dt \bigr\}\,.\ee
We note that the extensions of the three functions in the three term
relation~\eqref{3te} may extend to different domains. The relation
extends only to a connected neighborhood of~$(0,\infty)$.

\begin{defn}\label{def-FEom}The space
\il{FEom}{$\FE^\om_{\rho v_k,s,k}$}$\FE^\om_{\rho v_k,s,k}$ of
\il{pdfct}{period function}\emph{period functions} is the linear space
of elements $f\in \V \om{ \rho v_k, s,k}(0,\infty)$ that
\begin{enumerate}[label=$\mathrm{(\alph*)}$, ref=$\mathrm{\alph*}$]
\item\label{eq:FEa} $f$ has values in the $1$-eigenspace of
$|^\prs_{\rho v_k,s,k}(-I_2)$.
\item\label{eq:FEb} $f$ satisfies the three-term relation~\eqref{3te}.

\item\label{eq:FEc} $f$ has limits at $0$ and $\infty$ as indicated in
\eqref{lim-pf} and~\eqref{lim-rel}.
\item\label{eq:FEd} $f$ has a holomorphic extension to a wedge $W_\dt$ for some
$\dt\in (0,\pi/2)$.
\end{enumerate}
\end{defn}

The period functions in \cite[p.~85]{BLZ15} are characterized by a
boundary condition, equivalent to $\oh(1)$ at $0$ and $\infty$ in the
projective model used here. It is equivalent to the existence of
asymptotic expansions at $0$ and $\infty$. The existence of limits in
part~\eqref{eq:FEc} is easier to handle, and leads to the same space of period
functions.\smallskip

Our aim is to establish a relation between Maass cusp forms in
$\A^0_k(s,\rho v_k)$ and period functions in
$\FE^\om_{\rho v_k,s,k}$.\smallskip

The properties required in parts~\eqref{eq:FEb} and~\eqref{eq:FEd} in Definition~\ref{def-FEom}
allow us to apply the bootstrap method method in \cite[Chap III.4,
p.~240]{LZ01}.
\begin{prop}\label{prop-extFE}Each period function
$f\in \FE^\om_{\rho v_k,s,k}$ has a holomorphic extension to
\il{C'}{$\CC'$}$\CC'=\CC\setminus (-\infty,0]$.
\end{prop}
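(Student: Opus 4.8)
The plan is to exploit the three-term relation~\eqref{3te} as a recursion that spreads analyticity from $(0,\infty)$ into the whole slit plane $\CC'$, following the bootstrap method of Lewis--Zagier~\cite[Chap III.4, p.~240]{LZ01}. The starting point is property~\eqref{eq:FEd}: $f$ is already holomorphic on a wedge $W_\dt$ with $\dt\in(0,\pi/2)$. I would rewrite the three-term relation in the form
\be
f(t) \= \bigl(f|^\prs_{\rho v_k,s,k} T\bigr)(t) + \bigl(f|^\prs_{\rho v_k,s,k}T'\bigr)(t)\,,
\ee
and observe that, because of the explicit shape of $|^\prs_{\rho v_k,s,k}$ in~\eqref{prs-trf}, the term $f|^\prs_{\rho v_k,s,k}T$ evaluates $f$ at $Tt$ and the term $f|^\prs_{\rho v_k,s,k}T'$ evaluates $f$ at $T't$, up to explicit holomorphic prefactors coming from the cocycle. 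The key geometric point is that $T$ and $T'$ are parabolic with fixed points at $\infty$ and $0$ respectively, so their action on the slit plane shrinks the argument: iterating $T$ and $T'$ produces a sequence of Möbius images whose union, applied to the initial wedge, exhausts~$\CC'$.

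The central step is therefore the following: I would argue that the relation lets one analytically continue $f$ from any region where it is already known to be holomorphic into the preimages $T^{-1}W_\dt$ and $(T')^{-1}W_\dt$ (more precisely, into the regions on which the right-hand side terms are defined and holomorphic). Each application of $|^\prs_{\rho v_k,s,k}T$ or $|^\prs_{\rho v_k,s,k}T'$ contributes a factor of the form $(a-ic)^{-s+k/2}(a+ic)^{-s-k/2}$ times two holomorphic power-type factors $\bigl(\tfrac{t-i}{t-g^{-1}i}\bigr)^{s-k/2}$ and $\bigl(\tfrac{t+i}{t-g^{-1}(-i)}\bigr)^{s+k/2}$; since for $g\in\{T,T'\}$ the relevant branch points $g^{-1}(\pm i)$ lie off the slit plane (in the upper or lower half-plane, as noted after~\eqref{prs-trf}), these prefactors are single-valued and holomorphic on any simply connected domain inside $\CC'$. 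Thus each continuation step preserves holomorphy, and the only thing to track is where the argument $Tt$ or $T't$ lands. Iterating and taking the union over all words in $T,T'$ covers $\CC'$ because the orbit of the wedge under the parabolic semigroup generated by $T$ and $T'$ accumulates only at the fixed points $0$ and $\infty$, which lie on the boundary of the slit.

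The main obstacle I anticipate is verifying that the continuations obtained along different chains of the recursion agree, i.e. that the continuation is well-defined and single-valued on all of $\CC'$ rather than merely along rays. This is exactly where the algebraic relations of $\tGm$ (the braid-type relation $(\tm\sm)^3=\sm^2$ and $\sm^2\tm=\tm\sm^2$) and the compatibility of the prefactors with the cocycle property of $|^\prs_{\rho v_k,s,k}$ must be invoked: overlapping images of the wedge under distinct words must carry the same holomorphic extension, which follows from the consistency of the three-term relation under the group law together with property~\eqref{eq:FEa} (the $-I_2$-invariance) and the limit conditions~\eqref{eq:FEc}. Once single-valuedness is secured on a neighborhood of $(0,\infty)$ and the union of iterates is seen to be $\CC'$, the identity theorem for holomorphic functions finishes the proof: the locally defined continuations patch together into one holomorphic function on $\CC'=\CC\setminus(-\infty,0]$ restricting to $f$ on $(0,\infty)$.
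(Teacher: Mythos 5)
Your strategy is the same as the paper's: holomorphy on the wedge $W_\dt$ from condition~\eqref{eq:FEd}, then the three-term relation~\eqref{eq:FEb} used as a bootstrap expressing $f(z)$ as a finite sum of translates $f|^\prs_{\rho v_k,s,k}\gm(z)$ with $\gm$ in the semigroup $\Gm'$ generated by $T$ and $T'$ and $\gm z\in W_\dt$. The gap is in your treatment of the prefactors. Their branch points do \emph{not} lie off the slit plane: the remark after~\eqref{prs-trf} says the cut is a path from $i$ to $g^{-1}i$ inside $\uhp$, and such a path lies \emph{inside} $\CC'$, not outside it. Concretely, $T^{-1}i=i-1$, $(T')^{-1}i=\tfrac{-1+i}{2}$, and the numerators contribute branch points at $\pm i$ themselves; a word $w\in\Gm'$ produces prefactor branch points at $w^{-1}(\pm i)$, points with nonpositive real part and nonzero imaginary part, hence all in $\CC'$. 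So your assertion that the prefactors are ``single-valued and holomorphic on any simply connected domain inside $\CC'$'' fails exactly at these points, and this assertion is what makes your extension appear to reach all of $\CC'$. What the bootstrap yields as written is holomorphy of $f$ on $\CC'$ away from $\bigcup_{w\in\Gm'}w^{-1}\{i,-i\}$, in particular on the right half-plane $\{\re z>0\}$ --- which is, in fact, the consequence the paper relies on afterwards (proof of Proposition~\ref{prop-pfMcf}, Lemma~\ref{lem-cochain}). Holomorphy at the exceptional points, above all at $\pm i$, is the genuinely delicate part: in weight $0$ with trivial $\rho$, the projective-model period function is $(1+t^2)^s$ times the Lewis--Zagier plane-model period function of~\cite{LZ01}, so branch behavior at $\pm i$ is really in play and cannot be dismissed by a claim about where the branch points sit.

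Second, your resolution of the single-valuedness issue is misdirected. The relations $(\tm\sm)^3=\sm^2$ and $\sm^2\tm=\tm\sm^2$ involve $\sm$, which never occurs in the bootstrap; in fact $\Gm'$ is a \emph{free} semigroup, so there are no relations among words in $T$ and $T'$ to invoke. Likewise conditions~\eqref{eq:FEa} and~\eqref{eq:FEc} play no role in the continuation (the paper's proof uses only~\eqref{eq:FEb} and~\eqref{eq:FEd}). Consistency of the continuations is simply the identity theorem: every expression obtained by finitely many applications of~\eqref{3te} is holomorphic on a connected domain containing $W_\dt$ (note $T W_\dt\subset W_\dt$ and $T'W_\dt\subset W_\dt$, and the prefactor branch points avoid $W_\dt$) and restricts to $f$ on $W_\dt$, because~\eqref{3te} holds on $(0,\infty)$ and hence on $W_\dt$ by analytic continuation; therefore any two such expressions agree on the connected overlap of their domains. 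Note also that to continue $f$ at a point $z$ you need \emph{all} images of $z$ under the words of a given generation to lie in the already-established domain, so the exhaustion proceeds by unions over generations of \emph{intersections} of preimages of $W_\dt$, not by a union of images of the wedge.
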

\begin{proof}Condition~\eqref{eq:FEd} implies that $f$ is holomorphic on a wedge
\[
W_\dt= \bigl\{  z\in \CC'\;:\; |\arg(z)|<\dt\bigr\}
\]
for some small
$\dt>0$. For each $z\in \CC'$ we can use the three term
relation~\eqref{3te} to express $f(z)$ as a finite sum of translates
$f|_{\rho v_k,s,k} \gm(z)$ with $\gm$ in the semigroup generated by $T$
and $T'$ such that $f(\gm z)$ is in $W_\dt$.
\end{proof}
\smallskip

In the following result we use the orthonormal eigenbasis $e_l$ of
$X_\rho$, for the scalar product $(\cdot,\cdot)_\rho$, and the
parameters $\k_l\in [0,1)$ introduced in~\S\ref{sect-Fe}.
\begin{lem}\label{lem-f1c}Let $f\in \FE^\om_{\rho v_k,s,k}$. If
$\k_l =0$, then
$\Bigl( \rho(T')^{-1} v_k(T')^{-1} f(1), e_l\Bigr)_\rho=0$.
\end{lem}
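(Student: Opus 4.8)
The plan is to pass to the limit $t\uparrow\infty$ in the three-term relation~\eqref{3te} and then pair the resulting identity with $e_l$. First I would make the two automorphy factors in~\eqref{prsrvk} explicit at $\infty$. For $T=\rmatc1101$ we have $T\cdot t=t+1$, the constant factor $(a-ic)^{-s+k/2}(a+ic)^{-s-k/2}$ in~\eqref{prs-trf} equals $1$, and the remaining factor tends to $1$ as its two base ratios tend to $1$; since $f(t+1)\to a_\infty(f)$, this gives $\bigl(f|^\prs_{\rho v_k,s,k}T\bigr)(t)\to\rho(T)^{-1}v_k(T)^{-1}a_\infty(f)$. For $T'=\rmatc1011$ the decisive point is that $T'\cdot\infty=1$, so $f(T't)=f\bigl(t/(t+1)\bigr)\to f(1)$ by continuity of $f$ at $1\in(0,\infty)$; here the constant factor is $(1-i)^{-s+k/2}(1+i)^{-s-k/2}$ and the $t$-dependent factor again tends to $1$, whence $\bigl(f|^\prs_{\rho v_k,s,k}T'\bigr)(t)\to\rho(T')^{-1}v_k(T')^{-1}(1-i)^{-s+k/2}(1+i)^{-s-k/2}f(1)$.

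Since $f(t)\to a_\infty(f)$ by~\eqref{lim-pf} and both terms on the right converge, the relation~\eqref{3te} passes to the limit and yields
\[
a_\infty(f)\=\rho(T)^{-1}v_k(T)^{-1}a_\infty(f)+\rho(T')^{-1}v_k(T')^{-1}(1-i)^{-s+k/2}(1+i)^{-s-k/2}f(1).
\]
Now I would take the inner product with $e_l$. Recall that $v_k(T)\rho(T)$ is unitary with $v_k(T)\rho(T)e_l=e^{2\pi i\k_l}e_l$, so its inverse $\rho(T)^{-1}v_k(T)^{-1}$ is its adjoint and
\[
\bigl(\rho(T)^{-1}v_k(T)^{-1}a_\infty(f),e_l\bigr)_\rho\=\bigl(a_\infty(f),v_k(T)\rho(T)e_l\bigr)_\rho\=e^{-2\pi i\k_l}\bigl(a_\infty(f),e_l\bigr)_\rho.
\]
The hypothesis $\k_l=0$ turns this into $\bigl(a_\infty(f),e_l\bigr)_\rho$, which cancels the left-hand side of the limit identity. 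What survives is $\bigl(\rho(T')^{-1}v_k(T')^{-1}(1-i)^{-s+k/2}(1+i)^{-s-k/2}f(1),e_l\bigr)_\rho=0$, and dividing by the nonzero scalar $(1-i)^{-s+k/2}(1+i)^{-s-k/2}$ gives the asserted vanishing.

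The limit computation is routine; the only care needed is that the power functions in the automorphy factors are continuous where their bases approach $1$, so that no branch ambiguity enters (the bases stay close to $1$, away from the cut). The conceptual core is the final cancellation: it is precisely the condition $\k_l=0$, that is, that $e_l$ is fixed by $v_k(T)\rho(T)$, which lets the two $a_\infty(f)$ contributions annihilate each other and isolates the $T'$-term carrying $f(1)$. I expect no genuine obstacle beyond the bookkeeping of the automorphy factors at $\infty$.
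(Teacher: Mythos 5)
Your proof is correct and follows essentially the same route as the paper's: both pass to the limit $t\uparrow\infty$ in the three-term relation (with the same evaluation of the automorphy factors, the $T'$-term picking up $f(1)$ and the constant $(1-i)^{-s+k/2}(1+i)^{-s-k/2}$), project onto $e_l$, and use $\k_l=0$ to cancel the two $a_\infty(f)$ contributions, leaving a nonzero multiple of $\bigl(\rho(T')^{-1}v_k(T')^{-1}f(1),e_l\bigr)_\rho$. The only cosmetic difference is that the paper simplifies the constant to $2^{-s}\,i^{-k/2}$, whereas you just observe it is nonzero, which suffices.
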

\begin{proof}
We take the limit as $t\uparrow\infty$ of the three term equation, and
project it to the line in $X_\rho$ spanned by~$e_l$:
\begin{align*} a_\infty(f) &\= \rho(T)^{-1} v_k(T)^{-1} \, 1^{s-k/2}\,
1^{-s-k/2} \, a_\infty(f)\\
&\qquad\hbox{}
+ \rho(T')^{-1}v_k(T')^{-1}\, (1-i)^{-s+k/2}\,(1+i)^{-s-k/2} \,
1^{s-k/2}\, 1^{s+k/2}\, f(1)
\,,
\displaybreak[0]\\
\bigl(a_\infty(f),e_l\bigr)_\rho &\= e^{-2\pi i \k_l} \,
\bigl(a_\infty(f),e_l\bigr)_\rho
+ 2^{-s}\, i^{-k/2} \,\bigl( \rho(T')^{-1} v_k(T')^{-1} f(1),
e_l\bigr)_\rho\,.
\end{align*}
For $\k_l=0$ this gives the assertion in the lemma.\end{proof} We do not
know how $\rho(T')$ acts on the eigenbasis for $\rho(T)$, and further
simplification seems hard.



\section{Period functions}\label{sect-pi} In this section we show that
we can associate a period function to each Maass cusp form. We follow
the approach in \cite{BLZ15} for weight $0$, and adapt it to arbitrary
real weights.

\subsection{Poisson kernel}\il{Pk}{Poisson kernel}The function
$R(t;z)^s$ in \cite[ \S2.2]{BLZ15} can be generalized as the
scalar-valued function on
$\proj\RR \times \uhp$\ir{Poisk}{R_{s,k}(t,z)}
\be \label{Poisk}R_{s,k}(t,z) \= y^s \,
\Bigl(\frac{t-i}{t-z}\Bigr)^{s-k/2}\, \Bigl( \frac{t+i}{t-\bar
z}\Bigr)^{s+k/2}\,.\ee
As a function of $t$ it is real-analytic on~$\proj\RR$, and as a function of
$z$ it is real-analytic on~$\uhp$. It satisfies
\be \label{efR}\Dt_{-k} R_{s,k}(t,\cdot) \= s(1-s)\,
R_{s,k}(t,\cdot)\,.\ee
For $g\in G$
\be\label{diagactR} \bigl(R_{s,k}|^\prs_{s,k} g \bigr)|_{-k}g \=
\bigl( R_{s,k}|_{-k}g\bigr)
|^\prs_{s,k} g = R_{s,k}\,.\ee
To see this we note that the operator $|^\prs_{s,k}g$ acts on the
variable $t$, and the operator $|_{-k}g$ on the variable $z$. Hence
these operators commute. We carry out the computation for $\rmatc abcd$
near the unit element of $G$. Then the handling of powers of complex
quantities is not hard. The relation extends by analyticity. The action
of $\rmatc{-1}00{-1}$ is multiplication by $e^{-\pi i k}$ for
$|^\prs_{s,k}$ and multiplication by $e^{\pi i k }$ for $|_{-k}$; so
there is no difficulty on the region of discontinuity.

We write $j_{k}(g , z) \= (c_g z+d_g)^{-k}$ and $J^\prs_{s,k}(g,t)$ for
the factor before $\ph(gt)$ in~\eqref{prs-trf}. With
\eqref{diagactR} we see for $\gm\in \Gm$:
\begin{align}\nonumber
v_k(\gm)^{-1} & J^\prs_{s,k} (\gm, t)\, v_{-k}(\gm)^{-1}\, j_{-k}(\gm
,z) \, R_{s,k} (\gm t, \gm z) \= R_{s,k}(t,z)\,,\displaybreak[0]\\
\nonumber
v_k(\gm)^{-1} & J^\prs_{s,k} (\gm, t)\, \bigl( R_{s,k}(\gm
t,\cdot)|_{v_{-k},-k}\gm\bigr)(z)
\= R_{s,k}(t,z)\,,
\displaybreak[0]\\
\nonumber
v_k(\gm)^{-1} & J^\prs_{s,k} (\gm, t) \, R_{s,k}(\gm t,z) \=
R_{s,k}(t,\cdot) |_{v_{-k},-k}\gm^{-1}(z)\,,\displaybreak[0]\\
\label{Rd1}
R_{s,k}(\cdot,z)&|^\prs_{v_k,s,k} \gm \; (t) \=R_{s,k}(t,\cdot)
|_{v_{-k},-k}\gm^{-1}\, (z)\,.
\end{align}

\subsection{Green's form}\il{Gsf}{Green's form}The generalization of the
differential form $[u,v]$ in \cite[(1.9)]{BLZ15} is\ir{Gf}{[u_1,u_2]_k}
\badl{Gf} \bigl[ u_1,u_2\bigr]_k &\= \Bigl( \frac{\partial
u_1}{\partial z}\, u_2+\frac k{4iy} u_1 u_2 \Bigr)\, dz
+\Bigl( u_1\,\frac{\partial
u_2}{\partial_{\bar z}}
- \frac k{4iy} u_1u_2\Bigr)\, d\bar z\\
&\=-2i \Bigl( ( X_{+,k} u_1) v_2\, dz + u_1 (X_{-,-k} v_2) \, d\bar
z\Bigr)
\,,\eadl
for $u_1, u_2\in C^\infty(\uhp)$ (or for smooth functions on an open
subset of $\uhp$).

Some properties are
\begin{align}
d\bigl( u_1u_2\bigr) &\= \bigl[ u_1,u_2\bigr]_k + \bigl[u_2,u_1]_{-k}\,,
\displaybreak[0]
\\
\label{dhk}
d\bigl[ u_1,u_2]_k &\= \bigl(u_1\, \Dt_{-k} u_2 - u_2\,\Dt_k u_1
\bigr)\, \frac{dz\,d\bar z}{-4y^2}\,.
\end{align}
If $u_1 $ is an eigenvector of $\Dt_k$ and $u_2$ is an eigenvector of
$\Dt_{-k}$ with the same eigenvalue, then $\bigl[u_1,u_2\bigr]_k$ is a
closed 1-form. For all $g\in G$ we have
\be \label{Gf-inv}
\bigl[ u_1|_k g, u_2|_{-k} g \bigr]_k \= \bigl[ u_1,u_2\bigr]_k \circ
g\,,\ee
where $\circ g$ means the substitution $z\mapsto g z$. We can write it
as $\bigl[ u_1,u_2\bigr]_k |_0 g$.

These properties go through if one of $u_1$ and $u_2$ is vector-valued.
Then the products involved in the formulas make sense, and the
relations hold for each component of the vector-valued function.

Let $u_1$ be vector-valued with values in~$X_\rho$, and let $u_2$ be scalar-valued. Then we have for
$\gm\in \Gm$
\be\label{khtGm}
\bigl[ u_1|_{ \rho v_k,k}\gm, u_2|_{v_{-k},-k}\gm \bigr]_k \=
\rho(\gm)^{-1} \, \bigl[ u_1,u_2\bigr]_k \circ \gm\,.\ee

\rmrk{Disk model} The upper half-plane is isomorphic as a complex
variety with the unit disk by the map $z\mapsto w=\frac{z-i}{z+i}$ with
inverse $w\mapsto z=i\frac{1+w}{1-w}$. In the proof of
Proposition~\ref{prop-psC} it will be convenient to use the formulation
of the Green's form \ on the unit disk:\il{Gfdm}{Green's form, disk
model}
\badl{Gfdm} \bigl[a,b\bigr]_k &\= \Bigl(\frac{\partial a}{\partial w}b+
\frac{k(1- \bar w)}{2(1-w)(1-|w|^2)}ab \Bigr)\, dw
\\
&\qquad\hbox{}
+\Bigl( a \frac{\partial b}{\partial\bar w} + \frac{k(1-w)}{2(1-\bar
w)(1-|w|^2)} ab \Bigr)\, d\bar w\,.
\eadl

\subsection{Differential form}For smooth functions
$u\colon \uhp\rightarrow X_\rho$ we have a differential form of degree 1 with
values in the real-analytic functions
$\proj\RR \rightarrow X_\rho$:\ir{etaks}{\eta_{s,k}(u)}
\be\label{etaks} \eta_{s,k}(u) \= \bigl[ u, R_{s,k}\bigr]_k\,.\ee
This is a differential form on $\uhp$ with values in the functions on
$\proj\RR$. If we want to stress the role of the variables, we write
$\eta_{s,k}(u;z,t) = \bigl[ u(z), R_{s,k}(t,z) \bigr]_k$. If
$\Dt_k u = s(1-s)\, u$, then $\eta_{s,k}(u) $ is a closed form.
(Use \eqref{dhk} and \eqref{efR}.)

For Maass cusp forms $u\in \A_k^0(s,\rho v_k)$,
we have for $z_1,z_2\in \uhp$ and for $\gm\in \Gm$
\be \label{intetatr}\int_{\gm^{-1}
z_1}^{\gm^{-1} z_2} \eta_{s,k}(u)
\= \int_{z_1}^{z_2}\eta_{s,k}(u) |^\prs_{\rho v_k ,s,k}\gm \ee
as can be checked as follows:
\begin{align*}
\int_{z_1}^{z_2}&\eta_{s,k}(u) |^\prs_{\rho v_k,s,k}\gm = \rho(\gm)^{-1}\, v_k(\gm)^{-1}\,
\int_{z_1}^{z_2} \Bigl[ u, R_{s,k} \bigm|^\prs_{s,k}\gm\Bigr]_k
&&\text{by \eqref{prsrvk}}
\displaybreak[0]\\
&\= \rho(\gm)^{-1}  \, v_k(\gm)^{-1}
\int_{z_1}^{z_2} \Bigl[ u,R_{s,k}\bigm|_{-k}
\gm^{-1} \Bigr]_k&&\text{by \eqref{Rd1}}
\displaybreak[0]\\
&\= \rho(\gm)^{-1}  \, v_k(\gm)^{-1}
\int_{z_1}^{z_2} \Bigl[ u|_{\rho
v_k,k}\gm^{-1},R_{s,k}  \bigm|_{-k } \gm^{-1} \Bigr]_k
&& \text{since }u\in\A^0_k(s,\rho v_k)
\displaybreak[0]\\
&\= \int_{z_1}^{z_2} \Bigl[ u\bigm|_{ k}\gm^{-1},
R_{s,k} \bigm|_{-k} \gm^{-1} \Bigr]_k
&& \text{by \eqref{|rhockk} and}\\
&&&(\rho v_k)(\gm^{-1})^{-1} \= (\rho v_k)(\gm)
\displaybreak[0]\\
&\= \int_{z_1}^{z_2} \bigl[u, R_{s,k} \bigr]_k\circ \gm^{-1}&&\text{by
\eqref{khtGm}}
\displaybreak[0]\\
&\= \int_{\gm^{-1}z_1}^{\gm^{-1}z_2} \eta_{s,k}(u)\,.
\end{align*}

\subsection{Cocycles attached to Maass cusp forms}\label{sect-caMcf} For
  $u\in \A^0_k(s,\rho v_k)$ we put
\be c^u(z_1,z_2) \= \int_{z_1}^{z_2} \eta_{s,k}(u)\qquad\text{for
}z_1,z_2\in \uhp\,.\ee
This function on $\uhp\times\uhp$ has values in the $\Gm$-module
$\V \om{\rho v_k,s,k}(\proj\RR)$, and it does not depend on the choice
of the path from $z_1$ to $z_2$. It satisfies the homogeneous cocycle
relations
\badl{coc-prop} c^u(z_1,z_2) + c^u(z_2,z_3)
&\= c^u(z_1,z_3)&\quad &\text{for }z_1,z_2,z_3\in \uhp\,,\\
c^u(\gm^{-1}z_1,\gm^{-1}z_2) &\= c^u(z_1,z_2)|^\prs_{\rho v_k,s,k}\gm&&
\text{for }z_1,z_2\in \uhp,\;\gm\in \Gm\,.
\eadl
So $c^u$ is a cocycle in
$Z^1\bigl( \Gm;\V\om{\rho v_k,s,k}(\proj\RR)\bigr)$.
(See the discussion in \cite[\S6.1]{BLZ15}.) This definition does not
need a growth condition, and it works for more automorphic forms than
cusp forms.
\rmrk{Parabolic cocycles}If $u$ is a cusp form, then it has exponential
decay as $y\uparrow \infty$, and the same holds for its derivatives.
This implies that $\int_{z_1}^\infty \eta_{s,k}(u)$ converges 
absolutely  and does not depend on the path from
$z_1\in \uhp$ to $\infty$.

The cusps of $\Gm=\SL_2(\ZZ)$ form the set $\proj\QQ \subset\proj\RR$.
Each cusp $\xi$ is of the form $\xi=\gm\infty$ for some (non-unique)
$\gm\in \Gm$. The invariance of $u$ under $|_{v_k,k}\Gm$ implies that
$\eta_{s,k}(u)$ has fast decay when approaching any cusp of $\Gm$. So
we can form integrals $\int_{z_1}^\xi \eta_{s,k}(u)$ for any cusp
$\xi$, and also integrals between two cusps. In this way we get
\ir{cparb}{c^u_\pb}
\be\label{cparb} c^u_\pb(\xi_1,\xi_2) \= \int_{\xi_1}^{\xi_2}
\eta_{s,k}(u)\qquad\text{for }\xi_1,\xi_2\in \proj\QQ\,.\ee
The function $c^u_\pb$ on $\proj\QQ\times\proj\QQ$ has properties
analogous to~\eqref{coc-prop}. It requires some work to determine its
regularity properties. Working out $\bigl[ u, R_{s,k}(t,\cdot)\bigr]_k$
we see that $c^u_\pb(\xi_1,\xi_2)$ is real-analytic at all points of
$\proj\RR\setminus\{\xi_1,\xi_2\}$.

The behavior at $t=\xi_1$ and $t=\xi_2$ has to be considered. By the
transformation behavior under $\Gm$ we can reduce the consideration to
integrals $\int_z^\infty \eta_{s,k}(u)$. Proceeding in the same way as
in \cite[Proposition 9.7]{BLZ15} we can show that it is $C^\infty$ in a
neighborhood of $\infty$ in~$\proj\RR$. Here we are content to have
continuity.

In a notation analogous to the notations in \cite{BLZ15}, we define the
$\Gm$-module \il{Vom0}{$\V{\om,0}{\rho v_k,s,k}(\proj\RR)$
}$\V{\om^0,0}{\rho v_k,s,k}(\proj\RR)$ as the space of functions in the
space $\V0{\rho v_k,s,k}(\proj\RR)$ of continuous functions that
restrict to an element of
$\V\om{\rho v_k,s,k}\bigl( \proj\RR \setminus E\bigr)$ for a finite set
$E \subset \proj\QQ$. For $c^u_\pb(\xi_1,\xi_2)$ the set $E$ can be
taken as $\{\xi_1,\xi_2\}$.

The index \emph{par} in
\be c^u_\pb \in Z_\pb^1\bigl( \Gm;\V{\om^0,0}{\rho
v_k,s,k}(\proj\RR)\bigr)\ee
indicates \emph{parabolic}. For each $\xi\in \proj\QQ$ there is an
infinite subgroup of $\Gm$ fixing $\xi$. For $\xi=\infty$ this is the
subgroup generated by $T$ and $-I_2$. This has the consequence that
cocycles on $\proj\QQ\times\proj\QQ$ do not compute the usual
cohomology groups $H^1(\Gm;\cdot)$, but the \emph{parabolic cohomology
groups} $H^1_\pb
(\Gm;\cdot)$.

\subsection{The period function of a Maass cusp form}
\begin{prop}\label{prop-P}Let $u\in \A^0_k(s,\rho v_k)$. We put
\il{Pu}{$P(u)$}$P(u)= c^u_\pb(0,\infty)$.
\begin{enumerate}[label=$\mathrm{(\alph*)}$, ref=$\mathrm{\alph*}$]
\item\label{eq:Pua} $P(u)$ has values in the $1$-eigenspace of $|^\prs_{\rho
v_k,s,k}(-I_2)$.
\item\label{eq:Pub} $P(u) \= - P(u)|^\prs_{\rho v_k,s,k}S \= P(u) |^\prs_{\rho
v_k,s,k}T + P(u) |^\prs_{\rho v_k,s,k} T'$.
\item\label{eq:Puc} $P(u)$ is real-analytic on $(\infty,0)_c \cup
(0,\infty)$, with a continuous extension across $0$ and $\infty$.
\item\label{eq:Pud} $P(u)$ has a holomorphic extension to $\CC\setminus i\RR$.
\item\label{eq:Pue} If $\xi_1,\xi_2\in \proj\QQ$, then there is a finite number of
elements $\gm_j\in \Gm$ such that
\be c^u_\pb(\xi_1,\xi_2) \= \sum_j \, P(u) |^\prs_{\rho v_k,s,k}
\gm_j\,.\ee
\end{enumerate}
\end{prop}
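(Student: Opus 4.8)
The plan is to derive all five parts from the two cocycle relations in~\eqref{coc-prop} (additivity and $\Gm$-equivariance), the regularity of the parabolic cocycle already recorded in Subsection~\ref{sect-caMcf}, and the explicit shape of the Poisson kernel $R_{s,k}$; only part~\eqref{eq:Pud} carries real analytic content, the rest being formal. For part~\eqref{eq:Pua} I would use that $-I_2$ acts trivially on $\proj\RR$, so that the equivariance relation in~\eqref{coc-prop} with $\gm=-I_2$ gives $P(u)=c^u_\pb((-I_2)^{-1}\cdot 0,(-I_2)^{-1}\cdot\infty)=P(u)|^\prs_{\rho v_k,s,k}(-I_2)$, which is precisely membership in the $1$-eigenspace. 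Part~\eqref{eq:Pub} is purely combinatorial: additivity (taking the two outer points equal) gives antisymmetry $c^u_\pb(\xi_2,\xi_1)=-c^u_\pb(\xi_1,\xi_2)$, and since $S$ (like $S^{-1}$) interchanges $0$ and $\infty$ on $\proj\RR$, equivariance yields $P(u)|^\prs_{\rho v_k,s,k}S=c^u_\pb(\infty,0)=-P(u)$. For the three-term relation I would compute $T^{-1}\cdot 0=-1$, $T^{-1}\cdot\infty=\infty$, $(T')^{-1}\cdot 0=0$, $(T')^{-1}\cdot\infty=-1$, so that equivariance gives $P(u)|^\prs_{\rho v_k,s,k}T=c^u_\pb(-1,\infty)$ and $P(u)|^\prs_{\rho v_k,s,k}T'=c^u_\pb(0,-1)$; adding these and telescoping through the cusp $-1$ returns $c^u_\pb(0,\infty)=P(u)$.

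Part~\eqref{eq:Puc} is essentially already contained in Subsection~\ref{sect-caMcf}, where $c^u_\pb(\xi_1,\xi_2)$ is shown to be real-analytic on $\proj\RR\setminus\{\xi_1,\xi_2\}$ and, by the argument modelled on \cite[Proposition~9.7]{BLZ15}, to extend continuously across $\xi_1$ and $\xi_2$; specializing $\{\xi_1,\xi_2\}=\{0,\infty\}$ gives real-analyticity on $(\infty,0)_c\cup(0,\infty)$ together with continuous extension across $0$ and $\infty$. Part~\eqref{eq:Pud} is the analytic heart of the statement. I would use that $\eta_{s,k}(u)$ is closed and the defining integral is path-independent to take the integration contour to be the geodesic from $0$ to $\infty$, i.e.\ the positive imaginary axis $z=iy$, $y>0$. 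For such $z$ the singularities of $\zeta\mapsto R_{s,k}(\zeta,iy)$ — the branch points at $\zeta=\pm i$ from the numerators and the poles at $\zeta=iy$, $\zeta=-iy$ from the denominators — all lie on $i\RR$; since the Möbius factor $\frac{\zeta-i}{\zeta-iy}$ (respectively $\frac{\zeta+i}{\zeta+iy}$) carries $i\RR$ to $\RR$, its non-integer power is single-valued and holomorphic for $\zeta\in\CC\setminus i\RR$. Exponential decay of the cusp form $u$ and its derivatives towards both cusps makes the integral converge locally uniformly in $\zeta$ on $\CC\setminus i\RR$, so differentiation under the integral sign yields the holomorphic extension.

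Finally, part~\eqref{eq:Pue} follows from the connectedness of the Farey tessellation. Since $\Gm$ acts transitively on the ordered edges of the Farey tessellation and the edge $\{0,\infty\}$ is one of them, I would join $\xi_1$ to $\xi_2$ by a finite chain of cusps $\xi_1=\eta_0,\eta_1,\dots,\eta_N=\xi_2$ in which each consecutive pair is a Farey edge, hence of the form $\gm_j\cdot 0=\eta_{j-1}$, $\gm_j\cdot\infty=\eta_j$ for some $\gm_j\in\Gm$. Additivity then gives $c^u_\pb(\xi_1,\xi_2)=\sum_j c^u_\pb(\eta_{j-1},\eta_j)$, and equivariance rewrites each summand as $P(u)|^\prs_{\rho v_k,s,k}\gm_j^{-1}$, which is the asserted finite sum after renaming $\gm_j^{-1}$. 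The main obstacle is part~\eqref{eq:Pud}: verifying that the branch cuts of the two non-integer powers in $R_{s,k}$ can be pinned to $i\RR$ along the chosen contour and that the resulting integral converges to a holomorphic function up to the cusps $0$ and $\infty$ — everything else is a formal consequence of~\eqref{coc-prop} and the Farey structure.
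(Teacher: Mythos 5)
Your proposal is correct and takes essentially the same route as the paper's proof: parts (a), (c) are specializations of the cocycle properties and regularity recorded in Subsection~\ref{sect-caMcf}, part (b) is the same $S$- and $(T,T')$-computation (the paper phrases it via~\eqref{intetatr}, you via the induced cocycle relations, which is equivalent), part (d) uses the same choice of the positive imaginary axis as integration contour, and part (e) uses the same Farey-tessellation chain argument. Your write-up of (d) merely fills in the branch-cut and convergence details that the paper leaves implicit.
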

\begin{proof}Statements~\eqref{eq:Pua} and \eqref{eq:Puc} are specializations of properties
already observed for integrals $c^u_\pb(\xi_1,\xi_2)$ with general
$\xi_1$ and $\xi_1$ in $\proj\QQ$.

We can take the path of integration from $0$ to $\infty$ for $P(u)$ as
the positive imaginary axis. Then we obtain part~\eqref{eq:Pud}.

By \eqref{intetatr}
\[ \int_0^\infty \eta_{s,k}(u) |^\prs_{\rho v_k,s,k} S \= \int_\infty^0
\eta_{s,k}(u) \= - \int_0^\infty \eta_{s,k}(u)\,.\]
This gives the first relation in~\eqref{eq:Pub}. For the other relation we use
\[ \int_0^\infty \eta_{s,k}(u) |^\prs_{\rho v_k,s,k}\bigl( T+ T'\bigr)
\= \int_{-1}^\infty \eta_{s,k}(u) + \int_0^{-1}\eta_{s,k}(u) \=
\int_0^\infty \eta_{s,k}(u)\,.\]

We use the well-known Farey tesselation (sketched in
Figure~\ref{fig-Farey}). The endpoints of the edges run through
$\proj\QQ$. Each edge is the translate $\gm e_{0,\infty}$ for some
$\gm\in \Gm$, where $e_{0,\infty}$ denotes the path from $0$ to
$\infty$. We note that $e_{\infty,0}= S^{-1} e_{0,\infty}$. Each vertex
is connected to $\infty$ by a path along finitely many edges of the
tesselation.
(Use a Farey sequence with bounded denominators to see this.)
In this way we get for each pair $(\xi_1,\xi_2) \in \proj\QQ$ a finite
path $\sum_{j} \gm_j\, e_{0,\infty}$ from $\xi_1$ to $\xi_2$. We use
this to obtain part~\eqref{eq:Pue}.
\end{proof}

\begin{figure}
\begin{center}
\includegraphics[width=.8\textwidth]{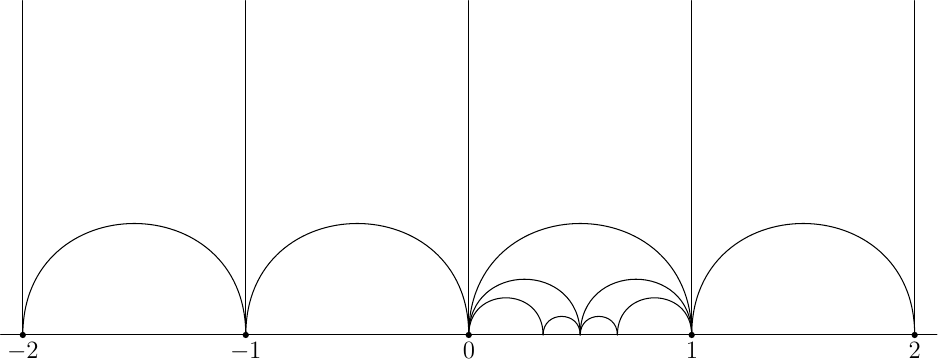}
\end{center}
\caption[]{The Farey tesselation. \\
This tesselation of $\uhp$ consists of all $\Gm$ translates of the
hyperbolic triangle with corner points $0$, $\infty$, and $1$. }
\label{fig-Farey}
\end{figure}
\begin{prop}[Period function of Maass cusp form]\label{prop-pfMcf}\mbox{ }
\begin{enumerate}[label=$\mathrm{(\roman*)}$, ref=$\mathrm{\roman*}$]
\item For $P(u) $ as in Proposition~\ref{prop-P}, its restriction to
$(0,\infty)$ defines $\pf(u)$ by\ir{pf}{\pf}
\be\label{pf} \pf(u) \coloneqq P(u)|_{(0,\infty)}\ee
This determines a linear map
$\pf\colon  \A^0_k(s,\rho v_k) \rightarrow \FE^\om_{\rho v_k,s,k}$. We call
$\pf(u)$ the \il{pfau}{period function associated to Maass cusp
form}\emph{period function} associated to~$u$.
\item For each $f\in \FE^\om_{\rho v_k,s,k}$ there is a unique
element $p\in \V{\om^0,0}{\rho v_k,s,k}(\proj\RR)$ for which properties~\eqref{eq:Pua}, \eqref{eq:Pub} and~\eqref{eq:Puc} in Proposition~\ref{prop-P} hold, with restriction
$p|_{(0,\infty)} $ equal to $f$.
\end{enumerate}
\end{prop}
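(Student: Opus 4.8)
The plan is to prove the two assertions separately: part~(i) by matching the conclusions of Proposition~\ref{prop-P} against the four defining conditions of Definition~\ref{def-FEom}, and part~(ii) by a uniqueness-driven construction that reverses the restriction map $p\mapsto p|_{(0,\infty)}$.

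For part~(i) I would first observe that $u\mapsto\eta_{s,k}(u)=[u,R_{s,k}]_k$ is linear in~$u$, whence $u\mapsto c^u_\pb(0,\infty)=P(u)$ and therefore $\pf$ are linear. It then remains to verify $\pf(u)\in\FE^\om_{\rho v_k,s,k}$. Condition~\eqref{eq:FEa}, the three-term relation~\eqref{eq:FEb}, and the real-analyticity and existence-of-limits part of~\eqref{eq:FEc} are the restrictions to $(0,\infty)$ of Proposition~\ref{prop-P}\eqref{eq:Pua}, of the three-term relation in~\eqref{eq:Pub}, and of~\eqref{eq:Puc}; while~\eqref{eq:FEd} follows from Proposition~\ref{prop-P}\eqref{eq:Pud} because $\CC\setminus i\RR$ contains the right half-plane and hence a wedge $W_\dt$ with $\dt<\pi/2$. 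The only item needing a brief computation is the limit relation~\eqref{lim-rel}: the relation $P(u)=-P(u)|^\prs_{\rho v_k,s,k}S$ from~\eqref{eq:Pub} shows that $P(u)=-(\pf(u)|^\prs_{\rho v_k,s,k}S)$ on $(-\infty,0)$, so letting $t\uparrow0$ and using the continuous extension across~$0$ from~\eqref{eq:Puc} together with the limit computation preceding Definition~\ref{def-FEom} yields $a_0(\pf(u))=-\rho(S)a_\infty(\pf(u))$.

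For part~(ii) I would establish uniqueness first, since it forces the construction. If $p$ satisfies~\eqref{eq:Pua}--\eqref{eq:Puc} with $p|_{(0,\infty)}=f$, then the relation $p=-p|^\prs_{\rho v_k,s,k}S$, together with the fact that $S$ carries $(-\infty,0)$ into $(0,\infty)$, forces $p=-f|^\prs_{\rho v_k,s,k}S$ on $(-\infty,0)$; continuity across $0$ and $\infty$ then fixes the remaining two values, so $p$ is unique. For existence I would define $p:=f$ on $(0,\infty)$ and $p:=-f|^\prs_{\rho v_k,s,k}S$ on $(-\infty,0)$ and check the required properties. Real-analyticity off $\{0,\infty\}$ is clear since $|^\prs_{\rho v_k,s,k}S$ preserves real-analyticity; the pointwise eigenvalue condition~\eqref{eq:Pua} is inherited from condition~\eqref{eq:FEa} of $f$; and the relation $p=-p|^\prs_{\rho v_k,s,k}S$ holds by construction, its self-consistency on $(0,\infty)$ reducing, via $S^2=-I_2$ and the cocycle property of the automorphy factor, to exactly $f|^\prs_{\rho v_k,s,k}(-I_2)=f$, i.e.\ condition~\eqref{eq:FEa}.

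The two genuinely analytic points, where I expect the main work, are the continuity of $p$ at the cusps and the three-term relation for $p$ on all of $\proj\RR$. Continuity across~$0$ is where condition~\eqref{eq:FEc} of $f$ is used: the right-hand limit is $a_0(f)$ and the left-hand limit is $\lim_{t\uparrow0}-(f|^\prs_{\rho v_k,s,k}S)(t)=-\rho(S)a_\infty(f)$, and these agree by~\eqref{lim-rel}. Continuity across $\infty$ then follows by applying $|^\prs_{\rho v_k,s,k}S$, which is a real-analytic isomorphism interchanging neighborhoods of $0$ and $\infty$, so continuity at $\infty$ of $p=-p|^\prs_{\rho v_k,s,k}S$ is equivalent to continuity at $0$. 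This places $p$ in $\V{\om^0,0}{\rho v_k,s,k}(\proj\RR)$ with exceptional set $\{0,\infty\}$ and gives~\eqref{eq:Puc}. Finally, the three-term relation holds on $(0,\infty)$ because $f$ satisfies~\eqref{3te}; by the propagation remark preceding Definition~\ref{def-FEom} it then also holds on $(-1,0)$ and on $(\infty,-1)_c$, hence on $\proj\RR\setminus\{-1,0,\infty\}$, and by continuity of the three terms it holds as an identity in $\V{\om^0,0}{\rho v_k,s,k}(\proj\RR)$, completing~\eqref{eq:Pub}. The crux is thus the bookkeeping that assembles the locally defined pieces and their translates into a single module element, rather than any single hard estimate.
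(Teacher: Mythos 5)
Your proposal is correct and follows essentially the same route as the paper's proof: part~(i) by checking that Proposition~\ref{prop-P} delivers exactly the conditions of Definition~\ref{def-FEom} (the paper compresses this to ``the definition has been arranged in such a way''), and part~(ii) by the forced extension $p=-f|^\prs_{\rho v_k,s,k}S$ on the negative axis, continuity at $0$ and $\infty$ from the limit relation~\eqref{lim-rel}, and the propagation of the three-term relation to $(-1,0)$ and $(\infty,-1)_c$. The paper's proof additionally records the holomorphic extension of $p$ to $\CC\setminus i\RR$ via Proposition~\ref{prop-extFE}, but that is not required for properties~\eqref{eq:Pua}--\eqref{eq:Puc}, so your omission of it is harmless.
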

\begin{proof}Definition~\ref{def-FEom} of the space
$\FE^\om_{\rho v_k,s,k}$ of period functions has been arranged in such
a way that the restriction of $P(u)$ to $(0,\infty)$ is a period
function.

Conversely, any period function $f$ has a unique extension
$p\in \V\om{s,k}\bigl( \RR \setminus \{0\}\bigr)$ satisfying
$p|_{\rho v_k,s,k}S = -p$. The limits in condition~\eqref{eq:FEc} in
Definition~\ref{def-FEom} imply that $p$ extends as a continuous
function on $\proj\RR$, hence
$p\in \V{\om^0,0}{\rho v_k,s,k}(\proj\RR)$. Separate computations on
$(\infty,-1)_c$ and $(-1,0)$ show that $p$ satisfies the three term
relation. The extension to $\CC'$ given in Proposition~\ref{prop-extFE}
shows that $f$ extends holomorphically to the right half-plane. Then
the action of $S$ gives the holomorphy on the left half-plane.
\end{proof}

The cocycle $c^u_\pb$ on $\proj\QQ\times\proj\QQ$ has the property that
$c^u_\pb(0,\infty)$ cannot be changed by adding a coboundary $db$ where
$b\colon  \proj\QQ \rightarrow \V{\om^0,0}{\rho v_k,s,k}$ is equivariant
under $\Gm$, as the following lemma indicates:
\begin{lem}\label{lem-np}Let $s\neq 0$. Each
$f\in \V {\om^0,\infty}{\rho v_k,s,k}$ is analytic on $(\al,\infty)$
for some $\al\geq 0$. If $f|^\prs_{\rho v_k,s,k}T = f$ on
$(\al,\infty)$, then $f=0$.
\end{lem}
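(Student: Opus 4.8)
The plan is to diagonalize the $T$-action, reduce the hypothesis to a scalar first‑order difference equation in each coordinate, exhibit an explicit solution of that equation, and then play the behavior of this solution at $\infty$ against the continuity of $f$ on $\proj\RR$. For the first (regularity) assertion, since $f\in\V{\om^0,\infty}{\rho v_k,s,k}$ is real-analytic on $\proj\RR\setminus E$ for a finite set $E\subset\proj\QQ$, I take $\al\coloneqq\max\bigl(0,\sup(E\cap\RR)\bigr)$, so that $f$ is real-analytic on $(\al,\infty)$ with $\al\geq0$. Next, using the orthonormal eigenbasis $\{e_l\}$ of $v_k(T)\rho(T)$ from~\S\ref{sect-Fe}, with $v_k(T)\rho(T)e_l=e^{2\pi i\k_l}e_l$, and writing $f=\sum_l f_l\,e_l$, the explicit shape of $|^\prs_{s,k}T$ in~\eqref{prs-trf} (whose scalar prefactors $(a\mp ic)^{\mp s\mp k/2}$ equal $1$ for $T$) together with~\eqref{prsrvk} turns the hypothesis $f|^\prs_{\rho v_k,s,k}T=f$ into the componentwise equations
\[
f_l(t)\= e^{-2\pi i\k_l}\,M(t)\,f_l(t+1)\quad(t>\al),\qquad
M(t)\coloneqq\Bigl(\tfrac{t-i}{t+1-i}\Bigr)^{s-k/2}\Bigl(\tfrac{t+i}{t+1+i}\Bigr)^{s+k/2}.
\]

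The key step is to observe that $\Phi_l(t)\coloneqq e^{2\pi i\k_l t}\,(t-i)^{s-k/2}(t+i)^{s+k/2}$ is a nowhere-vanishing solution of the same equation on $(\al,\infty)$; the verification is a direct telescoping using the principal branch, valid for $t$ large and extended by analyticity. Consequently $p_l\coloneqq f_l/\Phi_l$ is analytic and $1$-periodic on $(\al,\infty)$, and from $\Phi_l(t)=e^{2\pi i\k_l t}\,t^{2s}\bigl(1+O(1/t)\bigr)$ one has $|\Phi_l(t)|\sim t^{2\re s}$. Since $f\in\V{\om^0,\infty}{\rho v_k,s,k}$ is continuous at $\infty\in\proj\RR$, each $f_l$ has a finite limit as $t\uparrow\infty$ and is in particular bounded near $\infty$. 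In the range $\re s>0$ relevant to us (see Proposition~\ref{prop-sppMcf}) this gives $|p_l(t)|=|f_l(t)|\,t^{-2\re s}(1+o(1))\to0$; as $p_l$ is $1$-periodic, $p_l(t_0)=\lim_n p_l(t_0+n)=0$ for every $t_0$, so $p_l\equiv0$ and $f_l\equiv0$ on $(\al,\infty)$. Being a section of the sheaf, $f$ then vanishes on the open set $(\al,\infty)$, hence $f=0$.

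The delicate points, which I would flag as the main obstacle, concern the boundary of this range and the exclusion of $s=0$. When $s=0$ one has $M\equiv1$, and any constant vector in the $\k_l=0$ eigenspace solves the equation, so the conclusion genuinely fails; this is exactly why the hypothesis $s\neq0$ is imposed. On the critical line $\re s=0$ with $s\neq0$ the modulus estimate degenerates ($|\Phi_l(t)|\to1$), and I would instead use that
\[
\arg\Phi_l(t)\=2\pi\k_l t+2(\im s)\log t+O(1/t)
\]
fails to converge modulo $2\pi$ (because $\im s\neq0$), so $\Phi_l(t_0+n)$ winds and a nonzero $1$-periodic $p_l$ would prevent $f_l=p_l\Phi_l$ from converging as $t\uparrow\infty$, contradicting continuity at $\infty$; thus again $p_l\equiv0$. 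The only genuinely subtle ingredient beyond these estimates is that mere continuity at $\infty$ no longer suffices once $\re s\le 0$: there one must invoke the $C^\infty$ (indeed analytic) regularity of $f$ at the parabolic fixed point $\infty$ to exclude branch-type solutions such as powers of $t^2+1$, the point being that these are continuous but not smooth in the boundary coordinate. Since our applications live in $\re s>0$, this refinement is not needed here, and the remaining care is purely the branch bookkeeping in telescoping $M$, which is routine once $t$ is taken large enough that every factor lies near $1$.
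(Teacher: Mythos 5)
Your proof is, in substance, the paper's own argument in different packaging, and it is correct in the range $\re s\geq 0$, $s\neq 0$. The paper works with the same eigencomponents $f_l$ and simply iterates the relation $m$ times, obtaining $f_l(t)=e^{\mp 2\pi i m\k_l}\bigl(\tfrac{t-i}{t-i+m}\bigr)^{s-k/2}\bigl(\tfrac{t+i}{t+i+m}\bigr)^{s+k/2}f_l(t+m)$, and then lets $m\to\infty$ using $f_l(t+m)\to f_l(\infty)$; since this $m$-step factor is exactly $\Phi_l(t)/\Phi_l(t+m)$ (branches cause no trouble for large $t$), the paper's iterated identity is precisely your periodicity statement $p_l(t)=p_l(t+m)$. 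The genuine difference is one of care rather than of method: the paper compresses the conclusion into the single assertion that $f_l(t)\sim e^{\mp2\pi i m\k_l}m^{-2s}(\cdots)f_l(\infty)$ forces $f=0$ when $s\neq0$, whereas you separate $\re s>0$ (decay of $|\Phi_l|$) from $\re s=0$, $s\neq0$ (non-convergence of $\arg\Phi_l$ modulo $2\pi$); the latter argument is at best implicit in the paper.

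The one discrepancy is the case $\re s<0$, which the hypothesis ``$s\neq0$'' includes and which you explicitly defer. You should know that this is a defect of the lemma, not of your proof: the paper's argument also collapses there, since for $\re s<0$ the iterated relation forces $f_l(\infty)=0$ (otherwise the right-hand side blows up), and once $f_l(\infty)=0$ the displayed asymptotics, which tacitly assume $f_l(\infty)\neq0$, give nothing. In fact the statement is false for such $s$ in general: take $s\in\ZZ_{<0}$, $k=0$ and $\rho$ trivial, so that $\k_1=0$ and all powers below are rational functions; then $f(t)=(t^2+1)^{s}$ satisfies $\bigl(\tfrac{t-i}{t+1-i}\bigr)^{s}\bigl(\tfrac{t+i}{t+1+i}\bigr)^{s}\,\bigl((t+1)^2+1\bigr)^{s}=(t^2+1)^{s}$, i.e.\ $f|^\prs_{v_0,s,0}T=f$, and $f$ is a nonzero element of $\V\om{v_0,s,0}(\proj\RR)\subset\V{\om^0,\infty}{v_0,s,0}$, being real-analytic even at $\infty$ (in the chart $u=-1/t$ it equals $u^{-2s}(1+u^2)^{s}$). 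This also refutes your parenthetical claim that $C^\infty$ or analytic regularity at $\infty$ would rescue the case $\re s<0$ by excluding powers of $t^2+1$: for these $s$ such powers are analytic sections. Your instinct to prove only the range $\re s\geq0$, $s\neq0$ is therefore exactly right; it covers every application in the paper, where the lemma is used with $\re s\in(0,1)$.
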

\begin{proof}
The first statement follows from the fact that $f$ is real-analytic on
$\proj\RR$ except at finitely many cusps of~$\Gm$.

Let $f_l$ be a component function of $f$ in the decomposition
in~\eqref{ucomp}. For all $t>\al$
\bad f_l(t)&\= e^{2\pi i m \k_l}\,\Bigl(
\frac{t-i}{t-i+m}\Bigr)^{s-k/2}\, \Bigl(
\frac{t+i}{t+i+m}\Bigr)^{s+k/2}\, f_l (t+m) &&\text{ for }m\in
\ZZ_{\geq0}\\
&\;\sim\; e^{2\pi i m \k_l}\, m^{-2s} \, (t^2+1)^{2s}\,
f_l(\infty)&\quad&\text{ as } m\rightarrow\infty\,.
\ead
If $s\neq 0$, then this implies that $f$ is the zero function.
\end{proof}

\rmrk{Role of $k$}At least for $s\not\equiv \frac k2\bmod 1$ we know
that all spaces $\A^0_{k'}(s,\rho v_k)$ with $k'\equiv k\bmod 2$ are
related by the weight shifting operators; see~\eqref{isowsh}. We do not
know the effect of the weight shifting operators on the associated
period functions.



\section{Transfer operators}\label{sect-tro}

Proposition~\ref{prop-pfMcf} shows we can associate period function to
Maass cusp forms. In the introduction we indicated that discretization
of the geodesic flow on the sphere bundle of $\Gm\backslash \uhp$ leads
to transfer operators. Here we discuss such transfer operators, and
show that their eigenfunctions with eigenvalue $1$ lead to period
functions.

\subsection{Slow transfer operator} We denote by
\il{Comn}{$C^\om_\rho(I)$}$C^\om_\rho(I)$ the space of real-analytic
functions on the interval $I\subset \proj\RR$ with values in the
$1$-eigenspace of $|^\prs_{\rho v_k,s,k}(-I_2)$.

The transfer operators that we will discuss act on functions in
$C^\infty_\rho(0,\infty)$. Let \il{Gm'}{$\Gm'$}$\Gm'$ be the semigroup
in $\Gm$ generated by $T$ and $T'$. Since
$\dt^{-1} (0,\infty) \supset (0,\infty)$ for each $\dt\in \Gm'$, the
operator $|^\prs_{\rho v_k,s,k}\dt$ followed by restriction to
$(0,\infty)$ is well defined on $C^\om_\rho(0,\infty)$; it is not a
bijection. The restriction is understood in the formulas.

\begin{defn}\label{def-sltro}The \il{sltro}{transfer operator,
slow}\emph{slow transfer operator} is
\il{tro}{$\tro_{\rho v_k,s,r}$}
\[
\tro_{\rho v_k,s,k}\colon
C^\om_\rho(0,\infty) \rightarrow C^\om_\rho(0,\infty)\,,\quad
f \mapsto f|^\prs_{\rho v_k,s,k}\bigl( T+ T'\bigr)\,.
\]
\end{defn}
The period functions in Definition~\ref{def-FEom} are $1$-eigenfunctions
of the slow transfer operator. Since in the definition of
$C^\om_\rho(I)$ there are no conditions on the behavior near the
boundary points, there may be many more $1$-eigenfunctions of the slow
transfer operator than period functions.

\subsection{One-sided averages} We need results concerning the
\emph{Lerch transcendent} in \eqref{Lerch} below. Proposition~\ref{prop-Lerch} below was
shown in \cite{BCD}, starting from results of Kanemitsu, Katsurada
and Oshimoto in \cite{KKY} and Katsurada \cite{Kat}. See also~\cite[Proposition~A.1]{Fedosova_Pohl}. Lagarias and Li \cite{LL.II} give further going
information on the Lerch transcendent.

\begin{prop}\label{prop-Lerch}The \il{Ltrd}{Lerch transcendent}Lerch
transcendent\ir{Lerch}{H(s,\z,z)}
\be\label{Lerch} H(s,\z,z) \= \sum_{n\geq 0}\z^n\, \,(z+n)^{-s} \ee
converges absolutely for $\al>0$, $\re s>1$, $|\z|\leq 1$.
\begin{enumerate}[label=$\mathrm{(\roman*)}$, ref=$\mathrm{\roman*}$]
\item\label{eq:Lerch1} {\rm Meromorphic extension in $(s,z)$}
\begin{enumerate}[label=$\mathrm{(\alph*)}$, ref=$\mathrm{\alph*}$]
\item If $\z=1$, then $(s,z)\mapsto H(s,1,z)$ has a first order
singularity along $s=1$.
\item If $|\z|=1$, $\z\neq 1$, then $(s,z)\mapsto H(s,1,z)$ is
holomorphic on $\CC\times \left( \CC\setminus(-\infty,0]\right)$.
\end{enumerate}
\item\label{eq:Lerch2} {\rm Asymptotic behavior. }For $|\z|=1$ and $s\in \CC$ and
$N\in \ZZ_{\geq 0}$ there is an expansion
\be H(s,\z,z+1/2) \= \sum_{n=-1}^{N-1} C_n(\z,s) \,z^{-n-s} + \oh \bigl(
|z|^{-N-\re s}\bigr)\ee
on any region $\dt-\pi \leq \arg(z) \leq \pi-\dt$, $0<\dt\leq \pi$.

Furthermore, if $\z\neq 1$, then
\be C_{-1}(\z,s) \= 0\,.\ee
\end{enumerate}
\end{prop}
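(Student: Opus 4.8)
The plan is to reduce everything to the Mellin integral representation
\[
\Gamma(s)\,H(s,\z,z) \= \int_0^\infty \frac{t^{s-1}e^{-zt}}{1-\z e^{-t}}\,dt\,,
\]
valid for $\re s>1$, $\re z>0$ and $|\z|\le1$ (with $\z\ne1$ when $|\z|=1$), which one obtains by expanding $(1-\z e^{-t})^{-1}=\sum_{n\ge0}\z^n e^{-nt}$ and integrating term by term. The absolute convergence of the defining series asserted in the proposition is immediate from $|\z^n(z+n)^{-s}|\le|z+n|^{-\re s}$ together with comparison to $\sum_n n^{-\re s}$.

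For the meromorphic continuation in part~\eqref{eq:Lerch1} I would split the integral as $\int_0^1+\int_1^\infty$. The tail $\int_1^\infty$ is entire in $s$ and holomorphic in $z$, so the whole analytic structure is carried by $\int_0^1$, where it is governed by the behaviour of $(1-\z e^{-t})^{-1}$ at $t=0$. For $\z=1$ this factor has the Laurent expansion $t^{-1}+\tfrac12+\cdots$, so the integrand is $\sim t^{s-2}$ and $\int_0^1 t^{s-2}\,dt$ produces a simple pole at $s=1$; after division by $\Gamma(s)$ this leaves precisely a first-order singularity along $s=1$. For $|\z|=1$, $\z\ne1$, the factor is analytic at $t=0$ with value $(1-\z)^{-1}$, the integrand is $O(t^{s-1})$, and the quotient by $\Gamma(s)$ is entire in $s$. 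The holomorphy in $z$ is first obtained for $\re z>0$ and then propagated to all of $\CC\setminus(-\infty,0]$ by deforming the $t$-integration to a Hankel-type loop, which removes the half-plane restriction imposed by convergence of $e^{-zt}$.

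For the asymptotic expansion in part~\eqref{eq:Lerch2} I would apply Watson's lemma to the shifted representation
\[
\Gamma(s)\,H\bigl(s,\z,z+\tfrac12\bigr) \= \int_0^\infty t^{s-1}\,g(t)\,e^{-zt}\,dt\,,\qquad g(t)\coloneqq\frac{e^{-t/2}}{1-\z e^{-t}}\,.
\]
Near $t=0$ the function $g$ has an expansion $g(t)=\sum_{m\ge-1}g_m(\z)\,t^m$, where $g_{-1}(\z)=0$ unless $\z=1$; indeed for $\z=1$ one has $g(t)=\bigl(2\sinh(t/2)\bigr)^{-1}$, an odd function with $g_{-1}=1$. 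Feeding each monomial $t^m$ into the elementary integral $\int_0^\infty t^{s-1+m}e^{-zt}\,dt=\Gamma(s+m)\,z^{-s-m}$ and dividing by $\Gamma(s)$ yields the claimed series with $C_n(\z,s)=g_n(\z)\,\Gamma(s+n)/\Gamma(s)$. In particular $C_{-1}(\z,s)=g_{-1}(\z)/(s-1)$, which vanishes for $\z\ne1$, giving the final assertion.

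The main obstacle is establishing the error bound $\oh(|z|^{-N-\re s})$ \emph{uniformly} on the entire sector $\dt-\pi\le\arg z\le\pi-\dt$, and in particular as $z$ approaches the negative real axis. Watson's lemma in its standard form only gives uniformity on $|\arg z|\le\tfrac\pi2-\dt$; to reach $\pi-\dt$ one must rotate the contour of integration, replacing $t$ by $e^{i\ph}u$ with $\ph$ chosen so that $\re(zt)$ stays bounded below, and then carefully track the principal branches of $t^{s-1}$ and of the powers in $g$ through this rotation. This contour rotation and branch bookkeeping, combined with a uniform estimate for the Taylor remainder of $g$, is the delicate technical point; it is precisely what is supplied by the results of Kanemitsu--Katsurada--Oshimoto \cite{KKY} and Katsurada \cite{Kat} on which \cite{BCD} builds.
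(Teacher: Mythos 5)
Your proposal cannot be measured against an argument in the paper, because the paper gives none: Proposition~\ref{prop-Lerch} is quoted from the literature (it ``was shown in \cite{BCD}, starting from results of Kanemitsu, Katsurada and Oshimoto in \cite{KKY} and Katsurada \cite{Kat}''), and no proof appears in the text. What you have done is reconstruct, correctly in outline, the standard Mellin-transform argument that this literature carries out. Your representation $\Gamma(s)\,H(s,\z,z)=\int_0^\infty t^{s-1}e^{-zt}\bigl(1-\z e^{-t}\bigr)^{-1}dt$, the splitting $\int_0^1+\int_1^\infty$ for the continuation in $s$ (with the poles of $\int_0^1 t^{s-1}g(t)\,dt$ at $s\in\ZZ_{\leq0}$ cancelled by $1/\Gamma(s)$, and a surviving pole at $s=1$ exactly when $\z=1$), and the key observation that the shift by $\tfrac12$ produces the kernel $g(t)=e^{-t/2}\bigl(1-\z e^{-t}\bigr)^{-1}$, equal to $1/(2\sinh(t/2))$ when $\z=1$ and regular at $t=0$ when $\z\neq1$, so that $C_{-1}(\z,s)=g_{-1}(\z)\,\Gamma(s-1)/\Gamma(s)$ vanishes for $\z\neq1$ --- all of this is sound, and it makes the mechanism behind the statement visible where the paper offers only a citation. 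The one step you do not carry out is the one you flag yourself: uniformity of the error term on the full sector $\dt-\pi\leq\arg z\leq\pi-\dt$, which needs the ray of integration rotated to $\arg t=\ph$ with $|\ph|<\pi/2$ (legitimate, since the poles of the kernel lie on the imaginary axis at $2\pi i(\ZZ+\k)$ for $\z=e^{2\pi i\k}$), Watson's lemma applied on each rotated ray, and the resulting sectors patched together, with constants degrading as $\ph\to\pm\pi/2$ --- which is precisely why $\dt>0$ is required. Since you defer exactly this point to the same references the paper cites, your proof is, like the paper's, complete only modulo the literature; the difference is that your deferred step is a routine contour-rotation estimate rather than the whole proposition, so the proposal is a genuine improvement in content, with no wrong step --- only the minor caveat that $|(z+n)^{-s}|$ equals $|z+n|^{-\re s}e^{\im s\cdot\arg(z+n)}$ rather than $|z+n|^{-\re s}$, a bounded discrepancy that does not affect your convergence comparison.
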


The following lemma defines so-called one-sided averages, which we will
use to define the fast transfer operator.
\begin{lem}\label{lem-1sa}Let $\al, \bt\in \RR$ satisfy $\bt <\al$.
\begin{enumerate}[label=$\mathrm{(\alph*)}$, ref=$\mathrm{\alph*}$]
\item\label{eq:1saa} For all $s\in \CC$ with $\re s>\frac{1}2$ the \emph{one-sided
averages}\ir{av}{\av+,\;\av-}\il{osav}{one-sided average}
\badl{av} \bigl( f|^\prs_{\rho v_k,s,k}\av+ \bigr)(t) &\=
\sum_{m=0}^\infty \bigl(f|^\prs_{\rho v_k,s,k} T^m\bigr)(t)\,,\\
\bigl( f|^\prs_{\rho v_k,s,k}\av- \bigr)(t) &\=
- \sum_{m\leq -1} \bigl(f|^\prs_{\rho v_k,s,k} T^m\bigr)(t)
\eadl
converge absolutely for all $f\in C^\om_\rho \bigl( (\al,\bt)_c\bigr)$
and define
\[ f|^{\prs}_{\rho v_k,s,k}\av+ \in C^\om_\rho\bigl(\al,\infty)\quad\text{
and }\quad f|^\prs_{\rho v_k,s,k}\av - \in C^\om_\rho\bigl(
(\infty,\bt+1)_c\bigr)
\,.\]
\item\label{eq:1sab} The operators $|^\prs_{\rho v_k,s,k}\av+$ and
$|^\prs_{\rho v_k,s,k}\av-$ commute with $|^\prs_{\rho v_k,s,k}T$.
\item\label{eq:1sac} For $f\in C^\om_\rho\bigl( (\al,\bt)_c)$ the function
$f|^\prs_{\rho v_k,s,k} \av+ |^\prs_{\rho v_k,s,k}(1-T) $ is equal to
the restriction of $f$ to $(\al,\infty)$, and the function
$f|^\prs_{\rho v_k,s,k} \av- |^\prs_{\rho v_k,s,k}(1-T) $ is equal to
the restriction of $f$ to $(\infty,\bt)_c$.
\item\label{eq:1sad} For each $f\in C^\om\bigl( (\al,\bt)_c\bigr)$ the functions
$(s,t)\mapsto \bigl(f|^\prs_{\rho v_k,s,k}\av\pm\bigr)(t)$ are
real-analytic in $(s,t)$ and holomorphic in $s$ on the region of
$(s,t)$ with $\re s>\frac{1}2$, and $t\in (\al,\infty)$, respectively
$t\in (\infty,\bt+1)_c$.
\end{enumerate}
\end{lem}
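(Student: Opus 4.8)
The plan is to reduce all four statements to the explicit form of the individual summands $f|^\prs_{\rho v_k,s,k}T^m$ together with one decay estimate. First I would compute these summands. Since $T^m=\rmatc 1m01$ has $a=d=1$, $c=0$, the prefactors $(a-ic)^{-s+k/2}(a+ic)^{-s-k/2}$ in \eqref{prs-trf} equal $1$, while $T^{-m}(\pm i)=\pm i-m$ and $T^m t=t+m$; decomposing $f=\sum_l f_l\,e_l$ in the eigenbasis of $\rho(T)v_k(T)$ from \S\ref{sect-Fe} and using \eqref{prsrvk}, the $l$-th component of $f|^\prs_{\rho v_k,s,k}T^m$ is
\[
e^{-2\pi i m\k_l}\,\left(\frac{t-i}{t+m-i}\right)^{s-k/2}\left(\frac{t+i}{t+m+i}\right)^{s+k/2}f_l(t+m)\,.
\]
The decisive elementary estimate is that the modulus of this kernel factor equals $\bigl((t^2+1)/((t+m)^2+1)\bigr)^{\re s}$ times a factor bounded uniformly for $(s,t)$ in compacta: the two appearances of $k$ cancel, and the imaginary part of $s$ enters only through bounded arguments of the kernel. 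Hence each component is $\oh\bigl(|m|^{-2\re s}\bigr)$, uniformly on compacta.

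For part~\eqref{eq:1saa} I would combine this with the boundedness of the $f_l$ near $\infty$, which holds because $f$ is real-analytic at the point $\infty\in(\al,\bt)_c$. Comparison with $\sum_m|m|^{-2\re s}$ then yields absolute and locally uniform convergence precisely when $\re s>\tfrac12$. The stated domains follow by checking that for $t\in(\al,\infty)$ (resp. $t\in(\infty,\bt+1)_c$) every argument $t+m$ with $m\ge 0$ (resp. $m\le -1$) lies in $(\al,\bt)_c$, while the kernel has no poles on $\RR$ (its singularities sit at $t=\pm i-m$), so the sums are well defined there. Membership in $C^\om_\rho$ is immediate once analyticity is in hand, since each summand takes values in the $1$-eigenspace of $|^\prs_{\rho v_k,s,k}(-I_2)$ and $-I_2$ is central.

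Parts~\eqref{eq:1sab} and~\eqref{eq:1sac} are then formal consequences of the absolute convergence proved in~\eqref{eq:1saa}, which licenses reindexing the series. Shifting $m\mapsto m+1$ gives $(f|^\prs_{\rho v_k,s,k}\av\pm)|^\prs_{\rho v_k,s,k}T=(f|^\prs_{\rho v_k,s,k}T)|^\prs_{\rho v_k,s,k}\av\pm$, which is~\eqref{eq:1sab}; and the same shift makes $f|^\prs_{\rho v_k,s,k}\av\pm|^\prs_{\rho v_k,s,k}(1-T)$ telescope down to the single surviving term $f|^\prs_{\rho v_k,s,k}T^0=f$, restricted to the appropriate interval, which is~\eqref{eq:1sac}.

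Part~\eqref{eq:1sad} is where the real work lies. Holomorphy in $s$ is automatic, as a locally uniform limit of functions holomorphic in $s$. For joint real-analyticity in $(s,t)$ I would complexify the variable $t$: because $f$ is real-analytic on $(\al,\bt)_c$, including at $\infty$, it extends holomorphically to a fixed complex neighborhood of the relevant closed subinterval (and of $\infty$), and the kernel extends holomorphically in $t$ off $\RR$. On a slightly shrunk complex strip the modulus estimate survives, so the series converges locally uniformly in the two complex variables $(s,\tau)$; by the Weierstrass theorem in several variables the limit is holomorphic there, and restricting $\tau$ to real values gives the claim. The main obstacle is exactly this uniformity near $\infty$, where large $t$ and large $m$ interact: one must control $f$ and the kernel on a common complex neighborhood of $\infty$ and keep the $\oh\bigl(|m|^{-2\re s}\bigr)$ bound uniform there. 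This is the point at which the asymptotic expansion of the Lerch transcendent in Proposition~\ref{prop-Lerch} becomes the natural tool; although for the range $\re s>\tfrac12$ treated here the crude comparison above already suffices, the Lerch estimates are what will later permit continuation of $\av\pm$ beyond this half-plane.
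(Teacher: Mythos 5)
Your proposal is correct and follows essentially the same route as the paper's proof: the explicit formula for $f|^\prs_{\rho v_k,s,k}T^m$, boundedness of $f$ near $\infty\in(\al,\bt)_c$ combined with the unimodular phases coming from $\rho(T)$ and $v_k(T)$ (your $e^{-2\pi i m\k_l}$) to get the $\oh\bigl(|m|^{-2\re s}\bigr)$ comparison, rearrangement of the absolutely convergent sums for parts~(b) and~(c), and locally uniform convergence for part~(d). The paper states this much more tersely—it leaves the modulus computation and the complexification underlying real-analyticity implicit—but the substance of the argument is the same.
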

\begin{proof}
Similar results are proved earlier, in slightly differing contexts; see
e.g., \cite[\S4]{BCD} and \cite[\S7.6]{tahA}.

We note that $\infty\in (\al,\bt)_c$. The function $f$ is bounded on a
neighborhood of $\infty$ in $\proj\RR$. Absolute convergence follows
directly from
\be\label{avsum} \bigl(f|^\prs_{\rho v_k,s,k} T^m \bigr)(t)\= \Bigl(
\frac{t-i}{t-i+m}\Bigr)^{s-k/2} \, \Bigl(
\frac{t+i}{t+i+m}\Bigr)^{s+k/2}\, e^{-\pi i m k/6}\,\rho(T^{-m})
f(t+m)\ee
and the fact that the eigenvalues of the unitary operator $\rho(T)$ in
$X_\rho$ have absolute value~$1$. The other statements follow by
rearranging the order of the infinite sums, and the observation that
the absolute convergence is uniform for $(s,t)$ in compact sets.
\end{proof}

\begin{prop}\label{prop-1sav}Let $f\in C^\om\bigl( (0,-1)_c\bigr)$.
\begin{enumerate}[label=$\mathrm{(\roman*)}$, ref=$\mathrm{\roman*}$]
\item\label{eq:1sav1} The functions
$(s,t) \mapsto \bigl( f|^\prs_{\rho v_k,s,k}\av\pm\bigr)(t)$ extend as
real-analytic functions on
$\bigl\{ (s,t)\in \CC\times (0,\infty) \bigr\}$, respectively
$\bigl\{ (s,t) \in \CC\times (\infty,0)_c\bigr\}$ that are meromorphic
in $s$ with at most first order singularities in $s=\frac n2$ with
$n\in \ZZ_{\leq 1}$.

A singularity at $s=\frac12$ occurs if and only if there exists an
eigenvector $e_l$ of $\rho(T)$ with $\k_l=0$ and
$\bigl( f(\infty), e_l \bigr)_\rho \neq 0$.
\item\label{eq:1sav2} The assertions in \eqref{eq:1sab}, \eqref{eq:1sac} and \eqref{eq:1sad} in Lemma~\ref{lem-1sa} stay
valid for the extensions.
\item\label{eq:1sav3} We apply to $f$ the decomposition \eqref{ucomp}. There are
asymptotic expansions of the form
\badl{asympt-1sa} \bigl( f_l|^\prs_{\rho v_k,s,k}\av+\bigr)(t) &\sim
\sum_{n= -1}^{N-1} C_{n,l}(s) t^{-n}+\oh(t^{-N})&
\qquad &\text{as }t\uparrow\infty\,,\\
\bigl( f_l|^\prs_{\rho v_k,s,k}\av-\bigr)(t) &\sim \sum_{n=-1}^{N-1}
C_{n,l}(s) t^{-n}+\oh(t^{-N})&
&\text{as }t\downarrow-\infty\,,
\eadl
for each $N \in \ZZ_{\geq 0}$.
\end{enumerate}
\end{prop}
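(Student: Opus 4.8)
The plan is to make the sums in~\eqref{av} completely explicit and reduce them to the Lerch transcendent of Proposition~\ref{prop-Lerch}. First I would diagonalize. Writing $f=\sum_l f_l\,e_l$ in the orthonormal eigenbasis of $\rho(T)v_k(T)$ from~\S\ref{sect-Fe}, each $e_l$ is an eigenvector of $\rho(T)$, namely $\rho(T)e_l=e^{2\pi i\k_l}v_k(T)^{-1}e_l=e^{2\pi i\k_l}e^{-\pi i k/6}e_l$, so that the scalar phase in~\eqref{avsum} collapses to $e^{-\pi i m k/6}\rho(T^{-m})e_l=e^{-2\pi i m\k_l}e_l$. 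Hence $|^\prs_{\rho v_k,s,k}\av+$ acts diagonally in this basis, and one treats each component as a scalar problem. Setting $P(s,t)\coloneqq(t-i)^{s-k/2}(t+i)^{s+k/2}$, which is holomorphic in $s$ and real-analytic in $t\in(0,\infty)$, and writing $\z_l\coloneqq e^{-2\pi i\k_l}$, $u\coloneqq t+m$, formula~\eqref{avsum} becomes
\[
\bigl(f_l|^\prs_{\rho v_k,s,k}\av+\bigr)(t)=P(s,t)\sum_{m\ge0}\z_l^m\,(u-i)^{-(s-k/2)}(u+i)^{-(s+k/2)}f_l(u).
\]

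For the continuation I would expand the summand at $u=\infty$. Since $f\in C^\om\bigl((0,-1)_c\bigr)$, each $f_l$ is real-analytic at $\infty\in\proj\RR$ and admits a convergent expansion $f_l(u)=\sum_{j\ge0}b_{j,l}u^{-j}$ for large $u$, with $b_{0,l}=f_l(\infty)=\bigl(f(\infty),e_l\bigr)_\rho$. Multiplying by the binomial expansions of $(u\mp i)^{-(s\mp k/2)}$ gives, for each $N$,
\[
(u-i)^{-(s-k/2)}(u+i)^{-(s+k/2)}f_l(u)=u^{-2s}\sum_{j=0}^{N-1}g_{j,l}(s)\,u^{-j}+R_{N,l}(s,u),
\]
where the $g_{j,l}(s)$ are polynomial (hence entire) in $s$ with $g_{0,l}(s)=f_l(\infty)$, and $R_{N,l}(s,u)=\oh(u^{-2\re s-N})$ locally uniformly in $s$. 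Substituting and summing over $m$,
\[
\bigl(f_l|^\prs_{\rho v_k,s,k}\av+\bigr)(t)=P(s,t)\Bigl(\sum_{j=0}^{N-1}g_{j,l}(s)\,H(2s+j,\z_l,t)+\sum_{m\ge0}\z_l^m R_{N,l}(s,t+m)\Bigr).
\]
The remainder sum converges locally uniformly and holomorphically in $s$ on $\re s>(1-N)/2$; letting $N\to\infty$ extends $f_l|^\prs_{\rho v_k,s,k}\av+$ real-analytically in $t$ and meromorphically in $s$ to all of $\CC$, agreeing with the original for $\re s>\frac12$. Proposition~\ref{prop-Lerch}\eqref{eq:Lerch1} then locates the poles: if $\z_l\neq1$ (i.e. $\k_l\neq0$) each $H(2s+j,\z_l,t)$ is holomorphic in $s$, so that component is pole-free; if $\z_l=1$ (i.e. $\k_l=0$) the term $H(2s+j,1,t)$ has a first-order pole exactly at $2s+j=1$, i.e. $s=(1-j)/2=n/2$ with $n=1-j\le1$. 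Summing over $l$ and $j$ yields at most first-order poles at $s=n/2$, $n\in\ZZ_{\le1}$. For the pole at $s=\frac12$ only the terms with $j=0$ and $\k_l=0$ contribute, the corresponding residue being $P(\tfrac12,t)\,g_{0,l}(\tfrac12)\,\bigl(\operatorname{Res}_{s=1/2}H(2s,1,t)\bigr)\,e_l$; since $P(\tfrac12,t)\neq0$ and the residue of $H(2s,1,t)$ is nonzero (first order), and the $e_l$ are linearly independent, the vector residue vanishes precisely when $\bigl(f(\infty),e_l\bigr)_\rho=0$ for every $l$ with $\k_l=0$. This is~\eqref{eq:1sav1}.

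For~\eqref{eq:1sav2}, the relations~\eqref{eq:1sab}, \eqref{eq:1sac}, \eqref{eq:1sad} of Lemma~\ref{lem-1sa} hold for $\re s>\frac12$ as identities between functions now known to be real-analytic in $t$ and meromorphic in $s$; by the identity theorem they persist on the whole region of holomorphy of the extensions. (In particular the telescoping identity~\eqref{eq:1sac} forces the poles to cancel after applying $|^\prs_{\rho v_k,s,k}(1-T)$, consistent with its pole-free right-hand side.) For~\eqref{eq:1sav3} I would insert the asymptotic expansion of Proposition~\ref{prop-Lerch}\eqref{eq:Lerch2} for the $H(2s+j,\z_l,\cdot)$ into the displayed decomposition, multiply by the $1/t$-expansion of $P(s,t)$, and collect powers of $t$; the growing ($n=-1$) term arises from $P\sim t^{2s}$ times the leading $t^{1-2s}$ contribution of $H(2s,1,t)$, and the $\oh(t^{-N})$ error is controlled by the remainder. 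The statements for $|^\prs_{\rho v_k,s,k}\av-$ on $(\infty,0)_c$ follow by the identical computation with $m\le-1$ (equivalently the substitution $t\mapsto-t$), using $\lim_{u\downarrow-\infty}f_l(u)=f_l(\infty)$.

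The main obstacle I expect is the bookkeeping in the continuation step: verifying that the coefficients $g_{j,l}(s)$ are entire, that the remainder sum is holomorphic and locally uniform so that the limit $N\to\infty$ is legitimate, and---most delicately---computing the residue at $s=\frac12$ explicitly enough to obtain the sharp \emph{if and only if} in~\eqref{eq:1sav1}. Everything else reduces to analytic continuation of identities already supplied by Lemma~\ref{lem-1sa} and to the input of Proposition~\ref{prop-Lerch}.
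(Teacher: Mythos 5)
Your proposal is correct and takes essentially the same route as the paper's own (very terse) proof: work componentwise in the eigenbasis of $\rho(T)v_k(T)$ so the phases collapse to $e^{-2\pi i m \k_l}$, expand the factors $\bigl(\tfrac{t\mp i}{t\mp i+m}\bigr)^{s\mp k/2}$ and $f_l$ in powers of $t+m$, and reduce everything to the Lerch transcendent of Proposition~\ref{prop-Lerch} — precisely the adaptations the paper names before deferring the details to \cite{BCD}. The remaining points are routine bookkeeping (splitting off the finitely many terms where $t+m$ is not large, which are entire in $s$, and taking one extra term in the expansion to turn your $\oh(t^{1-N})$ remainder bound into the stated $\oh(t^{-N})$), so there is no genuine gap.
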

\begin{proof} The general approach in \cite{BLZ15, BCD, tahA} goes through with some adaptations.

We have to work with the components $f_l$ in the
decomposition~\eqref{ucomp}, and if $\k_l\neq0$, the factor
$e^{-2\pi i m \k_l}$ lead to the Lerch transcendent instead of the
Hurwitz zeta function. The factors
$\Bigl(\frac{t-i}{t-i+m}\Bigr)^{s-k/2} $ and
$ \Bigl( \frac{t+i}{t+i+m}\Bigr)^{s+k/2}$ have a more complicated
expansion in terms of powers of $t+m$. Taking this into account, we can
follow the approach in \cite[\S4.2]{BCD} to prove the theorem.
\end{proof}

\subsection{Fast transfer operator}Let $f \in C^\om_\rho(0,\infty)$.
Then the function $f|^\prs_{\rho v_k,s,k}T'$ is a function in
$C^\om\bigl( (0,-1)_c\bigr)$. So this function satisfies the condition
in Lemma~\ref{lem-1sa}, and we can form the \il{ftro}{transfer
operator, fast}\emph{fast transfer operator}\ir{trof}{ \trof{\rho
v_k,s,k}}
\be \label{trof}\trof{\rho v_k,s,k} f\= \bigl(f|^\prs_{\rho
v_k,s,k}T'\bigr)|^\prs_{\rho v_k,s,k} \av+ \= \sum_{n\geq 0}
f|^\prs_{\rho v_k,s,k} T' T^n\,.\ee

\begin{prop}\label{prop-ftro}The series in the definition of $\trof{\rho v_k,s,k} f$
in~\eqref{trof} converges absolutely for $\re s>\frac12$.
\begin{enumerate}[label=$\mathrm{(\roman*)}$, ref=$\mathrm{\roman*}$]
\item\label{eq:ft1} The family $s\mapsto f|^\prs_{\rho v_k,s,k}$ extends
meromorphically to $s\in \CC$ with at most first order singularities at
points of $\frac12\ZZ_{\leq 1}$.

A first order singularity occurs at $s=\frac12$ if and only if there is
an eigenvector $e_l$ of $v_k(T)\,\rho(T)$ with $\k_l=0$ for which
\be \Bigl( \rho(T')^{-1}\, v_k(T')^{-1} f (1) , e_l \Bigr)_\rho \neq
0\,. \ee
\item\label{eq:ft2} For all values of $s$ for which $\trof{\rho v_k,s,k} f$ is
holomorphic it defines a function in $C^\om(0,\infty)$ with an
asymptotic behavior as indicated in~\eqref{asympt-1sa}.
\item\label{eq:ft3} For all $f\in C^\om_\rho$
\be \label{dfT-1}\bigl(\trof{\rho v_k,s,k} f\bigr)|^\prs_{\rho
v_k,s,k}(1-T) \= f|^\prs_{\rho v_k,s,k}T'\= \bigl( \tro_{\rho v_k,s,k}
f \bigr)|^\prs_{\rho v_k,s,k} (1-T) \,. \ee
\item\label{eq:ft4} If $f$ is a $1$-eigenfunction of $\trof{\rho v_k,s,k}$ then
$f$ is a $1$-eigenfunction of $\tro_{\rho v_k,s,k}$.
\end{enumerate}
\end{prop}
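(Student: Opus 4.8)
The plan is to derive every assertion from the one-sided-average machinery of Lemma~\ref{lem-1sa} and Proposition~\ref{prop-1sav}, applied to the auxiliary function $g\coloneqq f|^\prs_{\rho v_k,s,k}T'$, which lies in $C^\om_\rho\bigl((0,-1)_c\bigr)$, together with the defining identity $\trof{\rho v_k,s,k}f\=g|^\prs_{\rho v_k,s,k}\av+$ from~\eqref{trof}. Absolute convergence for $\re s>\tfrac12$ is then immediate from Lemma~\ref{lem-1sa}\eqref{eq:1saa}: with $\al\=0$ and $\bt\=-1$ the average $g|^\prs_{\rho v_k,s,k}\av+$ converges absolutely on $(0,\infty)$ and defines an element of $C^\om_\rho(0,\infty)$.

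For part~\eqref{eq:ft1} I would transport the meromorphic continuation of Proposition~\ref{prop-1sav}\eqref{eq:1sav1} to $\trof{\rho v_k,s,k}f\=g|^\prs_{\rho v_k,s,k}\av+$. The one subtlety is that $g$ itself depends holomorphically on~$s$, whereas Proposition~\ref{prop-1sav} is phrased for a fixed function; this is harmless, since its proof runs termwise through the Lerch transcendent of Proposition~\ref{prop-Lerch} and feels only the holomorphic parameter dependence together with the boundary value $g(\infty)$, so the continuation with at most first-order poles on $\tfrac12\ZZ_{\le 1}$ carries over. To locate the pole at $s\=\tfrac12$ I would evaluate, using~\eqref{prs-trf} with $(T')^{-1}i\=\tfrac{-1+i}2$, $(T')^{-1}(-i)\=\tfrac{-1-i}2$ and $T'\infty\=1$,
\[
g(\infty)\=\lim_{t\uparrow\infty}\bigl(f|^\prs_{\rho v_k,s,k}T'\bigr)(t)\=\rho(T')^{-1}\,v_k(T')^{-1}\,(1-i)^{-s+k/2}\,(1+i)^{-s-k/2}\,f(1)\,.
\]
Since the scalar $(1-i)^{-s+k/2}(1+i)^{-s-k/2}$ is nonzero at $s\=\tfrac12$, the pole criterion of Proposition~\ref{prop-1sav}\eqref{eq:1sav1} --- existence of an eigenvector $e_l$ of $v_k(T)\rho(T)$ with $\k_l\=0$ and $\bigl(g(\infty),e_l\bigr)_\rho\neq0$ --- reduces to the stated condition $\bigl(\rho(T')^{-1}v_k(T')^{-1}f(1),e_l\bigr)_\rho\neq0$.

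Part~\eqref{eq:ft2} is then a direct reading of Proposition~\ref{prop-1sav}\eqref{eq:1sav2} and~\eqref{eq:1sav3}: at every $s$ where $g|^\prs_{\rho v_k,s,k}\av+$ is holomorphic it lies in $C^\om(0,\infty)$ and carries the asymptotic expansion~\eqref{asympt-1sa}. For part~\eqref{eq:ft3} the first equality is the telescoping property of $\av+$: by Lemma~\ref{lem-1sa}\eqref{eq:1sac}, extended meromorphically through Proposition~\ref{prop-1sav}\eqref{eq:1sav2}, one has $\bigl(g|^\prs_{\rho v_k,s,k}\av+\bigr)|^\prs_{\rho v_k,s,k}(1-T)\=g$ on $(0,\infty)$, that is $\bigl(\trof{\rho v_k,s,k}f\bigr)|^\prs_{\rho v_k,s,k}(1-T)\=f|^\prs_{\rho v_k,s,k}T'$; the second equality is read off directly from the definition $\tro_{\rho v_k,s,k}f\=f|^\prs_{\rho v_k,s,k}(T+T')$ and the cocycle property of the slash action.

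Finally, for part~\eqref{eq:ft4} I would use the first equality of part~\eqref{eq:ft3}: if $f$ is a $1$-eigenfunction of $\trof{\rho v_k,s,k}$, then substituting $\trof{\rho v_k,s,k}f\=f$ into it gives $f-f|^\prs_{\rho v_k,s,k}T\=f|^\prs_{\rho v_k,s,k}T'$, which rearranges to $f\=f|^\prs_{\rho v_k,s,k}(T+T')\=\tro_{\rho v_k,s,k}f$; thus $f$ is a $1$-eigenfunction of the slow transfer operator. (Lemma~\ref{lem-np} yields the same conclusion once both identities of part~\eqref{eq:ft3} are invoked, as then $\tro_{\rho v_k,s,k}f-f$ becomes invariant under $|^\prs_{\rho v_k,s,k}T$ and hence vanishes.) The main obstacle is part~\eqref{eq:ft1}: pinning down the precise pole structure needs the full Lerch-transcendent analysis behind Propositions~\ref{prop-Lerch} and~\ref{prop-1sav}, and one must verify that the holomorphic $s$-dependence of $g\=f|^\prs_{\rho v_k,s,k}T'$ introduces no new singularities and that the residue at $s\=\tfrac12$ is governed exactly by $g(\infty)$, hence by the single value $f(1)$.
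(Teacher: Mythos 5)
Your proposal is correct and takes essentially the same approach as the paper, whose entire proof consists of the remark that most assertions follow directly from Proposition~\ref{prop-1sav} (applied, as you do, to $f|^\prs_{\rho v_k,s,k}T'$), while \eqref{dfT-1} and part~\eqref{eq:ft4} follow from Definition~\ref{def-sltro} and assertion~\eqref{eq:1sac} of Lemma~\ref{lem-1sa}. The details you add --- the holomorphic $s$-dependence of $f|^\prs_{\rho v_k,s,k}T'$, and the evaluation of its limit at $\infty$ reducing the pole criterion to the single value $f(1)$ --- only make explicit what the paper leaves implicit (the latter computation is carried out by the paper in the proof of Lemma~\ref{lem-f1c}).
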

\begin{proof}Most of these assertions follow directly from
Proposition~\ref{prop-1sav}. The relations in \eqref{dfT-1} and part~\eqref{eq:ft4} follow from Definition~\ref{def-sltro} and assertion~\eqref{eq:1sac} in
Lemma~\ref{lem-1sa}.
\end{proof}

\begin{prop}\label{prop-FE-eftrof}Let
$s\in \CC\setminus \frac12\ZZ_{\leq 0}$. If
$f\in \FE^\om_{\rho v_k,s,k}$, then $f$ is a $1$-eigenfunction of the
fast transfer operator $\trof{\rho v_k,s,k}$.
\end{prop}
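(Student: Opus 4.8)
The plan is to show that, applied to a period function, the fast transfer operator telescopes to the identity because of the three-term relation, and to close the argument with the vanishing Lemma~\ref{lem-np}. For $\re s>\tfrac12$ the defining series \eqref{trof} converges absolutely (Proposition~\ref{prop-ftro}), so it may be rearranged freely. On $(0,\infty)$ the three-term relation \eqref{3te} reads $f|^\prs_{\rho v_k,s,k}T' = f - f|^\prs_{\rho v_k,s,k}T$; applying $|^\prs_{\rho v_k,s,k}T^n$ gives $f|^\prs_{\rho v_k,s,k}T'T^n = f|^\prs_{\rho v_k,s,k}T^n - f|^\prs_{\rho v_k,s,k}T^{n+1}$, so the partial sums collapse to $f - f|^\prs_{\rho v_k,s,k}T^{N+1}$. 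Using the explicit formula \eqref{avsum}, the existence of $a_\infty(f)$ and the unitarity of $\rho(T)$, the remainder $f|^\prs_{\rho v_k,s,k}T^{N+1}$ is $\oh\bigl(N^{-2\re s}\bigr)$ and hence tends to $0$; therefore $\trof{\rho v_k,s,k}f = f$ for $\re s>\tfrac12$.

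For the remaining $s$ a direct rearrangement is unavailable, and analytic continuation in $s$ is delicate, since $f$ and its three-term relation themselves depend on $s$. Instead I would argue through relation \eqref{dfT-1}. First, $\trof{\rho v_k,s,k}f$ is genuinely holomorphic at our $s$: by Proposition~\ref{prop-ftro}\eqref{eq:ft1} its only possible singularities lie in $\tfrac12\ZZ_{\leq 1}$, those in $\tfrac12\ZZ_{\leq 0}$ are excluded by hypothesis, and the potential pole at $s=\tfrac12$ is absent because, by Lemma~\ref{lem-f1c}, a period function satisfies exactly the vanishing $\bigl(\rho(T')^{-1}v_k(T')^{-1}f(1),e_l\bigr)_\rho=0$ for every index $l$ with $\k_l=0$ that would otherwise produce it. Set $g\coloneqq \trof{\rho v_k,s,k}f - f$. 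By \eqref{dfT-1} we have $(\trof{\rho v_k,s,k}f)|^\prs_{\rho v_k,s,k}(1-T)=f|^\prs_{\rho v_k,s,k}T'$, while \eqref{3te} gives $f|^\prs_{\rho v_k,s,k}T'=f|^\prs_{\rho v_k,s,k}(1-T)$; subtracting, $g|^\prs_{\rho v_k,s,k}(1-T)=0$, i.e.\ $g$ is invariant under $|^\prs_{\rho v_k,s,k}T$.

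It then remains to deduce $g=0$ from this $T$-invariance, which is exactly Lemma~\ref{lem-np} (applicable since $s\neq0$), provided $g\in\V{\om^0,\infty}{\rho v_k,s,k}$. Real-analyticity of $g$ on $(0,\infty)$ is immediate from Proposition~\ref{prop-ftro}\eqref{eq:ft2}, so the only thing to verify is that $g$ extends continuously across $\infty$, that is, that the linearly growing term in the asymptotic expansion \eqref{asympt-1sa} of $\trof{\rho v_k,s,k}f$ is absent. For the component of index $l$ this leading coefficient is controlled, through Proposition~\ref{prop-1sav}\eqref{eq:1sav3}, by the Lerch coefficient $C_{-1}(\z,s)$ at a root of unity $\z$ that equals $1$ precisely when $\k_l=0$; by Proposition~\ref{prop-Lerch}\eqref{eq:Lerch2} it vanishes whenever $\k_l\neq0$, and for $\k_l=0$ it is proportional to the value of $f|^\prs_{\rho v_k,s,k}T'$ at $\infty$, i.e.\ to $\rho(T')^{-1}v_k(T')^{-1}f(1)$ (note $T'\infty=1$), whose $e_l$-component vanishes once more by Lemma~\ref{lem-f1c}. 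Hence $g$ has a finite limit at $\infty$, lies in $\V{\om^0,\infty}{\rho v_k,s,k}$, and Lemma~\ref{lem-np} forces $g=0$. I expect this last asymptotic analysis at $\infty$ to be the main obstacle: matching the leading coefficient of the one-sided average to the Lerch datum and recognizing that Lemma~\ref{lem-f1c} is exactly the condition annihilating the would-be linear growth. This is also where both hypotheses — $s\notin\tfrac12\ZZ_{\leq0}$ and the period-function property — enter decisively.
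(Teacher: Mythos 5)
Your argument is correct, and its core coincides with the paper's: both combine \eqref{dfT-1} with the three-term relation to see that $g=\trof{\rho v_k,s,k}f-f$ is invariant under $|^\prs_{\rho v_k,s,k}T$, both use Lemma~\ref{lem-f1c} together with Proposition~\ref{prop-ftro}\eqref{eq:ft1} to make sense of $\trof{\rho v_k,s,k}f$ at $s=\tfrac12$, and both then kill $g$ by an asymptotic argument at~$\infty$. The difference is the endgame, and it is instructive. The paper does \emph{not} show that the $t^{1}$-term is absent: it carries the possible linear term along, reruns the argument of Lemma~\ref{lem-np} on the leading term $p(t)\sim A_m t^m$, $m\in\ZZ_{\leq 1}$, of $p=g$, and concludes that $p\neq 0$ would force $s=\tfrac m2\in\tfrac12\ZZ_{\leq 1}$, claiming these values are excluded --- which is imprecise at $s=\tfrac12$, since the hypothesis only excludes $\tfrac12\ZZ_{\leq 0}$. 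Your route --- killing the coefficient of $t^{1}$ componentwise, via Proposition~\ref{prop-Lerch}\eqref{eq:Lerch2} for $\k_l\neq 0$ and via Lemma~\ref{lem-f1c} for $\k_l=0$, so that $g$ has a finite limit at $\infty$ and Lemma~\ref{lem-np} applies --- makes explicit exactly the mechanism that repairs this point: Lemma~\ref{lem-f1c} annihilates the linear coefficient for \emph{all} $s$, not merely the pole at $s=\tfrac12$, so the case $m=1$, $s=\tfrac12$ never arises. Your separate telescoping proof for $\re s>\tfrac12$ is not in the paper; it is elementary and correct, but logically redundant since your general argument covers that range too. Three minor caveats, none affecting correctness: your formula for the $t^1$-coefficient requires opening up the proof of Proposition~\ref{prop-1sav}\eqref{eq:1sav3} (the paper states only the existence of the expansion and the pole criterion), though what you assert is its correct content; $\z=e^{-2\pi i\k_l}$ lies on the unit circle but need not be a root of unity, which is harmless because Proposition~\ref{prop-Lerch} needs only $|\z|=1$; and $g$, being defined on $(0,\infty)$ with a limit at $\infty$, is not literally a section of $\V{\om^0,\infty}{\rho v_k,s,k}$ over $\proj\RR$, so --- exactly like the paper, which says ``like in the proof of Lemma~\ref{lem-np}'' --- you are really invoking that lemma's proof rather than its statement.
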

\begin{proof}Let $f\in \FE^\om_{\rho v_k,s,k}$. Lemma~\ref{lem-f1c} and
part~\eqref{eq:ft1} of Proposition~\ref{prop-ftro} show that the fast transfer
operator $\trof {\rho v_k,s,k}f$ is holomorphic at $s=\frac12$.

By part~\eqref{eq:FEb} in Definition~\ref{def-FEom} and \eqref{dfT-1} we have
\[ \Bigl( \trof{\rho v_k,s,k} f - f \Bigr) |^\prs_{\rho v_k,s,k}(1-T) \=
0\,.\]
So the difference $p= \trof{\rho v_k,s,k} f - f $ is invariant under
$|^\prs_{\rho v_k , s,k}T$. Like in the proof of Lemma~\ref{lem-np} we
have, now for $t\in (0,\infty)$,
\be p(t)\= v_k(T)^q\, \rho(T)^q\,\Bigl(
\frac{t-i}{t-i+q}\Bigr)^{s-k/2}\, \Bigl(
\frac{t+i}{t+i+q}\Bigr)^{s+k/2}\, p(t+q)\,,\ee for all $q\in \ZZ$. The
limit as $t\uparrow\infty$ of $f$ exists by condition~\eqref{eq:FEc} in
Definition~\ref{def-FEom}, and the expansion of $\trof{\rho v_k,s,k} f$
can have a term with $t^1$, see~\eqref{asympt-1sa}. Thus, if $p\neq 0$
then it satisfies $p(t) \sim A_m t^m$ as $t\uparrow\infty$ for some
$m\in \ZZ_{\leq 1}$ and some $A_m \neq 0$. We go over to the
eigendecomposition \eqref{ucomp}. Taking $t_0\in (0,\infty)$ such that
$p_l(t_0) \neq 0$ we have
\bad p_l(t_0)&\= e^{2\pi i q \k_l}\Bigl(
\frac{t_0-i}{t_0-i+q}\Bigr)^{s-k/2}\, \Bigl(
\frac{t_0+i}{t_0+i+q}\Bigr)^{s+k/2}\,\,p_l(t_0+m)
\\
&\;\sim\; (t_0-i)^{s-k/2}(t_0+i)^{s+k/2} e^{2\pi i q \k_l}\, q^{-2s}\,
A_m\, (t_0+q)^m
\\
&\;\sim\; (t_0-i)^{s-k/2}(t_0+i)^{s+k/2} \, A_m \, e^{2\pi i
q\k_l}\,q^{m-2s}\qquad\text{ as } q\uparrow\infty\,.
\ead
This is possible only if $s=\frac m 2\in \frac12\ZZ_{\leq 1}$. In the
statement of the proposition these values of $s$ are excluded. Hence
$p=0$ and $\trof{\rho v_k,s,k} f = f$.
\end{proof}



\section{Analytic boundary germs}\label{sect-abg}
The step from parabolic cohomology to Maass forms in \cite[\S12]{BLZ15}
is carried out by going over from principal series modules of analytic
functions of $\proj\RR$ to isomorphic modules of analytic boundary
germs. The latter modules allow us to use the geometry of the upper
half-plane to construct Maass forms from cocycles.

\subsection{Kernel function}In the construction of period functions
associated to Maass cusp forms we used the Poisson kernel $R_{s,k}$
defined on $\proj\RR\times\uhp$. We need to replace it by a kernel
function on $\uhp\times \uhp$ with similar properties.

Here it is useful to work on the universal covering group, discussed
in~\S\ref{sect-ucg}. We first describe a function $Q_{s,k}$ on
$\tG\setminus \tK$, where
\il{tK}{$\tK\subset \tG$}$\tK=\bigl\{\tkm(\th)\;:\;\th\in \RR \bigr\}$.
We have the \il{pd}{polar decomposition}\emph{polar decomposition}
$\tG \setminus \tK \=  \tK \tA_+\tK $, with
\il{tAta+}{$\tA,\; \tA_+$}$\tA=\bigl\{\tam(y)\;:\; y>0\bigr\}$ and
$\tA_+\= \bigl\{ \tam(y)\;:\; y>1\bigr\}$,
\il{ta}{$\tam(y)$}$\tam(y) \= \tppm(iy)$.

\begin{lem}\label{lem-Qsk}Let $s\in \CC$ and $k\in \RR$ satisfy
$s\not \in \frac12\ZZ_{\leq 1}\cup\left(-\frac k2+\ZZ\right)\cup 
\left(\frac k2+\ZZ\right)$. There is a function
$Q_{s,k} \in C^\infty(\tG \setminus \tK)$ satisfying
\begin{enumerate}[label=$\mathrm{(\alph*)}$, ref=$\mathrm{\alph*}$]
\item\label{eq:Qa} $Q_{s,k}\bigl( \tkm(\th_1) \tam(y) \tkm(\th_2) \bigr)
\= e^{ik(\th_1+\th_2)} Q_{s,k}\bigl(\tam(y)\bigr)$.
\item\label{eq:Qb} $\Dt Q_{s,k}=s(1-s)\, Q_{s,k}$.
\item\label{eq:Qc} $Q_{s,k}\bigl( \am(y) \bigr) = \oh\bigl( y^{-s}\bigr)$ as
$y\rightarrow\infty$.
\item\label{eq:Qd}
$Q_{s,k}\bigl( \tam(y) \bigr) = -\log(1-v) \, h_1(v) + h_2 (v)$ for
$v=\frac{4y}{(y+1)^2}$, with $C^\infty$-functions $h_1$ and $h_2$ on a
neighborhood of~$1$ in~$\RR$, and $h_1(1)=1$.
\item\label{eq:Qe} $Q_{s,k}(g^{-1} ) = Q_{s,-k}(g)$ for
$g\in\tilde G \setminus \tilde K$.
\end{enumerate}
\end{lem}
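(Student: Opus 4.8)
The plan is to use the bi-weight property~\eqref{eq:Qa} to reduce the construction to a single ordinary differential equation, and then to select the solution with the prescribed behaviour at the two ends of~$\tA$. By~\eqref{eq:Qa} the function $Q_{s,k}$ on $\tG\setminus\tK=\tK\tA_+\tK$ is completely determined by the radial function $q(y)\coloneqq Q_{s,k}(\tam(y))$ for $y>1$, and on bi-weight-$k$ functions the Casimir~\eqref{DtXpm} restricts to a second-order ordinary differential operator in~$y$. First I would compute this radial part explicitly from the expressions~\eqref{opZXX} for $Z,X_\pm$, and then pass to the variable $v=\frac{4y}{(y+1)^2}=1-\bigl(\frac{y-1}{y+1}\bigr)^2$. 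In the disk coordinate $w=\frac{z-i}{z+i}$ one has $1-v=|w|^2$ at $z=iy$, so this substitution turns~\eqref{eq:Qb} into a hypergeometric equation on $v\in(0,1)$ with regular singular points at $v=0$ (the boundary, reached as $y\uparrow\infty$) and $v=1$ (the locus $\tK$, reached as $y\downarrow1$).

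Next I would read off the local behaviour at the two relevant singular points. The two radial solutions behave like $y^{-s}$ and $y^{s-1}$ as $y\uparrow\infty$, and for $\re s>\tfrac12$ the decay requirement~\eqref{eq:Qc} singles out the $y^{-s}$ solution uniquely up to scale; for the remaining~$s$ one continues analytically, the exclusion $s\notin\frac12\ZZ_{\leq1}$ guaranteeing that no resonance between the two exponents obstructs this. At $v=1$ the indicial equation has a double root at~$0$, so one local solution is holomorphic in~$v$ near~$1$ while an independent one carries a factor $\log(1-v)$; because $1-v=\bigl(\frac{y-1}{y+1}\bigr)^2$ this is exactly the logarithmic singularity along~$\tK$ demanded in~\eqref{eq:Qd}. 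I would then define $q$ to be the decaying solution, expressed through the standard hypergeometric connection formulas between $v=0$ and $v=1$, and fix the scale by normalizing the coefficient of $\log(1-v)$ to equal~$1$ at $v=1$, which gives $h_1(1)=1$; the smoothness of $h_1,h_2$ and the splitting in~\eqref{eq:Qd} are the usual description of the logarithmic solution of a hypergeometric equation at a regular singular point, and $Q_{s,k}\in C^\infty(\tG\setminus\tK)$ follows since $\Dt$ is elliptic. The conditions $s\notin(\pm k/2+\ZZ)$ enter precisely here: the hypergeometric parameters depend on $s\pm k/2$, and integrality of either would collapse the connection formula or the normalization.

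Finally, I would deduce~\eqref{eq:Qe} by a uniqueness argument rather than by recomputation. The construction shows that, under the stated restrictions on~$s$, properties~\eqref{eq:Qa}--\eqref{eq:Qd} characterize $Q_{s,k}$ uniquely, because the decaying solution of the radial ODE is one-dimensional and the normalization fixes its scale. Now set $\hat Q(g)\coloneqq Q_{s,k}(g^{-1})$. Inversion sends $\tam(y)\mapsto\tam(1/y)$, under which $v$ is invariant; it reverses the two $\tK$-factors and hence turns bi-weight~$k$ into bi-weight~$-k$; and, since $\Dt$ is bi-invariant, $\hat Q$ is again an eigenfunction for $s(1-s)$. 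One checks that $\hat Q$ inherits the decay~\eqref{eq:Qc} and the normalized singularity~\eqref{eq:Qd}, so $\hat Q$ satisfies the characterizing properties of $Q_{s,-k}$ and uniqueness gives $\hat Q=Q_{s,-k}$, which is~\eqref{eq:Qe}.

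The main obstacle I anticipate is the middle step: carrying the weight-$k$ terms of the Casimir correctly through the change of variables in order to pin down the hypergeometric parameters, and then matching the resulting logarithmic solution to the normalized form in~\eqref{eq:Qd} with $h_1(1)=1$, while confirming that the excluded values of~$s$ are exactly those at which this matching degenerates.
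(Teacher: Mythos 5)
Your construction is essentially the paper's: condition~(a) reduces everything to a radial function, the Casimir then gives a second-order ODE which in the variable $v=\frac{4y}{(y+1)^2}$ is hypergeometric with parameters $s-\frac k2$, $s+\frac k2$, $2s$, and (c), (d) are read off from the exponent $s$ at $v=0$ and the logarithmic case at $v=1$ (the paper quotes the explicit solution from Bruggeman's book and writes it as in~\eqref{Qsk}, namely $\frac{\Gf(s-k/2)\Gf(s+k/2)}{\Gf(2s)}\,v^s\,\hypg21\bigl(s-\tfrac k2,s+\tfrac k2;2s;v\bigr)$ on $\tA$); your identification of the role of $s\notin\bigl(\pm\frac k2+\ZZ\bigr)$ --- nonvanishing of the logarithmic connection coefficient --- is also correct. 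Up to and including~(d) your outline is sound and matches the paper's route.

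The gap is in~(e). Your uniqueness claim, that (a)--(d) characterize $Q_{s,k}$ ``because the decaying solution of the radial ODE is one-dimensional,'' holds only for $\re s>\frac12$. The two Frobenius solutions at $v=0$ are of size $y^{-s}$ and $y^{s-1}$ as $y\uparrow\infty$, and $y^{\re s-1}\leq y^{-\re s}$ exactly when $\re s\leq\frac12$; in that range \emph{both} solutions satisfy the bound in~(c), so (a)--(c) cut out a two-dimensional space, and (d) only normalizes the coefficient of the logarithmic solution at $v=1$: adding any multiple of the solution regular at $v=1$ (which also obeys~(c) there) can be absorbed into $h_2$ and preserves (a)--(d). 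Since the lemma, and the rest of the paper, are used with $\re s\in(0,1)$ --- in particular on the critical line --- your deduction of~(e) is incomplete precisely where it is needed. The repair stays inside your framework: either characterize the radial part by its Frobenius exponent at $v=0$ (it is $v^s$ times a function analytic and nonvanishing at $0$; the exclusion $s\notin\frac12\ZZ_{\leq1}$ makes this a one-dimensional condition for all allowed $s$, and it is inherited by $\hat Q$ because (a) itself forces $Q_{s,k}\bigl(\tam(1/y)\bigr)=Q_{s,k}\bigl(\tam(y)\bigr)$ via $\tam(1/y)=\tkm(\pi/2)\tam(y)\tkm(-\pi/2)$, while $v(1/y)=v(y)$); or run your uniqueness argument for $\re s>\frac12$ and extend~(e) by analytic continuation in~$s$, which your construction already provides; or, as the paper does, note that the explicit radial solution is symmetric under $k\mapsto-k$ (the two hypergeometric parameters are merely swapped and the Gamma prefactor is symmetric), so $Q_{s,k}$ and $Q_{s,-k}$ have the same radial part and~(e) follows at once from the inversion formula for the Cartan coordinates.
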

\begin{proof}Functions on $\tG$ satisfying a generalization of condition~\eqref{eq:Qa}
are needed to describe the polar expansion of scalar-valued Maass
forms at the point $i\in \uhp$. They are given in \cite[4.2.6 and
4.2.9]{B94} in terms of $u=\frac{(y-1)^2}{4y}$. Condition~\eqref{eq:Qb} imposes a
hypergeometric differential equation, with a two-dimensional solutions
space. For the expansion of Maass forms we need a solution that is
$C^\infty$ at $y=1$. Here we need a solution with a singularity at
$y=1$ that is small for $\re s\geq \frac12$ as $y\uparrow\infty$ and
$y\downarrow 0$. A multiple of the solution $\mu_k(i,ks+1/2)$ in
\cite[4.2.6]{B94} is the one that we need here.

Using \eqref{eq:Qa} and \eqref{eq:Qb} we obtain
\ir{Qsk}{\Q_{s,k}}
\badl{Qsk} Q_{s,k}&\bigl(\tkm(\th_1) \tam(y)\tkm(\th_2) \bigr)\=
\frac{\Gf(s-k/2) \, \Gf(s+k/2)}{\Gf(2s)} \, e^{ik (\th_1+\th_2)}\\
&\qquad\hbox{} \cdot
v^s\, \hypg21\left[ \atop{s-k/2,s+k/2}{2s} \Bigm| v \right]\,,\qquad v\=
\frac{ 4y}{(y+1)^2}\,.
\eadl
The
singularities of the solution are avoided by the condition on $s$ and
$k$ in the lemma. Since the hypergeometric function is holomorphic at
$v=0$ with value $1$, we get property~\eqref{eq:Qc}. There is a logarithmic
singularity at $v=1$. The gamma factors have been chosen such that the
hypergeometric function is $-\log (1-\nobreak v)$ as $v\uparrow 1$.
This leads to assertion~\eqref{eq:Qd}.

The function $Q_{s,k}\bigl( \tam(y) \bigr)$ is invariant under
$y\mapsto 1/y$. With
\[
\left( \tkm(\th_1)\tam(y)\tkm(\th_2)\right)^{-1}
= \tkm(-\th_2)\tam(1/y)\tkm(-\th_1)
\]
this implies assertion~\eqref{eq:Qe}.
\end{proof}

\begin{prop}\label{prop-qsk}Let $s\in \CC$, $k\in \RR$, and
$s\not \in \frac12\ZZ_{\leq 1}\cup
\left(-\frac k2+\ZZ\right)\cup \left(\frac k2+\ZZ\right)$. There is a
kernel function $q_{s,k}$ with the following properties:
\begin{enumerate}[label=$\mathrm{(\roman*)}$, ref=$\mathrm{\roman*}$]
\item\label{eq:q1} $q_{s,k}\in C^\infty
\Bigl(\Bigl\{(z_1,z_2\in \uhp^2\;;\; z_1\neq z_2\Bigr\}\Bigr)$,
\item\label{eq:q2} $\Dt_k q_{s,k} (z_1,\cdot)\= s(1-s) \, q_{s,k}(z_1,\cdot)$,
and $\Dt_{-k} q_{s,k} (\cdot,z_2)\= s(1-s) \, q_{s,k}(\cdot , z_2)$,
\item\label{eq:q3} $q_{s,k}(z_2,z_1) = q_{s,-k}(z_1,z_2)$,
\item\label{eq:q4} $q_{s,k} \left( \bigm|_{-k}\times \bigm|_k\right)g \=q_{s,k}$
for all $g\in G$. Here $|_{-k}g$ acts on the first variable and $|_k g$
on the second variable.
\end{enumerate}
\end{prop}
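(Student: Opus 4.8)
The plan is to build $q_{s,k}$ directly from the radial kernel $Q_{s,k}$ of Lemma~\ref{lem-Qsk} on the universal cover, by setting
\[
q_{s,k}(z_1,z_2) \coloneqq Q_{s,k}\bigl(\tppm(z_1)^{-1}\tppm(z_2)\bigr)
\]
for $z_1\neq z_2$ in $\uhp$. Since $\tppm(z)$ is a canonical element of $\tG$, there is no ambiguity of lift; this is exactly the point of passing to $\tG$, where the non-periodic bi-$\tK$-equivariance factor $e^{ik(\th_1+\th_2)}$ of Lemma~\ref{lem-Qsk}\eqref{eq:Qa} genuinely makes sense. First I would record that $\tppm(z_1)^{-1}\tppm(z_2)\in\tK$ forces $z_1=z_2$: projecting to $G$ and using $\km(\th)\cdot i=i$ shows the two points coincide. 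Hence for $z_1\neq z_2$ the argument lies in $\tG\setminus\tK$, where $Q_{s,k}$ is smooth, and as $(z_1,z_2)\mapsto\tppm(z_1)^{-1}\tppm(z_2)$ is smooth, property~\eqref{eq:q1} follows.

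For the two differential equations in~\eqref{eq:q2} I would freeze one variable and regard the result as a weight-shifted function on $\tG$. Fixing $z_1$ and writing $h(\tg)\coloneqq Q_{s,k}(\tppm(z_1)^{-1}\tg)=\bigl(L(\tppm(z_1)^{-1})Q_{s,k}\bigr)(\tg)$, the equivariance $Q_{s,k}(\tkm(\al)\,\cdot\,\tkm(\bt))=e^{ik(\al+\bt)}Q_{s,k}$ (valid on all of $\tG\setminus\tK$ by the polar decomposition $\tG\setminus\tK=\tK\tA_+\tK$) shows that $h$ has weight $k$, so $h=\Ps_k\bigl(q_{s,k}(z_1,\cdot)\bigr)$; and since $\Dt$ acts by right differentiation and commutes with $L$, Lemma~\ref{lem-Qsk}\eqref{eq:Qb} gives $\Dt h=s(1-s)h$. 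By the correspondence~\eqref{Dt-Dtk} of $\Dt$ on weight-$k$ functions with $\Dt_k$, this yields $\Dt_k q_{s,k}(z_1,\cdot)=s(1-s)\,q_{s,k}(z_1,\cdot)$. For the first variable I would use Lemma~\ref{lem-Qsk}\eqref{eq:Qe} to rewrite, with $z_2$ fixed, $Q_{s,k}(\tg^{-1}\tppm(z_2))=Q_{s,-k}(\tppm(z_2)^{-1}\tg)$; this is a weight-$(-k)$ function of $\tg$ to which the same argument applies with $Q_{s,-k}$ (covered by the same hypotheses on $s,k$), giving $\Dt_{-k}q_{s,k}(\cdot,z_2)=s(1-s)\,q_{s,k}(\cdot,z_2)$. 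The symmetry~\eqref{eq:q3} is then an immediate second application of~\eqref{eq:Qe}: $q_{s,k}(z_2,z_1)=Q_{s,k}(\tppm(z_2)^{-1}\tppm(z_1))=Q_{s,-k}(\tppm(z_1)^{-1}\tppm(z_2))=q_{s,-k}(z_1,z_2)$.

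For the equivariance~\eqref{eq:q4} the key input is the cocycle identity $\ell(g)\tppm(z)=\tppm(gz)\tkm(-\arg(cz+d))$ (the $\th=0$ case of the displayed relation for $\ell$), which I rewrite as $\tppm(gz)=\ell(g)\tppm(z)\tkm(\arg(cz+d))$. Substituting into $q_{s,k}(gz_1,gz_2)$, the factors $\ell(g)^{-1}\ell(g)$ cancel and the two remaining $\tK$-factors are pulled out through Lemma~\ref{lem-Qsk}\eqref{eq:Qa}, producing $q_{s,k}(gz_1,gz_2)=e^{-ik\arg(cz_1+d)}e^{ik\arg(cz_2+d)}q_{s,k}(z_1,z_2)$. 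Comparing with the definitions of $|_{-k}$ on the first and $|_k$ on the second variable, which contribute precisely the inverse factors, gives $q_{s,k}\bigl(|_{-k}\times|_k\bigr)g=q_{s,k}$.

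I expect the only real obstacle to be the sign-and-weight bookkeeping: one must keep track that the first variable of $q_{s,k}$ carries weight $-k$ and the second weight $+k$, and correspondingly invoke~\eqref{eq:Qe} to convert the first-variable eigenvalue problem into a statement about $Q_{s,-k}$ before applying the Casimir correspondence~\eqref{Dt-Dtk}. Once the convention $q_{s,k}(z_1,z_2)=Q_{s,k}(\tppm(z_1)^{-1}\tppm(z_2))$ is fixed, every assertion reduces to the corresponding property of $Q_{s,k}$ together with the bi-equivariance and the commuting of $\Dt$ with left translation, and no genuinely new analytic estimate is required.
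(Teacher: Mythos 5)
Your proposal is correct and follows essentially the same route as the paper: it defines $q_{s,k}(z_1,z_2)=Q_{s,k}\bigl(\tppm(z_1)^{-1}\tppm(z_2)\bigr)$ and derives all four properties from Lemma~\ref{lem-Qsk} together with the Casimir correspondence~\eqref{Dt-Dtk} and the cocycle relation for $\ell(g)\tppm(z)$, exactly as in the paper's proof (your extra detail on smoothness, on the weight of $L(\tppm(z_1)^{-1})Q_{s,k}$, and the explicit cancellation of the $\arg(cz+d)$ factors in~\eqref{eq:q4} just makes explicit what the paper leaves implicit). The only cosmetic difference is that for the $\Dt_{-k}$ equation in~\eqref{eq:q2} you invoke Lemma~\ref{lem-Qsk}\eqref{eq:Qe} directly, whereas the paper routes it through the already-established symmetry~\eqref{eq:q3}, which is the same fact.
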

\begin{proof}
We take for $z_1\neq z_2 \in \uhp$\ir{qsk}{q_{s,k}(\cdot,\cdot)}
\be \label{qsk}
q_{s,k}(z_1,z_2) \= Q_{s,k} \bigl( \tppm(z_1)^{-1}\tppm(z_2)\bigr)\,,
\ee
with $\tppm(z)\in \tG$ as discussed in~\S\ref{sect-ucg}. This satisfies
assertion~\eqref{eq:q1}. Relation~\eqref{eq:q3} follows from Lemma~\ref{lem-Qsk}\eqref{eq:Qe}.

The differential operator $\Dt$ in \eqref{DtXpm} commutes with left
translation, and corresponds to $\Dt_k$ in~\eqref{Dtk} on functions in
weight~$k$. This implies that
$\Dt_k q_{s,k}(z_1,\cdot)=s(1-s)q_{s,k}(z_1,\cdot)$. With~\eqref{eq:q3} this
implies $\Dt_{-k}q_{s,k}(\cdot,z_2)=s(1-s) q_{s,k}(\cdot,z_2)$ as well.

We have for all $\tilde g\in \tG$
\[ Q_{s,k}\bigl( \tppm(z_1)^{-1} \tppm(z_2) \bigr) \= Q_{s,k} \Bigl(
\bigl(\tilde g\tppm(z_1)\bigr)^{-1} \tilde g\tppm(z_2) \Bigr)\,.\]
Hence assertion~\eqref{eq:q4} follows from~\eqref{Psig}.
\end{proof}

\rmrk{Use of the disk model}Let $z_1\in \uhp$ be near to $i$ and
$z_2=i$. Then $w_1=\frac{z_1-i}{z_2+i}$ is near to $0$. Taking
$\th_1=\frac12 \arg(w_1)$ and $\th_2 = \th_1-\frac \pi 2+\arg(z+i)$ we
can check that
\be \tppm(z_1) = \tkm(\th_1)\tam(t) \tkm(\th_2)\,,\ee
with $t= \frac{1+|w_1|}{1-|w_1|}$. Then $\tppm(z_1) = 
\tkm(\th_1) \tam(t) \tkm(\th_2)$ with
$\th_1+\th_2 = \frac \pi2-\arg(z+i)$.

Hence
\be\label{pc} q_{s,k}(z_1,i) \= Q_{s,-k}\bigl( \tppm(z)1) \bigr) \=
e^{-ik(\th_1+\th_2)}\,Q_{s,-k}\Bigl(\frac{1+|w_1|}{1-|w_1|}\Bigr)\,.\ee

\subsection{Integration with the kernel function} We generalize the
integral formula in \cite[Theorem 1.1]{BLZ15}, proved in \cite[Theorem
3.1]{BLZ13} (quoted in \cite{BLZ15} as Theorem~2.1).

\begin{prop}\label{prop-psC}Let $C$ be a piecewise smooth positively
oriented simple closed curve in $\uhp$ and let $U$ be an open region in
$\uhp$ containing the curve $C$ and its interior. If $u\in C^\infty(U)$
satisfies $\Dt_k u = s(1-s) u$, then for $z_2\in \uhp\setminus C$
\be \int _C \bigl[ u, q_{s,k}(\cdot, z_2) \bigr]_k \= \begin{cases}
2\pi i \, u(z_2)&\text{ if $z_2$ is inside $C$}\,,\\
0&\text{ if $z_2$ is outside $C$}\,.
\end{cases}
\ee
\end{prop}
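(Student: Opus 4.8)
The plan is to view the integrand as a closed $1$-form and to reduce the statement to a residue computation. Since $u$ is an eigenfunction of $\Dt_k$ for the eigenvalue $s(1-s)$ and, by~\eqref{eq:q2} in Proposition~\ref{prop-qsk}, the function $q_{s,k}(\cdot,z_2)$ is an eigenfunction of $\Dt_{-k}$ for the same eigenvalue, relation~\eqref{dhk} gives $d\bigl[u,q_{s,k}(\cdot,z_2)\bigr]_k=0$ on $U\setminus\{z_2\}$; the form is smooth there by~\eqref{eq:q1}. If $z_2$ lies outside $C$, this $1$-form is closed on an open set containing $C$ together with its interior, and Stokes's theorem gives $\int_C=0$. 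If $z_2$ lies inside $C$, closedness permits me to replace $C$ by a small positively oriented circle $C_\e$ centred at $z_2$, since the annular region between $C$ and $C_\e$ lies in $U\setminus\{z_2\}$; hence $\int_C=\lim_{\e\downarrow0}\int_{C_\e}$.

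To evaluate this limit I first reduce to $z_2=i$ with the help of the equivariance~\eqref{eq:q4} and the invariance~\eqref{Gf-inv} of the Green's form. Choosing $g=\rmatc abcd\in G$ with $g\,i=z_2$ and substituting $z_1=g\zeta$ turns the integral into $\int_{g^{-1}C}\bigl[u|_kg,\,q_{s,k}(\cdot,z_2)|_{-k}g\bigr]_k$ by~\eqref{Gf-inv}. The equivariance~\eqref{eq:q4} identifies $q_{s,k}(\cdot,z_2)|_{-k}g$ with the constant multiple $e^{ik\arg(ci+d)}\,q_{s,k}(\cdot,i)$, while $u|_kg$ is again a $\Dt_k$-eigenfunction for $s(1-s)$ and satisfies $(u|_kg)(i)=e^{-ik\arg(ci+d)}u(z_2)$. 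Since $i$ lies inside $g^{-1}C$, these automorphy factors cancel, so it suffices to prove that $\lim_{\e\downarrow0}\int_{C_\e}\bigl[v,q_{s,k}(\cdot,i)\bigr]_k=2\pi i\,v(i)$ for every $\Dt_k$-eigenfunction $v$ that is smooth near~$i$.

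For this residue I would pass to the disk model via $w=\frac{z_1-i}{z_1+i}$ and use the explicit expression~\eqref{Gfdm} together with the singularity of $Q_{s,k}$ recorded in Lemma~\ref{lem-Qsk}\eqref{eq:Qd}. A direct computation gives $1-v=|w|^2$ for $v=\frac{4y}{(y+1)^2}$, so that, using $h_1(1)=1$ and the fact that the phase factor $e^{-ik(\th_1+\th_2)}$ in~\eqref{pc} tends to $1$ as $z_1\to i$, one finds $q_{s,k}(z_1,i)=-2\log|w|+\oh(1)$ near $w=0$. In~\eqref{Gfdm} every summand except $a\,\partial_{\bar w}b\,d\bar w$ either contains the factor $q_{s,k}(\cdot,i)$ itself, which is only logarithmically singular and hence integrable with vanishing contribution over the shrinking circle, or a derivative of the smooth factor $v$; only $\partial_{\bar w}\bigl(-2\log|w|\bigr)=-\bar w^{-1}$ is non-integrable. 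As $\oint_{C_\e}\bar w^{-1}\,d\bar w=-2\pi i$, the surviving term contributes $v(i)\cdot(-1)\cdot(-2\pi i)=2\pi i\,v(i)$, which is the claim.

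The routine parts are the closedness and the two Stokes arguments; the main obstacle is the residue computation in the last paragraph. There one must verify that all terms of~\eqref{Gfdm} other than the single $\partial_{\bar w}q_{s,k}$ term vanish in the limit, keep track of the phase factor $e^{-ik(\th_1+\th_2)}$ and of the normalisation $h_1(1)=1$ so that the coefficient of $\log|w|$ is exactly $-2$, and fix the orientation correctly so that the residue comes out as precisely $2\pi i$ rather than a nonzero multiple of it.
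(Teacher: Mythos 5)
Your proposal is correct and follows essentially the same route as the paper's own proof: closedness of $\bigl[u,q_{s,k}(\cdot,z_2)\bigr]_k$ plus contour deformation, reduction to $z_2=i$ via \eqref{Gf-inv} and Proposition~\ref{prop-qsk}\eqref{eq:q4} (the paper simply picks $g=\ppm(z_2)$ so the automorphy factors you cancel are trivially $1$), and then the residue computation in the disk model using \eqref{Gfdm} and Lemma~\ref{lem-Qsk}\eqref{eq:Qd}, where only the $a\,\partial_{\bar w}b\,d\bar w$ term survives and yields $2\pi i\,u(z_2)$. Your bookkeeping of the coefficient $-2$ of $\log|w|$, the phase factor tending to $1$, and $\oint_{C_\e}\bar w^{-1}\,d\bar w=-2\pi i$ matches the paper's explicit computation.
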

\begin{proof}With \eqref{Gf} we have a differential form
$\bigl[ u, q_{s,k}(\cdot,z_2)\bigr]_k$ on $U\setminus  \{z_2\}$, which
is closed by \eqref{dhk} and Proposition~\ref{prop-qsk}\eqref{eq:q2}.
The integral
\[ \int_C \bigl[ u, q_{s,k}(\cdot,z_2)\bigr]_k \]
does not change if we deform the path $C$ continuously in
$U \setminus\{z_2\}$. In particular, the integral is zero if $z_2$ is
outside $C$. We proceed under the assumption that $z_2$ is inside $C$.

Let us take $g=\ppm(z_2)\in G$, with
\ir{ppm}{\ppm(z)}$\ppm(x+iy) \= \pr\tppm(x+iy)
\= \rmatc1x01 \rmatc {y^{1/2}}00{y^{-1/2}}$. Then $z_2 = g i$, and
$C_1 = g^{-1}C$ encircles $i$ once, contained in the open set $g^{-1}U$
containing $i$. With \eqref{Gf-inv} we obtain
\be \int_C \bigl[ u, q_{s,k}(\cdot,z_2)\bigr]_k \= \int_{C_1} \bigl[
u|_k g, q_{s,k}(\cdot,i) \bigr]_k\,.\ee
We shrink the curve $C_1$ to a small hyperbolic circle around $i$.

We use the disk model, with coordinate $w=\frac{z-i}{z+i}$. Then we can
take $C_1$ as a circle around $w=0$ with radius $\oh(\e)$ and let
$\e\downarrow 0$. We use the description \eqref{Gfdm} for the Green's
form. The function $a$ corresponds to $u$, and the function $b$ to
$z_1\mapsto q_{s,k}(z_1,i)$. By Lemma~\ref{lem-Qsk}\eqref{eq:Qd} we obtain
\begin{align*}
b &\= (1-w)^{k/2}(1-\bar w)^{-k/2} \Bigl(- h_1(1-w\bar w)\, \log(w \bar
w)
+ h_2(1-w\bar w)\Bigr)\= \oh(\log \e)\,,
\displaybreak[0]\\
\partial_{\bar w}b&\=\frac {-1}{2\bar w}(1-w)^{-k/2}(1-\bar w)^{k/2-1}
\Bigl( k \bar w h_2(1-|w|^2)\\
&\qquad\hbox{}
- h_1(1-|w|^2) \bigl( k\bar w \log|w|^2+2\bar w-2\bigr) \\
&\qquad\hbox{}
-2 |w|^2(1-\bar w)\bigl( h_1'(1-|w|^2) \log|w|^2- h_2'(1-|w|^2) \bigr)
\Bigr)\displaybreak[0]\\
&\= \frac{-1}{2} \, \bar w^{-1} \Bigl( 2
+ \oh(\e\log\e) \Bigr)\,.
\end{align*}

We write $w= \e e^{i \ph}$. The first term in \eqref{Gfdm} is
\be \Bigl( (\partial_w a ) \, b + \frac{k(1-\bar w)}{2(1-w)(1-|w|^2)} a
b \Bigr) \, dw \= \oh\bigl(\log\e \bigr) i \e \, e^{i\ph}\, d\ph =
o(1)\,, \ee
and does not contribute to the integral. The second term is
\begin{align*}
\Bigl( a\, (\partial_{\bar w} b) &+ \frac{k(1- w)}{2(1-\bar w)(1-|w|^2}
a b \Bigr)\, d\bar w\\
& \= \Bigl( -a\,\e^{-1} e^{i\ph}+ a\, \oh(\log\e) + \oh(\log\e)
\Bigr)(-i \e) e^{-i\ph}\, d\ph
\end{align*}
This gives in the limit $\e\downarrow 0$ for the total integral the
value
\[ 2\pi \, a(0)= 2\pi i \, \bigl( u|_l \tppm(z_2)\bigr)(i)
\= 2\pi i \,e ^{k\cdot 0} u(z_2)\,. \qedhere\]
\end{proof}

\subsection{Boundary germs}In Proposition~\ref{prop-pfMcf} we associated
to a Maass cusp form in $\A^0_k(s,\rho v_k)$ a $1$-cocycle $c^u_\pb$ on
$\proj\QQ\times\proj\QQ$ with values in the module
$\V {\om^0,0}{\rho v_k,s,k}(\proj\RR)$, which contains
$\V \om{\rho v_k,s,k}(\proj\RR)$ and is contained in
$\V0{\rho v_k,s,k}(\proj\RR)$. To go back from period functions to
Maass cusp forms we go over to modules of boundary germs that are
isomorphic to the principal series modules
$\V {\om }{\rho v_k,s,k}(\proj\RR)$.

\rmrk{Sheaves related to eigenfunctions of \intitle{\Dt_k}}For each open
set $\Om\subset \uhp$ we put
\be \E_{s,k}(\Om)\= \bigl\{ f\in C^\infty(\Om)\;:\; \Dt_k f =
s(1-s)\,f\bigr\}\,.\ee
This defines a sheaf \il{Esh}{$\E_{s,k}$}$\E_{s,k}$ on~$\uhp$.

We turn to subsets $\Om\subset \proj\CC$ that have a non-empty
intersection with $\uhp$. We put $\B_{s,k}(\Om) \= \E_{s,k}(\Om)$ if
$\Om \subset \uhp$, and if $\Om \cap \proj\RR\neq\emptyset$, then we
put\ir{Bdef}{\B_{s,k}}\il{Phsk}{$\Ph_{s,k}$}
\badl{Bdef} \B_{s,k}(\Om) &\= \Bigl\{ f\in \E_{s,k}(\Om \cap \uhp)\;:\;
\text{ the function }F(z) \= \Ph_{s,k}(z) f(z) \\
&\qquad \text{ on }\Om\cap\uhp \text{ extends to a real-analytic
function on $\Om$}\Bigr\}\,,\\
\Ph_{s,k}(z)
&\= y^{-s} \, (z+i)^{s+k/2}\, (\bar z-i)^{s-k/2}\,.
\eadl
The argument of $z+i$ is in $[0,\pi]$ for $z\in\uhp\cup \RR$, and the
argument of $\bar z-i$ is in $[-\pi,0]$. In the coordinate
$w=\frac{z-i}{z+i}$
\be\label{Phi-w}
\Ph_{s,k}(w) \= 4^2\, e^{\pi i k/2}\,
(1-w)^{-k/2}\,(1-\bar w)^{k/2}\, \bigl(1-|w|^2\bigr)^{-s}\,.\ee

\rmrk{Remarks}
\begin{enumerate}[label=$\mathrm{(\arabic*)}$, ref=$\mathrm{\arabic*}$]
\item Any $f\in \E_{s,k}(\Om \cap \uhp)$ is real-analytic,
since $\Dt_k$ is an elliptic operator. It is far from sure that for
$f\in \E_{s,k}(\Om \cap \uhp)$ the function $\Ph_{s,k} \, f$ has a
real-analytic continuation to $\Om$. The analyticity of the
continuation is an additional requirement. It determines $f$ uniquely
on all open connected subsets of $\Om$ that contain $\Om\cap \uhp$.
\item An example is the function $z\mapsto y^s$, which is in
$\B\Bigl( \bigl\{ z\in \CC\;:\; \im z>-1\bigr\}\Bigr)$, where the
restriction $\im z>-1$ arises from the singularity of $\Ph_{s,k}$ at
$-i$.
\item Another example, defined on the region $|z|>1$, is $f(z) = \im(-1/z)^s$,
which leads to $F(z) = \bigl(1+i/z\bigr)^s\, \bigl( 1-i/\bar z)^s$.
\end{enumerate}

\begin{defn}The space of \il{abg}{analytic boundary germ}\emph{analytic
boundary germs} on an open set $I \subset\proj\RR$ is\ir{Womsh}{\W
\om{s,k}}
\be \label{Womsh}\W\om{s,k}(I) \= \lim_{\stackrel \Om\rightarrow}
\B_{s,k}(\Om)\,,
\ee
where $\Om$ runs over the open sets in $\proj\CC$ that contain $I$.
\end{defn}

For $I=\proj\RR$ the elements of $\W \om{s,k}( \proj\RR)$ are
represented by real-analytic functions on an annulus
$1-\e <\Bigl|\frac{z-i}{z+i} \Bigr| < 1+\e$ such that $\Dt_k f=s(1-s)f$
on $1-\e<\Bigl|\frac{z-i}{z+i}\Bigr|<1$.

The use of the direct limit in~\eqref{Womsh} implies that for
representatives $f_1\in \B_{s,k}(\Om_1)$ and $f_2 \in \B_{s,k}( \Om_2)$
of $\ph $, the functions $F_1=\Ph_{s,k}f_1$ and $F_2 = \Ph_{s,k} f_2$
have real-analytic extensions that coincide on $\Om_1\cap \Om_2$. This
implies that $I\mapsto \W\om{s,k}(I)$ is a sheaf.

\begin{defn}The \il{res}{restriction morphism}\emph{restriction
morphism}
\il{rest}{$\rest_{s,j}$}$\rest_{s,k}\colon \W\om{s,k}\rightarrow \V\om{s,k}$
is induced by assigning to $f\in \B_{s,k}(\Om)$ the restriction of
$F=\Ph_{s,k}f$ to $\Om \cap \proj\RR$.
\end{defn}
For example the function $h(z) = y^s$ in
$\B _{s,k}\bigl(\left\{ z\in \CC\;:\;\im z>-1\right\}\bigr)$ leads to the
restriction $x\mapsto (x+i)^{s+k/2}(x-i)^{s-k/2}$, which is analytic on
$\RR$.
\begin{lem}\label{lem-intertwW}For each $g\in G$ the operators
$|_k g\colon\B_{s,k}(\Om) \rightarrow \B_{s,k}(g^{-1}\Om)$ with
$\Om \supset I$ induce an operator
$|_k g\colon \W\om{s,k}(I) \rightarrow \W\om{s,k}(g^{-1}I)$.
Furthermore,
\be \label{res-intertw}(\rest_{s,k}\ph) |^\prs_{s,k} f  \=
\rest_{s,k}\bigl(f|_{k} g\bigr) \,.\ee
\end{lem}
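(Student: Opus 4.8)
The plan is to reduce both assertions to a single explicit computation that identifies the transfer factor created by $\Ph_{s,k}$ with the principal-series automorphy factor. For $g=\rmatc abcd\in G$ put
\[
\mu_g(z) \= \Ph_{s,k}(z)\,e^{-ik\arg(cz+d)}\,\Ph_{s,k}(gz)^{-1}\,,
\]
with $\Ph_{s,k}$ as in \eqref{Bdef}. First I would compute $\mu_g$ directly, using $\im(gz)=y\,|cz+d|^{-2}$ together with the factorizations
\[
gz+i \= \frac{(a+ic)\,\bigl(z-g^{-1}(-i)\bigr)}{cz+d}\,,\qquad
\overline{gz}-i \= \frac{(a-ic)\,\bigl(\bar z-g^{-1}i\bigr)}{c\bar z+d}\,.
\]
Substituting these into $\Ph_{s,k}(gz)$, the factor $y$ cancels, the powers of $cz+d$ and $c\bar z+d$ collapse to $(cz+d)^{k/2}(c\bar z+d)^{-k/2}=e^{ik\arg(cz+d)}$, and this is annihilated by the weight factor $e^{-ik\arg(cz+d)}$ from \eqref{slash-k}. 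What remains is
\[
\mu_g(z) \= (a-ic)^{-s+k/2}(a+ic)^{-s-k/2}\,\Bigl(\frac{\bar z-i}{\bar z-g^{-1}i}\Bigr)^{s-k/2}\Bigl(\frac{z+i}{z-g^{-1}(-i)}\Bigr)^{s+k/2}\,.
\]
Comparing with \eqref{prs-trf}, this is exactly the real-analytic continuation of $t\mapsto J^\prs_{s,k}(g,t)$ off $\proj\RR$; in particular $\mu_g(z)\to J^\prs_{s,k}(g,t)$ as $z\to t\in\proj\RR$, and the branch points of $\mu_g$, namely $-i$ and $g^{-1}(-i)$, all lie strictly below $\proj\RR$, so $\mu_g$ is real-analytic on a neighbourhood of $g^{-1}I$ in $\proj\CC$.

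Granting this identity, the first assertion follows. Let $f\in\B_{s,k}(\Om)$ and set $F=\Ph_{s,k}f$, real-analytic on $\Om$. Since $|_kg$ commutes with $\Dt_k$, we have $f|_kg\in\E_{s,k}(g^{-1}\Om\cap\uhp)$, and by \eqref{slash-k} and the definition of $\mu_g$,
\[
\Ph_{s,k}(z)\,(f|_kg)(z) \= \mu_g(z)\,F(gz)\,.
\]
As $z\mapsto gz$ is biholomorphic on $\proj\CC$ and carries $g^{-1}\Om$ to $\Om$, the function $F\circ g$ is real-analytic on $g^{-1}\Om$; multiplied by the real-analytic $\mu_g$ it yields a real-analytic extension of $\Ph_{s,k}(f|_kg)$ to $g^{-1}\Om$. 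Hence $f|_kg\in\B_{s,k}(g^{-1}\Om)$. Because $(f|_kg)(z)$ depends only on the value $f(gz)$, the operator $|_kg$ is local and sends representatives of a common germ to representatives of a common germ; it therefore passes to the direct limit \eqref{Womsh} and induces $|_kg\colon\W\om{s,k}(I)\to\W\om{s,k}(g^{-1}I)$.

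For the intertwining relation \eqref{res-intertw} I would simply take boundary restrictions in the displayed identity. By the definition of $\rest_{s,k}$, the function $\rest_{s,k}(f|_kg)$ is the restriction of $\Ph_{s,k}(f|_kg)=\mu_g\cdot(F\circ g)$ to $g^{-1}\Om\cap\proj\RR$. Letting $z\to t$ and using $\mu_g(z)\to J^\prs_{s,k}(g,t)$ together with $(F\circ g)(t)=(\rest_{s,k}f)(gt)$ gives
\[
\bigl(\rest_{s,k}(f|_kg)\bigr)(t) \= J^\prs_{s,k}(g,t)\,(\rest_{s,k}f)(gt) \= \bigl((\rest_{s,k}f)|^\prs_{s,k}g\bigr)(t)\,,
\]
where the last equality is the definition \eqref{prs-trf}. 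This is precisely \eqref{res-intertw}.

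The main obstacle is the computation of $\mu_g$, and within it the bookkeeping of branch choices: one must check that the powers in $\Ph_{s,k}(z)$, in $\Ph_{s,k}(gz)$, and in $e^{-ik\arg(cz+d)}$ combine, under the conventions $\arg(z+i)\in[0,\pi]$ and $\arg(\bar z-i)\in[-\pi,0]$ of \eqref{Bdef} and the argument convention of \eqref{slash-k}, to give exactly the branches prescribed in \eqref{prs-trf}, and not a version differing by a factor $e^{2\pi in(s\pm k/2)}$. As in the argument following \eqref{diagactR}, the cleanest way to secure this is to verify the identity first for $g$ near $I_2$, where all arguments are unambiguous, and then to extend it by analyticity in $g$. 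Once $\mu_g$ is identified as the analytic continuation of the principal-series factor, everything else is formal.
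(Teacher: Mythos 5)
Your proposal is correct and follows essentially the same route as the paper: you eliminate $f$ to identify the transfer factor $\Ph_{s,k}(z)\,e^{-ik\arg(cz+d)}\,\Ph_{s,k}(gz)^{-1}$ with the real-analytic continuation of the principal-series factor in \eqref{prs-trf}, and you fix the branches exactly as the paper does, by verifying the identity for $g$ near $I_2$ and extending by analyticity in $g$. The only difference is that you actually carry out the ``long but straightforward computation'' (via the factorizations of $gz+i$ and $\overline{gz}-i$) that the paper's proof leaves implicit.
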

We note that principal series action $|^\prs_{s,k}g$ on the sections
$\rest_{s,k}\ph$ of $\V \om{s,k}$ is related to the action $|_k g$ on
boundary germs. The latter action does not depend on~$s$.
\begin{proof}
The existence of the operators $|_k g$ follows from the direct limit
definition of $\W\om{s,k}(I)$.

For $g=\rmatc abcd$ near to $I_2\in G$ a check of \eqref{res-intertw} is
a long but straightforward computation.
For the right hand side we know that the quantity $F_g(z)$ defined by
\be\label{r} F_g(z) \= \Ph_{s,k}(z)\, e^{-i k \arg(cz+d)} f(g z)\ee
for $z\in g^{-1} \Om \cap \uhp$ extends to $g^{-1}\Om$.
For the left hand side we have
\be F(z) \= \Ph_{k,s}(z) \, f(z)\ee
on $\Om \cap \uhp$, and we know that it extends to $\Om$. We can eliminate $f$ from the relation,
and end up with a relation in terms of $z$ and $\bar z$.
 Working out this relation takes some care with powers of complex quantities with complex
exponents, but for $g\approx I_2$ this causes no problems. Then we
substitute $z=t$ and $\bar z=t$ with $t\in g^{-1} I$, and observe that we get the factor
in~\eqref{prs-trf}. The resulting relation extends as the equality of
two multi-valued real-analytic functions on~$G$. We have chosen the
branches for $|_kg$ and $|^\prs_{s,k}g$ in the same way.
\end{proof}

The restriction morphism is not a morphism of $G$-equivariant sheaves.
Tensoring with $X_\rho$ we get a morphism of $\Gm$-equivariant sheaves
\il{Womrvsk}{$\W\om{\tro v_k,s,k}$}$\rest_{s,k}\colon
\W\om{\rho v_k,s,k} \rightarrow \V\om{\rho v_k,s,k}$.

\begin{prop}[Kernel functions $R_{s,k}$ and $q_{s,k}$]
\label{prop-qR}
For $2s\not\equiv k\bmod 2$\ir{bsk}{b(s,k)}
\badl{bsk} \bigl(\rest_{s,k} q_{s,k}(z_1,\cdot)\bigr)(t)&\=
b(s,k)
R_{s,k}(t,z_1)\,,\\
b(s,k) &\= e^{\pi i k/2}\,\frac{\Gf(s-k/2)\,\Gf(s+k/2)}{\Gf(2s)}\,.
\eadl
\end{prop}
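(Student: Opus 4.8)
The plan is to use the joint $G$-equivariance of the two kernels to reduce the identity to the single base point $z_1=i$, where both $q_{s,k}(z_1,\cdot)$ and $R_{s,k}(\cdot,z_1)$ degenerate to explicit elementary and hypergeometric expressions.

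First I would check that, regarded as functions of $(t,z_1)\in\proj\RR\times\uhp$, the two sides of~\eqref{bsk} obey the same transformation law under $G$. For the Poisson kernel this is~\eqref{diagactR}: $R_{s,k}$ is fixed by $|^\prs_{s,k}g$ in the variable~$t$ together with $|_{-k}g$ in the variable~$z_1$, so that $R_{s,k}(\cdot,z_1)|^\prs_{s,k}g=e^{ik\arg(c'z_1+d')}R_{s,k}(\cdot,g^{-1}z_1)$ with $g^{-1}=\rmatc{a'}{b'}{c'}{d'}$. For the left-hand side I would combine the intertwining relation~\eqref{res-intertw}, $(\rest_{s,k}\varphi)|^\prs_{s,k}g=\rest_{s,k}(\varphi|_kg)$, with the invariance $q_{s,k}\bigl(|_{-k}\times|_k\bigr)g=q_{s,k}$ of Proposition~\ref{prop-qsk}\eqref{eq:q4}. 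The latter turns $q_{s,k}(z_1,\cdot)|_kg$ into a scalar multiple of $q_{s,k}(g^{-1}z_1,\cdot)$, and after using the cocycle identity for the automorphy factors the two covariances coincide exactly. Hence the difference $D(t,z_1)=\bigl(\rest_{s,k}q_{s,k}(z_1,\cdot)\bigr)(t)-b(s,k)R_{s,k}(t,z_1)$ satisfies a homogeneous covariance in $z_1$; since $G$ acts transitively on $\uhp$ and $|^\prs_{s,k}g$ is invertible, it suffices to prove $D(t,i)=0$ for all $t$.

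At $z_1=i$ both kernels collapse. On the right, \eqref{Poisk} gives $R_{s,k}(t,i)=1$ for every $t$. On the left, $\tppm(i)$ is the identity of $\tG$ (it equals $\tam(1)$), so \eqref{qsk} yields $q_{s,k}(i,\cdot)=Q_{s,k}(\tppm(\cdot))$, and the goal becomes to show that the boundary germ $\rest_{s,k}Q_{s,k}(\tppm(\cdot))$ is the constant $b(s,k)$. I would evaluate it by substituting the explicit formula~\eqref{Qsk}. Writing $\tppm(z)=\tkm(\th_1)\tam(t')\tkm(\th_2)$ as in the polar computation leading to~\eqref{pc}, the phase factor becomes $e^{ik(\th_1+\th_2)}=e^{\pi ik/2}e^{-ik\arg(z+i)}$, while the hypergeometric argument is $v=4y/|z+i|^2$, which tends to $0$ as $z\to t\in\RR$. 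Multiplying $Q_{s,k}(\tppm(z))$ by $\Ph_{s,k}(z)$ and letting $z\to t$, the factor $v^s$ cancels against the $s$-dependent part of $\Ph_{s,k}$ (equivalently against the $(1-|w|^2)^{-s}$ in~\eqref{Phi-w}), the hypergeometric factor tends to $\hypg21\left[\atop{s-k/2,s+k/2}{2s}\Bigm|0\right]=1$, and the surviving elementary powers of $z+i$ and $\bar z-i$ combine with $e^{-ik\arg(z+i)}$ to a limit independent of $t$. The surviving gamma prefactor $\Gf(s-k/2)\Gf(s+k/2)/\Gf(2s)$ together with the phase $e^{\pi ik/2}$ then yields the constant $b(s,k)$ of~\eqref{bsk}.

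The step I expect to be most delicate is this last one: keeping the multivalued powers and phases consistent across~\eqref{Qsk}, \eqref{Phi-w} and the definition of $\Ph_{s,k}$ (respecting the argument conventions $-\pi<\arg\le\pi$ already fixed for $|^\prs_{s,k}$ and $|_k$), verifying that the residual power-and-phase factor is genuinely constant in $t$ rather than merely bounded, and thereby pinning down the multiplicative constant to be precisely $b(s,k)$. The hypothesis $2s\not\equiv k\bmod2$ enters here and in Lemma~\ref{lem-Qsk}, keeping the gamma ratio defining $b(s,k)$ finite and nonzero and guaranteeing that $Q_{s,k}$ is well defined; the logarithmic singularity of $Q_{s,k}$ at $v=1$ recorded in Lemma~\ref{lem-Qsk}\eqref{eq:Qd} is irrelevant, since the boundary $\proj\RR$ corresponds to $v\to0$, where $Q_{s,k}$ is governed by its leading term $v^s$ from~\eqref{eq:Qc}. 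Once $D(\cdot,i)\equiv0$ is established, the equivariance of the first step propagates the identity to all $(t,z_1)$.
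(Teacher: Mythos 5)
Your proposal is correct and follows essentially the same route as the paper's proof: reduce to $z_1=i$ via the joint equivariance of the two kernels (using \eqref{diagactR}, Proposition~\ref{prop-qsk}\eqref{eq:q4} and the intertwining property \eqref{res-intertw}), then compute $\Ph_{s,k}(z)\,q_{s,k}(i,z)$ explicitly from \eqref{Qsk}, where the hypergeometric factor equals $1$ on the boundary and the remaining phase and gamma factors give exactly $b(s,k)$, matching $R_{s,k}(t,i)=1$. The paper's computation confirms the step you flagged as delicate: the product $\Ph_{s,k}(z)\,q_{s,k}(i,z)$ equals $e^{\pi i k/2}\,\Gf(s-k/2)\Gf(s+k/2)/\Gf(2s)$ times the hypergeometric factor identically, so its boundary value is genuinely constant in $t$ (with the value at $t=\infty$ obtained by analytic continuation).
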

\begin{proof}In part~\eqref{eq:q4} of Proposition~\ref{prop-qsk} the kernel
function $q_{s,k}$ transforms with weight $-k$ in $z_1$ and with weight
$k$ in $z_2$. The Poisson kernel $R_{s,k}(t,z)$ transforms with weight
$-k$ in $z$, and with a principal series action of weight $k$ in~$t$;
see~\eqref{diagactR}. This shows that it is sensible to compare the
functions $z_2\mapsto q_{s,k}(z_1,z_2)$ and $t\mapsto R_{s,k}(t,z_1)$.

The transformation behavior of both kernels implies that it suffices to
take $z_1=i$. We denote the gamma factors in~\eqref{Qsk} by
\be \mathrm{Gf} \= \frac{\Gf(s-k/2)\,\Gf(s+k/2)}{\Gf(2s)}\,,\ee
and obtain with \eqref{Qsk}
\begin{align*}
q_{s,k}(i,z) &\= (1+i \bar z)^{k/2} \, (1-i z)^{-k/2} \, \mathrm{Gf}\;
\Bigl( \frac y{|z+i|^2}\Bigr)^s\, \hypg21\Bigl(\frac y{|z+i|^2}
\Bigr)\,,\displaybreak[0]\\
\Phi_{s,k}(z) &\= y^{-s} (z+i)^{s+k/2} \,(\bar z-i)^{s-k/2}\,,
\displaybreak[0]\\
F(z)&\= e^{\pi i k/2} \, \mathrm{Gf}\; \hypg21\Bigl(\frac y{|z+i|^2}
\Bigr)\,.
\end{align*}
The hypergeometric factor equals $1$ for $y=0$,
hence we get
$F(x) = e^{\pi i k/2} \, \mathrm{Gf}$ not depending on  $x\in \RR$, and then also for
$x=\infty$ by analytic continuation. Thus,
\be \bigl( \rest_{s,k} q_{s,k}(i,\cdot)\bigr)(t) \= e^{\pi i k/2} \,
\mathrm{Gf}\,.\ee
We observe in~\eqref{Poisk} that $R_{s,k}(t,i)=1$, which completes the proof.
\end{proof}

\begin{thm}\label{thm-resiso}Let $s\in \CC\setminus \ZZ_{\leq 0}$, and
$2s\neq \pm k$. The restriction morphism
\[
\rest_{s,k}\colon\W\om{s,k}\rightarrow \V\om{s,k}
\]
is bijective and
$\bigl( \rest_{s,k} f \bigr)|^\prs_{s,k}g = \rest_{s,k}\bigl( f|_k g\bigr)$
for all $g\in G$ for representatives $f$ of sections of $\W\om{s,k}$.

If $\rest_{s,k}\ph \in \W\om{s,k}(I)$ extends holomorphically to a
convex neighborhood $\Om$ in $\proj\CC$ of the open interval
$I\subset\proj\RR$ not containing $i$ and~$-i$ and symmetric under
complex conjugation, then $\ph$ can be represented by
$f\in \B_{s,k}(\Om)$ for the same neighborhood~$\Om$.
\end{thm}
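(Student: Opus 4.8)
The plan is to prove the three assertions in turn, reducing everything to a local analysis of eigenfunctions of $\Dt_k$ near an interval of $\proj\RR$. The equivariance $(\rest_{s,k}f)|^\prs_{s,k}g = \rest_{s,k}(f|_kg)$ is already contained in Lemma~\ref{lem-intertwW}, which records exactly this intertwining between the operators $|_kg$ on $\B_{s,k}$ and $|^\prs_{s,k}g$ on $\V\om{s,k}$; it passes to the direct limits defining $\W\om{s,k}$ and to the global sections. So only the bijectivity of $\rest_{s,k}$ and the domain statement genuinely remain.

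For bijectivity I would argue locally, since $\rest_{s,k}$ is a morphism of sheaves: it suffices to show that over every open interval $I\subset\proj\RR$ the map $\W\om{s,k}(I)\to\V\om{s,k}(I)$ is a bijection. The engine is the indicial analysis of $\Dt_k f = s(1-s)f$ at the boundary. Near a boundary point every eigenfunction is built from the two indicial solutions $y^{s}\cdot(\text{real-analytic})$ and $y^{1-s}\cdot(\text{real-analytic})$. Since $\Ph_{s,k}(z)\sim y^{-s}$ up to the real-analytic nonvanishing factor $(z+i)^{s+k/2}(\bar z-i)^{s-k/2}$, multiplication by $\Ph_{s,k}$ turns the $y^{s}$-solution into a real-analytic function on $\Om$, while it turns the $y^{1-s}$-solution into a function behaving like $y^{1-2s}$. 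Under the hypotheses on $s$ the two indicial roots are non-resonant and this second exponent produces no real-analytic contribution, so membership in $\B_{s,k}(\Om)$ forces the $y^{1-s}$-part to vanish and $\rest_{s,k}f$ records precisely the leading coefficient of the $y^s$-part. Injectivity follows, since a vanishing leading coefficient propagates through the Frobenius recursion (solvable because $s$ avoids the excluded values) to all higher coefficients, giving $f=0$. For surjectivity I would build, from a given real-analytic section $\psi$ of $\V\om{s,k}(I)$, an eigenfunction with $y^s$-leading coefficient $\psi$, either through the Frobenius/Cauchy--Kovalevskaya solution of the complexified equation or, more explicitly, through a Poisson-type integral of $\psi$ against the kernel $R_{s,k}(t,\cdot)$ of \eqref{Poisk}; the duality $\rest_{s,k}q_{s,k}(z_1,\cdot)=b(s,k)R_{s,k}(\cdot,z_1)$ of Proposition~\ref{prop-qR} together with the reproducing formula of Proposition~\ref{prop-psC} identifies this transform as the inverse of $\rest_{s,k}$.

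For the domain statement I would use this explicit construction. If $\rest_{s,k}\ph=\psi$ extends holomorphically to a convex, conjugation-symmetric neighborhood $\Om$ of $I$ avoiding the singularities $\pm i$ of $\Ph_{s,k}$, then the contour in the Poisson-type integral defining a representative of $\ph$ can be deformed within $\Om$: the holomorphic extension of $\psi$ lets the integral be evaluated for all $z\in\Om\cap\uhp$, and convexity guarantees that the deformation stays inside $\Om$ and never meets $\pm i$. The resulting eigenfunction $f$ is then holomorphic on $\Om\cap\uhp$ with $\Ph_{s,k}f$ extending real-analytically to all of $\Om$, so that $f\in\B_{s,k}(\Om)$ represents $\ph$ on the same neighborhood.

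I expect the main obstacle to be the indicial bookkeeping at the boundary: showing cleanly that the real-analyticity of $\Ph_{s,k}f$ isolates the $y^{s}$-solution and that $\rest_{s,k}$ inverts the Poisson transform exactly, which is where the non-resonance conditions $s\notin\ZZ_{\leq 0}$ and $2s\neq\pm k$ enter and where the handling of complex powers of $(z+i)$, $(\bar z-i)$ and $y$ — absent in the weight-zero case of \cite{BLZ15} — requires the most care. The second delicate point is controlling the domain of the constructed representative for the final statement, namely keeping the contour deformation inside $\Om$ and away from $\pm i$.
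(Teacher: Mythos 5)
Your reduction of the equivariance to Lemma~\ref{lem-intertwW} and your injectivity argument coincide with the paper's proof: the paper also divides out the non-vanishing real-analytic factor of $\Ph_{s,k}$, expands $y^{-s}f=\sum_{n\geq 0}a_n(x)y^n$ near $I$, and uses the recursion $n(2s+n-1)\,a_n= ik\,a_{n-1}'-a_{n-2}''$ to propagate $a_0=\ph=0$ to $f=0$. (Your appeal to ``non-resonance of the indicial roots'' is both inaccurate under the stated hypotheses --- $s=\tfrac12$, $s=\tfrac32$ or integer $s\geq 1$ are allowed and have $1-2s\in\ZZ$ --- and unnecessary; only the non-vanishing of these denominators enters.)

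The surjectivity half, however, has two genuine gaps. First, your kernel is wrong: by \eqref{efR}, $z\mapsto R_{s,k}(t,z)$ satisfies $\Dt_{-k}R_{s,k}(t,\cdot)=s(1-s)R_{s,k}(t,\cdot)$, so a Poisson-type integral of $\ph$ against $R_{s,k}(t,\cdot)$ produces eigenfunctions of $\Dt_{-k}$, which do not lie in $\E_{s,k}$ and hence cannot represent sections of $\W\om{s,k}$. The paper's transform \eqref{bskf} uses $R_{1-s,-k}$: the weight must be dualized to $-k$ to solve the correct equation, and the spectral parameter to $1-s$ so that the boundary value of $\Ph_{s,k}f$ is finite, non-zero, and equal to $\ph$ --- this is the explicit computation \eqref{Fres}, which is how the paper actually proves that restriction inverts the transform. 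Your alternative identification via Proposition~\ref{prop-psC} and Proposition~\ref{prop-qR} does not deliver this: Proposition~\ref{prop-psC} reproduces interior values of an eigenfunction defined on a full neighborhood of a closed curve in $\uhp$, which a boundary germ need not admit, and at best it concerns the composition $(\text{transform})\circ\rest_{s,k}$, whereas surjectivity requires $\rest_{s,k}\circ(\text{transform})=\mathrm{id}$ on $\V\om{s,k}(I)$.

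Second, and more seriously, nothing in your proposal plays the role of the Pochhammer contour. The integral from $\bar z$ to $z$ has endpoint singularities $(t-z)^{s-k/2-1}$ and $(t-\bar z)^{s+k/2-1}$, so it converges only for $\re s>|k|/2$ (this is \eqref{i0}); the theorem claims bijectivity for all $s\in\CC\setminus\ZZ_{\leq 0}$ with $2s\neq\pm k$, and the application in Theorem~\ref{thm-al} needs it with $\re s\in(0,1)$ and arbitrary real weight $k$, far outside the convergent range. The paper's essential extra step is to replace $[\bar z,z]$ by a Pochhammer double loop contained in $\Om$ (Figure~\ref{Pchm}) --- this is where the convexity and conjugation-symmetry of $\Om$ are really used --- obtaining the integral \eqref{i1}, which is holomorphic in $(s,k)$ and equals $4e^{-\pi ik/2}\sin\bigl(\pi\tfrac k2-\pi s\bigr)\sin\bigl(\pi \tfrac k2+\pi s\bigr)$ times \eqref{i0}; dividing by this factor gives the meromorphic continuation with singularities only at $s=\pm\tfrac k2$, which is exactly what accounts for the excluded parameters in the statement. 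Your fallback Frobenius/Cauchy--Kovalevskaya construction does not repair this: it still requires a convergence proof for the series it builds, and its denominators $n(2s+n-1)$ vanish for $s\in\tfrac12\ZZ_{\leq 0}$, an exceptional set different from the one in the theorem.
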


\begin{proof}Lemma~\eqref{lem-intertwW} gives the intertwining property
of the operators $|_k g$. Hence we can work with sections over an
interval~$I$ contained in $\RR$.
Let $f\in \B_{s,k}(\Om)$ represent a section $\ph \in \W\om{s,k}(I)$.
Near~$I$ we have
\be \Ph_{s,k}(x+i y) \= y^{-s}\, \al(x,y)\,,\ee
with
$\al(x,y) = \bigl( x+iy+i\bigr)^{s+k/2}\,\bigl(x-i y-i\bigr)^{s-k/2}$.
Since the factor $\al$ is real-analytic without zeros on the strip
$|y|<1$ in~$\uhp$, the function
\be H(x,y) = y^{-s}\, f(x,y) \= F(x,y)/\al(x,y)\ee
is also real-analytic on $\Om$, and we can work with $H$ instead of $F$.

For the injectivity we suppose that $\ph=0$ on $I$, and have to show
that then $H=0$ on a neighborhood of $I$ in $\CC$. The differential
equation $\Dt_k f = s(1-s) f$ implies that $H$ satisfies
\be - y^2\Bigl( \partial_x^2 H + \partial_y^2 H\Bigr) - 2 s y
\,\partial_y
H + i k y \,\partial_x H\=0\,.\ee
Since $H$ is real-analytic there is for each $x\in I$ an expansion
$ \sum_{n\geq 0} a_n(x)y^n$ converging to $H(x,y)$ for $y$ in an open
interval containing $0$. (This interval may depend on~$x$.) Inserting
this into the differential equation we get
\be\label{arel} a_n(x) \= \begin{cases} \frac {ik}{2s} a_0'(x)&\text{ if
}n=1\,,\\[2mm]
\dfrac{ik a_{n-1}'(x) - a_{n-2}''(x)}{n(2s+n-1)} &\text{ if }n\geq
2\,.\end{cases}\ee
(We use that $s\not\in \ZZ_{\leq 0}$.)

Since $a_0(x) = \ph(x)$, the function $a_n$ can be written as
\be a_n(x) \= p_n \, \ph^{(n)}(x)\,,\ee
with coefficients depending on $s$ and $k$. If $\ph=0$, then $H$
vanishes on a neighborhood of $I$. Hence the restriction map is
injective.
(In the case $k=0$ there is a nice formula for the $a_n$ in
\cite[(5.15)]{BLZ13}. We did not try to find a similar formula for
general real weights.)
\medskip

In \cite[\S5.2]{BLZ13} the surjectivity of the restriction is shown in
two ways: With a power series expansion (Theorem 5.6 in \cite{BLZ13}) and with an
integral representation (Theorem 5.7 in \cite{BLZ13}). Here we try to generalize the
latter approach.

Let $\Om$ be a neighborhood of $I$ with the properties indicated in the
theorem. For given $\ph \in \V \om{\rho v_k,s,k}(I)$ extending
holomorphically to $\Om$ we put
\be\label{bskf} f(z) \= \frac1{i 2^{2s-1}b(s,k)} \int_{t=\bar z}^z
R_{1-s,-k}(t;z) \, \ph(t)\, \frac{dt}{1+t^2}\,, \ee
initially for $\re s>\frac12|k|$. The corresponding function $F$
in~\eqref{Bdef} is
\begin{align*}
F(z) &\= \Ph_{k,s}(z) f(z)
\=\frac1{i 2^{2s-1} b(s,k)}\, \int_{t=\bar z}^z (z+i)^{s+k/2}\,(\bar
z-i)^{s-k/2}\\
&\qquad\hbox{} \cdot
\Bigl( \frac{t-i}{t-z}\Bigr)^{1-s+k/2}\, \Bigl( \frac{t+i}{t-\bar
z}\Bigr)^{1-s-k/2}\, \ph(t)\, \frac{dt}{t^2+1}
\displaybreak[0]\\
&\= \frac1{i 2^{2s-1}b(s,k)} \int_{t=-i}^i (x+i y+i)^{s+k/2}\,(x-i
y-i)^{s-k/2}\\
&\qquad\hbox{} \cdot
\Bigl( \frac{ty+x-i}{t-i}\Bigr)^{-s+k/2}\, \Bigl( \frac{ty+ x+i}{t+i}
\Bigr)^{-s-k/2} \ph(x+y t)\, \frac{dt}{1+t^2}\,.
\end{align*}
It is clear that the integral converges absolutely for $\re s$
sufficiently large, and that it describes a real-analytic function in
$z=x+i y$. We take the value at $y=0$:
\badl{Fres} F(x) &\= \frac1{i 2^{2s-1}b(s,k)}(x+i)^{s+k/2}\,
(x-i)^{s-k/2} \ph(x)
\\
&\qquad\hbox{} \cdot
\int_{t=-i}^i \Bigl( \frac{x-i}{t-i}\Bigr)^{-s+k/2} \,\Bigl(
\frac{x+i}{t+i}\Bigr)^{-s-k/2}\, \frac{dt}{1+t^2} \= \ph(x)\,.
\eadl
Under the assumption that $\re s$ is large, this shows that $\ph$ occurs
as the restriction of $f$. That is the surjectivity of $\rest_{s,k}$.
Moreover, if $\ph$ is holomorphic on a set $\Om$ as indicated in the
theorem, then $F$ is real-analytic on $\Om$, and furthermore
$f\in \E_{s,k}\bigl( \Om\cap\uhp\bigr)$.\medskip

The integral
\be\label{i0}\int_{t=-i}^i \, \Bigl( \frac{ty+x-i}{t-i}\Bigr)^{-s+k/2}\,
\Bigl( \frac{ty+ x+i}{t+i} \Bigr)^{-s-k/2} \ph(x+y t)\,
\frac{dt}{1+t^2} \ee
is holomorphic in $(s,k)\in \CC^2$ on the region $\re s>|\re k|/2$. We
aim at a meromorphic continuation for $(s,k) \in \CC^2$. As $t$ runs
from $-i$ to $i$ the term $ty+x$ runs from $t_-=\bar z$ to $t_+=z$. The
factor $\Bigl( \frac{ty+x-i}{t-i}\Bigr)^{-s+k/2}$ is holomorphic on
$\proj\CC$ except for a path from $t_+=z$ to $i$, which we choose as
indicated in Figure~\ref{Pchm}. The other factor is well-defined
outside the path from $t_-=\bar z$ to $-i$ in the figure.

We replace the integration over $[-i,i]$ in~\eqref{i0} by integration
over the Pochhammer contour $P$ sketched in Figure~\ref{Pchm}. For
$\Om$ with the properties mentioned in the theorem we can arrange that
the contour $P$ is contained in $\Om$. The paths from $t_\pm$ to
$\pm i$ are important only on the contour. A given choice of $p_\pm$
can be used for $z$ varying over compact sets.
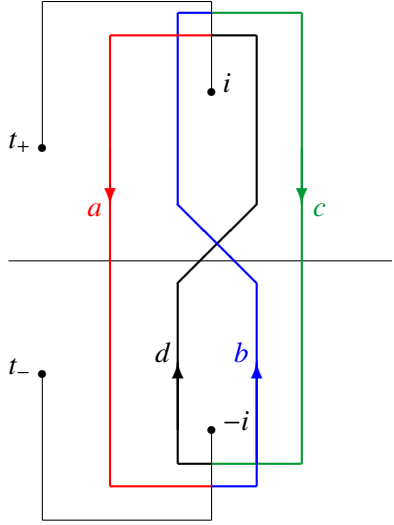
\begin{figure}[ht]
\[
\setlength\unitlength{1.5cm}
\begin{picture}(3.4,5)(-.4,-2.5)
\put(-.3,0){\line(1,0){3.4}}
\put(0,1){\circle*{.08}}
\put(0,-1){\circle*{.08}}
\put(1.5,1.5){\circle*{.08}}
\put(1.5,-1.5){\circle*{.08}}
\put(-.3,1){$t_+$}
\put(-.3,-1){$t_-$}
\put(1.6,1.5){$i$}
\put(1.6,-1.5){$-i$}
\put(0,1){\line(0,1){1.3}}
\put(0,2.3){\line(1,0){1.5}}
\put(1.5,2.3){\line(0,-1){.8}}
\put(0,-1){\line(0,-1){1.3}}
\put(0,-2.3){\line(1,0){1.5}}
\put(1.5,-2.3){\line(0,1){.8}}
\thicklines
\put(1.5,2){\line(1,0){.4}}
\put(1.9,2){\line(0,-1){1.5}}
\put(1.9,.5){\line(-1,-1){.7}}
\put(1.2,-.2){\line(0,-1){1.3}}
\put(1.2,-1.8){\vector(0,1){.9}}
\put(1.5,-1.8){\line(-1,0){.3}}
\put(1,-.9){$d$} \green
\put(1.5,-1.8){\line(1,0){.8}}
\put(2.3,-1.8){\line(0,1){4}}
\put(2.3,1){\vector(0,-1){.5}}
\put(2.3,2.2){\line(-1,0){.8}}
\put(2.4,.4){$c$}
\endgreen \blue
\put(1.5,2.2){\line(-1,0){.3}}
\put(1.2,2.2){\line(0,-1){1.7}}
\put(1.2,.5){\line(1,-1){.7}} 
\put(1.9,-.2){\line(0,-1){1.8}}
\put(1.9,-2){\line(-1,0){.4}}
\put(1.9,-1.8){\vector(0,1){.9}}
\put(1.7,-.9){$b$}
\endblue \red
\put(.6,-2){\line(0,1){4}}
\put(.6,2){\line(1,0){.9}}
\put(.6,-2){\line(1,0){.9}}
\put(.6,1){\vector(0,-1){.5}}
\put(.4,.4){$a$}
\endred
\end{picture}\]
\caption{Pochhammer contour} \label{Pchm}
\end{figure}

We conclude that
\be \label{i1} \int_{t\in P} \Bigl( \frac{ty+x-i}{t-i}\Bigr)^{-s+k/2}\,
\Bigl( \frac{ty+ x+i}{t+i} \Bigr)^{-s-k/2} \ph(x+y t)\,
\frac{dt}{1+t^2} \ee
depends analytically on $(s,k,z) \in \CC^2\times \Om$, holomorphically
depending on $(s,k)$.

To relate the outcome of \eqref{i1} to the outcome of \eqref{i0} we take
$\re s> \frac12|\re k|$. Then we can compute the integral over the
Pochhammer contour as a linear combination of four integrals from $-i$
to $i$. We take the arguments in such a way that at $t=0$ on part $a$
the argument of $\frac{ty+x-i}{t-i} = \arg(1+i x) \in (0,\pi)$, and the
argument of $\frac{ty+x+i}{t+i}$ is equal to $\arg(1-i x)\in (-\pi,0)$.
That is the choice of the arguments that we use in the computation
of~\eqref{Fres}. In this way the transition from \eqref{i0} to
\eqref{i1} amounts to multiplication by
\[ -1+ e^{-2\pi i s-\pi i k} - e^{-2\pi i k} + e^{2\pi i s-\pi i k} \= 4
e^{-\pi i k/2} \,\sin\Bigl(\pi \frac{k}2-\pi s\Bigr)\, \sin\Bigl(\pi
\frac{k}2+\pi s\Bigr)
\,.\]
Hence $f$ and $F$ have a meromorphic extension in $(s,k)$ with
singularities occurring only in $s=\pm \frac k2$.
\end{proof}

\subsection{Restriction and one-sided averages}Let $\bt<\al$. Based on a
fixed function $\ph \in C^\om_\rho\bigl( (\al,\bt)_c\bigr)$ we have the
meromorphic family
\[ s\mapsto f_s(z) =\rest_{s,k}^{-1} \ph\,(z) \= \frac1{b(s,k)}
\int_{t=\bar z}^z R_{1-s,-k}(t,z)\, \ph(t)\frac{dt}{1+t^2}\,.\]
Like in \cite[Lemma 4.6]{BLZ15} we can try to get the lower row of the
following scheme:
\be\xymatrix@C2cm{ \ph \ar[r]^{\av+} \ar[d]_{\rest_{s,k}^{-1}}
& \ph|^\prs_{\rho v_k,s,k} \av+ \ar[d]_{\rest_{s,k}^{-1}}
\\
f_s \ar[r]^{\av+ ?} & \rest_{s,k}^{-1}\bigl( \ph|^\prs_{\rho v_k,s,k}
\av+\bigr)
}\ee

We formulate the result that we will use later on.
\begin{lem}\label{lem-Avbg}
Let $\re s>0$, and denote
\begin{align*} \Om_\pm &\= \proj\CC \setminus \Bigl\{ z\in \CC\;:\;
\Bigl|z\pm \frac12\Bigr|\leq \frac12 \Bigr\}\,,\\
\Ups_\pm &\= \bigl\{ z\in \CC\;:\; \pm\re z>0 \bigr\}\,.
\end{align*}
See Figure~\ref{fig-OU}.

Suppose that $h_\pm\in X_\rho \otimes \B_{s,k}(\Om_\pm)$ and that the
associated real-analytic functions $H_\pm=\Ph_{s,k} h_\pm$ on $\Om_\pm$
satisfy $ \bigl( F_\pm(\infty) ,e_l\bigr)_\rho=0$ for all basis
elements $e_l$ with $\k_l=0$. (See \S\ref{sect-Fe}.)
Then
\be\label{avPdefbg} f_\pm|_{\rho v_k,k}\av\pm \=
\begin{cases} \sum_{m\geq 0} f_+|_{\rho v_k,k} T^m\\
-\sum_{m\leq
-1} f_-|_{\rho v_k,k} T^m
\end{cases}\ee
are well-defined elements of $X_\rho \otimes B_{s,l}(\Ups_\pm)$ which
satisfy
\be\label{resrel} \rest_{s,k} \bigl(f_\pm|_{\rho v_k , k}\av\pm \bigr)
\= \bigl( \rest_{s,k} f_\pm \bigr)\bigm|^\prs_{\rho v_k,s,k}\av\pm\,.\ee
Moreover, for $l=1,\ldots ,n(\rho)$:
\be \label{avasbg}\bigl( f_\pm|_{\rho v_k,k}\av\pm(x+i y),
e_l\bigr)_\rho \= \oh\bigl( y^{-s}\bigr)
\ee
as $y \uparrow\infty$, uniform for $x$ in compact sets in
$\Ups_\pm \cap \RR$.
\end{lem}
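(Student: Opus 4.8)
The plan is to obtain the bottom arrow of the diagram preceding the statement by transporting the one-sided averages already understood on the principal-series side (Lemma~\ref{lem-1sa} and Proposition~\ref{prop-1sav}) through the restriction isomorphism $\rest_{s,k}$ of Theorem~\ref{thm-resiso}. First I would make the germ-level action explicit: since $T^m$ acts by $z\mapsto z+m$ with trivial automorphy factor, on a representative $h_+\in X_\rho\otimes\B_{s,k}(\Om_+)$ one has $f_+|_{\rho v_k,k}T^m=(v_k(T)\rho(T))^{-m}\,h_+(\cdot+m)$, a section over $T^{-m}\Om_+$, and analogously for $\av-$. One checks $T^{-m}\Om_+\supset\Ups_+$ for every $m\ge0$ (the excluded disk of $\Om_+$ is translated into $\{\re z<0\}$), and symmetrically $T^{-m}\Om_-\supset\Ups_-$ for $m\le-1$, so every summand is a germ on $\Ups_\pm$ and the sums in \eqref{avPdefbg} are at least formally defined there.

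The analytic core is the convergence of these sums in a full complex neighbourhood of $(0,\infty)$ (resp.\ of $\Ups_-\cap\RR$). Decomposing in the eigenbasis $e_l$ of $v_k(T)\rho(T)$, the $e_l$-component of the $m$-th summand of $f_+|_{\rho v_k,k}\av+$ is $e^{-2\pi i m\k_l}\,h_{+,l}(z+m)$, where $h_+=\Ph_{s,k}^{-1}F_+$ with $F_+$ real-analytic on $\Om_+$ and $h_{+,l}=(h_+,e_l)_\rho$. Since $\Ph_{s,k}(z+m)^{-1}\sim c\,y^{s}m^{-2s}$ and $F_{+,l}(z+m)\to F_{+,l}(\infty)=(F_+(\infty),e_l)_\rho$ as $m\to\infty$, for $\k_l\neq0$ the oscillation with $\z=e^{-2\pi i\k_l}\neq1$ produces a Lerch-type series whose convergence and meromorphic continuation to $\re s>0$, together with the vanishing of its leading coefficient, are governed by Proposition~\ref{prop-Lerch}; for $\k_l=0$ the hypothesis $(F_+(\infty),e_l)_\rho=0$ removes the leading $m^{-2s}$ term and improves the decay to $\oh(m^{-2s-1})$, again giving convergence for $\re s>0$. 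Carrying out these estimates uniformly on compacta of a neighbourhood of $(0,\infty)$ shows the sums converge to real-analytic functions whose products with $\Ph_{s,k}$ extend real-analytically across $(0,\infty)$; hence $f_\pm|_{\rho v_k,k}\av\pm\in X_\rho\otimes\B_{s,k}(\Ups_\pm)$.

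The restriction relation \eqref{resrel} then follows by applying $\rest_{s,k}$ term by term: by Theorem~\ref{thm-resiso} it intertwines $|_kT^m$ with $|^\prs_{s,k}T^m$, and being the sheaf morphism $f\mapsto(\Ph_{s,k}f)|_{\proj\RR}$ it commutes with the locally uniform limit, so $\rest_{s,k}(f_\pm|_{\rho v_k,k}\av\pm)=(\rest_{s,k}f_\pm)|^\prs_{\rho v_k,s,k}\av\pm$, the right-hand side being the principal-series average of Lemma~\ref{lem-1sa}. For the asymptotic \eqref{avasbg} I would note that $\rest_{s,k}(f_+|_{\rho v_k,k}\av+)=(\rest_{s,k}f_+)|^\prs_{\rho v_k,s,k}\av+$ has the expansion \eqref{asympt-1sa} with vanishing $C_{-1,l}$ (by Proposition~\ref{prop-Lerch} when $\k_l\neq0$, and by $(\rest_{s,k}f_+)(\infty)=F_+(\infty)$ together with the hypothesis when $\k_l=0$); thus the uniform estimates of the previous step give that $\Ph_{s,k}\,(f_+|_{\rho v_k,k}\av+)$ stays bounded as $z\to\infty$ in $\Ups_+$, which combined with $\Ph_{s,k}(x+iy)^{-1}=\oh(y^{-s})$ as $y\uparrow\infty$ yields the claimed bound, uniformly for $x$ in compacta of $\Ups_\pm\cap\RR$.

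The main obstacle is precisely this convergence step on the germ side: the principal-series estimates of Proposition~\ref{prop-1sav} only control behaviour along $\proj\RR$, whereas here one needs uniform control, and real-analytic extendability of the limit across the boundary interval, on a full two-dimensional neighbourhood of $(0,\infty)$ in the borderline range $0<\re s\le\frac12$. This is where the component decomposition, the vanishing hypothesis at $\k_l=0$, and the fine Lerch-transcendent asymptotics of Proposition~\ref{prop-Lerch} must be combined; once convergence is secured, the identification of the limit and its decay are comparatively formal.
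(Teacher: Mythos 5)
Your overall architecture is the same as the paper's, and most of it is sound: you decompose in the eigenbasis $e_l$ of $v_k(T)\rho(T)$, split each component $F_{\pm,l}$ into its value at $\infty$ and a remainder vanishing there, get absolute convergence of the remainder part for $\re s>0$, treat the constant term via the Lerch transcendent \eqref{Lerch} (with the hypothesis $\bigl(F_\pm(\infty),e_l\bigr)_\rho=0$ disposing of the case $\k_l=0$), and obtain \eqref{resrel} termwise in the region of absolute convergence, extending in $s$ afterwards. This is exactly the paper's route for well-definedness and for \eqref{resrel}.

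The genuine gap is in your derivation of the decay estimate \eqref{avasbg}. You deduce it from the asymptotic expansion \eqref{asympt-1sa} of the restriction $\bigl(\rest_{s,k}f_\pm\bigr)|^\prs_{\rho v_k,s,k}\av\pm$ together with ``the uniform estimates of the previous step''. Neither ingredient addresses the relevant regime: \eqref{asympt-1sa} (Proposition~\ref{prop-1sav}\eqref{eq:1sav3}) describes behavior as $t\to\infty$ \emph{through} $\proj\RR$, whereas \eqref{avasbg} concerns $y=\im z\uparrow\infty$ with $\re z$ in a compact set; and your convergence estimates were uniform only on compacta of a neighborhood of $(0,\infty)$, which never reach vertical infinity. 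There is no general principle that transfers an expansion of the boundary restriction of a real-analytic eigenfunction into a bound along vertical lines, and the paper's own proof opens by flagging precisely this distinction before doing the work your argument skips: for the components with $F_{\pm,l}(\infty)=0$ one estimates termwise $|f_{\pm,l}(z+m)|\ll y^{\re s}\bigl(y^2+(x+m)^2\bigr)^{-\re s-1/2}$ and compares with $\int(1+t^2)^{-\re s-1/2}\,dt$ to get $\oh\bigl(y^{-\re s}\bigr)$ (this is where $\re s>0$ is used); for the constant term, present only when $\k_l\neq0$, one identifies the sum as $y^s\,H\bigl(2s,e^{-2\pi i\k_l},z\bigr)$ up to an absolutely convergent error and applies Proposition~\ref{prop-Lerch}\eqref{eq:Lerch2} at the \emph{complex} argument $z=x+iy$ --- legitimate because that expansion holds on sectors $\dt-\pi\le\arg z\le\pi-\dt$ --- where $\z=e^{-2\pi i\k_l}\neq1$ forces $C_{-1}(\z,s)=0$ and yields $\oh\bigl(y^{-s}\bigr)$. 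Your closing remark that, once convergence is secured, ``the decay is comparatively formal'' has it backwards: the vertical decay estimate is the substantive computation of this lemma, and it does not follow from convergence plus boundary asymptotics.
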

\begin{figure}[ht]
\begin{center}
\includegraphics[width=.8\textwidth]{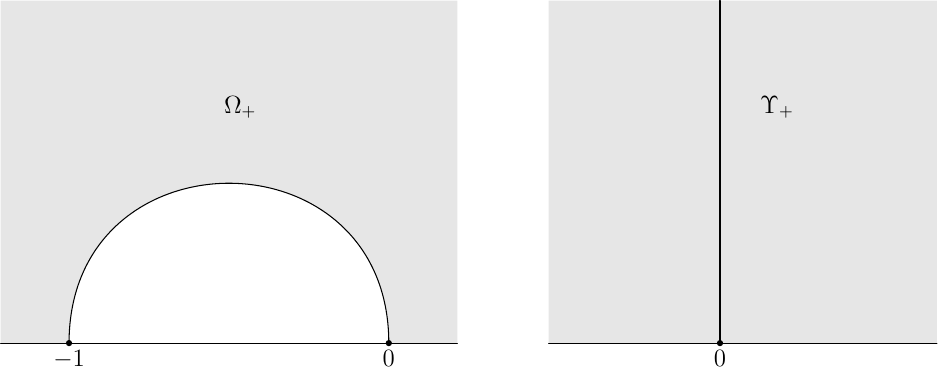}
\end{center}
\caption{Domains $\Om_+$ and $\Ups_+$ in
Lemma~\ref{lem-Avbg}.}\label{fig-OU}
\end{figure}

\begin{proof}
The definition of $|_{\rho v_k,k}\av\pm$ follows the same scheme as the
definition of $|^\prs_{\rho v_k,s,k}$. Only the power series $F_\pm$ at
$\infty$ is in the variables $z$ and $\bar z$. In the region of
convergence the correspondence \eqref{resrel} is clear. It extends by
analytic continuation in $s$. \smallskip

In \eqref{avasbg} we are interested in the asymptotic behavior as
$y=\im z\uparrow\infty$, whereas part~\eqref{eq:1sav3} of
Proposition~\ref{prop-1sav} concerns expansions for $t$ approaching
$\infty$ through $\RR$.

It suffices to consider the component
$f_{\pm ,l}(z) = \bigl( f_\pm(z),e_l\bigr)_\rho$, and the corresponding
component $F_{\pm, l} = \Ph_{s,k} \, f_{\pm ,l}$. The function
$F_{\pm ,l}$ is given by a power series in $(z+i)^{-1}$ and
$(\bar z^{-1}-i)^{-1} $ that converges on a neighborhood of $\infty$ in
$\proj\CC$.

If $F_{\pm ,l}(\infty)=0$ we have
\[ F_{\pm , l}(z) \= \Ph_{s,k}(z) f_{\pm,l}(z) \= \oh \bigl( y^s
((y+1)^2+x^2)^{-s-1/2} \bigr)\,.\]
This gives in the $+$ case for $ x\geq 0$ and $y\uparrow\infty$
\begin{align*} \sum_{m\geq 0} e^{-2\pi i m \k_l}\, f_{\pm,l}(z+m)
&\;\ll\; y^s \sum_{m\geq 0}\bigl( y^2+(x+m)^2\bigr)^{-s-1/2}
\displaybreak[0]\\
&\;\ll\; y^{-s}\int_{t=x}^\infty (1+t^2)^{-s-1/2} \= \oh(y^{-s})\,.
\end{align*}
For the $-$ case, replace $\sum_{m\geq 0}$ by $-\sum_{m\leq -1}$.

We treat the constant term of $F_{\pm,l}$ at $\infty$ separately. This
needs to be done only for $\k_l\neq0$.
\begin{align*}\sum_{m=0}^\infty& e^{2\pi i m \k_l}\, \Ph_{s,k}(z+m)^{-1}
\= \sum _{m\geq 0} e^{-2\pi i m\k_l}\, y^s \,(z+m+i)^{-s-k/2}\,( \bar
z+m-i)^{-s+k/2}
\displaybreak[0]\\
&\= y^s \sum_{m\geq 0} e^{-2\pi i m \k_l}\, (z+m)^{-2s} \Bigl( 1 +
\oh\bigl( (z+m)^{-1}\bigr)
\Bigr)\,.
\end{align*}
The $\oh$-term gives a convergent sum for $\re s>0$ with the estimate
$\oh(y^{-s})$ like above. The main term is $y^s$ times the Lerch
transcendent $H(2s,e^{-2\pi i \k_l}, z)$; see \eqref{Lerch}. We apply
the asymptotic behavior in Proposition~\ref{prop-Lerch}\eqref{eq:Lerch2} to $z=x+i y$
with $x$ in a compact set, and $y\uparrow\infty$. Since
$\z=e^{-2\pi i \k_l}\neq 1$, the expansion starts with $z^{-2s}$. This
gives the desired result.

The individual terms in the sum~\eqref{avPdefbg} correspond to the
individual terms with $\rest_{s,k} f$. In the region of absolute
convergence the relation in \eqref{resrel} is clear. This relation
extends analytically in~$s$.
\end{proof}



\section{From period functions to cuspidal Maass forms}\label{sect-pfcf}
In Sections~\ref{sect-Mcf}--\ref{sect-pi} we carried out the following
steps
\[
\begin{array}{ccccccc}
u&\mapsto&\eta_{s,k}(u)&\mapsto&c^u_\pb&\mapsto&
c^u_\pb(0,\infty)|_{(0,\infty)}
\\
\in \A^0_k(s,\rho v_k)&&\text{see
\eqref{etaks}}&&(\proj\QQ)^2\rightarrow \V{\om^0,0}{\rho
v_k,s,k}(\proj\RR)&&\in \FE^\om_{\rho v_k,s,k}
\end{array}\label{usteps}
\]
In this section the aim is to go from a period function to an
automorphic form, using cocycles with values in the boundary germs,
instead of in analytic functions on intervals~in $\proj\RR$.
We state the main theorem.
\begin{thm}\label{thm-al}
Let $k\in \RR$ and $s\in \CC$ such that $\re s\in (0,1)$ and $s\not\equiv \pm k/2\bmod 1$.
Let $\rho$ be a finite-dimensional unitary representation of $\Gm=\SL_2(\ZZ)$.
Then the linear map $\A^0_k(s,\rho v_k) \rightarrow \FE^\om_{\rho v_k,s,k}$
given by $u \mapsto c^u_\pb(0,\infty)|_{(0,\infty)}$ with
$c^u_\pb(\cdot,\cdot)$ as in~\eqref{cparb} is bijective.
\end{thm}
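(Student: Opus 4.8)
The plan is to prove bijectivity by producing an explicit two-sided inverse $\al\colon\FE^\om_{\rho v_k,s,k}\to\A^0_k(s,\rho v_k)$, using the analytic boundary germs of Section~\ref{sect-abg} to pass from cocycles valued in the principal series back to eigenfunctions of~$\Dt_k$ on~$\uhp$. That $\pf$ is a well-defined linear map is already contained in Proposition~\ref{prop-pfMcf}(i), so only the construction of $\al$ and the two identities $\pf\circ\al=\mathrm{id}$ and $\al\circ\pf=\mathrm{id}$ remain.

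First I would reconstruct a parabolic cocycle from a period function. Given $f\in\FE^\om_{\rho v_k,s,k}$, Proposition~\ref{prop-pfMcf}(ii) provides a unique extension $p\in\V{\om^0,0}{\rho v_k,s,k}(\proj\RR)$ with $p|_{(0,\infty)}=f$ satisfying the relations in Proposition~\ref{prop-P}(a)--(c). As in the proof of Proposition~\ref{prop-P}(e), the single value $p=c(0,\infty)$ together with the three-term and $S$-relations and the Farey tessellation determines a parabolic $1$-cocycle $c\in Z^1_\pb\bigl(\Gm;\V{\om^0,0}{\rho v_k,s,k}(\proj\RR)\bigr)$ with $c(0,\infty)=p$. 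The same tessellation argument recovers $c^u_\pb$ from its value $c^u_\pb(0,\infty)$, so $\pf$ itself factors through $u\mapsto c^u_\pb\mapsto c^u_\pb(0,\infty)|_{(0,\infty)}$.

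Next I would turn the cocycle into a function on~$\uhp$. By Theorem~\ref{thm-resiso} the restriction morphism $\rest_{s,k}$ is an isomorphism, so $c$ lifts to a cocycle $\tilde c=\rest_{s,k}^{-1}c$ with values in the boundary-germ sheaf $\W\om{\rho v_k,s,k}$. Integrating the germ representatives against the kernel $q_{s,k}$ of Proposition~\ref{prop-qsk} along a Farey path $\sum_j\gm_j e_{0,\infty}$ from~$0$ to~$\infty$ defines $\al(f)=u$ on~$\uhp$. By the closedness of the Green's form, which follows from \eqref{dhk} together with Proposition~\ref{prop-qsk}\eqref{eq:q2}, and by the contour-integral formula of Proposition~\ref{prop-psC}, the function $u$ is a smooth $\Dt_k$-eigenfunction with eigenvalue $s(1-s)$; the cocycle relation together with the equivariance \eqref{khtGm} of the Green's form then yields $u|_{\rho v_k,k}\gm=u$ for all $\gm\in\Gm$.

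The main obstacle will be the cuspidality of~$u$ and the two round-trip identities. For the decay I would localize at the cusp~$\infty$, where Lemma~\ref{lem-Avbg} replaces the relevant germ by its one-sided average and the bound \eqref{avasbg} gives $\oh(y^{-s})$; the hypotheses $\re s\in(0,1)$ and $s\not\equiv\pm k/2\bmod1$ are exactly what keep $q_{s,k}$ and the lift $\rest_{s,k}^{-1}$ well-defined (Lemma~\ref{lem-Qsk}, Theorem~\ref{thm-resiso}) and, via Lemma~\ref{lem-f1c}, suppress the singular constant term at~$\infty$. This polynomial bound upgrades to the exponential decay~\eqref{ed} by the Fourier-expansion argument of Subsection~\ref{sect-Fe}, so $u\in\A^0_k(s,\rho v_k)$. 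For $\al\circ\pf=\mathrm{id}$ one unwinds the contour integral of Proposition~\ref{prop-psC} over the Farey tessellation, expressing $2\pi i\,u(z)$ through the cocycle $c^u_\pb$, so applying $\al$ to $\pf(u)$ returns~$u$. For $\pf\circ\al=\mathrm{id}$ one computes the period function of $\al(f)$ using the identity $\rest_{s,k}q_{s,k}(i,\cdot)=b(s,k)R_{s,k}(\cdot,i)$ of Proposition~\ref{prop-qR}, which identifies the Green's form built from the germs with the form $\eta_{s,k}$ underlying $\pf$ and returns~$f$ on $(0,\infty)$. Injectivity then follows at once, since $\pf(u)=0$ forces $c^u_\pb=0$ and hence $u=0$ by this reconstruction.
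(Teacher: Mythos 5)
Your opening steps (extending $f$ to the cocycle via Proposition~\ref{prop-pfMcf}(ii) and lifting it to boundary germs with $\rest_{s,k}$) match the paper, but the central construction of the inverse map is garbled in a way that would not work. You propose to define $\al(f)$ by ``integrating the germ representatives against the kernel $q_{s,k}$ along a Farey path from $0$ to $\infty$''. That integral is only available \emph{after} one knows the cocycle comes from a Maass cusp form: in the paper, integrals $\int_e\bigl[\,\cdot\,,q_{s,k}(\cdot,z)\bigr]_k$ appear only in Lemma~\ref{lem-osi}, where the integrand is $[u,q_{s,k}(\cdot,z)]_k$ with $u$ a cusp form whose exponential decay makes the integral along an edge ending in cusps converge. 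For an arbitrary period function there is no such $u$ yet (it is what you are trying to construct), and a germ representative $h=\tilde b_\Fa(e_{0,\infty})$ is merely an eigenfunction on $\uhp$ minus an edge, with no decay at the cusps; neither convergence nor $\Gm$-equivariance of your integral can be argued. The paper's actual mechanism (Lemmas~\ref{lem-cochain} and~\ref{lem-al}) uses no kernel at all: one represents the germ-valued cocycle by a cochain $\tilde b_\Fa$ with values in $\G{\om^0,0}{\rho v_k,s,k}$, i.e.\ by genuine eigenfunctions on $\uhp$ minus finitely many Farey edges, evaluates this cochain on closed paths $p$ of Farey edges, uses the cocycle relation to see that $\tilde b_\Fa(p)$ vanishes outside $p$ and defines a function $v_p$ inside $p$, and then exhausts $\uhp$ by wider and wider paths to obtain $v\in\E_{\rho v_k,s,k}(\uhp)^\Gm$. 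This cochain-evaluation idea is the heart of the construction and is absent from your proposal.

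The second gap is the identity $\pf\circ\al=\mathrm{id}$, which you dispose of in one sentence by citing Proposition~\ref{prop-qR}. No mechanism is offered: $\al(f)$ arises from a limiting geometric procedure, and there is no direct way to compute its period integral and recognize $f$. The paper never proves $\pf\circ\al=\mathrm{id}$ directly. It proves (i) $\al_{s,k}\circ\pf=\frac{2\pi i}{b(s,k)}\,\mathrm{id}$ (Lemma~\ref{lem-osi}; note the nonzero constant, so your exact identity $\al\circ\pf=\mathrm{id}$ is off by normalization), and (ii) that $\al_{s,k}$ is \emph{injective}. Surjectivity of $\pf$ is then formal: given $f$, the function $u\coloneqq\frac{b(s,k)}{2\pi i}\al_{s,k}f$ lies in $\A^0_k(s,\rho v_k)$ by Lemma~\ref{lem-cusp}, and $\al_{s,k}(\pf(u))=\frac{2\pi i}{b(s,k)}u=\al_{s,k}(f)$ forces $\pf(u)=f$. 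Step (ii) is the hardest part of the whole proof: it needs the Mittag--Leffler type splitting $f=f_\infty-f_\infty|^\prs_{\rho v_k,s,k}S$ obtained from H\"ormander's theorem (Lemma~\ref{lem-psplit}), the decomposition $\Fbg{s,k}=\E_{s,k}(\uhp)\oplus\W\om{s,k}(\proj\RR)$ (Lemma~\ref{lem-dbcp}), and a Fourier--Whittaker analysis (Lemma~\ref{lem-inj}). Nothing in your proposal substitutes for this, so bijectivity does not follow from what you have written.
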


We emphasize that the proof of Theorem~\ref{thm-al} provides the inverse map. However, the statement of the inverse map is rather involved; it involves a transition to boundary germs. The proof is split into a number of steps. At the end of this section we
give a recapitulation. In this section we use $s$, $k$ and $\rho$ as
indicated in the theorem.

\rmrk{Use of the Farey tesselation}We use the \il{Fartess}{Farey
tesselation}Farey tesselation \il{Far}{$\Fa$}$\Fa$ illustrated in
Figure~\ref{fig-Farey}. By
\il{XjF}{$X_j^\Fa$}$X_0^\Fa$ we denote the set of vertices, by
$X^\Fa_1$ the set of edges $e_{\xi,\eta}$ in the tesselation, and by
$X^\Fa_2$ the set of cells. The group $\bar \Gm=\Gm/\{\pm I_2\}$ acts
on these sets, and $X_0^\Fa=\Gm\, \infty$,
$X_1^\Fa = \Gm\, e_{0,\infty}$, and $X_2^\Fa=\Gm C_{0,\infty,-1}$,
where $C_{0,\infty,-1}$ denotes the cell with vertices $0$, $\infty$
and $-1$. For each $e\in X_1^\Fa$ we choose an orientation, and use
$e_{\eta,\xi}=-e_{\xi,\eta}$ to handle the opposite orientation. Since
$S\, e_{0,\infty}= e_{\infty,0}=-e_{0,\infty}$, all oriented edges can
be written as $\gm^{-1} e_{0,\infty}$ with a unique $\gm\in \bar \Gm$.

Like in \cite[\S11.1, 11.3]{BLZ15} the complex
\il{FFar}{$F_\bullet^\Fa$}$F_\bullet^\Fa=\CC[X_\bullet^\Fa]$ forms a
resolution of $\bar\Gm$-modules that leads to the parabolic cohomology
groups
$H^j\bigl( F_\bullet^\Fa; M\bigr)\cong H^j_\pb \bigl( \bar \Gm; M\bigr)  $,
$j=0,1,2$, of $\bar\Gm$-modules $M$. (Here we do not need the mixed
cohomology groups used in~\cite{BLZ15}.) Furthermore,
$H^j_\pb \bigl( \bar \Gm; M\bigr)  = H^j_\pb \bigl(  \Gm; M\bigr) $ for
modules in which the action of $-I_2$ is trivial.)

\begin{lem}\label{lem-bt}
Let $\W{\om^0,0}{\rho v_k,s,k}(\proj\RR)$ correspond to
$\V{\om^0,0}{\rho v_k,s,k}(\proj\RR)$ under the isomorphism
$\rest_{s,k}$ in Theorem~\ref{thm-resiso}. There is an injective linear map
\il{bt}{$\bt_{s,k}$}
\[
\bt_{s,k}\colon  \FE^\om_{\rho v_k,s,k}
\rightarrow Z^1\bigl( F_\bullet^\Fa;\W{\om^0,0}{\rho v_k,s,k}(\proj\RR)\bigr)\,.
\]
\end{lem}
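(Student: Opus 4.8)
The plan is to realise $\bt_{s,k}(f)$ as a $\bar\Gm$-equivariant $1$-cochain on $F_\bullet^\Fa$ whose value on the base edge $e_{0,\infty}$ is the boundary-germ incarnation of the extended period function, and then to read off closedness and injectivity directly from Proposition~\ref{prop-pfMcf} and Theorem~\ref{thm-resiso}. Concretely, given $f\in\FE^\om_{\rho v_k,s,k}$, I would first invoke part~(ii) of Proposition~\ref{prop-pfMcf} to produce the unique $p\in\V{\om^0,0}{\rho v_k,s,k}(\proj\RR)$ with $p|_{(0,\infty)}=f$ satisfying properties~\eqref{eq:Pua}, \eqref{eq:Pub}, \eqref{eq:Puc} of Proposition~\ref{prop-P}; in particular $p$ has values in the $1$-eigenspace of $|^\prs_{\rho v_k,s,k}(-I_2)$, it satisfies $p=-p|^\prs_{\rho v_k,s,k}S$, and it obeys the three-term relation $p=p|^\prs_{\rho v_k,s,k}T+p|^\prs_{\rho v_k,s,k}T'$. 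By the very definition of $\W{\om^0,0}{\rho v_k,s,k}(\proj\RR)$ in the lemma, $\rest_{s,k}$ is an isomorphism onto $\V{\om^0,0}{\rho v_k,s,k}(\proj\RR)$, so I set $\tilde p\coloneqq\rest_{s,k}^{-1}p\in M$, where $M\coloneqq\W{\om^0,0}{\rho v_k,s,k}(\proj\RR)$.

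Next I would define the cochain. Every oriented edge is uniquely $\gm^{-1}e_{0,\infty}$ with $\gm\in\bar\Gm$, so I put
\[
\bt_{s,k}(f)\bigl(\gm^{-1}e_{0,\infty}\bigr)\coloneqq \tilde p\,|_{\rho v_k,k}\gm,
\]
extended $\CC$-linearly to $F_1^\Fa=\CC[X_1^\Fa]$, where $|_{\rho v_k,k}$ is the boundary-germ action intertwined with $|^\prs_{\rho v_k,s,k}$ via Lemma~\ref{lem-intertwW}. Well-definedness requires two checks. Since $-I_2$ acts trivially on $M$ (property~\eqref{eq:Pua}), $M$ is genuinely a $\bar\Gm$-module; and the sole relation among the generators of $F_1^\Fa$, namely the orientation reversal $S\,e_{0,\infty}=e_{\infty,0}=-e_{0,\infty}$, forces the compatibility $\tilde p\,|_{\rho v_k,k}S=-\tilde p$. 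Applying $\rest_{s,k}$ this reads $p|^\prs_{\rho v_k,s,k}S=-p$, which is exactly property~\eqref{eq:Pub} (using $S=S^{-1}$ in $\bar\Gm$). Hence $\bt_{s,k}(f)$ is a well-defined $1$-cochain, and $f\mapsto\bt_{s,k}(f)$ is visibly linear.

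Then I would verify closedness. As $\bar\Gm$ acts transitively on cells, $X_2^\Fa=\Gm\,C_{0,\infty,-1}$, it suffices to check $\bt_{s,k}(f)\bigl(\partial C_{0,\infty,-1}\bigr)=0$. Writing the oriented boundary as $e_{0,\infty}+e_{\infty,-1}+e_{-1,0}$, and using $T^{-1}e_{0,\infty}=e_{-1,\infty}$ and $(T')^{-1}e_{0,\infty}=e_{0,-1}$ together with $e_{\eta,\xi}=-e_{\xi,\eta}$, the value on the boundary equals
\[
\tilde p - \tilde p\,|_{\rho v_k,k}T - \tilde p\,|_{\rho v_k,k}T',
\]
whose image under $\rest_{s,k}$ is $p-p|^\prs_{\rho v_k,s,k}T-p|^\prs_{\rho v_k,s,k}T'$. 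This vanishes precisely by the three-term relation~\eqref{3te}, so $\bt_{s,k}(f)\in Z^1\bigl(F_\bullet^\Fa;M\bigr)$. Injectivity is then immediate, since $f=\rest_{s,k}\bigl(\bt_{s,k}(f)(e_{0,\infty})\bigr)\big|_{(0,\infty)}$, so $\bt_{s,k}(f)=0$ forces $f=0$.

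The cocycle bookkeeping above mirrors the weight-zero case in \cite[\S11]{BLZ15}, and the conceptual content is the clean dictionary \emph{orientation reversal} $\leftrightarrow$ property~\eqref{eq:Pub}, \emph{closedness on the fundamental cell} $\leftrightarrow$ three-term relation, \emph{$-I_2$ trivial} $\leftrightarrow$ property~\eqref{eq:Pua}. The step I expect to demand the most care is ensuring that $\tilde p$ and all its translates $\tilde p\,|_{\rho v_k,k}\gm$ are honest global sections of $\W{\om^0,0}{\rho v_k,s,k}(\proj\RR)$ — that is, continuous on all of $\proj\RR$ and analytic off a finite set of cusps. This rests on the regularity~\eqref{eq:Puc} of $p$ at $0$ and $\infty$ (so that $p$, and hence $\tilde p$, is a genuine element of the $\om^0,0$-module rather than a mere germ) and on the $\Gm$-stability of the singular cusp set; the local boundary-germ analysis near these cusps, in the spirit of the regularity discussion following~\eqref{cparb}, is the delicate point that must be spelled out to legitimise the constructions of the first three paragraphs.
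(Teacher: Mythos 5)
Your proposal is correct and follows essentially the same route as the paper: the extension of $f$ to $\proj\RR$ (which the paper constructs directly, and which is exactly the content of Proposition~\ref{prop-pfMcf}(ii) that you invoke), the equivariant definition of the cochain on the orbit of $e_{0,\infty}$, the identification of the cocycle condition on $C_{0,\infty,-1}$ with the three-term relation, and injectivity by recovering $f$ from the value on $e_{0,\infty}$. The only difference is cosmetic ordering --- the paper first builds the cocycle $c_\Fa$ with values in $\V{\om^0,0}{\rho v_k,s,k}(\proj\RR)$ and then applies $\rest_{s,k}^{-1}$ to the whole cocycle, whereas you pull back to the boundary-germ module first and define the cochain there via the intertwining of Lemma~\ref{lem-intertwW}; the two are equivalent.
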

\begin{proof}A period function $f\in \FE^\om_{\rho v_k,s,k}$ is a
real-analytic function on $(0,\infty)$. The definition in
\eqref{def-FEom} implies that
\be \tilde f(t) \= \begin{cases}f(t)&\text{ for }t\in [0,\infty]\,,\\
-f|^\prs_{\rho v_k,s,k}S (t) &\text{ for }t\in [\infty,0]_c
\end{cases}\ee
is a continuous function on $\proj\RR$ that has values in the
$1$-eigenspace of $|^\prs_{\rho v_k,s,k}(-I_2)$ and satisfies
$\tilde f|^\prs_{\rho v_k,s,k}S=-\tilde f$. We can check that
$\tilde f = \tilde f|^\prs_{\rho v_k,s,k}(T+T')$ on $\proj\RR$. We
determine \il{cF}{$c_\Fa$}$c_\Fa 
\in Z^1\bigl( F_\bullet^\Fa; \V{\om ^0,0}{\rho v_k,s,k}(\proj\RR) \bigr)$
by $c_\Fa(e_{0,\infty} ) = \tilde f$, and extending this by
$ c_\Fa(\gm^{-1}e_{0,\infty}) \= \tilde f|^\prs_{\rho v_k,s,k}\gm$. To
see that $c_\Fa$ is a cocycle it suffices to show that
$dc_\Fa \bigl( C_{0,\infty,-1})=0$. That is just the relation
$\tilde f = \tilde f|^\prs_{\rho v_k,s,k}(T+T')$.

Since $\rest_{s,k}$ is an isomorphism of $\Gm$-equivariant sheaves
(Theorem~\ref{thm-resiso}) there is a cocycle\ir{bF}{b_\Fa}
\be\label{bF} b_\Fa \=\rest_{s,k}^{-1} c_\Fa\ee
in $Z^1\bigl( F_\bullet^\Fa;\W{\om^0,0}{\rho v_k,s,k}(\proj\RR)\bigr)$.
It is the zero cocycle only if $\tilde f=0$, and hence $f=0$. So taking
$\bt_{s,k} f = b_\Fa$ gives an injective linear map.
\end{proof}

The boundary germs in $\W{\om^0,0}{\rho v_k,s,k}(\proj\RR)$ are
represented by elements of $\B_{s,k}\bigl( \Om \bigr)$ where
$\Om\subset \proj\CC$ is a neighborhood of $\proj\RR\setminus E$, for a
finite set $E$ of cusps. We define a module of functions on $\uhp$
containing a special choice of these representatives.

By \il{Ervsk}{$\E_{\rho v_k,s,k}$}$\E_{\rho v_k,s,k}(U)$ we denote
$X_\rho \otimes E_{s,k}(U)$ with the action $|_{\rho v_k,k}$ of $\Gm$.

\begin{defn}Let
\il{Gom0}{$\G{\om^0,0}{\rho v_k ,s,k}$}$\G{\om^0,0}{\rho v_k ,s,k}$ be
the space of functions
$f\in \E_{\rho v_k,s,k}\left( \uhp \setminus  E_1\right)$ for a finite
set $E_1\subset X_1^\Fa$ of edges of the Farey tesselation; this set
may depend on $f$. The finitely many connected components $C$ of
$\uhp \setminus E_1$ can be of the following types:
\begin{enumerate}[label=$\mathrm{(\arabic*)}$, ref=$\mathrm{\arabic*}$]
\item\label{eq:type1} The closure $\bar C$ of $C$ in $\proj\CC$ has finite area. In this
case we require that the restriction $f_C$ is in
$\E_{\rho v_k,s,k}(C)= X_\rho \otimes \B_{s,k}(C)$.
\item\label{eq:type2} The closure $\bar C$ of $C$ in $\proj\CC$ contains one or more
intervals $I_j $ in $\proj\RR$. We require that
$f_C\in X_\rho \otimes \B_{s,k}(\Om)$ for an open set
$\Om \subset\proj\CC$ containing $C$ and the intervals $I_j$.
\end{enumerate}

Let $E\subset \proj\QQ$ be the finite set of endpoints of the geodesics
in $E_1$. The functions $\rest_{s,k}\Ph_{s,k}f_C$ determine an element
$\ph \in\V\om{\rho v_k,s,k} \bigl(\proj\RR\setminus E)$. We require
that this element extends continuously to $\proj\RR$.
\end{defn}
\begin{figure}[ht]
\begin{center}
\includegraphics[width=.8\textwidth]{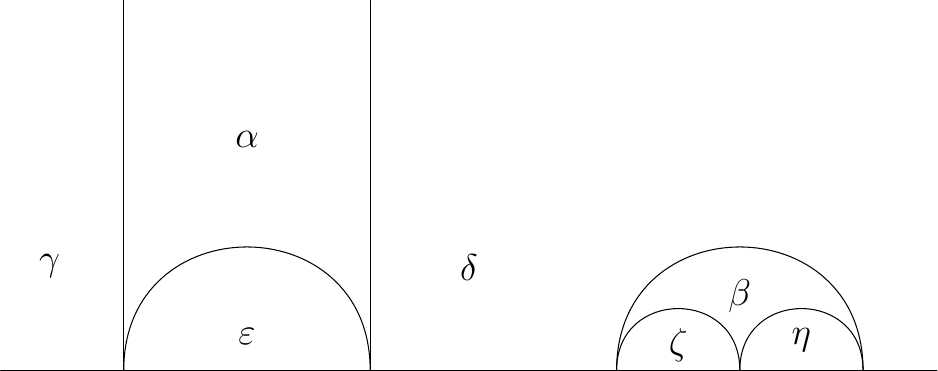}
\end{center}
\caption{Decomposition of $\uhp$ by a finite number of edges in the
Farey tesselation.} \label{fig-Gf}
\end{figure}
In Figure~\ref{fig-Gf} the components indicated by $\al$ and $\bt$ are
of type~\eqref{eq:type1}, the other components are of type~\eqref{eq:type2}. The closure of the
component indicated by $\dt$ contains two intervals in $\proj\RR$.

We note that $\G{\om^0,0}{\rho v_k,s,k}$ is invariant under the action
$|_{\rho v_k,k}$ of $\Gm$. This $\Gm$-module is not equal to the module
$\G{\om^\ast,\mathrm{exc}}{s}$ in \cite[Definition 9.21]{BLZ15}, but we
use it in a similar way.

\begin{figure}[ht]
\begin{center}
\includegraphics[width=.6\textwidth]{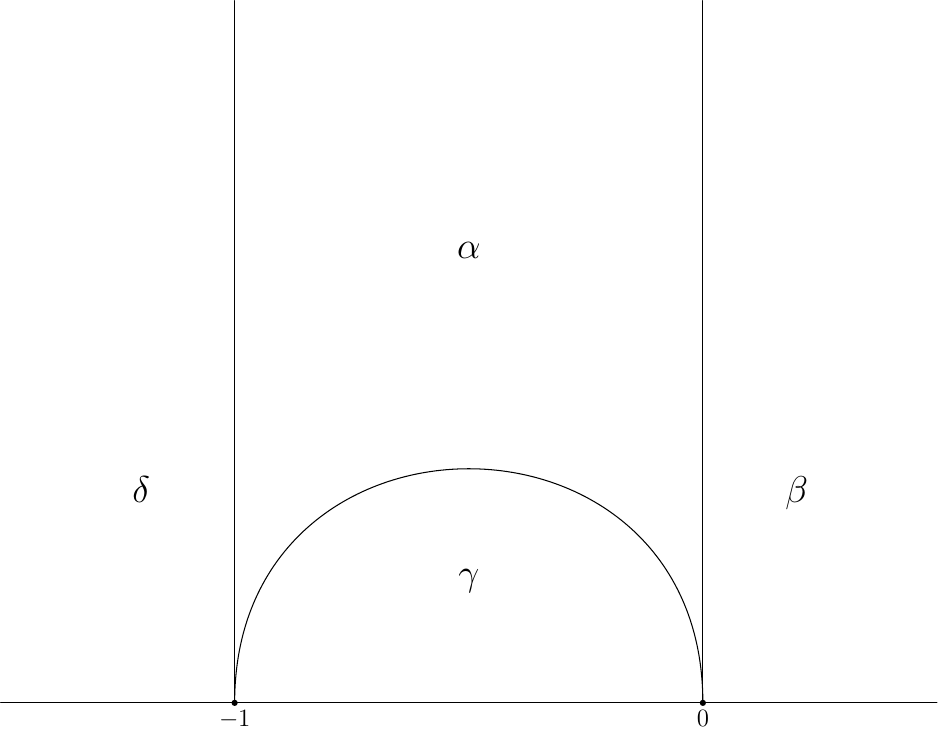}
\end{center}
\caption{The cell $C_{0,\infty,-1}$ in the Farey
tesselation.}\label{fig-cell}
\end{figure}

\begin{lem}\label{lem-cochain}There is a cochain
$\tilde b_\Fa \in C^1\bigl( F^\Fa_
\bullet;\G{\om^0,0}{\rho v_k,s,k}\bigr)$ such that $\tilde b_\Fa(e)$
represents $b_\Fa(e)\in \W{\om^0,0}{ \rho v_k,s,k}(\proj\RR)$ for all
$e\in X_1^\Fa$. Moreover, $
\tilde b_\Fa(e) \in \B_{\rho v_k,s,k}\bigl( \proj\CC\setminus \tilde e\bigr)$
where $\tilde e$ is the closure of the union of $e$ and its complex
conjugate.
\end{lem}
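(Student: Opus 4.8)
The plan is to construct the representative on the base edge $e_{0,\infty}$ and then to propagate it over all of $X_1^\Fa$ by $\Gm$-equivariance. Write $\tilde f = c_\Fa(e_{0,\infty})$ for the continuous function on $\proj\RR$ produced in the proof of Lemma~\ref{lem-bt}, so that by~\eqref{bF} the germ to be represented is $b_\Fa(e_{0,\infty}) = \rest_{s,k}^{-1}\tilde f$. On the positive real axis $\tilde f$ coincides with the period function $f$, and on the negative real axis it equals $-f|^\prs_{\rho v_k,s,k}S$. The decisive input is the global holomorphic extension of $f$: by Proposition~\ref{prop-extFE} the period function extends holomorphically to $\CC' = \CC\setminus(-\infty,0]$, in particular to the right half-plane $\Ups_+ = \{z : \re z > 0\}$.

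First I would treat the right half-plane. Since $\Ups_+$ is a convex neighborhood of the interval $(0,\infty)$, symmetric under complex conjugation and avoiding $\pm i$, the last assertion of Theorem~\ref{thm-resiso} (applied componentwise in $X_\rho$) yields a representative $f_+ \in X_\rho \otimes \B_{s,k}(\Ups_+)$ of the restriction of $b_\Fa(e_{0,\infty})$ to $(0,\infty)$, i.e. an eigenfunction of $\Dt_k$ with $\Ph_{s,k}f_+$ real-analytic on $\Ups_+$ and restricting to $\tilde f$ there. For the left half-plane $\Ups_- = \{z : \re z < 0\}$ I would use the relation $\tilde f|^\prs_{\rho v_k,s,k}S = -\tilde f$: because $S$ interchanges $\Ups_+$ and $\Ups_-$ (as $\re(-1/z) = -\re z/|z|^2$) while the factor $|^\prs_{\rho v_k,s,k}S$ has branch points only at $\pm i\in i\RR$, the function $-f|^\prs_{\rho v_k,s,k}S$ agreeing with $\tilde f$ on $(-\infty,0)$ extends holomorphically to $\Ups_-$, and the same theorem produces $f_- \in X_\rho \otimes \B_{s,k}(\Ups_-)$. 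Since $\proj\CC \setminus \tilde e = \Ups_+ \sqcup \Ups_-$ is the disjoint union of the two half-planes (here $\tilde e = \overline{i\RR}$ for $e = e_{0,\infty}$), gluing $f_+$ and $f_-$ defines $\tilde b_\Fa(e_{0,\infty}) \in \B_{\rho v_k,s,k}(\proj\CC\setminus\tilde e)$, which by construction represents $b_\Fa(e_{0,\infty})$.

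To define $\tilde b_\Fa$ on an arbitrary oriented edge I would write it uniquely as $\gm^{-1}e_{0,\infty}$ with $\gm\in\bar\Gm$ and set $\tilde b_\Fa(\gm^{-1}e_{0,\infty}) = \tilde b_\Fa(e_{0,\infty})|_{\rho v_k,k}\gm$; this is well defined because $-I_2$ acts trivially on the relevant module. Since $|_k\gm$ carries $\B_{s,k}(\Om)$ to $\B_{s,k}(\gm^{-1}\Om)$ and $\gm$ commutes with complex conjugation, $\tilde b_\Fa(\gm^{-1}e_{0,\infty})$ lies in $\B_{\rho v_k,s,k}(\proj\CC \setminus \widetilde{\gm^{-1}e_{0,\infty}})$; moreover $\rest_{s,k}$ intertwines $|_k\gm$ with $|^\prs_{\rho v_k,s,k}\gm$ (Lemma~\ref{lem-intertwW}), so passing to germs commutes with the action and $\tilde b_\Fa(e)$ represents $b_\Fa(e)$ for every $e$. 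Finally I would check membership in $\G{\om^0,0}{\rho v_k,s,k}$: with $E_1 = \{e\}$, the two components of $\uhp\setminus E_1$ are of type~\eqref{eq:type2} with representatives $f_\pm$ on the two half-planes, and the required continuous extension of $\rest_{s,k}\Ph_{s,k}f_\pm$ across $\proj\RR$ is precisely the continuity of $\tilde f$.

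I expect the main obstacle to be not the equivariant bookkeeping but securing the representative on the \emph{full} half-planes rather than on a mere neighborhood of $\proj\RR$: this rests entirely on the global extension in Proposition~\ref{prop-extFE} combined with the precise domain-control in the last part of Theorem~\ref{thm-resiso}. One must verify carefully that $\Ups_\pm$ meet the hypotheses there (convexity, conjugation-symmetry, exclusion of $\pm i$) and that the $S$-transform introduces no singularity away from $i\RR$, so that the two eigenfunctions $f_+$ and $f_-$ genuinely fill out $\proj\CC\setminus\tilde e$.
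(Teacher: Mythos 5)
Your proposal is correct and follows essentially the same route as the paper's proof: holomorphy of $\tilde f$ on $\CC\setminus i\RR$ (via Proposition~\ref{prop-extFE} and the $S$-antisymmetry), the domain-preserving statement in Theorem~\ref{thm-resiso} to obtain a representative in $\B_{\rho v_k,s,k}\bigl(\CC\setminus i\RR\bigr)$, and then $\Gm$-equivariant extension to all edges. Your version is in fact slightly more careful than the paper's, since you apply Theorem~\ref{thm-resiso} separately on the two convex half-planes $\Ups_\pm$ rather than on the non-convex set $\CC\setminus i\RR$ directly.
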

\begin{proof}$b_\Fa(e_{0,\infty})$ is related to $c_\Fa(e_{o,\infty})$
by $\rest_{s,t} b_\Fa(e_{0,\infty} )= c_\Fa(e_{0,\infty})$, and
$c_\Fa(e_{0,\infty})= \tilde f$ is holomorphic on $\CC\setminus i\RR$.
Theorem~\ref{thm-resiso} implies that $b_\Fa(e_{0,\infty})$ has a
representative in $\B_{\rho v_k,s,k}\bigl( \CC \setminus i\RR)$. Take
$\tilde b_\Fa(e_{0,\infty})$ as this representative, and extend the
definition in a $\Gm$-equivariant way. This results in
$\tilde b_\Fa \in C^1\bigl( F^\Fa_\bullet;\G{\om^0,0}{
\rho v_k, s, k}\bigr)$.
\end{proof}

The cochain $\tilde b_\Fa$ represents the cocycle $b_\Fa$. So we have
\[db_\Fa(C_{0,\infty,-1}) \= b_\Fa \bigl( e_{0,\infty}- e_{-1,\infty} -
e_{0,-1}\bigr) \=0\,,\]
which corresponds to the three term relation \eqref{3te}. In
Figure~\ref{fig-cell} this means that $d\tilde b_\Fa(C_{0,\infty,-1}) $
vanishes on the components $\bt$, $\gm$ and $\dt$. On the component
$\al$ the function $d\tilde b_\Fa(C_{0,\infty,-1}) $ can be any
$s(1-s)$-eigenfunction of $\Dt_k$.

\begin{lem}\label{lem-al}Given the cochain $\tilde b_\Fa$ representing
the cocycle $b_\Fa$, there exists a function
$v\in \E_{\rho v_k,s,r}(\uhp)$ satisfying $v|_{\rho v_k , k}\gm =v$ for
all $\gm\in \Gm$. This establishes a linear map\il{alsk}{$\al_{s,k}$}
$\al_{s,k} \colon \FE^\om_{\rho v_k,s,k} \rightarrow \E_{\rho v_k,s,k}(\uhp)^\Gm$.
\end{lem}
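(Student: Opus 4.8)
The plan is to assemble $v$ from the lifted cochain $\tilde b_\Fa$ of Lemma~\ref{lem-cochain}, cell by cell over the Farey tesselation, and then to verify that the pieces fit into a single $\Gm$-invariant eigenfunction. Starting from $f\in\FE^\om_{\rho v_k,s,k}$ I would take $b_\Fa=\bt_{s,k}f$ with its representative $\tilde b_\Fa\in C^1\bigl(F^\Fa_\bullet;\G{\om^0,0}{\rho v_k,s,k}\bigr)$, form the $2$-cochain $d\tilde b_\Fa$, and set
\[
v \= \sum_{C\in X_2^\Fa} d\tilde b_\Fa(C)\,.
\]
For each cell $C$ the value $d\tilde b_\Fa(C)$ is an honest element of $\E_{\rho v_k,s,k}(\uhp\setminus\partial C)$, so the sum is defined away from the edges; the work is to show it is locally finite, extends real-analytically across the edges, and is $\Gm$-invariant.

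First I would pin down where each term lives. Since $b_\Fa$ is a cocycle, $db_\Fa(C)=0$ in $\W{\om^0,0}{\rho v_k,s,k}(\proj\RR)$, so the boundary germ of $d\tilde b_\Fa(C)$ along $\proj\RR$ vanishes. By the injectivity of $\rest_{s,k}$ (Theorem~\ref{thm-resiso}) together with the uniqueness of representatives in $\B_{s,k}$, the function $d\tilde b_\Fa(C)$ vanishes on a neighbourhood in $\uhp$ of $\proj\RR$ minus the three cusps of $C$; as $\Dt_k$ is elliptic its solutions are real-analytic, and unique continuation forces $d\tilde b_\Fa(C)$ to vanish on every connected component of $\uhp\setminus\partial C$ whose closure meets $\proj\RR$. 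These are exactly the three exterior components (the regions $\bt,\gm,\dt$ of Figure~\ref{fig-cell}), so $d\tilde b_\Fa(C)$ is carried by the closed ideal triangle $\bar C$ and is a genuine $s(1-s)$-eigenfunction on its interior (the region $\al$). In particular, at an interior point of a cell $C_0$ all terms with $C\neq C_0$ vanish, and near an edge only the two terms of the adjacent cells survive, so the sum is locally finite.

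The gluing is the heart of the argument. Let $C_0,C_1$ share an edge $e$, and fix a small neighbourhood $N$ of an interior point of $e$, split by $e$ into $N^+\subset C_0$ and $N^-\subset C_1$. Near $e$ the other edges of $C_0$ and their complex conjugates do not meet $N$, so $d\tilde b_\Fa(C_0)=\sigma_0\,\tilde b_\Fa(e)+A_0$ with $A_0$ real-analytic on all of $N$; likewise $d\tilde b_\Fa(C_1)=-\sigma_0\,\tilde b_\Fa(e)+A_1$, the opposite sign arising from the opposite orientation of $e$ in $\partial C_1$. Hence on $N$
\[
w \;\coloneqq\; d\tilde b_\Fa(C_0)+d\tilde b_\Fa(C_1) \= A_0+A_1
\]
is real-analytic, the singular term $\tilde b_\Fa(e)$ having cancelled, and it satisfies $\Dt_k w=s(1-s)w$. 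By the vanishing above, $d\tilde b_\Fa(C_1)=0$ on $N^+$ and $d\tilde b_\Fa(C_0)=0$ on $N^-$, so $w$ agrees with $v$ on $N\setminus e$. Thus $v$ extends across $e$ to the real-analytic eigenfunction $w$. Since the edges of the Farey tesselation meet only at ideal vertices on $\proj\RR$ and never at interior points of $\uhp$, this is the only gluing needed, and $v\in\E_{\rho v_k,s,k}(\uhp)$.

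Finally, $\Gm$-invariance follows from the $\Gm$-equivariance of $\tilde b_\Fa$, hence of $d\tilde b_\Fa$, and the fact that $\Gm$ permutes the cells $X_2^\Fa$: applying $|_{\rho v_k,k}\gm$ to $v$ merely reindexes the sum and returns $v$, so $v|_{\rho v_k,k}\gm=v$. Choosing the representative $\tilde b_\Fa$ linearly in $b_\Fa$, and hence in $f$, makes $f\mapsto v$ linear, yielding $\al_{s,k}$. I expect the main obstacle to be the support statement for $d\tilde b_\Fa(C)$: it requires transporting the germ-level cocycle identity into an honest vanishing of functions, through injectivity of $\rest_{s,k}$ and unique continuation, and it is precisely this vanishing on the exterior components that lets the singular parts cancel pairwise so that the alternating sum glues to a global smooth eigenfunction.
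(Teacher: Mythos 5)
Your proof is correct, and it produces the same function as the paper's, but the assembly is organized differently. The paper does not sum $d\tilde b_\Fa(C)$ over all cells; instead it evaluates the cochain on large, positively oriented closed paths $p$ along edges of the Farey tesselation, uses exactly the support property you derive (since $\tilde b_\Fa$ represents a cocycle, $\tilde b_\Fa(p)$ vanishes on the components outside $p$) to see that $v_p \coloneqq \tilde b_\Fa(p)$ is an eigenfunction on the enclosed region $U(p)$, checks that $v_{p_1}=v_{p_2}$ on overlaps (modifying the path by one cell $\gm^{-1}C_{0,\infty,-1}$ changes the value only on that cell), exhausts $\uhp$ by widening $p$, and obtains $\Gm$-invariance by comparing $p$ with $\gm^{-1}p$ over a fixed compact set. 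Since $\tilde b_\Fa(p)=\sum_{C\subset U(p)} d\tilde b_\Fa(C)$, your construction is the un-telescoped version of the paper's. What each buys: in the paper's version only the edges of $p$ itself occur, so on $U(p)$ there are no singular terms and no cancellation analysis is needed --- smoothness is automatic --- at the cost of a path-consistency argument, a limiting step, and a separate invariance argument; in your version the locally finite sum makes well-definedness and $\Gm$-invariance immediate by reindexing, at the cost of the gluing step across each interior edge, which rests on the coherent-orientation fact that the two Farey cells adjacent to an edge traverse it with opposite signs, so the singular contributions $\pm\tilde b_\Fa(e)$ cancel. Both routes consume the same inputs: the lifted cochain of Lemma~\ref{lem-cochain} and the vanishing of $d\tilde b_\Fa(C)$ on the three exterior components, which the paper records in the discussion just before the lemma and which you rederive via injectivity of $\rest_{s,k}$ and unique continuation for the elliptic operator $\Dt_k$.
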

\begin{proof}Let $p$ be a positively oriented simple closed path along
edges of the Farey tesselation~$\Fa$, as illustrated in
Figure~\ref{fig-cp}. Evaluating $\tilde b_\Fa(p)$ gives a function on
$\uhp\setminus p$. Since $\tilde b_\Fa$ represents the cocycle $d_\Fa$,
the function $\tilde b_\Fa(p)$ is equal to zero on the components
outside $p$. On the open region $U(p)  \subset \uhp  $ enclosed by $p$
we obtain a function \ $v_p \in \E_{\rho v_k,s,k}\bigl( U(p)\bigr)$,
which depends on the path~$p$.
\begin{figure}[th]
\begin{center}
\includegraphics[width=.9\textwidth]{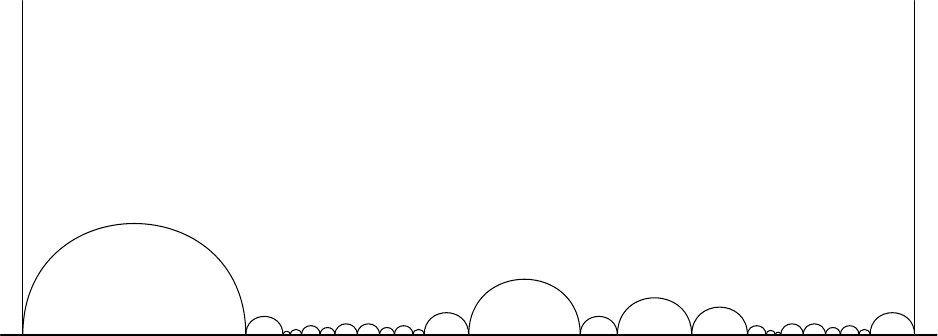}
\end{center}
\caption{Closed path along edges of the Farey tesselation.
(The boundary lines of the wide sector meet each other at $\infty$.)
}\label{fig-cp}
\end{figure}

If $p_1$ and $p_2$ are two paths for which
$U(p_1)\cap U(p_2) \neq \emptyset$ we have $v_{p_1}=v_{p_2}$ on
$U(p_1)\cap U(p_2) $. Indeed, the difference between $p_1$ and $p_2$
can be obtained by adding or subtracting successively a cell
$\gm^{-1} C_{0,\infty,-1}$ to or from the region. This changes only the
value of $\tilde d_\Fa(p_1)$ on $\gm^{-1} C_{0,\infty,-1}$, and
$\gm^{-1} C_{0,\infty,-1} \cap \left(U(p_1)\cap U(p_2)\right) = \emptyset$.

In particular, by making $p$ wider and wider we obtain $v(p)$ on larger
and larger regions. The limit as $p $ tends to $\proj\RR$ gives
$v \in \E_{\rho v_k,s,k}(\uhp)$.

Let $\gm\in \Gm$. For a given relatively compact open region
$V\subset \uhp$ we can take the path $p$ encircling it sufficiently
wide such that $\gm^{-1}p$ encircles $V$ as well. So on the region $V$
we have
\[ v(p) \= \tilde d_\Fa( p ) \= \tilde d_\Fa(\gm^{-1} p ) \= \tilde
d_\Fa(p)|_{\rho v_k,k} \gm \,.\]
So if $z,\gm^{-1} z\in V$ then $v(p)(z) = v|_{\rho v_k,k}\gm(z)$. In the
limit this implies that $v\in \E_{\rho v_k,s,k}(\uhp)^\Gm$.

The map $\bt_{s,k}$ in Lemma~\ref{lem-bt}, followed by
$b_\Fa \mapsto \tilde b_\Fa$ is linear. Also the dependence of $v$ on
$\tilde b_\Fa$ is linear. The composition gives a linear map
\[
\al_{s,k} \colon \FE^\om_{\rho v_k,s,k} \rightarrow \E_{\rho v_k,s,k}(\uhp)^\Gm\,.
\]
\end{proof}

We now have the following situation:
\bad \xymatrix{ \FE^\om_{\rho v_k ,s,k} \ar[r]^(.35){\bt_{s,k}}
\ar[rd]^{\al_{s,k}}
& Z^1\bigl( F_\bullet^\Fa; \W{\om^0,0}{\rho v_k,s,k}(\proj\RR) \bigr)
\ar[d]^{\text{with Lemma~\ref{lem-al}}}
\\
& \E_{\rho v_k,s,k}(\uhp)^\Gm
}
\ead
In Proposition~\ref{prop-pfMcf} we associated a period function to a
Maass cusp form.
\bad \xymatrix{\FE^\om_{\rho v_k ,s,k}\\
&\A^0_k(s,\rho v_k) \ar[lu]_{\pf} }
\ead
The following lemma states that $\al_{s,k}$ is proportional to a left
inverse of~$\pf$.

\begin{lem}\label{lem-osi}If $f\in \FE^\om_{\rho v_k,s,k}$ is the period
function of the Maass cusp form $u\in \A^0_k(s,\rho v_k)$, then
\be \al_{s,k} f = \frac{2\pi i}{b(s,k)}\, u\,,\ee
with the meromorphic factor $b(s,k)$ from~\eqref{bsk}.
\end{lem}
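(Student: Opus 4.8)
The plan is to realize the boundary-germ cochain $\tilde b_\Fa$ explicitly as an integral against the kernel $q_{s,k}$, so that the path-integral producing $\al_{s,k}f$ turns into a contour integral governed by Proposition~\ref{prop-psC}. Write $f=\pf(u)$. Then the function $\tilde f$ attached to $f$ in Lemma~\ref{lem-bt} coincides on all of $\proj\RR$ with $P(u)=c^u_\pb(0,\infty)$: its values $f$ on $(0,\infty)$ and $-f|^\prs_{\rho v_k,s,k}S$ on $(\infty,0)_c$ are exactly the two branches of $P(u)$ by Proposition~\ref{prop-P}\eqref{eq:Pub}. The central claim I would establish is that for every edge $e\in X_1^\Fa$ the representative $\tilde b_\Fa(e)$ of Lemma~\ref{lem-cochain} is, as a function of $w$,
\[
\tilde b_\Fa(e)(w)\=\frac1{b(s,k)}\int_e\bigl[u(z),q_{s,k}(z,w)\bigr]_k\,,
\]
where the Green's form is taken in the first variable $z$ and $b(s,k)$ is as in~\eqref{bsk}.

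First I would verify this for $e=e_{0,\infty}$. The right-hand side is a smooth $\Dt_k$-eigenfunction of $w$ on $\CC\setminus i\RR$ (the form is closed in $z$ by~\eqref{dhk} together with Proposition~\ref{prop-qsk}\eqref{eq:q2}, and $q_{s,k}(z,w)$ is singular only at $z=w$), and it converges at the cusps $0,\infty$ because $u$ decays exponentially and $q_{s,k}(z,w)=\oh\bigl(e^{-\re s\,d(z,w)}\bigr)$ there. Moving $\rest_{s,k}$, which acts in the $w$-variable, through the $z$-integral and invoking Proposition~\ref{prop-qR} gives
\[
\rest_{s,k}\tilde b_\Fa(e_{0,\infty})(t)\=\frac1{b(s,k)}\int_{e_{0,\infty}}\bigl[u(z),\rest_{s,k}q_{s,k}(z,\cdot)(t)\bigr]_k\=\int_{0}^{\infty}\bigl[u(z),R_{s,k}(t,z)\bigr]_k\=c^u_\pb(0,\infty)(t)\=\tilde f(t)\,,
\]
so the integral represents the same boundary germ $b_\Fa(e_{0,\infty})=\rest_{s,k}^{-1}\tilde f$ as $\tilde b_\Fa(e_{0,\infty})$; being $\Dt_k$-eigenfunctions on $\CC\setminus i\RR$ with the same germ along $\proj\RR$, Theorem~\ref{thm-resiso} forces them to agree. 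The assignment $e\mapsto\frac1{b(s,k)}\int_e[u,q_{s,k}(\cdot,w)]_k$ is $\Gm$-equivariant — a computation of exactly the type carried out for~\eqref{intetatr}, now using invariance of $u$ under $|_{\rho v_k,k}\Gm$, the invariance~\eqref{khtGm} of the Green's form, and Proposition~\ref{prop-qsk}\eqref{eq:q4} for $q_{s,k}$ — so it agrees with the $\Gm$-equivariant extension $\tilde b_\Fa$ on all of $X_1^\Fa$.

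With the claim in hand the conclusion is immediate. For a positively oriented simple closed path $p$ along Farey edges as in Lemma~\ref{lem-al}, summing the displayed identity over the edges of $p$ gives, for $w$ in the enclosed region $U(p)$,
\[
(\al_{s,k}f)(w)\=\tilde b_\Fa(p)(w)\=\frac1{b(s,k)}\oint_{p}\bigl[u(z),q_{s,k}(z,w)\bigr]_k\=\frac{2\pi i}{b(s,k)}\,u(w)\,,
\]
the last step being Proposition~\ref{prop-psC} for the closed curve $p$ enclosing $w$ (after an arbitrarily small deformation of $p$ off the cusps into $\uhp$, harmless by the cuspidal decay of the integrand). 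Letting $p$ exhaust $\proj\RR$ so that $U(p)\uparrow\uhp$ yields $\al_{s,k}f\=\frac{2\pi i}{b(s,k)}\,u$ on all of $\uhp$.

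The hard part will be the identification of $\tilde b_\Fa$ with the kernel integral, i.e.\ the displayed formula for $\tilde b_\Fa(e)$. Its delicate points are the convergence of the edge integrals at the cusps (controlled by cuspidality of $u$ and the decay of $q_{s,k}$) and the bookkeeping in the $\Gm$-equivariance computation, where the weight factors from $|_{\rho v_k,k}$ must be matched against those produced by Proposition~\ref{prop-qsk}\eqref{eq:q4}. Once the restriction is recognized through Proposition~\ref{prop-qR} as reproducing $\tilde f$, the final contour-integral step is just Proposition~\ref{prop-psC}, and the factor $\frac{2\pi i}{b(s,k)}$ emerges as the product of the $2\pi i$ from that Cauchy-type formula and the $\frac1{b(s,k)}$ converting $q_{s,k}$ into $R_{s,k}$.
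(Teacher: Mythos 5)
Your proposal is correct and follows essentially the same route as the paper: both identify $\tilde b_\Fa(e)$ with $\frac1{b(s,k)}\int_e\bigl[u,q_{s,k}(\cdot,w)\bigr]_k$ via Proposition~\ref{prop-qR} and the bijectivity of $\rest_{s,k}$ (Theorem~\ref{thm-resiso}), and then invoke the Cauchy-type formula of Proposition~\ref{prop-psC} with a truncation argument near the cusps, justified by the exponential decay of $u$. The only (inessential) difference is that the paper evaluates $d\tilde b_\Fa$ on the single cell $C_{0,\infty,-1}$ and finishes by analyticity, whereas you integrate over general closed Farey paths $p$ and let $U(p)$ exhaust $\uhp$.
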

\begin{proof}Now $\tilde f$ in the proof of Lemma~\ref{lem-bt} is given
by
\be c_\Fa(e_{0,\infty} )\= \tilde f \= \int_0^\infty \eta_{s,k}(u) \=
\int_0^\infty \bigl[ u, R_{s,k} \bigr]_k \,, \ee
and hence for each edge $e\in X_1^\Fa$
\be c_\Fa(e) \= \int_e \bigl[u, R_{s,k}\bigr]_k\,.\ee
See Proposition~\ref{prop-P}, \eqref{cparb} and~\eqref{etaks}. With
Proposition~\ref{prop-qR} and the bijectivity of $\rest_{s,k}$ in
Theorem~\ref{thm-resiso} we obtain
\be \bigl[u,R_{s,k}(z,\cdot)\bigr]_k \= \frac1{b(s,k)}\, \bigl[
u,q_{s,k}(z,\cdot) \bigr]_k\,,\ee
and for $e\in X_1^\Fa$
\be \tilde b_\Fa(e) \= \frac1{b(s,k)}\int_e \bigl[ u, q_{s,k}(z,\cdot)
\bigr]_k\,. \ee
The exponential decay of $u$ and its derivatives implies the absolute
convergence of these integrals. We have
\be d\tilde b_\Fa ( C_{0,\infty,-1}) \= \tilde b_F
\bigl(e_{0,\infty}+e_{\infty,-1}
+ e_{-1,0}\bigr)\,.\ee
Now we would like to apply Proposition~\ref{prop-psC}. However the
closed curve $\partial
C_{0,\infty,-1}$ is not in contained in~$\uhp$. We can truncate the cell
$C_{0,\infty,-1}$ at its vertices, and apply Proposition~\ref{prop-psC}
to this approximation of $\partial
C_{0,\infty,1}$. The exponential decay of $u$ and its derivatives
implies that the truncation error goes to zero in the limit. The result
is \be d\tilde b_\Fa( C_{0,\infty,-1}) \= \frac{2\pi i}{b(s,k) } \,u
\ee
on the interior of $C_{0,\infty,-1}$. By analyticity this gives the
lemma.
\end{proof}

\begin{lem}\label{lem-cusp}The function $v=\al_{s,k}f$ associated to a
period function $f\in \FE^\om_{\rho v_k,s,k}$ is in
$\A^0_k(s,\rho v_k)$.
\end{lem}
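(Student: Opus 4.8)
The plan is to check the single missing defining property of a Maass cusp form. By Lemma~\ref{lem-al} the function $v=\al_{s,k}f$ already lies in $\E_{\rho v_k,s,k}(\uhp)^\Gm$; that is, $v$ is smooth, satisfies $\Dt_k v=s(1-s)v$, and is invariant under $|_{\rho v_k,k}\Gm$. Comparing with Definition~\ref{defMf}, all that remains is the exponential decay \eqref{ed} as $y\uparrow\infty$. Since $v$ is $\Gm$-invariant and the cusps of $\Gm$ form the single orbit $\Gm\infty$, it is enough to prove this decay at the cusp $\infty$.

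To locate the behaviour of $v$ near $\infty$ I would return to the construction of $v$ in the proof of Lemma~\ref{lem-al} as the value $\tilde b_\Fa(p)$ of the boundary-germ cochain on a closed Farey path $p$ enclosing a cuspidal neighbourhood of $\infty$. As $p$ is widened, the edges of $p$ abutting $\infty$ are precisely the $T$-translates of one fixed edge, so on a sector at $\infty$ the function $v$ is represented by a one-sided average $h|_{\rho v_k,k}\av\pm$ of a single boundary germ $h$ whose restriction $\rest_{s,k}h$ is the branch of $\tilde f$ governing the cusp, built from $f$ and $f|^\prs_{\rho v_k,s,k}T'$. In this way $v$ on the two halves $\Ups_\pm$ of a neighbourhood of $\infty$ is exactly of the form treated in Lemma~\ref{lem-Avbg}. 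I expect this identification -- matching the cochain value $\tilde b_\Fa(p)$ near $\infty$ with a genuine one-sided average to which Lemma~\ref{lem-Avbg} applies, with all orientations and $T$-equivariance bookkept correctly -- to be the main obstacle.

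With the germ in hand I would verify the hypothesis of Lemma~\ref{lem-Avbg}, namely that the constant term $F(\infty)$ of the real-analytic extension $F=\Ph_{s,k}h$ satisfies $(F(\infty),e_l)_\rho=0$ for every basis vector $e_l$ with $\k_l=0$. Here $F(\infty)$ is the value at $\infty$ of $\rest_{s,k}h$, which equals $(f|^\prs_{\rho v_k,s,k}T')(\infty)$; since $T'\infty=1$, this value equals, up to a nonzero scalar factor, $\rho(T')^{-1}v_k(T')^{-1}f(1)$, and its pairing with $e_l$ for $\k_l=0$ vanishes by Lemma~\ref{lem-f1c}. The estimate \eqref{avasbg} of Lemma~\ref{lem-Avbg} then gives $(v(x+iy),e_l)_\rho=\oh(y^{-s})$ as $y\uparrow\infty$, uniformly for $x$ in compact sets; using that $v$ is invariant under $|_{\rho v_k,k}T$ and hence $T$-periodic, this upgrades to $v(z)=\oh(y^{-\re s})$ as $y\uparrow\infty$, uniformly in $x$.

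Finally I would feed this bound into the Fourier expansion of $v$ at $\infty$. As $v$ satisfies \eqref{eifc} and is equivariant under $|_{\rho v_k,k}T$, each component $v_l$ has a Fourier expansion whose order-$0$ term is a combination of $y^s$ and $y^{1-s}$ and whose order-$n$ terms ($n\neq0$) are combinations of the $W$- and $M$-Whittaker functions in \eqref{genFt}. Because $\re s\in(0,1)$, a bound $\oh(y^{-\re s})$ is incompatible with a nonzero coefficient of $y^s$ or of $y^{1-s}$ (both exponents exceed $-\re s$) and with any $M$-Whittaker term (which grows exponentially); hence the constant term vanishes and only the exponentially decaying $W$-Whittaker terms survive, reducing the expansion to the cuspidal form \eqref{Fe}. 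Consequently $v$ satisfies the exponential decay condition \eqref{ed}, and therefore $v\in\A^0_k(s,\rho v_k)$.
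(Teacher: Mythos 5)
Your back end is exactly the paper's: the hypothesis of Lemma~\ref{lem-Avbg} is indeed supplied by Lemma~\ref{lem-f1c} in precisely the way you describe (the value at $\infty$ of $f|^\prs_{\rho v_k,s,k}T'$ is a nonzero multiple of $\rho(T')^{-1}v_k(T')^{-1}f(1)$), and your concluding Fourier--Whittaker argument (the bound $\oh(y^{-s})$ kills the $M$-Whittaker terms and, since $\re s\in(0,1)$, also any $y^s$ or $y^{1-s}$ contribution, leaving only decaying $W$-Whittaker terms) is the paper's argument verbatim. The problem is the middle step, which you yourself flag as ``the main obstacle'' and do not carry out.

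That step is not just unproven; as you state it, it cannot be correct. The function $v$ satisfies $v|_{\rho v_k,k}T=v$, while by part~\eqref{eq:1sac} of Lemma~\ref{lem-1sa} (which persists for boundary germs by \eqref{resrel}) any one-sided average satisfies $\bigl(g|_{\rho v_k,k}\av\pm\bigr)|_{\rho v_k,k}(1-T)=g$. So if $v$ coincided with $g|_{\rho v_k,k}\av+$ on $\Ups_+$ (a region mapped into itself by $T$), applying $1-T$ would give $g=0$ and hence $v=0$; if this held for every period function, $\al_{s,k}$ would be the zero map, which Lemma~\ref{lem-osi} rules out whenever a nonzero Maass cusp form exists. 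What the widening-path computation actually yields near $\infty$ is a \emph{two-sided} sum: up to sign and boundary terms, the difference $g|_{\rho v_k,k}\av+ - g|_{\rho v_k,k}\av-$ of \emph{both} one-sided averages of one and the same germ $g=\tilde b_\Fa(e_{0,1})$ (this difference is annihilated by $1-T$, consistently with the invariance of $v$). Turning that into a proof requires showing the contributions of the two vertical edges of the widening paths vanish in the limit, extending the estimate \eqref{avasbg} from $x$ in compact subsets of $\Ups_\pm\cap\RR$ to $x$ in compact subsets of $\RR$ so that the two halves can be combined on a full strip, and redoing the vanishing hypothesis for this germ, whose restriction is $\tilde f|^\prs_{\rho v_k,s,k}T'^{-1}$ rather than $f|^\prs_{\rho v_k,s,k}T'$ (it reduces to Lemma~\ref{lem-f1c} only after one more observation, namely that $\rho(T)v_k(T)$ pairs trivially against $e_l$ when $\k_l=0$). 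The paper avoids this entire identification by a different device: Proposition~\ref{prop-FE-eftrof} gives the fixed-point equation $f=\bigl(f|^\prs_{\rho v_k,s,k}T'\bigr)|^\prs_{\rho v_k,s,k}\av+$ for the period function itself; via \eqref{resrel} this becomes the identity $h=\bigl(h|_{\rho v_k,k}T'\bigr)|_{\rho v_k,k}\av+$ for the single cochain value $h=\tilde b_\Fa(e_{0,\infty})$, whence the bound \eqref{esth}; and then $v$ inside one \emph{fixed} closed Farey path is the finite sum $h-h|_{\rho v_k,k}T^2-h|_{\rho v_k,k}T'T-h|_{\rho v_k,k}T'$, whose four terms are estimated individually (two by \eqref{esth}, two because the germ of $h$ is analytic at the point $1$). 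Substituting this finite-path decomposition for your widening-path identification -- or carrying out the two-sided-average identification in full -- is what is needed to close your proof.
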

\begin{proof}
We have still to show that $v$ has exponential decay. The equivariance
of $v$ implies that it suffices to give an estimate of $v(x+iy)$ as
$y\uparrow\infty$ for $x$ in an interval of length at least~$1$.

Let $f\in \FE^\om_{\rho v_k,s_0,k}$ and denote by $\tilde b_\Fa$ the
cochain in Lemma~\ref{lem-cochain} with which we built
$v\in \E_{\rho v_k,s_0,k}(\uhp)^\Gm$ as in the proof of that lemma. Let
$h=\tilde b_\Fa(e_{0,\infty})$. So $\rest_{s,h} h = f$.

Proposition~\ref{prop-FE-eftrof} implies that
$f = \bigl( f|^\prs_{\rho v_k,s,k} T'\bigr)|^\prs_{\rho v_k,s,k}\av+$.
With \eqref{resrel} in Lemma~\ref{lem-Avbg} this implies that
$h= \bigl( h|_{\rho v_k,k}\bigr)|_{\rho v_k}\av+$. With \eqref{avasbg}
this implies that
\be \label{esth}h(z) \= \oh(y^{-s}) \qquad \text{as }y\uparrow\infty\ee
uniform for $x$ in compact sets contained in $(0,\infty)$.
\begin{figure}[ht]
\begin{center}
\includegraphics[width=.8\textwidth]{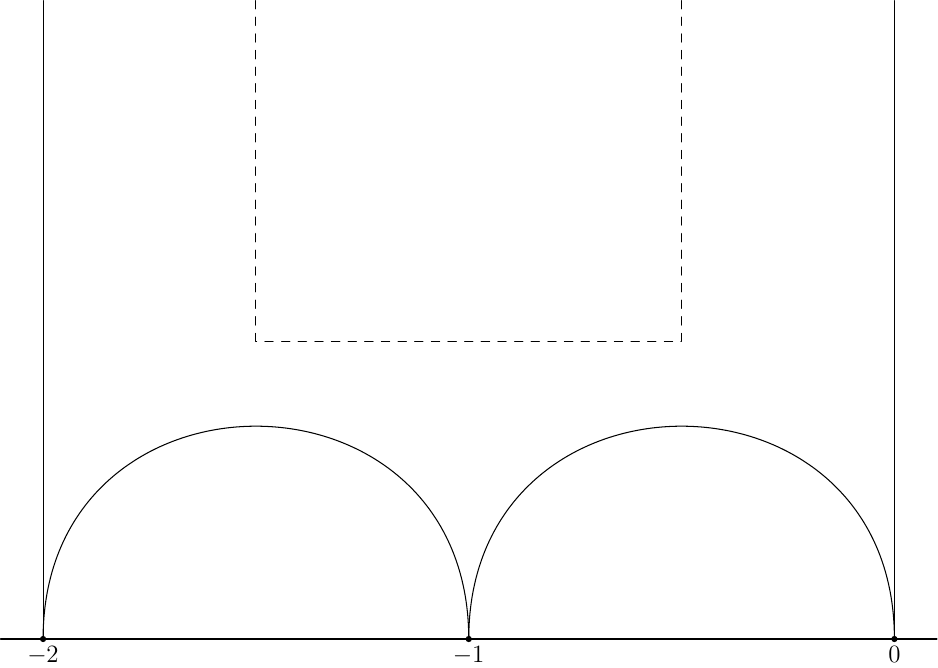}
\end{center}
\caption{Closed path used in the proof of
Lemma~\ref{lem-cusp}.}\label{fig-estdom}
\end{figure}
We use the closed path
\[ p = - T^{-2} e_{0,\infty} - T^{-1}{T'}^{-1} e_{0,\infty} -
{T'}^{-1}e_{0,\infty}
+ e_{0,\infty}\,,\]
sketched in Figure~\ref{fig-estdom}, encircling the union of two cells
in the Farey tesselation. We aim to estimate $v(z)$ for $z$ in the
region bounded by dashed lines. For $z$ inside the path $p$
\bad v(z) & \= - h|_{\rho v_k,k}T^2(z)-h|_{\rho v_k,k}T' T(z)- h|_{\rho
v_k,k}T'(z)+h(z)\\
&\= - \rho(T)^{-2} v_k(T)^{-2} h(z+2)- \ast \,e^{-i k
\arg(z+2)}\,h\Bigl(\frac{z+1}{z+2} \Bigr) \\
&\qquad\hbox{}
- \ast\, e^{-i k \arg(z+1)} \, h\Bigl(\frac z{z+1} \Bigr) + h(z)
\,.
\ead
By $\ast$ we indicate quantities with absolute value~$1$ that do not
depend on~$z$.

For the first and the last term we have the estimate $\oh(y^{-s})$;
see~\eqref{esth}. In the middle terms we have as $y\uparrow\infty$
\begin{align*} h\Bigl( \frac{z+a}{z+a+1}\Bigr) &\=\Ph_{s,k}\Bigl(
\frac{z+a}{z+a+1}\Bigr) ^{-1}\,\oh(1)
\displaybreak[0]\\
& \;\ll\; \frac{y^s}{|z+a+1|^{2s}} \, \Bigl(
\frac{z+a}{z+a+1}+i\Bigr)^{-s-k/2} \, \Bigl( \frac{\bar z+a}{\bar
  z+a+1}+i\Bigr)^{-s+k/2} \displaybreak[0]\\
  & \= \oh\bigl( y^{-s}\bigr)\,.
\end{align*}

The conclusion is that $v(z) = \oh(y^{-s})$ as $y\uparrow\infty$, first
for $\frac12\leq x\leq \frac32$, and then for all $x$ by
$T$-equivariance.

For all components $v_l = \bigl( v, e_l\bigr)_\rho$ this implies that
the exponentially increasing $M$-Whittaker functions in~\eqref{genFt}
do not occur in the Fourier expansion of $v_l$. For $\k_l\in (0,1)$
there are no Fourier terms with order $n$ equal to zero. Then all
Fourier terms are exponentially decreasing, and hence $v_l$ is
exponentially decreasing.

For components $v_l$ with $\k_l=0$ the Fourier term of order zero might
contain a linear combination of $y^s$ and $y^{1-s}$ (or a logarithmic
possibility if $s=\frac12$). For $0<\re s<1$ this is ruled out by the
estimate $\oh(y^{-s})$. (To obtain this we have used an assumption that
holds in this case by Lemma~\ref{lem-f1c}.)
\end{proof}\smallskip

We turn to the injectivity of the map $\al_{s,k}$ from period functions
to Maass cusp forms.

\begin{lem}\label{lem-psplit}For each $f\in \FE^\om_{s,k}$ there is a
holomorphic function $f_\infty$ on
$\CC\setminus \left( i[1,\infty) \cup (-i)[1,\infty) \right)$ such that
\bad f &\= f_\infty - f_\infty|^\prs_{\rho v_k,s,k} S \qquad\text{ on }
\bigl\{z\in \CC\;:\; \re z>0 \bigr\}\,,\\
f_\infty&- f_\infty|^\prs_{\rho v_k,s,k}T \in \V\om{s,k}\bigl( \proj\RR
\bigr)\,.
\ead
\end{lem}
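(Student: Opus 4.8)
The plan is to construct $f_\infty$ as a one-sided average, exploiting that a period function is a fixed point of the fast transfer operator. By Proposition~\ref{prop-FE-eftrof}, $f=\trof{\rho v_k,s,k}f=\bigl(f|^\prs_{\rho v_k,s,k}T'\bigr)|^\prs_{\rho v_k,s,k}\av+=\sum_{n\ge0}f|^\prs_{\rho v_k,s,k}T'T^n$, so the relevant seed is analytic in a full neighbourhood of the cusp $\infty$ and the averaging machinery of Lemma~\ref{lem-1sa}, Proposition~\ref{prop-1sav}, and, in its boundary-germ form, Lemma~\ref{lem-Avbg} applies. I would define $f_\infty$ to be the holomorphic function furnished by a one-sided $\av+$-average whose seed $h\coloneqq f_\infty|^\prs_{\rho v_k,s,k}(1-T)$ is globally real-analytic, so that the second required identity $f_\infty-f_\infty|^\prs_{\rho v_k,s,k}T\in\V\om{s,k}(\proj\RR)$ holds by the telescoping relation of Lemma~\ref{lem-1sa}\eqref{eq:1sac}. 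The two pillars of the argument are this telescoping identity together with the three-term relation \eqref{3te} in the form $f|^\prs_{\rho v_k,s,k}(1-T)=f|^\prs_{\rho v_k,s,k}T'$, and the odd $S$-extension $\tilde f$ from Lemma~\ref{lem-bt}, normalised by $\tilde f|^\prs_{\rho v_k,s,k}S=-\tilde f$ and by the limit relation \eqref{lim-rel}, which will produce the first identity.

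For the first identity I would expand $f_\infty-f_\infty|^\prs_{\rho v_k,s,k}S$ and rearrange the resulting double sum, using $T'=ST^{-1}S$ and $\tilde f|^\prs S=-\tilde f$, into the eigenfunction series $\sum_{n\ge0}f|^\prs_{\rho v_k,s,k}T'T^n$; by Proposition~\ref{prop-FE-eftrof} this equals $f$ on $(0,\infty)$, hence by Proposition~\ref{prop-extFE} on the whole right half-plane $\{\re z>0\}$. The rearrangement is legitimate because the averages converge absolutely and locally uniformly for $\re s>\tfrac12$ (Lemma~\ref{lem-1sa}) and continue meromorphically in $s$ to the strip $0<\re s<1$ by Proposition~\ref{prop-1sav}\eqref{eq:1sav1}; the only candidate pole, at $s=\tfrac12$, is ruled out because Lemma~\ref{lem-f1c} forces the $\k_l=0$ components of $f(1)$ to vanish, which is exactly the condition in Proposition~\ref{prop-1sav}\eqref{eq:1sav1} ensuring holomorphy there. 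This simultaneously justifies the identity \eqref{resrel} of Lemma~\ref{lem-Avbg} on the boundary-germ side, which is what ties the holomorphic $f_\infty$ to the period function.

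The harder half is the second identity together with the domain. The telescoping gives $f_\infty|^\prs_{\rho v_k,s,k}(1-T)=h$ a priori only on a half-line $(\al,\infty)$, and the real content is that $h$ extends real-analytically across the cusp $\infty$ to a global section of $\V\om{s,k}(\proj\RR)$. Here I would invoke the asymptotic expansions of one-sided averages in Proposition~\ref{prop-1sav}\eqref{eq:1sav3}: the only term that could obstruct analyticity at $\infty$ is the one of order $t^{1}$, and the cancellation forced by the eigenfunction property (again through Lemma~\ref{lem-f1c}, as in the proof of Proposition~\ref{prop-FE-eftrof}) kills it, leaving an analytic germ at $\infty$. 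The precise domain of holomorphy is then read off from the explicit factors in \eqref{avsum}: each summand carries branch points only at $z=\pm i$, and summation over $m\ge0$ accumulates the associated cuts along the two vertical rays $i[1,\infty)$ and $(-i)[1,\infty)$, so that $f_\infty$ is holomorphic precisely on $\CC\setminus\bigl(i[1,\infty)\cup(-i)[1,\infty)\bigr)$; the estimate \eqref{avasbg} of Lemma~\ref{lem-Avbg} controls the behaviour as $z\to\infty$ in the half-planes.

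The main obstacle will be this second identity: showing that $h=f_\infty|^\prs_{\rho v_k,s,k}(1-T)$, which is manifestly analytic only on a half-line, genuinely extends to a global element of $\V\om{s,k}(\proj\RR)$ across the parabolic cusp $\infty$. This is exactly where the fine asymptotic information on one-sided averages (Proposition~\ref{prop-1sav}) and the vanishing of the $\k_l=0$ constant terms (Lemma~\ref{lem-f1c}) are indispensable. By contrast, the first identity and the description of the cut domain are comparatively formal once absolute convergence, meromorphic continuation, and the branch-point bookkeeping of \eqref{avsum} are in place.
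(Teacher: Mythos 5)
Your construction is circular where it is vague and fails where it is concrete. The seed of your proposed average is declared to be $h\coloneqq f_\infty|^\prs_{\rho v_k,s,k}(1-T)$, i.e.\ it is defined in terms of the very function $f_\infty$ you are trying to build, and no formula for $h$ in terms of $f$ is ever given. The only concrete candidate your argument points to is the fast-transfer-operator series: but by Proposition~\ref{prop-FE-eftrof} one has $\bigl(f|^\prs_{\rho v_k,s,k}T'\bigr)|^\prs_{\rho v_k,s,k}\av+=\trof{\rho v_k,s,k}f=f$, so this average reproduces $f$ itself (up to normalisation), and then both assertions of the lemma fail: $f|^\prs_{\rho v_k,s,k}(1-T)=f|^\prs_{\rho v_k,s,k}T'$ is analytic at $\infty$ but in general not at $0$ and $-1$, so it is not in $\V\om{s,k}(\proj\RR)$; and nothing forces the odd extension $\tilde f$, which is holomorphic on the two open half-planes separately, to glue holomorphically across the segment $i(-1,1)$ and across $(-\infty,0)$ --- which is exactly what membership in $\B$ of the stated cut plane requires and is the whole point of the lemma.

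The branch-point bookkeeping is also wrong, and this is why \emph{no} average of the form $\sum_{m\ge0}h|^\prs_{\rho v_k,s,k}T^m$ can have the stated domain. By \eqref{prs-trf} and \eqref{avsum}, the $m$-th summand has branch points not only at $\pm i$ but also at the pulled-back points $T^{-m}(\pm i)=\pm i-m$ (resp.\ $(T'T^m)^{-1}(\pm i)=\tfrac{-1\pm i}{2}-m$ for your series), and the cut $(-\infty,0]$ of $f$ pulls back under $T'T^m$ into the segment $[-m-1,-m]$ of the negative real axis. Since $s\not\equiv\pm k/2\bmod 1$, the exponents $s\mp k/2$ are not integers, so these are genuine singularities; they lie at bounded height with real part tending to $-\infty$, i.e.\ \emph{inside} the region $\CC\setminus\bigl(i[1,\infty)\cup(-i)[1,\infty)\bigr)$, and near each of them only one summand is singular, so they cannot cancel. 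Hence one-sided averages produce singularities marching along the negative real axis, never cuts confined to the two vertical rays. (For $\re s\le\tfrac12$ the sums moreover exist only by meromorphic continuation in $s$, so the proposed rearrangement of double series has no meaning; the paper's proof needs no continuation in $s$ at all, so Lemma~\ref{lem-f1c}, Lemma~\ref{lem-1sa}, Proposition~\ref{prop-1sav} and Lemma~\ref{lem-Avbg} are red herrings here.) The actual proof is of a different nature: it solves an additive Cousin problem. Apply H\"ormander's theorem \cite[Theorem~1.4.5]{Horm} to the cover $\Om_1=\CC\setminus\bigl(i[1,\infty)\cup(-i)[1,\infty)\bigr)$, $\Om_2=\proj\CC\setminus i[-1,1]$, whose intersection $\CC\setminus i\RR$ carries the holomorphic extension $\tilde f$; this yields $\tilde f=A_\infty+A_0$ with $A_\infty$, $A_0$ holomorphic on $\Om_1$, $\Om_2$. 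Then symmetrise: $f_\infty\coloneqq\tfrac12\bigl(A_\infty-A_0|^\prs_{\rho v_k,s,k}S\bigr)$ and $f_0\coloneqq\tfrac12\bigl(A_0-A_\infty|^\prs_{\rho v_k,s,k}S\bigr)$ satisfy $f_0=-f_\infty|^\prs_{\rho v_k,s,k}S$, giving the first identity, while the second follows by writing $f_\infty|^\prs_{\rho v_k,s,k}(1-T)=f|^\prs_{\rho v_k,s,k}T'-f_0|^\prs_{\rho v_k,s,k}(1-T)$ and comparing the finite singular sets ($\{0,-1\}$ versus $\{\infty\}$) on $\proj\RR$.
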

\begin{proof}
Let
$\Om_1 \= \CC\setminus \left( i[1,\infty) \cup (-i)[a,\infty) \right)$
and $\Om_2\= \proj\CC \setminus i[-1,1]$. Then
$\Om_1\cap \Om_2 = \CC\setminus i\RR$ and
$\Om_1\cup\Om_2 = \proj\CC \setminus \{i,-i\}$. We apply \cite[Theorem
1.4.5]{Horm} with $\Om= \proj\CC$ and follow the reasoning in the proof
of \cite[Proposition 13.1]{BLZ15}, obtaining from the holomorphic
function $f$ on $\Om_1\cap \Om_2$, holomorphic functions $A_\infty $ on
$\Om_1$ and $A_0$ on $\Om_2$ such that $A_\infty+A_0 = f$ on
$\Om_1\cap\Om_2$. (H\"ormander requires that we work with open subsets
of $\CC$. That is arranged by a holomorphic transformation of
$\proj\CC$ sending $-i$ to~$\infty$.)

Note that $A_0$ is holomorphic on a neighborhood of $\infty$. Hence
$A_\infty = f-A_0$ is in $\V{\om^0,0}{\rho v_k,s,k} \bigl( \RR\bigr)$,
and analogously
$A_0 \in \V{\om^0,0}{\rho v_k,s,k}\bigl(\proj\RR\setminus \{0\}\bigr)$.
In this way we can conclude that $A_\infty$ and $A_0$ are elements of
$\V{\om^0,0}{\rho v_k,s,k}(\proj \RR)$.

We have
\[ 0 \= f+f|^\prs_{\rho v_k,s,k}S \= \bigl( A_\infty + A_0|^\prs_{\rho
v_k,s,k}S\bigr)
+ \bigl( A_0+A_\infty|^\prs_{\rho v_k,s,k}S\bigr)\,.\]
Considering the singularities of the terms we conclude that
\[ h\= A_\infty + A_0|^\prs_{\rho v_k,s,k}S = - A_0 -
A_\infty|^\prs_{\rho v_k,s,k}S\]
represents an element of $\V\om{\rho v_k,s,k}(\proj\RR)$. We put
\bad f_\infty &\= A_\infty-\frac12 h & &\= \frac12 \bigl( A_\infty -
A_0|^\prs_{\rho v_k,s,k}S \bigr)\,,
\\
f_0 &\= A_0 + \frac12 h &&\= \frac12\bigl( A_0 - A_\infty |^\prs_{\rho
v_k,s,k}\bigr)\,.
\ead
These functions satisfy $f_\infty + f_0 =f$ on $\CC\setminus i \RR$, and
$f_0 |^\prs_{\rho v_k,s,k}S = 
- p_\infty$.

Since $A_\infty \in \V\om{s,k}(\RR)$, we have $p_\infty|^\prs_{
\rho v_k,s,k}T \in \V\om{s,k}(\RR)$ as well. Analogously,
$p_0 \in \V \om{s,k}\bigl(\proj\RR\setminus \{0\}\bigr)$. However,
\[ f_\infty|^\prs_{s,k}(1-T) \= p|^\prs_{s,k}T'-p_0|^\prs_{ \rho
v_k,s,k}(1-T)\,,\]
which show that $f_\infty|^\prs_{s,k}(1-T) \in
\V\om{s,k}\bigl(\proj\RR\setminus\{-1,0\}\bigr)$. Hence
$f_\infty|^\prs_{s,k}(1-T) \in \V\om{s,k}\bigl(\proj\RR\bigr)$.
\end{proof}

\begin{lem}\label{lem-dbcp}Let \ir{Fsk}{\Fbg{s,k}}
\be \label{Fsk}\Fbg{s,k} \= \lim_{\stackrel \Om \rightarrow}
\E_{s,k}(\Om \cap \uhp)
\ee
where $\Om$ runs over the open sets in $\proj\CC$ that contain
$\proj\RR$. Then
\be\label{FEW} \Fbg{s,k} \= \E_{s,k}(\uhp) \oplus \W\om{s,k}\bigl(
\proj\RR\bigr)\,.\ee
\end{lem}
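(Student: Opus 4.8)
The plan is to read $\Fbg{s,k}$ through the disk model $w=\frac{z-i}{z+i}$, so that an element is the germ at the boundary circle $\proj\RR=\{|w|=1\}$ of a $\Dt_k$-eigenfunction defined on an inner collar $\{1-\e<|w|<1\}$. Both summands embed into $\Fbg{s,k}$: a global eigenfunction restricts to any collar, and a boundary germ $f\in\B_{s,k}(\Om)$ lies in $\E_{s,k}(\Om\cap\uhp)$; injectivity in each case is immediate from real-analyticity of $\Dt_k$-eigenfunctions. Thus \eqref{FEW} amounts to two assertions: that $\Fbg{s,k}=\E_{s,k}(\uhp)+\W\om{s,k}(\proj\RR)$ (existence of a splitting) and that the intersection is trivial. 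I emphasize that the genuine content of the boundary-germ condition is that $\Ph_{s,k}f$ extends real-analytically across the \emph{entire} circle $\proj\RR$; the function $y^s$ shows that a global eigenfunction can be a boundary germ on $\RR$ while failing at $\infty$.

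For the existence of a splitting I would set up a Cauchy-type decomposition using the kernel $q_{s,k}$, generalizing the contour integral of Proposition~\ref{prop-psC} from a disk to an annulus. Represent a germ by $F\in\E_{s,k}(\{1-\e<|w|<1\})$ and fix a circle $C_\rho$, $1-\e<\rho<1$. For $w_2$ inside $C_\rho$ set $U(w_2)=\frac1{2\pi i}\int_{C_\rho}[F,q_{s,k}(\cdot,w_2)]_k$; since $[F,q_{s,k}(\cdot,w_2)]_k$ is closed off the diagonal (Proposition~\ref{prop-qsk}\eqref{eq:q2} together with \eqref{dhk}), this value is independent of the admissible $\rho$ and defines $U\in\E_{s,k}(\uhp)$. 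For $w_2$ in the outer part of the collar the same integral defines $V$, and the residue computation at the diagonal from the proof of Proposition~\ref{prop-psC} (contributing $2\pi i\,F(w_2)$) gives $F=U-V$ on the collar. Finally $V$ is a boundary germ: by Proposition~\ref{prop-qR} the function $q_{s,k}(w,\cdot)$ is a boundary germ on all of $\proj\RR$ for each interior $w$, so $\Ph_{s,k}(w_2)\,q_{s,k}(w,w_2)$, and its $w$-derivatives, extend real-analytically in $w_2$ across $\proj\RR$ uniformly for $w\in C_\rho$; integrating over $C_\rho$ shows $\Ph_{s,k}V$ extends, i.e. $V\in\W\om{s,k}(\proj\RR)$. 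Hence $[F]=[U]-[V]\in\E_{s,k}(\uhp)+\W\om{s,k}(\proj\RR)$.

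For triviality of the intersection, suppose $u\in\E_{s,k}(\uhp)$ is simultaneously a boundary germ on the full circle. Then $\Ph_{s,k}u$ is bounded near $\proj\RR$, so $u=\oh\bigl((1-|w|^2)^{\re s}\bigr)$ and, since $\re s>0$, $u$ decays at the whole boundary; in particular $u\in L^2(\uhp)$. I would then decompose $u$ into $\tK$-isotypical components about the center $i$: each component is the radial $\Dt$-eigenfunction that is regular at $i$ (the solution complementary to $Q_{s,k}$ of Lemma~\ref{lem-Qsk}), whose behaviour as $y\uparrow\infty$ is a combination $A_n\,y^{-s}+B_n\,y^{-(1-s)}$. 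The boundary-germ condition kills the $y^{-(1-s)}$-part in every mode, so each coefficient times $B_n$ vanishes. The connection coefficient $B_n$ is an explicit ratio of Gamma factors, and it is nonzero precisely because $s\not\equiv\pm k/2\bmod1$ and $s\notin\frac12\ZZ_{\leq 1}$ (the conditions guaranteeing existence of $Q_{s,k}$ and of $\rest_{s,k}$ in Lemma~\ref{lem-Qsk} and Theorem~\ref{thm-resiso}); conceptually these conditions say that $s(1-s)$ is not one of the discrete, Landau-type, eigenvalues of $\Dt_k$ on $\uhp$, so the square-integrable $u$ must vanish. Thus all components vanish, $u=0$, and the sum is direct.

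The main obstacle is the triviality of the intersection: existence is a clean residue/Cauchy argument once $q_{s,k}$ is recognized as a boundary germ in its second variable, whereas the vanishing step is where the arithmetic of the weight enters. The delicate point is to establish that the center-regular eigenfunction really carries a nonzero $y^{-(1-s)}$-boundary component in every $\tK$-type, i.e. the nonvanishing of the hypergeometric connection coefficient $B_n$ under the stated hypotheses on $(s,k)$; equivalently, the absence of $L^2$-eigenfunctions of $\Dt_k$ at eigenvalue $s(1-s)$.
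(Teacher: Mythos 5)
Your proposal is correct in substance, and its two halves relate to the paper differently. For the existence of the splitting you reproduce exactly the argument the paper relies on: the paper's own proof of Lemma~\ref{lem-dbcp} is a two-line reference to \cite[(3.3)]{BLZ15}, asserting that the reasoning there generalizes once the integral formula used there is replaced by its weight-$k$ version, Proposition~\ref{prop-psC}; your Cauchy-type decomposition over a circle $C_\rho$ in the collar, with $U$ given by the inside values, $V$ by the outside values, $F=U-V$ via the residue computation, and $V\in\W\om{s,k}(\proj\RR)$ via the boundary-germ property of $q_{s,k}$ in its second variable (Proposition~\ref{prop-qR}), is precisely that argument written out. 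For the directness of the sum, the paper again defers to the cited reasoning, whereas you supply a self-contained spectral argument ($\tK$-isotypic expansion about $i$, connection coefficients, absence of discrete series); this is a legitimate route and it correctly locates where the hypothesis $s\not\equiv\pm k/2\bmod 1$ of Theorem~\ref{thm-al} enters. Two caveats. First, the side remark that $u\in L^2(\uhp)$ is false for $0<\re s\le\tfrac12$: the decay $u=\oh\bigl((1-|w|^2)^{\re s}\bigr)$ gives square-integrability against the hyperbolic measure only when $\re s>\tfrac12$; fortunately your actual chain of deductions (the germ condition kills the exponent-$(1-s)$ part in each mode, and $B_n\neq0$ then forces each mode to vanish) never uses square-integrability, so this is a blemish rather than a gap. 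Second, your nonvanishing criterion invokes $s\notin\tfrac12\ZZ_{\leq1}$, which excludes $s=\tfrac12$, while the standing hypotheses of the section ($\re s\in(0,1)$, $s\not\equiv\pm k/2\bmod1$) allow $s=\tfrac12$; there the two boundary exponents collide and the argument must be rerun with the logarithmic solution. This limitation is shared by the paper's own toolkit (Lemma~\ref{lem-Qsk} and hence Proposition~\ref{prop-psC} also exclude $s\in\tfrac12\ZZ_{\leq1}$), so it does not distinguish your proof from the paper's, but it is worth flagging.
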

\begin{proof}This result generalizes \cite[(3.3)]{BLZ15}. In
\cite{BLZ15} the decomposition of boundary germs is based on
Proposition~1.1, which we have generalized to weight~$k$ as
Proposition~\ref{prop-psC}. The reasoning leading
to~\cite[(3.3)]{BLZ15} generalizes as well.
\end{proof}

\begin{lem}\label{lem-inj}The map $\al_{s,k}$ in Lemma~\ref{lem-al} is
injective.
\end{lem}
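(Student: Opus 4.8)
The plan is to show that $v\coloneqq\al_{s,k}f=0$ forces $f=0$. First I would trace back through the construction in Lemma~\ref{lem-al}: there $v$ is characterised on the interior of the cell $C_{0,\infty,-1}$ as the value $d\tilde b_\Fa(C_{0,\infty,-1})$, and this $2$-cochain value vanishes on the remaining components of $\uhp\setminus\partial C_{0,\infty,-1}$ by the three-term relation (the components $\bt,\gm,\dt$ in Figure~\ref{fig-cell}). Hence $v=0$ is equivalent to $d\tilde b_\Fa(C_{0,\infty,-1})=0$ as an element of $\G{\om^0,0}{\rho v_k,s,k}$, which by $\Gm$-equivariance means that $\tilde b_\Fa$ is a \emph{genuine} cocycle in $Z^1\bigl(F_\bullet^\Fa;\G{\om^0,0}{\rho v_k,s,k}\bigr)$, not merely a cochain representing $b_\Fa$.

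Next I would produce a candidate equivariant primitive from the H\"ormander splitting. By Lemma~\ref{lem-psplit} write $f=f_\infty-f_\infty|^\prs_{\rho v_k,s,k}S$ with $f_\infty$ holomorphic on $\Om_1=\CC\setminus(i[1,\infty)\cup(-i)[1,\infty))$ and $g\coloneqq f_\infty-f_\infty|^\prs_{\rho v_k,s,k}T\in\V\om{\rho v_k,s,k}(\proj\RR)$. By Theorem~\ref{thm-resiso} set $F_\infty\coloneqq\rest_{s,k}^{-1}f_\infty$, an $s(1-s)$-eigenfunction of $\Dt_k$ realising $f_\infty$, and consider the $\Gm$-equivariant $0$-cochain determined by $\tilde a(\infty)=F_\infty$, i.e.\ $\tilde a(\gm\infty)=F_\infty|_{\rho v_k,k}\gm^{-1}$. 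Then $\tilde a(\infty)-\tilde a(0)=F_\infty-F_\infty|_{\rho v_k,k}S$, and since $\rest_{s,k}(F_\infty-F_\infty|_{\rho v_k,k}S)=f-\bigl(\text{$S$-part}\bigr)$ agrees with $\rest_{s,k}\tilde b_\Fa(e_{0,\infty})$ on $(0,\infty)$, the injectivity of $\rest_{s,k}$ identifies $d\tilde a(e_{0,\infty})$ with $\tilde b_\Fa(e_{0,\infty})$. Thus $\tilde a$ is well defined (and then $d\tilde a=\tilde b_\Fa$) \emph{exactly when} $F_\infty|_{\rho v_k,k}T=F_\infty$, i.e.\ exactly when $g=0$.

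The hard part will be to deduce $g=0$ from $v=0$. Since $g\in\V\om{\rho v_k,s,k}(\proj\RR)$ is a global section, its realisation $\rest_{s,k}^{-1}g$ is a global boundary germ in $\W\om{\rho v_k,s,k}(\proj\RR)$; I would feed this, together with the eigenfunction $F_\infty$, into the telescoping sum over Farey edges that defines $v$, and use the decomposition $\Fbg{s,k}=\E_{s,k}(\uhp)\oplus\W\om{s,k}(\proj\RR)$ of Lemma~\ref{lem-dbcp} to separate an interior $\E_{s,k}(\uhp)$-part from the boundary-germ part. The vanishing of $v$ pins the boundary-germ contribution of $g$ against the $\E_{s,k}(\uhp)$-component, producing a $\Gm$-invariant eigenfunction whose realisation carries the decay $\oh(y^{-s})$ supplied by Lemmas~\ref{lem-Avbg} and~\ref{lem-cusp}; for $\re s\in(0,1)$ such an invariant eigenfunction must vanish, giving $g=0$. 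This step---converting the global analytic statement $v=0$ into the local statement $g=0$ by bookkeeping boundary germs across the edges of the Farey tessellation---is the crux, and it is where the decomposition lemma and the one-sided-average estimates do the real work.

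Finally, with $g=0$ the function $f_\infty$ satisfies $f_\infty|^\prs_{\rho v_k,s,k}T=f_\infty$; being holomorphic on $\Om_1$ it restricts to a section that is analytic on all of $\RR$ and at worst singular at $\infty$, hence lies in $\V{\om^0,\infty}{\rho v_k,s,k}$. Since $\re s\in(0,1)$ gives $s\neq0$, Lemma~\ref{lem-np} yields $f_\infty=0$ on $\proj\RR$, so $f_\infty\equiv0$ on $\Om_1$ by analytic continuation and therefore $f=f_\infty-f_\infty|^\prs_{\rho v_k,s,k}S=0$. This proves that $\al_{s,k}$ is injective.
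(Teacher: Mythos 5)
Your opening reduction, your cochain set-up, and your final step are sound and broadly parallel the paper's proof: the paper likewise observes that $v=0$ forces $h\coloneqq\tilde b_\Fa(e_{0,\infty})$ to satisfy $h=h|_{\rho v_k,k}T+h|_{\rho v_k,k}T'$ and hence to lie in $\E_{\rho v_k,s,k}(\uhp)$, and it likewise pushes the splitting of Lemma~\ref{lem-psplit} through $\rest_{s,k}^{-1}$ to get $h=h_\infty-h_\infty|_{\rho v_k,k}S$ with $h_\infty\in\E_{\rho v_k,s,k}(\uhp\setminus\{i\})$; your finish via Lemma~\ref{lem-np} would even be a legitimate alternative to the paper's finish via Lemma~\ref{lem-dbcp}. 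The gap is your third paragraph, which you yourself call the crux: it is a plan, not an argument, and the one precise claim it rests on is false. A $\Gm$-invariant element of $\E_{\rho v_k,s,k}(\uhp)$ that is $\oh(y^{-s})$ as $y\uparrow\infty$ with $\re s\in(0,1)$ need \emph{not} vanish: by the Fourier argument in the proof of Lemma~\ref{lem-cusp}, such a function is exactly a Maass cusp form, and Theorem~\ref{thm-al} is interesting precisely because these are not all zero. Decay alone kills nothing. What kills the troublesome component in the paper is a conjunction: the $\E_{s,k}(\uhp)$-component $g_l$ of $h_{\infty,l}$ in the decomposition \eqref{FEW} is $T$-invariant up to the phase $e^{2\pi i \k_l}$, it represents a germ in $\W\om{s,k}(\RR)$ --- which excludes all $W$-Whittaker Fourier terms, since those do not behave like $y^{s}\cdot(\text{real-analytic})$ as $y\downarrow 0$ --- and it is $\oh(y^{-s})$ at $\infty$, which then excludes the $M$-Whittaker terms and $y^{s}$. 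It is the boundary-germ regularity on $\RR$ that removes the cuspidal terms which a decay-only criterion cannot see.

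There is also a structural flaw in the intermediate target itself: ``$v=0\Rightarrow g=0$'' is not a well-posed consequence of $v=0$, because $f_\infty$ in Lemma~\ref{lem-psplit} is not unique. Replacing $f_\infty$ by $f_\infty+w$, where $w\in\V\om{\rho v_k,s,k}(\proj\RR)$ satisfies $w|^\prs_{\rho v_k,s,k}S=w$ and extends holomorphically off $\pm i$ (nonzero such $w$ exist), leaves $f$ unchanged but changes $g$ by $w|^\prs_{\rho v_k,s,k}(1-T)$, which is nonzero for $w\neq 0$ by Lemma~\ref{lem-np}. So $g=0$ is a property of the chosen splitting, not of $f$: for $f=0$ and a bad choice of $f_\infty$ one has $v=0$ and $g\neq 0$, so no argument can derive your target from $v=0$ alone. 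The choice-independent statement --- the one the paper actually proves --- is that the $\E_{s,k}(\uhp)$-component of $h_\infty$ vanishes, i.e.\ $h_\infty$ represents an element of $\W\om{\rho v_k,s,k}(\proj\RR)$; then $h=h_\infty|_{\rho v_k,k}(1-S)$ lies in $\E_{\rho v_k,s,k}(\uhp)\cap\W\om{\rho v_k,s,k}(\proj\RR)=\{0\}$ by Lemma~\ref{lem-dbcp}, whence $f=\rest_{s,k}h=0$. Repairing your proposal means replacing the third step by exactly this analysis (decomposition of $h_\infty$ via \eqref{FEW}, Fourier expansion of the invariant component, exclusion of $W$-Whittaker terms by the germ condition on $\RR$ and of the remaining terms by the $\oh(y^{-s})$ bound) and re-aiming the second step at this choice-independent target.
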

\begin{proof} Any period function $f\in \FE^\om_{\rho v_k,s,k}$
determines a cocycle $\bt_{s,k} p$ on $X_1^\Fa$ with values in
$\W{\om^0,0}{\rho v_k,s,k}(\proj\RR)$ (Lemma~\ref{lem-bt}), determined
by its value on $e_{0,\infty}$. This cocycle is represented by a
$1$-cochain $\tilde b_\Fa$ on $X^\Fa_1$, which is determined by
$h= \tilde b_\Fa(e_{0,\infty}) \in \G{\om^0,0}{\rho v_k,s,k}$.
Lemma~\ref{lem-cochain} implies that $H=\Ph_{s,k}\, h$ extends as a
real-analytic function on $\CC \setminus i\RR$.

The function $h$ is defined (and real-analytic) on
$\uhp\setminus e_{0,\infty}$. This implies that $h_{\rho v_k,k}T$ and
$h_{\rho v_k,k}T'$ are defined (and real-analytic)
on $\uhp \setminus e_{-1,\infty}$ and $\uhp \setminus e_{ 0,-1}$,
respectively.
\begin{center}
\includegraphics[width=.8\textwidth]{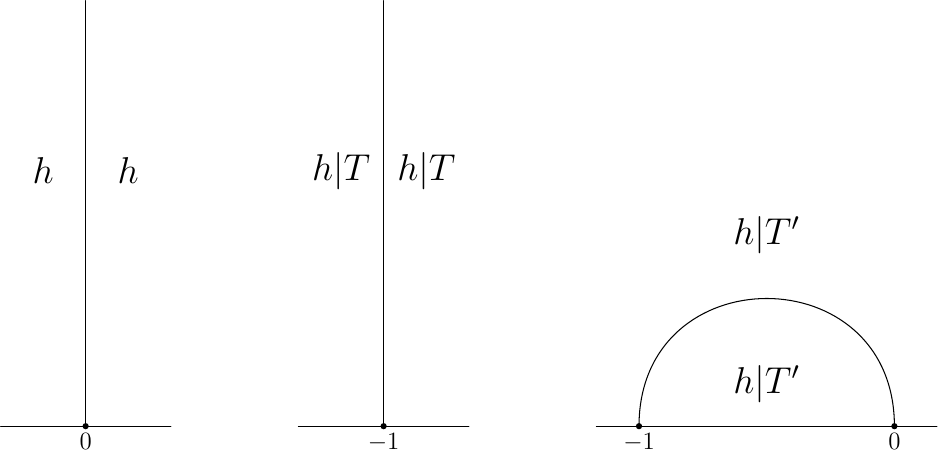}
\end{center}
Hence, the function $l= h-h|_{\rho v_k,k}T -h|_{\rho,v_k,k}T'$ is in
$\E_{s,k}(\uhp\setminus  U )$, where $U$ is the union of the three
geodesic boundary segments of the Farey cell. These geodesics determine
four connected regions in the upper half-plane. The cochain
$\tilde b_\Fa$ with values in $ \G{\om^0,0}{\rho v_k,s,k}$ represents
the cocycle $b_\Fa$ with values in
$\W{\om^0,0}{\rho v_k,s,k}(\proj\RR)$. Since $b_\Fa$ is a cocycle, the
value of
\[ \tilde b_\Fa\bigl( e_{0,\infty}-e_{0,-1} - e_{-1,\infty}\bigr) = l\]
should be zero near $\proj\RR\setminus\{0,\infty,-1\}$. Since $h$ is
real-analytic on $\uhp \setminus e_{0,\infty}$, the function $l$
vanishes outside the triangle with vertices $0$, $\infty$ and $-1$. On
the other hand, inside the triangle, $l$ represents (a multiple of)
$\al_{s,k} f$, by the construction in the proof of Lemma~\ref{lem-al}.

Suppose now that $\al_{s,k} f =0$. Then $l$ is zero on all four
components, and extends as the zero function on $\uhp$. We note that $h\in \E_{\rho v_k,s,k}\bigl(\uhp\setminus e_{0,\infty}\bigr)$,
$h|_{\rho v_k,k}T\in \E_{\rho v_k,s,k}(\uhp\setminus e_{-1,\infty})$, and $h|_{\rho v_k,k}T'\in \E_{\rho v_k,s,k}(\uhp\setminus e_{-1,0})$.
We have $h=h|_{\rho v_k,k}T +h|_{\rho,v_k,k}T'$. The right-hand side of
this equality is in $ \E_{\rho v_k,s,k}(U)$ for some open neighborhood
$U$ of $e_{0,\infty}$ in $\uhp$, hence $h$ is in
$\E_{\rho v_k,s,k}(U)$. This implies that
\be\label{hE} h \in \E_{\rho v_k,s,k}( \uhp)\,. \ee

Theorem~\ref{thm-resiso} states that the restriction $
\rest_{s,k}\colon  \W\om{s,k} \rightarrow \V\om{s,k}$ is bijective. Moreover,
it states that the domain of representatives is preserved.
Lemma~\ref{lem-psplit} splits $f\in \FE^\om_{s,k}$ as
$f=f_\infty-f_\infty|^\prs_{\rho v_k,s,k}S$. This implies that
$h = h_\infty- h_\infty|_{ \rho v_k,k}S$ with
$h_\infty \in \E_{\rho v_k,s,k}\bigl( \uhp\setminus i[1,\infty]\bigr)$
and $h_\infty|_{
\rho v_k,k}S \in \E_{\rho v_k,s,k}\bigl( \uhp\setminus i(0,1]\bigr)$.
From the fact that $h\in \E_{\rho v_k,s,k}(\uhp)$ we conclude that
$h_\infty\in \E_{\rho v_k,s,k}\bigl( \uhp\setminus\{i\}\bigr)$.
Lemma~\ref{lem-psplit} implies that
$h_\infty|_{\rho v_k,k}(1-T)\in \G{\om^0,0}{s,k}(\proj\RR)$ represents
an element of $\W\om{s,k}(\proj\RR)$. The element $h_\infty$ itself
corresponds to the function $f_\infty$, which is holomorphic on a
neighborhood of $\RR$ in~$\CC$. Hence $h_\infty$ represents an element
of $\W\om{s,k}(\RR)$.

Let $h_\infty = \sum_l h_{\infty,l} \, e_l$ as in \eqref{ucomp}, for an
eigenbasis $\{e_l\}$ of $X_\rho$ for $\rho (T)$, as in~\eqref{ucomp}.
The action $h_\infty \mapsto h_\infty|_{\rho v_k,k}$ corresponds to
$h_{\infty,l} \mapsto e^{-2\pi i \k_l}\, h_{\infty,l} |_k T$.

The element $h_{\infty,l}$ represents an element of $\Fbg{s,k} $
in~\eqref{Fsk}. So we can write $h_{\infty,l} = g_l + q_l$ with
$g_l \in \E_{s,k}(\uhp)$ and $q_l \in \G{\om^0,0}{s,k}( \proj\RR)$
representing an element of $\W\om{s,k}(\proj\RR)$. Since the eigenvalue
of $|_{\rho v_k,k}T$ on $e_l$ is $e^{2\pi i \k_l}$ we have
\be h_{\infty,l} \bigm|_k \bigl( 1-e^{-2\pi i \k_l }T\bigr) \= g_l\bigm
|_k \bigl( 1-e^{-2\pi i \k_l }T\bigr) + q_l \bigm|_k \bigl( 1-e^{-2\pi
i \k_l }T\bigr) \,. \ee
This is the situation which is treated in \cite[Lemma 9.23]{BLZ15}. Hence $h_{\infty,l} \in \E_{s,k}\bigl( \uhp\setminus \{i\}\bigr)$.
Moreover, $h_{\infty,l}|_k(1-e^{-2\pi i \k_l} T) $ represents an
element of~$\W\om{s,k}(\proj\RR)$. So the term with
$g_l\in \E_{s,k}(\uhp)$ is zero by the decomposition \eqref{FEW}, and
\be g_l |_k T \= e^{2\pi i \k_l}\, g_l\,.\ee

Combining the component functions to vector-valued functions we obtain
\be h_\infty \= g + q\,,\qquad g|_{\rho v_k,k} T\= g\,,\ee
with $g\in \E_{\rho v_k,s,k}(\uhp)$, and
$q\in \G{\om^0,0}{\rho v_k,s,k}(\proj\RR)$ representing an element of
$\W\om{\rho v_k,s,k}(\proj\RR)$.

Since $h_\infty\in \E_{\rho v_k,s,k}\bigl( \uhp\setminus \{i\})
\cap \G{\om^0,-0}{\rho v_k,s,k}(\RR)$, the function $g$ represents an
element of $\W\om{\rho v_k,s,k}(\RR)$. The invariance of $g$ under
$|_{\rho v_k,k}T$ implies that it has a Fourier expansion of the type
discussed in~\S\ref{sect-Fe}. In this expansion the $W$-Whittaker
functions do not have the right behavior near zero to give a
contribution in $\W\om{\rho v_k,s,k}(\RR)$. In the terms of non-zero
order we are left with multiples of $M$-Whittaker functions, and in the
term of order zero with multiples of $z\mapsto y^s$.

Since $q$ represents an element of $\W\om{\rho v_k,s,k}(\proj\RR)$, this
implies that $g(z) = \oh\bigl ( y^{-s} \bigr)$ as $y\uparrow\infty$.
Each Fourier term inherits this estimate. The function $z\mapsto y^s$
and the $M$-Whittaker functions have larger growth, and hence occur
with coefficient zero. So $g=0$, and $h_\infty = q_\infty$.

We use $h = h|_{\rho v_k,k}(1-S) = q_\infty |_{\rho v_k,k}(1-S)$ to see
that it represents an element of $\W\om {\rho v_k,s,k}(\proj\RR)$. We
combine this with \eqref{hE} to get $h=0$ with Lemma~\ref{lem-dbcp},
and also $f=\rest_{s,k}h=0$\,.\end{proof}

\rmrk{Recapitulation of the proof of Theorem~\ref{thm-al}}
Lemma~\ref{lem-al} gives the central step, in which an invariant
eigenfunction is constructed on the basis of the cochain $\tilde b_\Fa$
representing the cocycle $b_\Fa$. Here it is important that we work
with a cocycle with values in the boundary germs. This allows a
geometrical approach in the upper half-plane.

Before Lemma~\ref{lem-al} we have to construct a cocycle from a given
period function. It is easy to get a cocycle with values in the sheaf
$\V{\om^0,0}{\rho v_k,s,k}$ based on the principal series realized on
the boundary $\proj\RR$ of~$\uhp$. To go over to boundary germs we use
Theorem~\ref{thm-resiso}.

After Lemma~\ref{lem-al} we have to show that the resulting invariant
eigenfunctions have the desired properties. Lemma~\ref{lem-cusp} shows
that the invariant eigenfunctions are indeed Maass cusp form.
Lemma~\ref{lem-osi} shows if we apply the construction to the period
function associated to a Maass cusp form we get back (a non-zero
multiple of) this cusp form. The final steps, in Lemmas
\ref{lem-psplit} and~\ref{lem-inj}, show that a non-zero period
function gives a non-zero Maass cusp form.



\section{Jacobi Maass forms}\label{sect-JMf} Jacobi Maass form have been
studied by Yang \cite{Yang, Yang2}, and by Pitale \cite{Pit09}.

In this final section we extend the definition of Jacobi Maass forms of
Pitale to real weights, and show that spaces of Jacobi Maass cusp forms
are isomorphic to spaces of vector-valued Maass cusp forms to which we
can apply Theorem~\ref{thm-al}.

\subsection{Jacobi group and its covering group} The \il{Jg}{Jacobi
group}Jacobi group \il{GJ}{$G^J$}$G^J \= \hei\rtimes G$ is the
semidirect product of $G= \SL_2(\RR)$ and the \il{Heig}{Heisenberg
group}Heisenberg group $\hei$. As a topological space,
$\hei\cong \RR^3$:\ir{hei}{\hei}
\be \label{hei}\hei \= \Bigl\{ \hm(x,y,r) \;:\; x,y,r \in
\RR\bigr\}\,;\ee
it has the group operation\il{hm}{$\hm(x,y,r)$}
\be \hm(x_1,y_1,r_1) \,\hm(x_2,y_2,r_2) \= \hm(x_1+x_2,y_1+y_2,r_1+r_2+
x_1 y_2-x_2 y_1)\,.\ee
The semidirect product is given by the following right action of $G$ on
$\hei$.
\be g^{-1}\hm(x,y,r) g \= \hm(a x+c y,b x+d y,r) \qquad\text{ with
}g=\rmatc abcd\,.\ee
We note that $ (ax+cy,bx+dy) = (x,y)\, g$.

One can embed $G^J$ in $\GL_4(\RR)$. Berndt and Schmidt describe $G^J$
by such an embedding, see \cite[\S1.1]{BS}.

The universal covering group of the Jacobi group is obtained as
\[ \tG^J = \hei \rtimes \tG\,, \]
with the universal covering group $\tG$ in~\S\ref{sect-ucg}. The action
of $\tG$ on $\hei$ is given by projection to~$G$ in \eqref{pr}:
\be \tilde g^{-1} h \tilde g \= (\pr \tilde g)^{-1} h (\pr \tilde
g)\qquad h\in \hei,\; \tilde g\in \tG\,.\ee
Since $\hei $ is simply connected, it is its own universal covering
group.

A function $F\in C^\infty(\tG^J)$ has \il{ind}{index}\emph{index}
$m\in \RR_{\neq0}$ and \emph{weight} $k\in \RR$ if
\be \label{wi} F\bigl( \gj \hm(0,0,r) \tkm(\th) \bigr) \= e^{2\pi i m r
+i k \th}\, F (\gj)
\qquad \text{ for }\gj\in \tG^J,\; r,\th \in \RR\,.\ee
It is essential to put $\tkm(\th)$ on the right. The elements
$\hm(0,0,r)$ are central in $\tG^J$ and can be put where it suits us.
The index and the weight are not changed under left translation
$L(\gj g)$ for $\gj\in \tG$.

The Jacobi group acts on $\uhp\times \CC$ by
\bad \rmatc abcd \cdot (\tau,z) &\= \Bigl( \frac {a\tau+b}{c\tau+d},
\frac z{c\tau+d} \Bigr)
&\quad\text{for }&\rmatc abcd\in G\,,\\
\hm(x,y,r) \cdot (\tau,z) &\= ( \tau,z+x\tau+y )&\text{for
}&\hm(x,y,r)\in \hei\,.
\ead
This induces an action of $\tG^J$ on $\uhp\times\CC$ by
\be h \tilde g \cdot (\tau,z) \= h\, \pr\!(\tilde g)\cdot(\tau,z)\,.\ee
In the previous sections we denote elements of the upper half-plane
$\uhp$ by $z$, in accordance we the usual practice in the study of
Maass forms. Here we follow the convention to denote by $\tau$ the
modular variable in $\uhp$, and use $z\in \CC$ as the name of the
elliptic variable.

Like in \eqref{Psk}, there is a map\ir{spsJ}{ \Ps_{k,m}}
\be\label{spsJ} \Ps_{k,m}\colon C^\infty(\uhp\times \CC) \rightarrow
C^\infty(\tG^J)\ee
determined by the following relation between $F$ and $f=\Ps_{k,m} F$:
\be\label{fFJ} f\bigl( \hm(p,q,r)\tppm(\tau)\tkm(\th) \bigr)
\= e^{i k \th
+ 2\pi i m r + 2\pi i m p (p\tau+q)}\,F(\tau,p\tau+q)\,,\ee
with inverse relation
\be \label{fFJi}F(\tau,z) \= e^{-2\pi i m z\,\im(z)/\im(\tau) }\, f
\Bigl( \hm\Bigl( \tfrac{\im z}{\im\tau}, z-\tfrac{\tau \,\im
z}{\im\tau},0\Bigr)
\tppm(\tau) \Bigr)\,. \ee

The right representation of $\tG^J$ by left translation
\il{LJ}{$L$}$L(\gj_1)f \colon  \gj\mapsto f(\gj_1 \gj)$ corresponds
under $\Ps_{k,m}$ to a right representation of $\tG$ on functions on
$\uhp\times\CC$ determined by\ir{|kmtG}{|^J_{k,m}}
\badl{|kmtG} \bigl( F|^J_{k,m} \ell(g) \bigr)(\tau,z) &\= e^{-i k \arg(c
\tau+d)}\,e^{-2\pi i m c z^2 /(c\tau+d)} \\
&\quad\hbox{} \cdot F\bigl( g(\tau,z)\bigr)& \text{for }&g=\rmatc
abcd\in G\,,\\
\bigl( F|^J_{k,m} \z\bigr)(\tau,z) &\= e^{\pi i k n}\,
F(\tau,z)&\text{for }& \z\= \tkm(\pi n)\in \tZ\,,\\
\bigl( F|^J_{k,m} h \bigr) (\tau,z) &\=e^{2\pi i m \bigl(r+ p^2 \tau+ 2
p z+ p q \bigr)}\, F\bigl( h(\tau,z) \bigr)&\text{for }&h=\hm(p,q,r)\,.
\eadl
For $k\in \ZZ$ this is a representation of $\tG^J$ which is trivial on
$\tZ_2$, defined in~\eqref{Z2}. Hence it is a representation of $G^J$.
It is the action used by Pitale, \cite[(4)]{Pit09}. For
$k\in \RR\setminus \ZZ$ the representation $|_{k,m}$ is not trivial on
$\tZ_2$. The operators $|_{k,m} \ell(g)$ for $g\in G$ are similar to
the operators $|_k g$ in \eqref{slash-k}.

\subsection{Discrete subgroup}We use the discrete subgroup
\il{tGmJ}{$\tGm^J$}$\tGm^J= \Ld \rtimes \tGm $ of
$\tG^J = \hei \rtimes \tG$, with the lattice \ir{Ld}{\Ld}
\be\label{Ld} \Ld \= \hei(\ZZ) \= \bigl\{ \hm(\ld,\mu ,\k)\in \hei\;:\;
\ld,\mu,\k\in \ZZ\bigr\}\,,\ee
and the inverse image of the modular group $\tGm= \pr^{-1}\Gm$, as
defined in~\S\ref{sect-ucg}.

Suppose that a function $f$ on~$\tG^J$ has index $m$ and weight $k$ as
indicated in~\eqref{wi}. If $f$ is left-invariant under $\Ld$, then for
$\gj\in \tG^J$:
\[ f(\gj) \= f\bigl( \hm(0,0,1) \gj \bigr) \= f\bigl( \gj \hm(0,0,1)
\bigr) \= e^{2\pi i m } f(\gj)\,.\]
So we need $m$ to be integral for invariance under $\Ld$.

The element $\sm= \widetilde{\rmatc0{-1}10} = \tkm(-\pi/2)$ satisfies
$\sm^4=  \tkm(-2\pi)\neq 1$, although $\rmatc 0{-1}10^4$ is the unit
matrix. We have
\[ f\bigl( \sm^4 \gj \bigr) \= f\bigl( \gj \sm^4\bigr) \= e^{-2\pi i k }
f(\gj)\,.\]
Hence, if $k\in \RR\setminus \ZZ$, then a function cannot be
left-invariant under $\tGm^J$.\

We restrict our attention to left-$\tGm^J$-equivariant functions $f$ on
$\tG^J$ of index $m\in \ZZ_{\geq 1}$ and weight $k\in\RR$ satisfying
\be\label{eqvar} f(\boldsymbol{\gm} \gj) \= \ph(\boldsymbol{\gm})\,
f(\gj) \qquad (\boldsymbol{\gm}\in \tGm^J, \gj \in \tG^J)\ee
for a character $\ph\colon \tGm^J \rightarrow \CC^\ast$ that satisfies
\be \ph(\ld)\=1 \text{ for }\ld\in \Ld\,,\qquad \ph(\sm^4) \= e^{-2\pi i
k}\,.\ee

One such character is \il{chk1}{$\ch_k$}$\ch_k$ as defined in
\eqref{chk} and extended to $\tGm^J$ by taking $\ch_k(\ld)=1$ for
$\ld\in \Ld$. All other such characters are of the form
$\ph = \ph_a \ch_k$ with $a\in \ZZ\bmod 12$\ir{pha}{\ph_a}
\be\label{pha} \ph_a(\tm) \= e^{\pi i a/6}\,,\qquad \ph_a(\sm) \=
e^{-\pi i a/2}\,.\ee
The $\ph_a$ are trivial on $\tZ_2\= \ker\pr$; see~\eqref{Z2}. Hence the
$\ph_a$ correspond to characters of $\Gm=\SL_2(\ZZ)$.

For functions $F$ on $\uhp\times \CC$ we define the action
$|^J_{\ph_a v_k,k,m}$ of $ \Gm^J$ on functions on $\uhp\times \CC$ by
\ir{phvslJ}{|_{\ph_a v_k,k,m}\gm}
\badl{phvslJ} F|_{\ph_av_k,k,m}^J\gm &\= \ph_a(\gm)^{-1} v_k(\gm)^{-1}
\, F|^J_{k,m}\gm& \text{ for } \gm\in \Gm\,,
\\
F|_{\ph_av_k,k,m}^J\ld &\= F|^J_{k,m}\ld&\text{ for } \ld \in \Ld\,.
\eadl
If $F$ corresponds via $\tilde\Ps_{k,m}$ (see \eqref{fFJ}
and~\eqref{fFJi}) to a function $f$ satisfying \eqref{eqvar}, then $F$
is invariant under the action $|_{\ph_a v_k ,k,m}$ of $\Gm^J$. To see
this we use the relation $v_k(\gm) = \ch_k\bigl(\ell(\gm)\bigr)$ and
the fact that $\ph_a$ is a character of $\Gm$.

\subsection{Lie algebra}The group $G^J$ and its covering group $\tG^J$
have the same Lie algebra $\lieJ$. We use the notation of basis
elements of $\lieJ$ as indicated in \cite[\S1.3, \S1.4]{BS}. That are
$Z$, $X_+$ and $X_-$ in the Lie algebra of $G$, already used in
\eqref{opZXX}, and $Z_0$, $Y_+$ and $Y_-$ in the Lie algebra of~$\hei$.
  Each element of the Lie algebra acts on the functions in
  $C^\infty(\tG^J)$ by right differentiation. For any function $f $ of
weight $k$ and index $m$ we have
\be Z_0 f \= 2\pi m\, f\,,\qquad Z f \= k \, f\,.\ee

Under the relation \eqref{fFJ}, the differential operator on $\tG^J$
given by any $\XX\in \lieJ$ commutes with left translations. For given
index and weight it corresponds to a differential operator on
$\uhp\times\CC$ by the relation in~\eqref{fFJi}. \ We use Pitale's
notation \il{kmup}{$\XX^{k,m}$}$\XX^{k,m}$ for this operator. He gives
it explicitly in terms of the coordinates $\tau\in\uhp$ and $z\in\CC$;
see \cite[p~91, 92]{Pit09}. We see for instance that the kernel of
$Y_-^{k,m}$ consists of the functions $F$ on $\uhp\times\CC$ that are
holomorphic in~$z$.

The elements $X_+$ and $X_-$ in $\glie\subset\lieJ$ shift the weight of
functions on $\tG$ by $\pm 2$, respectively. To get weight shifting
operators on $\tG^J$ from~$X_\pm$ we need to calibrate them by adding a
correction term based on the elements $Y_\pm$ in the Lie algebra
of~$\hei$, setting
\be D_\pm \= X_\pm \pm \frac1{4\pi m} Y_\pm^2\,.\ee
These elements are not in the Lie algebra $\lieJ$ itself, but
non-commutative polynomials in Lie algebra elements. Pitale gives the
corresponding weight shifting differential operators $X_\pm^{k,m}$ on
$\uhp\times \CC$.

More complicated is the Casimir operator
\il{Cas}{$\Cas^{k,m}$}$\Cas^{k,m}$, given in \cite[(8)]{Pit09}. It
corresponds to a non-commutative polynomial of degree 3 in elements of
the Lie algebra~$\lieJ$. It has the advantage to commute not only with
$|_{k,m}^J g$ for all $g \in G^J$, but also with $\XX^{k,m}$ for all
elements $\XX$ of the Lie algebra. See also \cite[Proposition
3.1.10]{BS}. Instead of requiring functions to be eigenfunctions of
$\Cas^{k,m}$ we can require that functions are eigenfunctions of
$D^{k+2,m}_- D^{k,m}_+$ with prescribed eigenvalue depending on the
weight. This is similar to the relation~\eqref{DtXpm} for $\SL_2(\RR)$,
which implies for functions of a given weight that eigenfunctions of
$\Dt$ are also eigenfunctions of $X_-X_+$.

\subsection{Jacobi Maass forms} Jacobi Maass forms can be defined as
functions on $\uhp\times \CC$, as Pitale does. One may also view Jacobi
Maass forms as function on the Jacobi group, or on the universal
covering group of the Jacobi group. Both points of view are connected
by the map $\Ps_{k,m}$ in~\eqref{spsJ}. We formulate the definition in
both ways.

First we work on $\tG$:
\begin{defn}\label{defJM}Let $k\in \RR$, $m\in \ZZ_{\geq0}$, $s\in \CC$,
and $a\in \ZZ/ 12\ZZ$. The space
\il{AJ}{$\A^J_{k,m}(s,\ph_a\ch_k), \A^{J,0}_{k,m}(s,\ph_a\ch_k)$
}$\A_{k,m}^J(s,\ph_a \ch_k)$ of \il{JMf}{Jacobi Maass form}\emph{Jacobi
Maass forms} on $\tG^J$ consists of the functions $f\in C^\infty(\tG)$
that satisfy
\begin{enumerate}[label=$\mathrm{(\alph*)}$, ref=$\mathrm{\alph*}$]
\item\label{eq:JFa} $f\bigl( \hm(0,0,r) \gj \tkm(\th) \bigr)
\= e^{2\pi i m r+i k \th}\, f({\gj})$ for $\gj\in\tG^J$, $r,\th\in \RR$.
\item\label{eq:JFb}
$f\bigl(\tilde\gm \gj) \= \ph_a(\tilde\gm)\ch_k(\tilde\gm) \, f(\gj)$
for $\tilde \gm \in \tGm^J$, $\gj\in\tG^J$.
\item\label{eq:JFc} $f$ satisfies the following relations:
\begin{align*} D_- D_+ f &\= \frac{4s^2-(2k+1)^2}{16} f\,,\\
D_+ D_- f &\= \frac{4 s^2-(2k-3)^2}{16}\ f\,, \quad\text{ and }\quad Y_-
f=0\,.\end{align*}
\item\label{eq:JFd} $f\bigl( \tam(t) \gj \bigr) = \oh(t^\al)$ as
$t\uparrow\infty$, for some $\al\in \RR_{>0}$, uniform for $\gj$ in
compact subsets of~$\tG$.
(We recall that $\tam(t) = \tppm(it) $, see \eqref{pr}.)
\end{enumerate}

The subspace $\A^{J,0}_{k,m}(s,\ph_a \ch_k)$ of \il{JMfc}{--- cusp
form}\emph{Jacobi Maass cusp forms} is determined by replacing
\eqref{eq:JFd} by the stronger condition
\begin{enumerate}[label=$\mathrm{(\alph*')}$, ref=$\mathrm{\alph*'}$]
\setcounter{enumi}{3}
\item $f\bigl( \tam(t) \gj\bigr) = \oh(t^{-\al})$ as $t\uparrow\infty$,
for all $\al\in \RR_{>0}$, uniform for $\gj$ in compact subsets
of~$\tG^J$.
\end{enumerate}
\end{defn}

With relation \eqref{fFJi} we obtain the following reformulation:
\begin{defn}\label{defJMHZ}Let $k\in \RR$, $m\in \ZZ_{\geq0}$,
$s\in \CC$, and $a\in \ZZ\bmod 12$. The space
\il{AJ1}{$\A^J_{k,m}(s,\ph_a v_k), \A^{J,0}_{k,m}(s,\ph_a v_k)$
}$\A_{k,m}^J(s,\ph_a v_k)$ of \il{JMf1}{Jacobi Maass form}\emph{Jacobi
Maass forms} on $\uhp\times\CC$ consists of the functions
$F\in C^\infty(\uhp\times\CC)$ that satisfy

\begin{enumerate}[label=$\mathrm{(\Alph*)}$, ref=$\mathrm{\Alph*}$]
\setcounter{enumi}{1}
\item\label{eq:JFl2} $F|^J_{\ph_a v_k,k,m}\gm = F$ for all
$\gm \in \Gm^J$.
\item\label{eq:JFl3} $F$ satisfies the following relations:
\begin{align*}
D_-^{k+2,m} D_+^{k,m} F&\= \frac{4s^2-(2k+1)^2}{16}\, F\,,\\
D_+^{k-2,m} D_-^{k,m} F &\= \frac{4s^2-(2k-3)^2}{16} F\,,\quad\text{and
} Y^{k,m}_- F=0\,.
\end{align*}
\item\label{eq:JFl4}
$F\bigl( \am(y) \cdot (\tau,z) \bigr) \= \oh(y^\al)$ as
$y\uparrow\infty$ for some $\al\in \RR_{>0}$, uniform for $(\tau,z)$ in
compact sets of $\uhp\times \CC$. (Here
$\am(y) = \rmatc {y^{1/2}}00{y^{-1/2}} \in G$.)
\end{enumerate}

The subspace $\A^{J,0}_{k,m}(s,\ph_a v_k)$ of \il{JMfc1}{--- cusp
form}\emph{Jacobi Maass cusp forms} is determined by
replacing~\eqref{eq:JFl4} by the stronger condition
\begin{enumerate}[label=$\mathrm{(\Alph*')}$, ref=$\mathrm{\Alph*'}$]
\setcounter{enumi}{3}
\item $F\bigl( \am(y) \cdot (\tau,z) \bigr) \= \oh(y^{-\al})$ as
$y\uparrow\infty$ for all $\al\in \RR_{>0}$, uniform for $(\tau,z )$ in
compact sets of $\uhp\times \CC$.
\end{enumerate}
\end{defn}

We note that there is no part~(A) in Definition~\ref{defJMHZ}
corresponding to part~\eqref{eq:JFa} in Definition~\ref{defJM}. The
weight and the index are properties of functions on~$\tG^J$, and have
to be fixed in the definition for $\tG^J$. On the other hand, the
weight and the index are not properties of functions on
$\uhp\times \CC$, but parameters in the transformation behavior. We
also note that the character $\ch_k$ of $\tilde \Gm$ in
Definition~\ref{defJM} is replaced in Definition~\ref{defJMHZ} by the
multiplier system $v_k$ on $G$ given by
$v_k(g) = \ch_k\bigl(\ell( g) \bigr)$.

There are a number of differences in comparison with Pitale's
Definition~3.2 in \cite{Pit09}:
\begin{enumerate}[label=$\mathrm{(\arabic*)}$, ref=$\mathrm{\arabic*}$]
\item\label{eq:Pitale1} We allow the weight $k$ to be real, instead of
only integral.
\item\label{eq:Pitale2} Pitale seems to allow the eigenvalue $\ld$ of
$\Cas^{k,m}$ to depend on $F$. In that way, $J^{nh}_{k,m}$ is not a
linear space.
\item\label{eq:Pitale3} Our spaces of Jacobi Maass forms are in
$\hat J_{k,m}^{nh}$ in \cite[(30)]{Pit09}.

Even if we fix the eigenvalue $\ld$, the space of Jacobi Maass forms has
infinite dimension. Pitale does not impose the condition
$Y^{k,m}_- F=0$ in the definition, but imposes it later on.
\item\label{eq:Pitale4} The characterization in \eqref{eq:JFc} says that
Jacobi Maass forms transform under the Lie algebra action in the same
way as a vector $w_0\otimes  v_k$ in the principal series
representation described in \cite[Proposition 3.1.6]{BS}. (There the
weights are integral, but the formulas describe a Lie algebra module if
we let the weight $k$ run through a class in $\RR\bmod 2\ZZ$.)

\item\label{eq:Pitale5} We can show that
$  D_+ D_- f = D_- D_+ f + \bigl( k-\tfrac12\bigr) f$ on functions of
weight $k$ and index $m$ that satisfy $Y_- f=0$. So we can omit the
condition on $D_+ D_- f$ in~c) and~C).

\item\label{eq:Pitale6} We use the condition of quick decay to
characterize Jacobi Maass cusp forms. It takes a consideration of
Fourier expansions to get the formulation used by Pitale.
\end{enumerate}

\subsection{Theta decomposition}According to \cite[Theorem 4.6]{Pit09},
each $F\in \A^J_{k,m}(s,v_0)$, with $k\in \ZZ$ and the trivial
multiplier system $v_0$, is of the form\il{Thmj}{$\Th_{m,j}$}
\badl{Thmj} F(\tau,z) &\= \sum_{j\bmod 2m} \Th_{m,j} (\tau,z)
F_j(\tau)\,,\\
\Th_{m,j} (\tau,z) &\= (\im\tau)^{1/4}\sum_{\al \equiv j/2m\bmod 1}
e^{2\pi i m \tau \al^2}\, e^{4\pi i m z\al}\\
&\= (\im\tau)^{1/4} \sum_{r\equiv j \bmod 2m} e^{\pi i\tau r^2/2m}\,
e^{2\pi i r z}\,
\eadl
with a vector $\bigl(F_j\bigr)_{j\bmod 2m}$ which is a vector-valued
Maass form of weight $k-\frac12$. This opens the way to attach period
functions to Jacobi Maass cusp forms by application of
Theorem~\ref{thm-al}. In this subsection we check that the
decomposition goes through in the case of real weight.

\rmrk{Theta functions on \intitle{\hei}}Let $m\in \ZZ_{\geq 0}$ and
$j\in\ZZ/ 2m$. For each Schwartz function $\ph$ on $\RR$ the theta
function\ir{thHei}{\th^\hei_{m,j}(\ph)}
\be\label{thHei} \th^\hei_{m,j}\bigl(\ph; \hm(p,q,r) \bigr) \=
\sum_{\al\equiv j/2m \,(1)} e^{2\pi i m(r+q(2\al+p))} \ph(p+\al)\ee
is in $C^\infty(\Ld\backslash \hei)$, and the subspace of functions in
$C^\infty(\Ld\backslash \hei)$ that transform according to the
character $\hm(0,0,r) \mapsto e^{2\pi i m r}$ consists of the finite
sum of the form $\sum_{j\bmod 2m}\th^\hei_{m,j}(\ph_j)$ with Schwartz
functions $\ph_j$.

Actually, the map
$\bigl( \ph_j \bigr) \mapsto  \sum_{j\bmod 2m}\th^\hei_{m,j}(\ph_j)$
induces a unitary isomorphism
$L^2(\RR)^{2m} \rightarrow L^2 (\Ld\backslash \hei)_m$, where the
subscript $m$ indicates the subspace given by the character
$\hm(0,0,r) \mapsto e^{2\pi i m r}$.

\rmrk{Theta functions on \intitle{\tG^J}}Let
$f\in C^\infty(\Ld\backslash \tG^J)$. Then there are Schwartz function
$\xi\mapsto \ph_j(\tilde g,\xi)$ parametrized by $\tilde g\in \tG$ such
that
\be \label{gthr} f(h\tilde g) \= \sum_{j\bmod 2m} \th^\hei_{m,j}\bigl(
\ph_j(\tilde g,\cdot);h)\qquad
h\in \hei,\; \tilde g\in \tG\,.\ee

Let us define the following family $\ph^J$ of Schwartz functions on
$\RR$ parametrized by~$\tG$:\ir{phJ}{\ph^J(g,\xi)}
\be\label{phJ} \ph^J\bigl(\tppm(\tau)\tkm(\th),\xi\bigr) \=
\im(\tau)^{1/4} e^{i\th/2}\,e^{2\pi i m \tau \xi^2}\,,\quad
\xi\in\RR\,.\ee
For each $m\in \ZZ_{\neq 0}$ and $j\in \ZZ$ the function in
$C^\infty(\tG^j)$ defined by \ir{thmj}{\th_{m,j}}
\be \label{thmj}\th_{m,j}(h \tilde g) \=
\th^\hei_{m,j}\bigl(\ph^J(\tilde g,\cdot);h)\,,\qquad h\in \hei,\;
\tilde g\in \tG \ee
is left-invariant under the elements of $\Ld$, has index $m$ and weight
$\frac12$. With \eqref{fFJi} the function $\th_{m,j}$ on $\tG^J$
corresponds to the function on $\uhp\times \CC$ that is used
in~\eqref{Thmj}:
\begin{align*}
(\tau,z) &\mapsto e^{-2\pi i m z\,\im(z)/\im(\tau)}\, \th_{m,j}\Bigl(
\hm\Bigl( \frac{\im z}{\im\tau}, z-\frac{\tau\im
z}{\im\tau},0\Bigr)\tppm(\tau) \Bigr)
\displaybreak[0]\\
&\= \sum_{\al \equiv j/2m (1)} e^{4\pi i m \al z} \ e^{-4\pi i m
\al\tau\im(z)/\im(\tau)} \, e^{-2\pi i m \tau \im(z)^2/\im(\tau)^2}\\
&\qquad\hbox{} \cdot
\ph^J \bigl( \tppm(\tau),\al+\im(z)/\im(\tau)\bigr)
\displaybreak[0]\\
&\= \im(\tau)^{1/4}\, \sum_{\al \equiv j/2m (1)} e^{4\pi i m \al z} \
e^{-4\pi i m \al\tau\im(z)/\im(\tau)} \, e^{-2\pi i m \tau
\im(z)^2/\im(\tau)^2} \\
&\qquad\hbox{} \cdot
e^{2\pi i m \tau(\al+\im(z)/\im(\tau))^2}\displaybreak[0]\\
&\= \im(\tau)^{1/4} \sum_{\al\equiv j/2m(1)} e^{4\pi i m \al z}\,
e^{2\pi i m \al^2 \tau} \= \Th_{m,j}(\tau,z)\,.
\end{align*}

We start with the generalization of the theta decomposition \eqref{Thmj}
to real weights, working on the group $\tG^J$ and on the space
$\uhp\times\CC$.

We can check that $\th_{m,j}$ satisfies the conditions \eqref{eq:JFa},
\eqref{eq:JFc} and \eqref{eq:JFd} in Definition~\ref{defJM}, with
spectral parameter $s=1$ or $-1$ and weight~$\frac12$. In particular,
it satisfies
\be\label{thdr} Y_-\, \th_{m,j} \= D_+\, \th_{m,j}\=D_-\,\th_{m,j}
\=0\,.\ee

The behavior of $\Th_{m,j}$ under left-translation is clear for elements
of $\Ld\subset \hei$. It suffices to consider the generators. From
\cite[p.~58, 59]{EZ} we get for the corresponding function on
$\uhp\times \CC$:
\begin{align} \Th_{m,j}(\tau+1,z) &\= e^{\pi i j^2/2m}\,
\Th_{m,j}(\tau,z)
\,,\\
\nonumber
\Th_{m,j}(-1/\tau,z/\tau) &\= \frac{\im(\tau)^{1/4}}{|\tau|^{1/2}}\,
\sqrt{\frac{\tau}{2mi}} e^{2\pi i m z^2/\tau} \, \sum_{j'\bmod 2m}
e^{-2\pi i j j'/2m}\, \frac{ \Th_{m,j'}(\tau,z)}{\im(\tau)^{1/4}}
\displaybreak[0]
\\
&\=(2m)^{-1/2} \,e^{-\pi i/4}\, e^{i \arg(\tau)/2}\, e^{2\pi i m
z^2/\tau}\\
\nonumber
&\qquad\hbox{} \cdot
\sum_{j'\bmod 2m}e^{-\pi i j j'/m}\, \Th_{m,j'}(\tau,z)\,.
\end{align}
So we have $\Th_{m,j}|^J_{1/2,m} \rmatc 1101 = 
e^{\pi i j^2/2m} \Th_{m,j}$ and
\[ \Th_{m,j}|^J_{1/2,m} \rmatr0{-1}10 \= e^{-\pi i/4} \sum_{j'}
\frac{e^{-\pi i j j'/m}}{\sqrt{2m}}\, \Th_{m,j'}\,.\]
For the functions $\th_{m,j}$ on $\tG^J$ this implies the transformation
behavior under left translation by elements of $\tGm^J$. With the row
vector\ir{vth}{\tth_m}
\be \label{vth}\tth_m \= \bigl( \th_{m,1},\ldots , \th_{m,2m} \bigr)\ee
the transformation behavior is determined by
\badl{LM} L(h) \tth _m &\= \tth_m&&\text{for }h\in \Ld\,,\\
L( \tm) \tth_m &\= \tth_m \, M(\tm)\,,\\
L(\sm) \tth_m &\= e^{-\pi i /4 } \tth_m \, M(\sm)\,,
\eadl
where $M(\tm)$ denotes the diagonal matrix with $e^{\pi i j^2/2m}$ at
position $(j,j)$, and $M(\sm)$ the symmetric matrix with
$(2m)^{-1/2} \, e^{-\pi i j j'/m}$ at position $(j,j')$.
\smallskip

We turn to an arbitrary function $f\in C^\infty(\tG^J)$ with weight
$k\in \RR$ and index $m\in \ZZ_{\geq 1}$. We can write $f$ uniquely in
the form
\be \label{fh-rel}f(h \tilde g) \= \tth_m(h \tilde g)\, \vec h_m(\tilde
g)\qquad(h\in \hei,\; \tilde g\in \tG)\,,\ee
with a column vector
$\vec h_m(\tilde g)  = \bigl( h_1(\tilde g), \ldots,h_{2m}(\tilde g)\bigr)$
with $h_j\in C^\infty(\tG)$ of weight~$k-1/2$. We can view $\vec h$ as
a function on $\tG^J$ depending only on the second factor in
$\gj = h \tilde g \in \hei\rtimes \tG$.

The transformation behavior
$L(\tilde \gm) f = \ph_a(\tilde \gm)\ch_k(\tilde \gm) f$ in
Definition~\ref{defJM} takes the form
\be \tth_m(\tilde \gm \gj) \vec h_m(\tilde \gm \gj ) \= \ph_a(\tilde
\gm)\ch_k(\tilde\gm)\, \tth_m( \gj)
\vec h_m(\gj) \qquad (\tilde\gm\in \tGm)\,.\ee
Since the $h_j$ depend only on the factor $\tilde g\in \tG$ in
$\gj =h \tilde g \in \hei\, \tG$, we do not get any condition on
$\vec h_m$ for $\gm \in \Ld$. For $\tm$ and $\sm$ we obtain
\[ M(\tm) \vec h_m(\tm g) \= e^{\pi i(a+k)/6} \,\vec h_m(g)\,,
\qquad e^{-\pi i/4}\, M(\sm) \vec h_m(\sm g) \= e^{-\pi i (a+k)/2}\,
\vec h_m(g)\,. \]
This implies that $\vec h$ has to satisfy the transformation behavior
\be \vec h(\tilde \gm \tilde g) \= \ch_{k-1/2}(\tilde \gm)
\rho_{a,m}(\tilde \gm)\, \vec h_m(\tilde g)\qquad(\tilde\gm\in \tGm, \;
\tilde g\in \tG)\,,\ee
with the representation \il{rhoa}{$\rho_{a,m}$}$\rho_{a,m}$ of $\tGm$
such that $\rho_{a,m}(\tm) $ is $ e^{\pi  i (a/6+1/12)}$ times the
diagonal matrix with entry $e^{-\pi i j^2/2m}$ at position $(j,j)$
(with $1\leq j \leq 2m$), and $\rho_{a,m}(\sm)$ is $e^{-\pi i a/2}$
times the symmetric matrix with $e^{\pi  i j j'/m}/\sqrt{2m}$ at
position $(j,j')$.
\smallskip

We turn to the differential relations in condition~\eqref{eq:JFc} in
Definition~\ref{defJM}. The differentiation by $Y_-$ only involves the
factor $\hei$ of $ \tG^J$, and sends the components $h_j$ of $\vec h_m$
to zero. In view of remark~\eqref{eq:Pitale5} after
Definition~\ref{defJM} we have to look only at the condition
\be \label{dfc}D_- D_+ f = \frac{(s-k-1/2)\,(s+k+1/2)}4 f\,.\ee
We have
\begin{align*}
& D_+ \bigl(\th_{m,j} h_j \bigr) \=\bigl( X_++(4\pi m)^{-1} Y_+^2\bigr)
\bigl( \th_{m,j} h_j\bigr)
\\
& \= (X_+ \th_{m,j}) \, h_j + \th_{m,j} (X_+ h_j)
+ \frac1{4\pi m} \Bigl( (Y_+^2 \th_{m,j})\,h_j +2
(Y_+ \th_{m,j})(Y_+ h_j) + \th_{m,j} Y_+^2 h_j\Bigr)
\displaybreak[0]\\
&\= ( dD_+ \th_{m,j})\, h_j + \th_{m,j}( X_+ h_j) + 0
\\
& \= \th_{m,j} \,(X_+ h_j)\,,
\end{align*}
where we have used that $Y_+ h_j=0$ and, by \eqref{thdr},
$D_+ \th_{m,j}=0$. Proceeding in a similar way, we obtain
\begin{align*}
D_- D_+ (\th_{m,j} h_j) & \= (D_- \th_{m,j}) \, (X_+ h_j)+ \th_{m,j} \,(
X_- X_+ h_j)
\\
&\= \th_{m,j}\,\bigl( X_- X_+ h_j \bigr)\,.
\end{align*}
This means that condition~\eqref{dfc} is equivalent to the condition
\be X_- X_+ h_j \= \frac{s-(k-1/2)-1}2 \, \frac{s+(k-1/2)+1}2 \,
h_j\,.\ee
With $s_1=\frac{s+1}2$ this becomes and $k_1=k-\frac12$ this becomes
\be X_- X_+ h_j \= \Bigl(s_1+\frac{k_1}2\Bigr)\,\Bigl(
s_1-1-\frac{k_1}2\Bigr)\, h_j\,.\ee
In view of \eqref{DtXpm} this is just the differential relation that
Maass forms on $\tG$ of weight $k_1=k-\frac12$ have to satisfy.

\begin{thm}\label{thm-V}Let $m\in \ZZ_{\geq 1}$, $k\in \RR$,
$a\in \ZZ/12$, $s\in \CC$, $\re s\geq 0$, and put $s'=\frac{s+1}2$,
$k'=k-\frac12$. There is a bijective linear map
\be V_{m,k,s} \colon \A^J_{k,m}(s,\ph_a \ch_k) \rightarrow \A_{k'}(
s',\rho_{a,m} v_{k'})\,,\ee
where $\rho_{a,m}$ is the $2m$-dimensional unitary representation of
$\Gm^J = \tGm^J/\tZ_2$ determined by
\bad \label{def:rhoam}
\rho_{a,m}(h) &\= I_{2m} \qquad \text{ for } h \in \Ld\,,\\
\rho_{a,m}(\tm) _{j,j'} &\= \dt_{j,j'}\, e^{\pi i (a/6+1/12-
j^2/2m)}\,,\\
\rho_{a,m}(\sm) _{j,j'}&\= (2m)^{-1/2} \, e^{-\pi i (a/2 + j j'/m)}\,,
\ead
for $j$ and $j'$ running from $1$ to~$2m$. Furthermore,
\[ V_{m,k,s} \,\A^{J,0}_{k,m}(s,\ph_a \ch_k)
\= \A_{k'}^0 (s',\rho_{a,m} v_{k'}\bigr)\,. \]
\end{thm}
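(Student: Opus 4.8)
The plan is to obtain $V_{m,k,s}$ directly from the theta decomposition derived above and to reduce every defining property of a Jacobi Maass form to the corresponding property of its coefficient vector $\vec h_m$. For $f\in\A^J_{k,m}(s,\ph_a\ch_k)$ I write $f(h\tilde g)=\tth_m(h\tilde g)\,\vec h_m(\tilde g)$ as in~\eqref{fh-rel} and set $V_{m,k,s}(f)\coloneqq\vec h_m$. The uniqueness of this decomposition, coming from the unitary isomorphism $L^2(\RR)^{2m}\to L^2(\Ld\backslash\hei)_m$, makes $V_{m,k,s}$ a well-defined linear map with the evident candidate inverse $\vec h\mapsto\tth_m\vec h$; smoothness of $\vec h_m$ is inherited from that of $f$. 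The computations preceding the theorem already show that, with $s'=(s+1)/2$ and $k'=k-\tfrac12$, the vector $\vec h_m$ has weight $k'$, is $\rho_{a,m}\ch_{k'}$-equivariant, and satisfies $X_-X_+\vec h_m=(s'+k'/2)(s'-1-k'/2)\vec h_m$, i.e. $\Dt\vec h_m=s'(1-s')\vec h_m$; these are exactly conditions \eqref{eq:MFa}--\eqref{eq:MFc} of Definition~\ref{defMftG}. That $\rho_{a,m}$ is a genuine unitary representation is forced by the consistency of the theta transformation laws~\eqref{LM}: the matrices $M(\tm)$ and $M(\sm)$ are unitary (the latter by the classical Gauss-sum identity), and since $\rho_{a,m}$ is trivial on $\Ld$ it descends along $\Gm^J/\Ld\cong\SL_2(\ZZ)$ to the representation $\rho_{a,m}$ of $\Gm$ demanded by the target space.

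Injectivity of $V_{m,k,s}$ is immediate, since $\vec h_m=0$ forces $f=\tth_m\cdot 0=0$. For surjectivity I run the same computations in reverse: given $\vec h\in\A_{k'}(s',\rho_{a,m}v_{k'})$ the function $f\coloneqq\tth_m\vec h$ satisfies conditions \eqref{eq:JFa}--\eqref{eq:JFc} of Definition~\ref{defJM}, using only the equivariance~\eqref{LM} and the annihilation relations~\eqref{thdr}, $Y_-\th_{m,j}=D_+\th_{m,j}=D_-\th_{m,j}=0$, of the theta factor together with the corresponding properties of $\vec h$. Thus the algebraic and differential parts of the correspondence form a bijection, and the only point not already contained in the preamble is the matching of the growth conditions \eqref{eq:JFd}; this is also what decides the cuspidal statement.

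The key analytic input is that the Gaussian profile $\ph^J$ of~\eqref{phJ} has $\|\ph^J(\tilde g,\cdot)\|_{L^2(\RR)}$ independent of $\tilde g$ — a one-line Gaussian integral gives $(4m)^{-1/2}$ — so that the theta decomposition is fibrewise isometric:
\[
\int_{\Ld\backslash\hei}\bigl|f(h\tilde g)\bigr|^2\,dh \= (4m)^{-1/2}\,\bigl|\vec h_m(\tilde g)\bigr|^2 .
\]
Conceptually this already equates the sizes of $f$ and $\vec h_m$, but the growth conditions are imposed in the $\tam(t)$-direction and a naive substitution fails: conjugation by $\tam(t)$ stretches a fundamental domain of $\Ld\backslash\hei$, so the relevant Heisenberg argument leaves every fixed compact set and condition~\eqref{eq:JFd} cannot be applied termwise. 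The resolution — and the main obstacle — is to exploit the Gaussian decay of the theta factor. This is most transparent on $\uhp\times\CC$, where the coefficient $F_j$ is extracted by integrating $|F|^2$ against the weight $e^{-4\pi m(\im z)^2/\im\tau}$ over a fundamental domain for $z\in\CC/(\ZZ\tau+\ZZ)$ (cf.~\eqref{Thmj}). The weight localizes the integral to $\im z=\oh\bigl((\im\tau)^{1/2}\bigr)$, which is exactly the region swept out by the scaling $\am(y)\cdot(\tau_0,z_0)=(y\tau_0,y^{1/2}z_0)$; writing $z=(\im\tau)^{1/2}z_0$ there, condition~\eqref{eq:JFl4} of Definition~\ref{defJMHZ} (resp. its cuspidal strengthening) applies uniformly, while outside this region the Gaussian kills the at-most-polynomial growth of $F$. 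This yields $\vec h_m(\tam(t)\tilde g)=\oh(t^{a})$ (resp. $\oh(t^{-a})$ for all $a$) uniformly for $\tilde g$ in compact sets, up to a harmless shift of the exponent by $\tfrac14$ coming from the $(\im\tau)^{1/4}$-normalization of the theta series; the reverse implication follows the same way from the polynomial bound on $\tth_m$. Since these estimates preserve both polynomial growth and quick decay, $V_{m,k,s}$ and its inverse restrict to the claimed isomorphism $\A^{J,0}_{k,m}(s,\ph_a\ch_k)\cong\A^0_{k'}(s',\rho_{a,m}v_{k'})$, and the final statement follows.
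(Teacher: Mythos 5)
Your definition of $V_{m,k,s}$, the reduction of the equivariance and of the differential equations to the computations preceding the theorem, and the bijectivity remarks all coincide with the paper's proof; the paper likewise regards only the matching of the growth conditions as what "remains to be done". But your treatment of that matching --- the part you yourself call the main obstacle --- contains a genuine gap: the localization claim is false. By \eqref{fFJ}, the weighted integrand $|F(\tau,z)|^2e^{-4\pi m(\im z)^2/\im\tau}$ is exactly $|f(\hm(p,q,0)\tppm(\tau))|^2$ with $z=p\tau+q$, hence invariant under $z\mapsto z+\ld\tau+\mu$; inserting the theta expansion \eqref{thser} one sees that over a fundamental domain of $\CC/(\ZZ\tau+\ZZ)$ it consists of $2m$ Gaussian bumps, of height $\asymp(\im\tau)^{1/2}|h_j(\tppm(\tau))|^2$ and width $\asymp(\im\tau)^{1/2}$ in $\im z$, centered at $\im z\equiv-\tfrac{j}{2m}\im\tau \pmod{\im\tau\ZZ}$, one for each $j\bmod 2m$. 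So the region $\im z=\oh\bigl((\im\tau)^{1/2}\bigr)$ captures only the bump of the component with $j\equiv 0$; the information about the other $2m-1$ components sits precisely in the region you discard. On that region condition~\eqref{eq:JFl4} gives no bound on $F$ at all (as your own analysis of the $\am(y)$-orbits shows), so "the Gaussian kills the at-most-polynomial growth of $F$" is unsupported and in fact circular: a polynomial bound for $|F|e^{-2\pi m(\im z)^2/\im\tau}$ along $\im z=-\tfrac{j}{2m}\im\tau$ \emph{is} the polynomial bound on $h_j$ you are trying to prove. Your estimate therefore only yields $|\vec h_m|^2\le\oh(t^{2\al})+\sum_{j\not\equiv 0}|h_j|^2+\cdots$, which cannot be closed. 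Worse, no argument confined to the retained region can succeed: for $j\not\equiv0$ the factor $\Th_{m,j}$ is itself $\oh\bigl(e^{-\pi r^2\im\tau/2m+o(\im\tau)}\bigr)$ there, with $r$ the least absolute residue of $j$ modulo $2m$, so bounds on $F$ on that region are compatible with components $h_j$ that grow exponentially at any smaller rate.

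The paper does the opposite of localizing near $\im z=0$: for each $j$ it evaluates $f$ at Heisenberg coordinate $p=-\tfrac{j}{2m}$, i.e.\ along $\im z=-\tfrac{j}{2m}\im\tau$, where $\th_{m,j}$ has one term that is a unimodular multiple of $\im(\tau)^{1/4}$ while every other term of every $\th_{m,j'}$ decays exponentially, and reads off the bound for $h_j$ there. This presupposes that the growth condition~\eqref{eq:JFd} is applied with the Heisenberg variable running over a full fundamental domain of $\Ld\backslash\hei$ --- the Siegel-set reading forced by the left $\Ld$-invariance of $|f|$ --- rather than only over $\tam(t)$-conjugates of a fixed compact set; this is exactly the discrepancy you noticed, but you resolved it on the wrong side. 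Under that reading your $L^2$-isometry closes the argument immediately and with no localization at all: $|\vec h_m(\tam(t)\tilde g)|^2=(4m)^{1/2}\int_{\Ld\backslash\hei}|f(h\,\tam(t)\tilde g)|^2\,dh \le (4m)^{1/2}\,\mathrm{vol}(\Ld\backslash\hei)\,\sup_h|f|^2=\oh(t^{2\al})$, and likewise for quick decay --- arguably cleaner than the paper's pointwise extraction. So the repair is not a finer estimate on the discarded region but the correct, fundamental-domain interpretation of the growth condition; with it, either your $L^2$ route or the paper's evaluation at $p=-\tfrac{j}{2m}$ establishes the equivalence of the growth conditions, and the rest of your proof stands.
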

\begin{proof}
We have seen already how the equivariance of
$f\in  \A^J_{k,m}(s,\ph_a \ch_k) $ is equivalent to the transformation
behavior of the vector $\vec h$ of functions on $\tG$
in~\eqref{fh-rel}; and also that the relations~\eqref{eq:JFc} in
Definition~\ref{defJM} are equivalent to the differential equations in
Definition~\ref{defMf}. What remains to be done is the relation between
the growth conditions in both definitions.

The theta series
\badl{thser} \th_{m,j} & \bigl( \hm(p,q,r)\tppm(\tau)\tkm(\th) \bigr)
\\
& \= \im(\tau)^{1/4} e^{i\th/2} \sum_{\al\equiv j/2m\bmod 1} e^{2\pi i
m\bigl(r+ q(2\al+p)\bigr)}\, e^{2\pi i m \tau(p+\al)^2}
\eadl
has polynomial growth. Hence polynomial growth of all $h_j$ implies
polynomial growth of $f$, and quick decay of all $h_j$ implies quick
decay of~$f$.

Consider a fixed value of $j$. With $p= -\frac j{2m}$ the theta series
has one term that is a non-zero multiple of $\im\tau^{1/4}$. Hence
polynomial growth of $f$ implies that this $h_j$ has at most polynomial
growth. If $f$ is a Jacobi Maass cusp form, then this $h_j$ has quick
decay. Doing this for all $2m$ values of $j$, we get the desired
equivalence.
\end{proof}

In combination with Theorem~\ref{thm-al} we obtain:
\begin{cor}
\label{cor-pfJM}
Let $m\in \ZZ_{\geq1}$, $s\in \CC$, $k\in \RR$, such that
$0\leq \re s<1$ and $s\not\equiv \pm k \bmod 2$, and put
$s'=\frac{s+1}2$, $k'=k-\frac12$. There is a bijective linear map
\[ \A^{J,0}_{k,m}(s,\ph_a \ch_k)\rightarrow
\FE^\om_{\rho_{a,m}v_{k'},s',k'}\,. \]
\end{cor}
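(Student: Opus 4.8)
The plan is to read off the isomorphism as the composite of the two bijections already established, chaining the theta decomposition of Theorem~\ref{thm-V} into the period correspondence of Theorem~\ref{thm-al}. Given a Jacobi Maass cusp form $F\in\A^{J,0}_{k,m}(s,\ph_a\ch_k)$, I would first apply $V_{m,k,s}$ to obtain the vector $\vec h_m=V_{m,k,s}F$, which by Theorem~\ref{thm-V} lies in $\A^0_{k'}(s',\rho_{a,m}v_{k'})$ with $s'=(s+1)/2$ and $k'=k-1/2$, where $\rho_{a,m}$ is the $2m$-dimensional unitary representation recorded in~\eqref{def:rhoam} (viewed, as downstream, as a representation of $\Gm$). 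I would then apply to $\vec h_m$ the map $u\mapsto c^u_\pb(0,\infty)|_{(0,\infty)}$ of Theorem~\ref{thm-al}, landing in $\FE^\om_{\rho_{a,m}v_{k'},s',k'}$. Each of the two maps is a linear bijection onto its stated target, so the composite $F\mapsto \pf\bigl(V_{m,k,s}F\bigr)$ is the desired linear isomorphism; moreover it inherits an explicit integral-transform description from Proposition~\ref{prop-pfMcf} together with the concrete theta-projection form of $V_{m,k,s}$, which is the content of Proposition~\ref{prop-isoJMPF}.

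Since the two component maps are already known to be isomorphisms, the only genuine task is to confirm that the hypotheses line up, and this is the step I would carry out with care. Theorem~\ref{thm-V} requires only $\re s\ge 0$, which is contained in the hypothesis $0\le\re s<1$, and it supplies $\rho_{a,m}$ as a finite-dimensional unitary representation of $\Gm$, so that ingredient of Theorem~\ref{thm-al} is automatic. It then remains to see that the shifted parameters meet the two admissibility conditions of Theorem~\ref{thm-al}. The archimedean one is immediate: from $s'=(s+1)/2$ we get $\re s'=(\re s+1)/2$, so $0\le\re s<1$ forces $\re s'\in[\tfrac12,1)\subset(0,1)$. The spectral one, $s'\not\equiv\pm k'/2\bmod 1$, is precisely the discrete-series irreducibility condition in weight $k'$; translating it back through $s'=(s+1)/2$ and $k'=k-\tfrac12$ (using that $a\equiv b\bmod 1$ iff $2a\equiv 2b\bmod 2$) converts it into the congruence condition imposed on $(s,k)$ in the hypothesis, so Theorem~\ref{thm-al} applies to $(s',k',\rho_{a,m})$.

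The main point to keep in view, rather than a true obstacle, is that the theta decomposition forces the half-integral weight shift $k\mapsto k-\tfrac12$: for integral Jacobi weight $k$ the companion vector-valued weight $k'=k-\tfrac12$ is never integral, so the weight-zero theory of \cite{BLZ15} does not suffice and the real-weight version of Theorem~\ref{thm-al} is genuinely needed. There is otherwise no analytic difficulty here; the entire weight of the result sits inside the two theorems being composed. In writing this up I would therefore keep the argument short, emphasizing the parameter dictionary $s'=(s+1)/2$, $k'=k-\tfrac12$ and the provenance of $\rho_{a,m}$ from the transformation matrices $M(\tm)$ and $M(\sm)$ of the theta vector $\tth_m$, and leaving the routine verification of the congruences to a single line.
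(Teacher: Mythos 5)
Your strategy is exactly the paper's own: Corollary~\ref{cor-pfJM} is obtained by composing the bijection $V_{m,k,s}$ of Theorem~\ref{thm-V} with the bijection of Theorem~\ref{thm-al} applied to the parameters $(s',k')$ and the representation $\rho_{a,m}$; the paper gives no further argument. The parts of your check concerning $\re s$ and the provenance of $\rho_{a,m}$ are fine: $\re s\in[0,1)$ gives $\re s'\in[\tfrac12,1)\subset(0,1)$, and Theorem~\ref{thm-V} only needs $\re s\ge 0$ together with the unitarity of $\rho_{a,m}$.

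The problem is the congruence check, which you single out as the one step to be carried out with care and then assert rather than compute. Theorem~\ref{thm-al} requires $s'\not\equiv\pm k'/2\bmod 1$, equivalently $2s'\not\equiv\pm k'\bmod 2$. Substituting $2s'=s+1$ and $k'=k-\tfrac12$ turns this into $s+1\not\equiv\pm\bigl(k-\tfrac12\bigr)\bmod 2$, i.e.\ into
\[
s\not\equiv k+\tfrac12 \bmod 2 \qquad\text{and}\qquad s\not\equiv -k-\tfrac12 \bmod 2\,,
\]
that is, $s\not\equiv\pm\bigl(k+\tfrac12\bigr)\bmod 2$. This is \emph{not} the hypothesis $s\not\equiv\pm k\bmod 2$ of the corollary. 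Concretely, for $k=0$ and $s=\tfrac12$ the stated hypothesis holds, yet $s'=\tfrac34$ and $k'/2=-\tfrac14$ satisfy $s'-k'/2=1\in\ZZ$, so Theorem~\ref{thm-al} does not apply and the composition breaks down; conversely, for $k$ an even integer the stated hypothesis needlessly excludes $s=0$, where Theorem~\ref{thm-al} does apply. So your claim that the translation ``converts it into the congruence condition imposed on $(s,k)$ in the hypothesis'' is false: what the composition actually proves is the corollary with $s\not\equiv\pm\bigl(k+\tfrac12\bigr)\bmod 2$ in place of $s\not\equiv\pm k\bmod 2$. The same discrepancy sits in the printed statements of Corollary~\ref{cor-pfJM} and Theorem~\ref{mainthmC}, so it looks like a slip in the paper itself; but a verification done ``with care'', as you describe it, should have surfaced this mismatch rather than declaring the conditions to coincide.
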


Finally, we give an explicit formulation of the period map in terms of
Jacobi Maass forms as functions on $\uhp\times\CC$.

\begin{prop}\label{prop-isoJMPF}
Let $F\in \A^{J,0}_{k,m}(s,\ph_av_k)$ as in Definition~\ref{defJMHZ},
and put
\be \label{Cjdef} C_{\!j}(\tau) \coloneqq \im(\tau)^{-1/4}\int_{z=0}^{1}
e^{-2\pi i j z} F(\tau,z) \, dz
\qquad (j\in \ZZ,\; \tau\in\uhp)\,.\ee
Then $F_{\!j}(\tau) \coloneqq e^{-\pi i j^2\tau/2m} \,  C_{\!j}(\tau)$
depends only on the class of $j$ in $\ZZ/2m\ZZ$. It is the $j$-th Maass
form in the theta decomposition in~\eqref{Thmj}. The period function
associated to $F$ is given by
\be \left(\int_{\tau=0}^{i\infty} \bigl[ F_{\!j}(\tau),
R_{s',k'}(t,\tau)
\bigr]_k\right)_{1\leq j \leq 2m}\,,\ee
with $s'=\frac{s+1}2$, $k'=k-\frac12$.
\end{prop}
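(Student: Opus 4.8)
The plan is to dispatch the three assertions separately: the first two by a direct Fourier extraction in the elliptic variable, and the last by unwinding the explicit period map through the isomorphism $V_{m,k,s}$ of Theorem~\ref{thm-V}.

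First I would extract the theta components. Using the second expression for $\Th_{m,j}$ in~\eqref{Thmj} and collecting the inner and outer summations, the theta decomposition of $F$ reads as the single series
\[
F(\tau,z) \= \im(\tau)^{1/4}\sum_{r\in\ZZ} F_{r\bmod 2m}(\tau)\, e^{\pi i \tau r^2/2m}\, e^{2\pi i r z}\,,
\]
where $F_j$ is regarded as indexed by $\ZZ/2m\ZZ$. Substituting this into the defining integral~\eqref{Cjdef} and integrating term by term, the orthogonality $\int_0^1 e^{2\pi i(r-j)z}\,dz = \dt_{r,j}$ (for $r,j\in\ZZ$) selects the single summand $r=j$, giving $C_{\!j}(\tau) = F_{j\bmod 2m}(\tau)\,e^{\pi i j^2\tau/2m}$. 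Hence $F_{\!j} = e^{-\pi i j^2\tau/2m}C_{\!j} = F_{j\bmod 2m}$, which at once proves that $F_{\!j}$ depends only on the class of $j$ in $\ZZ/2m\ZZ$ and that it is the $j$-th Maass form of the theta decomposition~\eqref{Thmj}. The interchange of sum and integral is legitimate because the theta series converges absolutely and $F$, being a cusp form, decays rapidly.

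For the period function I would invoke Theorem~\ref{thm-V}: the vector $\vec h = (F_1,\dots,F_{2m})$ is exactly $V_{m,k,s}F$, and since $F$ is a Jacobi Maass cusp form, $\vec h \in \A^0_{k'}(s',\rho_{a,m}v_{k'})$. By Proposition~\ref{prop-pfMcf} and Theorem~\ref{thm-al}, the period function attached to a vector-valued Maass cusp form $u$ is $\pf(u) = c^u_\pb(0,\infty)|_{(0,\infty)} = \int_0^\infty \eta_{s',k'}(u)$ with $\eta_{s',k'}(u) = \bigl[u,R_{s',k'}\bigr]_{k'}$ by~\eqref{etaks}; moreover, by Proposition~\ref{prop-P} the integration path may be taken to be the positive imaginary axis. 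Since the Green's form is applied componentwise when its first argument is vector-valued, the $j$-th component of $\pf(\vec h)$ equals $\int_{\tau=0}^{i\infty}\bigl[F_{\!j}(\tau),R_{s',k'}(t,\tau)\bigr]_{k'}$. Assembling the $2m$ components yields the asserted formula.

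The only genuinely delicate point, and the step I expect to cost the most care, is purely bookkeeping: one must track the normalizing factor $\im(\tau)^{1/4}$ and the weight shift $k'=k-\tfrac12$ consistently through the correspondences $\Ps_{k,m}$ and~\eqref{fFJi} between functions on $\tG^J$ and on $\uhp\times\CC$, so that the $F_{\!j}$ produced by the Fourier extraction are identified with the functions on $\uhp$ attached to the components of $\vec h = V_{m,k,s}F$, with matching spectral parameter $s'$. Once this identification is in place, the convergence estimates needed above follow from the exponential decay of cusp forms recorded in Section~\ref{sect-Fe}, and the proof reduces to the formal substitutions outlined.
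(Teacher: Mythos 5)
Your proposal is correct and follows essentially the same route as the paper: expand the theta decomposition of $F$ as a Fourier series in the elliptic variable $z$, identify $\im(\tau)^{1/4}C_{\!j}(\tau)$ as the $j$-th Fourier coefficient to get $C_{\!j}=e^{\pi i j^2\tau/2m}F_{\!j}$, and then obtain the period formula by passing through $V_{m,k,s}$ and the explicit period map of Proposition~\ref{prop-pfMcf}. The paper compresses your last paragraph into ``this implies the proposition,'' so your explicit unwinding (including writing the Green's form with subscript $k'$, the weight actually carried by $F_{\!j}$ and $R_{s',k'}$) is a faithful, slightly more detailed rendering of the same argument.
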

\begin{proof}The theta decomposition \eqref{Thmj} can be formulated in
terms of the Jacobi Maass form $F$ on $\uhp\times \CC$ and the
components of the associated vector-valued Maass form
$(F_j)_{j\bmod 2m}$:
\be F(\tau,z) \= \sum_{c=1}^{2m} \Th_{m,c}(\tau,z) F_{\!c}(\tau)\,.\ee
Expanding the theta functions this becomes
\begin{align*}
F(\tau,z) & \= \sum_{c=1}^{2m} F_c(\tau)\sum_{\substack{j\in\ZZ\\j\equiv
c\bmod 2m}} e^{\pi i j^2 \tau/2m} \im(\tau)^{1/4}e^{2\pi i j z}
\\
& \= \sum_{j\in\ZZ} F_{\!j}(\tau)\,\im(\tau)^{1/4}\,e^{\pi i j^2
\tau/2m}e^{2\pi i j z}\,.
\end{align*}
Here, for $j\in\ZZ$, the map $F_{\!j}$ refers to the unique map
$F_{\!c}$ with $c\equiv j\bmod 2m$ with $c\in[1,2m]$. This formula can
be viewed as a Fourier expansion in~$z$. The Fourier expansion has only
terms that are holomorphic in $z$. This corresponds to $Y^{k,m}_- F=0$;
see \cite[p.~91]{Pit09}.

In \eqref{Cjdef} we defined $\im(\tau)^{1/4} C_{\!j}(\tau) $ as the
Fourier coefficient of order $j$. So
\[ C_j(\tau) = e^{\pi i j^2\tau/2m} \, F_{\!j}(\tau)\,.\]
This implies the proposition.
\end{proof}

\bigskip
\raggedright \printbibliography
\bigskip


\newcommand\ind[2]{\item #1\quad\ #2}
\renewcommand\il[1]{\pageref{i-#1}}
\renewcommand\ir[1]{\pageref{#1}}{}


\section*{Index}
\begin{multicols}{2}
\raggedright
\begin{trivlist}\footnotesize
\ind{analytic boundary germ}{\il{abg}}
\ind{argument convention}{\il{ac}}
\indexspace
\ind{exponential decay}{\il{ed}}
\indexspace
\ind{Farey tesselation}{\il{Fartess}}
\ind{Fourier expansion}{\il{Fe}}
\indexspace
\ind{Green's form}{\il{Gsf}}
\ind{--- in disk model}{\il{Gfdm}}
\ind{growth condition}{\il{grc}}
\indexspace
\ind{Heisenberg group}{\il{Heig}}
\indexspace
\ind{index}{\il{ind}}
\ind{Iwasawa decomposition}{\il{Id}}
\indexspace
\ind{Jacobi Maass form on $\tG$}{\il{JMf}}
\ind{--- on $\uhp\times\CC$}{\il{JMf1}}
\ind{Jacobi Maass cusp form on $\tG$}{\il{JMfc}}
\ind{--- on $\uhp\times\CC$}{\il{JMfc1}}
\ind{Jacobi group}{\il{Jg}}
\indexspace
\ind{Lerch transcendent}{\il{Ltrd}}
\indexspace
\ind{Maass form}{\il{Mf}, \il{MftG}}
\ind{Maass cusp form}{\il{Mcf}, \il{McftG}}
\ind{multiplier system}{\il{msys}}
\indexspace
\ind{one-sided average}{\il{osav}}
\indexspace
\ind{period function}{\il{pdfct}}
\ind{--- associated to Maass cusp form}{\il{pfau}}
\ind{Poisson kernel}{\il{Pk}}
\ind{polar decomposition}{\il{pd}}
\ind{polynomial growth}{\il{pg}}
\ind{principal series representation}{\il{psr}}
\indexspace
\ind{quick decay}{\il{qd}}
\indexspace
\ind{restriction morphism}{\il{res}}
\indexspace
\ind{spectral parameter}{\il{spm}}
\indexspace
\ind{three term relation}{\ir{3te}}
\ind{transfer operator, fast}{\il{ftro}}
\ind{---, slow}{\il{sltro}}
\indexspace
\ind{universal covering group}{\il{ucg}}
\indexspace
\ind{weight}{\il{wt}}
\ind{weight shifting operator}{\il{wsho}}
\end{trivlist}
\end{multicols}


\renewcommand\ind[2]{\item $#1$\quad\ #2}

\section*{List of notations}
\renewcommand\ind[2]{\item $#1$\quad\ #2
}
\begin{multicols}{3}
\raggedright
\begin{trivlist}\footnotesize
\ind{\tA\subset\tG,\; \tA_+ \subset \tA}{\il{tAta+}}
\ind{A_k(s,\rho v_k) \supset \A^0_k(s,\rho v_k)}{\il{A0}, \il{AktG}}
\ind{\A^J_{k,m}(s,\ph_a\ch_k), \A^{J,0}_{k,m}(s,\ph_a\ch_k)}{\il{AJ}}
\ind{\A^J_{k,m}(s,\ph_a v_k), \A^{J,0}_{k,m}(s,\ph_a v_k)}{\il{AJ1}}
\ind{\av+,\; \av-}{\ir{av}}
\ind{\tam(y)}{\il{tam}, \il{ta}}
\indexspace
\ind{\B_{s,k}}{\ir{Bdef}}
\ind{b_\Fa}{\ir{bF}}
\ind{b(s,k)}{\ir{bsk}}
\indexspace
\ind{\Cas^{k,m}}{\il{Cas}}
\ind{\CC'}{\il{C'}}
\ind{C^\om_{\rho}(I)}{\il{Comn}}
\ind{c_\Fa}{\il{cF}}
\ind{c^u_\pb}{\ir{cparb}}
\indexspace
\ind{\E_{s,k}}{\il{Esh}}
\ind{\E_{\rho v_k,s,k}}{\il{Ervsk}}
\ind{e_l\in X_\rho}{\il{el}}
\indexspace
\ind{\Fbg{s,k}}{\ir{Fsk}}
\ind{\Fa}{\il{Far}}
\ind{\FE^\om_{\rho v_k,s,k}}{\il{FEom}}
\ind{F_\bullet^\Fa}{\il{FFar}}
\indexspace
\ind{\G{\om^0,0}{\rho v_k ,s,k}}{\il{Gom0}}
\ind{G=\SL_2(\RR)}{\il{G}}
\ind{G^J}{\il{GJ}}
\ind{\tG}{\il{tG}}
\ind{\glie,\;\glie_c}{\il{glie}}
\indexspace
\ind{\hei}{\ir{hei}}
\ind{\uhp}{\il{uhp}}
\ind{H(s,\z,z)}{\ir{Lerch}}
\ind{\hm(\cdot,\cdot,\cdot)}{\il{hm}}
\indexspace
\ind{I_2}{\il{I2}}
\indexspace
\ind{\tK\subset\tG}{\il{tK}}
\ind{\km(\th)}{\il{km}}
\ind{\tkm(\th)}{\il{tkm}}
\ind{k\text{ weight}}{\il{k}}
\indexspace
\ind{\tro_{\rho v_k,s,k}}{\il{tro}}
\ind{\trof{\rho v_k,s,k}}{\ir{trof}}
\ind{L}{\ir{L}, \il{LJ}}
\ind{g\mapsto \ell(g)}{\ir{tg}}
\indexspace
\ind{n(\rho)}{\il{nrho}}
\indexspace
\ind{P(u)}{\il{Pu}}
\ind{\pf}{\ir{pf}}
\ind{\pr:\tG \rightarrow G}{\ir{pr}}
\ind{\ppm(z),\;\tppm(z)}{\il{tppm}}
\indexspace
\ind{Q_{s,k}}{\ir{Qsk}}
\ind{q_{s,k}(z_1,z_2)}{\ir{qsk}}
\indexspace
\ind{R}{\ir{R}}
\ind{R_{s,k}(t,z)}{\ir{Poisk}}
\ind{\rest_{s,k}}{\il{rest}}
\indexspace
\ind{S=\rmatr0{-1}10}{\il{S}}
\ind{\sm \in \tGm}{\il{sm}}
\ind{s}{\il{s}}
\indexspace
\ind{T=\rmatc1101}{\il{T}}
\ind{T'=\rmatc1011}{\il{Ta}}
\ind{\tm \in \tGm}{\il{tm}}
\indexspace
\ind{\V\om{\rho v_k,s,k}}{\il{Vomsh}}
\ind{\V{\om^0,0}{\rho v_k,s,k}}{\il{Vom0}}
\ind{v_k}{\ir{vk}}
\indexspace
\ind{\W\om{s,k}}{\ir{Womsh}}
\ind{\W\om{\tro v_k,s,k}}{\il{Womrvsk}}
\ind{W_\dt}{\ir{Wdt}}
\indexspace
\ind{X_\rho}{\il{Xrho}}
\ind{X_+, \;X_-\in \glie_c}{\il{Xpm}}
\ind{X_{+,k},\; X_{-,k}}{\il{Xpmk}}
\ind{X_j^\Fa}{\il{XjF}}
\ind{x}{\il{x}}
\indexspace
\ind{y}{\il{y}}
\indexspace
\ind{Z\in \glie_c}{\il{Zl}}
\ind{\tZ}{\ir{tZ}}
\ind{\tZ_2}{\ir{Z2}}
\indexspace
\indexspace
\ind{\al_{s,k}}{\il{alsk}}
\indexspace
\ind{\bt_{s,k}}{\il{bt}}
\indexspace
\ind{\Gm}{\il{Gm}}
\ind{\Gm'}{\il{Gm'}}
\ind{\tGm\subset \tG}{\il{tGm}}
\ind{\tGm^J}{\il{tGmJ}}
\indexspace
\ind{\Dt}{\ir{DtXpm}}
\ind{\Dt_k}{\ir{Dtk}}
\indexspace
\ind{\eta_{s,k}(u)}{\ir{etaks}}
\indexspace
\ind{\Th_{m,j}}{\il{Thmj}}
\ind{\th_{m,j}}{\ir{thmj}}
\ind{\tth_m}{\ir{vth}}
\ind{\th^\hei_{m,j}(\ph)}{\ir{thHei}}
\indexspace
\ind{\k_l}{\il{kpl}}
\indexspace
\ind{\Ld}{\ir{Ld}}
\indexspace
\ind{\rho}{\il{rho}}
\ind{\rho_{a,m}}{\il{rhoa}}
\indexspace
\ind{\Ph_{s,k}}{\il{Phsk}}
\ind{\ph_a}{\ir{pha}}
\ind{\ph^J(g,\xi)}{\ir{phJ}}
\indexspace
\ind{\Ps_k}{\ir{Psk}}
\ind{\Ps_{k,m}}{\ir{spsJ}}
\indexspace
\ind{\ch_k}{\ir{chk}, \il{chk1}}
\indexspace
\indexspace 
\ind{[u_1,u_2]_k}{\ir{Gf}}
\ind{(\cdot,\cdot)_\rho \text{ inner product in }X_\rho}{\il{iprho}}
\ind{(\cdot,\cdot)_c\subset\proj\RR}{\il{ci}}
\ind{|_k}{\ir{slash-k}}
\ind{|_{v_k,k}}{\ir{slash-vk}}
\ind{|_{\rho v_k,k}}{ \ir{|rhockk}}
\ind{|^\prs_{s,k}}{\ir{prs-trf}}
\ind{|^\prs_{\rho v_k,s,k}}{\ir{prsrvk}}
\ind{|^J_{k,m}}{\ir{|kmtG}}
\ind{|^J_{\ph_a,v_k,k,m}}{\ir{phvslJ}}
\ind{\XX^{k,m} \text{ for }\XX\in \lieJ}{\il{kmup}}
\end{trivlist}
\end{multicols}

\tableofcontents

\end{document}